\theoremstyle{plain}
\newtheorem{theorem}{Theorem}[section]
\newtheorem{lemma}[theorem]{Lemma}
\newtheorem{proposition}[theorem]{Proposition}
\theoremstyle{definition}
\newtheorem{definition}[theorem]{Definition}
\theoremstyle{remark}
\newtheorem{remark}[theorem]{Remark}
\numberwithin{equation}{section}
\begin{document}

\title[Navier--Stokes equations in a thin spherical shell]{Difference estimate for weak solutions to the Navier--Stokes equations in a thin spherical shell and on the unit sphere}

\author[T.-H. Miura]{Tatsu-Hiko Miura}
\address{Graduate School of Science and Technology, Hirosaki University, 3, Bunkyo-cho, Hirosaki-shi, Aomori, 036-8561, Japan}
\email{thmiura623@hirosaki-u.ac.jp}

\subjclass[2020]{76D05, 35Q30, 35B25, 35R01}

\keywords{Navier--Stokes equations, thin spherical shell, weak solutions}

\begin{abstract}
  We consider the Navier--Stokes equations in a three-dimensional thin spherical shell and on the two-dimensional unit sphere, and estimate the difference of weak solutions on the thin spherical shell and the unit sphere.
  Assuming that the weak solution on the thin spherical shell is a Leray--Hopf weak solution satisfying the energy inequality, we derive difference estimates for the two weak solutions by the weak-strong uniqueness argument.
  The main idea is to extend the weak solution on the unit sphere properly to an approximate solution on the thin spherical shell, and to use the extension as a strong solution in the weak-strong uniqueness argument.
\end{abstract}

\maketitle

%%% Section 1 %%%
\section{Introduction} \label{S:Intro}

\subsection{Problem setting and main results} \label{SS:Int_Main}
For a sufficiently small $\varepsilon\in(0,1)$, let
\begin{align*}
  \Omega_\varepsilon := \{x\in\mathbb{R}^3 \mid 1<|x|<1+\varepsilon\}
\end{align*}
be a thin spherical shell in $\mathbb{R}^3$.
Also, let $\mathbf{n}_\varepsilon$ be the unit outward normal vector field of $\partial\Omega_\varepsilon$ and $\mathbf{P}_\varepsilon:=\mathbf{I}_3-\mathbf{n}_\varepsilon\otimes\mathbf{n}_\varepsilon$ be the orthogonal projection onto the tangent plane of $\partial\Omega_\varepsilon$, where $\mathbf{I}_3$ is the $3\times3$ identity matrix and $\mathbf{a}\otimes\mathbf{b}$ is the tensor product of $\mathbf{a},\mathbf{b}\in\mathbb{R}^3$.

In this paper, we consider the three-dimensional (3D) Navier--Stokes equations under the Navier boundary conditions
\begin{align} \label{E:NS_TSS}
  \left\{
  \begin{aligned}
    &\partial_t\mathbf{u}^\varepsilon+(\mathbf{u}^\varepsilon\cdot\nabla)\mathbf{u}^\varepsilon-\nu\Delta\mathbf{u}^\varepsilon+\nabla p^\varepsilon = \mathbf{f}^\varepsilon, \quad \mathrm{div}\,\mathbf{u}^\varepsilon = 0 \quad\text{in}\quad \Omega_\varepsilon\times(0,\infty), \\
    &\mathbf{u}^\varepsilon\cdot\mathbf{n}_\varepsilon = 0, \quad \mathbf{P}_\varepsilon\mathbf{D}(\mathbf{u}^\varepsilon)\mathbf{n}_\varepsilon = \mathbf{0} \quad\text{on}\quad \partial\Omega_\varepsilon\times(0,\infty), \\
    &\mathbf{u}^\varepsilon|_{t=0} = \mathbf{u}_0^\varepsilon \quad\text{in}\quad \Omega_\varepsilon.
  \end{aligned}
  \right.
\end{align}
Here, $\mathbf{f}^\varepsilon$ and $\mathbf{u}_0^\varepsilon$ are a given external force and initial velocity, respectively, and $\nu>0$ is the viscosity coefficient.
We assume that $\nu$ is independent of $\varepsilon$.
Also, $\mathbf{D}(\mathbf{u}^\varepsilon)$ is the strain rate tensor given as the symmetric part of $\nabla\mathbf{u}^\varepsilon$.
The first boundary condition states that the fluid does not pass through the boundary.
The second one describes the perfect slip, which means that the fluid slips on the boundary without friction.

The purpose of this paper is to compare solutions to \eqref{E:NS_TSS} with solutions to the two-dimensional (2D) Navier--Stokes equations on the unit sphere $S^2$ in $\mathbb{R}^3$ of the form
\begin{align} \label{E:NS_S2}
  \left\{
  \begin{aligned}
    &\partial_t\mathbf{v}+\nabla_{\mathbf{v}}\mathbf{v}-2\nu\mathbf{P}\,\mathrm{div}_{S^2}[\mathbf{D}_{S^2}(\mathbf{v})]+\nabla_{S^2}q = \mathbf{f}, \quad \mathrm{div}_{S^2}\mathbf{v} = 0 \quad\text{on}\quad S^2\times(0,\infty), \\
    &\mathbf{v}|_{t=0} = \mathbf{v}_0 \quad\text{on}\quad S^2.
  \end{aligned}
  \right.
\end{align}
Here, $\mathbf{v}$ is the tangential velocity of a fluid, and $\mathbf{f}$ and $\mathbf{v}_0$ are a given external force and initial velocity, respectively.
Also, $\nabla_{\mathbf{v}}\mathbf{v}$ is the covariant derivative of $\mathbf{v}$ along itself, $\nabla_{S^2}$ is the tangential gradient, and $\mathrm{div}_{S^2}$ is the surface divergence.
We write $\mathbf{P}$ for the orthogonal projection onto the tangent plane of $S^2$ and $\mathbf{D}_{S^2}(\mathbf{v})$ for the surface strain rate tensor (see Section \ref{S:Prelim} for details).
The system \eqref{E:NS_S2} can be rewritten as
\begin{align*}
  \partial_t\mathbf{v}+\nabla_{\mathbf{v}}\mathbf{v}-\nu(\Delta_H\mathbf{v}+2\mathbf{v})+\nabla_{S^2}q = \mathbf{f}, \quad \mathrm{div}_{S^2}\mathbf{v} = 0 \quad\text{on}\quad S^2\times(0,\infty),
\end{align*}
where $\Delta_H$ is the Hodge Laplacian (see \cite[Lemma C.11]{Miu20_03}).
This form is intrinsic in the sense that it involves only the first fundamental form of $S^2$ and thus can be considered without the ambient space $\mathbb{R}^3$.
In this paper, we use the extrinsic form \eqref{E:NS_S2} which requires the ambient space $\mathbb{R}^3$, since it is more convenient for our analysis.

The relation between the two systems \eqref{E:NS_TSS} and \eqref{E:NS_S2} was studied in our previous work \cite{Miu20_03} for curved thin domains around general closed surfaces (see also \cite{Miu24_SNS} for the stationary case).
There, we considered a strong solution to \eqref{E:NS_TSS} in the $L^2$ class and proved that the average in the thin direction of a strong solution converges to a (unique) weak solution to \eqref{E:NS_S2} as $\varepsilon\to0$ in an appropriate sense.
Moreover, we derived estimates for the difference of a strong solution to \eqref{E:NS_TSS} and a weak solution to \eqref{E:NS_S2}, which roughly state that the difference of the two solutions is of order $\varepsilon$.
In this paper, we establish a similar difference estimate by assuming only that a solution to \eqref{E:NS_TSS} is a Leray--Hopf weak solution.

To state main results, we introduce some notations (see Section \ref{S:Prelim} for details).
Let
\begin{align*}
  \mathcal{H}_\varepsilon := \{\mathbf{u}\in L^2(\Omega_\varepsilon)^3 \mid \text{$\mathrm{div}\,\mathbf{u}=0$ in $\Omega_\varepsilon$, $\mathbf{u}\cdot\mathbf{n}_\varepsilon=0$ on $\partial\Omega_\varepsilon$}\}, \quad \mathcal{V}_\varepsilon := \mathcal{H}_\varepsilon\cap H^1(\Omega_\varepsilon)^3
\end{align*}
be the spaces of solenoidal vector fields on $\Omega_\varepsilon$.
Also, let
\begin{align*}
  \mathcal{H}_0 := \{\mathbf{v}\in L^2(S^2)^3 \mid \text{$\mathbf{v}\cdot\mathbf{n}=0$, $\mathrm{div}_{S^2}\,\mathbf{v}=0$ on $S^2$}\}, \quad \mathcal{V}_0:=\mathcal{H}_0\cap H^1(S^2)^3
\end{align*}
be the spaces of tangential and solenoidal vector fields on $S^2$, where $\mathbf{n}$ is the unit outward normal vector field of $S^2$.
We denote by $\mathcal{V}_\varepsilon^\ast$ and $\mathcal{V}_0^\ast$ the dual spaces of $\mathcal{V}_\varepsilon$ and $\mathcal{V}_0$, respectively.
When $\eta$ is a function on $S^2$, we set $\bar{\eta}(x):=\eta(x/|x|)$ for $x\in\mathbb{R}^3\setminus\{0\}$, which is the constant extension of $\eta$ in the radial direction.
By duality, we also define the ``constant extension'' $\bar{\mathbf{f}}$ of $\mathbf{f}\in\mathcal{V}_0^\ast$ (see \eqref{E:Def_fext} for the precise definition).
For a vector field $\mathbf{u}$ on $\Omega_\varepsilon$, let $\partial_{\mathbf{n}}\mathbf{u}:=(\bar{\mathbf{n}}\cdot\nabla)\mathbf{u}$ be the derivative of $\mathbf{u}$ in the direction $\bar{\mathbf{n}}$ (i.e. the radial direction).

Now, let us present the main results of this paper.
We refer to Definitions \ref{D:WS_TSS} and \ref{D:WS_S2} for the definitions of weak solutions to \eqref{E:NS_TSS} and \eqref{E:NS_S2}, respectively.

\begin{theorem} \label{T:Diff_Est}
  For given data
  \begin{align*}
    (\mathbf{u}_0^\varepsilon,\mathbf{f}^\varepsilon) \in \mathcal{H}_\varepsilon \times L_{\mathrm{loc}}^2([0,\infty);\mathcal{V}_\varepsilon^\ast), \quad (\mathbf{v}_0,\mathbf{f}) \in \mathcal{H}_0 \times L_{\mathrm{loc}}^2([0,\infty);\mathcal{V}_0^\ast),
  \end{align*}
  let $\mathbf{u}^\varepsilon$ and $\mathbf{v}$ be weak solutions to \eqref{E:NS_TSS} and \eqref{E:NS_S2}, respectively.
  Also, for $t\geq0$, let
  \begin{align} \label{E:Def_Diff}
    \begin{aligned}
      D_{\mathrm{data}}^\varepsilon(t) &:= \frac{1}{\varepsilon}\|\mathbf{u}_0^\varepsilon-\bar{\mathbf{v}}_0\|_{L^2(\Omega_\varepsilon)}^2+\frac{1}{\varepsilon\nu}\int_0^t\|\mathbf{f}^\varepsilon-\bar{\mathbf{f}}\|_{\mathcal{V}_\varepsilon^\ast}^2\,ds, \\
      D_{\mathrm{sol}}^\varepsilon(t) &:= \frac{1}{\varepsilon}\|[\mathbf{u}^\varepsilon-\bar{\mathbf{v}}](t)\|_{L^2(\Omega_\varepsilon)}^2 \\
      &\qquad\qquad+\frac{\nu}{\varepsilon}\int_0^t\Bigl(\Bigl\|\overline{\mathbf{P}}\nabla\mathbf{u}^\varepsilon-\overline{\nabla_{S^2}\mathbf{v}}\Bigr\|_{L^2(\Omega_\varepsilon)}^2+\|\partial_{\mathbf{n}}\mathbf{u}^\varepsilon-\bar{\mathbf{v}}\|_{L^2(\Omega_\varepsilon)}^2\Bigr)\,ds.
    \end{aligned}
  \end{align}
  Suppose that $\mathbf{u}^\varepsilon$ is a Leray--Hopf weak solution, i.e. the energy inequality
  \begin{align} \label{E:TSS_Ener}
    \frac{1}{2}\|\mathbf{u}^\varepsilon(t)\|_{L^2(\Omega_\varepsilon)}^2+2\nu\int_0^t\|\mathbf{D}(\mathbf{u}^\varepsilon)\|_{L^2(\Omega_\varepsilon)}^2\,ds \leq \frac{1}{2}\|\mathbf{u}_0^\varepsilon\|_{L^2(\Omega_\varepsilon)}^2+\int_0^t\langle\mathbf{f}^\varepsilon,\mathbf{u}^\varepsilon\rangle_{\mathcal{V}_\varepsilon}\,ds
  \end{align}
  holds for all $t\geq0$.
  Then, we have
  \begin{align} \label{E:Diff_Est}
    D_{\mathrm{sol}}^\varepsilon(t) \leq C_1F_{\mathbf{v}}(t)\{D_{\mathrm{data}}^\varepsilon(t)+\varepsilon^2G_{\mathbf{v}}(t)\} \quad\text{for all}\quad t\geq0.
  \end{align}
  Here, $C_1>0$ is a constant independent of $\varepsilon$, $t$, and $\nu$.
  Also,
  \begin{align} \label{E:Def_FG}
    \begin{aligned}
      F_{\mathbf{v}}(t) &:= \exp\left(C_2\int_0^t\left\{\nu+\frac{1}{\nu^3}\|\mathbf{v}\|_{L^2(S^2)}^2\|\mathbf{v}\|_{H^1(S^2)}^2\right\}\,ds\right), \\
      G_{\mathbf{v}}(t) &:= \|\mathbf{v}_0\|_{L^2(S^2)}^2+\|\mathbf{v}(t)\|_{L^2(S^2)}^2 \\
      &\qquad +\frac{1}{\nu}\int_0^t\Bigl\{\|\mathbf{f}\|_{\mathcal{V}_0^\ast}^2+\Bigl(\nu^2+\|\mathbf{v}\|_{L^2(S^2)}^2\Bigr)\|\mathbf{v}\|_{H^1(S^2)}^2\Bigr\}\,ds
    \end{aligned}
  \end{align}
  with a constant $C_2>0$ independent of $\varepsilon$, $t$, and $\nu$.
\end{theorem}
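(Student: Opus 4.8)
The plan is to carry out a weak--strong uniqueness argument in which the role of the ``strong'' comparison solution is played by the radial constant extension $\bar{\mathbf v}$ of the sphere solution. Since $\mathrm{div}\,\bar{\mathbf v}=\tfrac{1}{|x|}\overline{\mathrm{div}_{S^2}\mathbf v}=0$ in $\Omega_\varepsilon$ and $\bar{\mathbf v}\cdot\mathbf n_\varepsilon=0$ on $\partial\Omega_\varepsilon$, we have $\bar{\mathbf v}(t)\in\mathcal V_\varepsilon$, so it is an admissible test field in the weak formulation of \eqref{E:NS_TSS} (Definition \ref{D:WS_TSS}). First I would record the residual equation satisfied by $\bar{\mathbf v}$. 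Substituting $\bar{\mathbf v}$ into the 3D operator and using the identities from Section \ref{S:Prelim} that relate $(\bar{\mathbf v}\cdot\nabla)\bar{\mathbf v}$, $\mathrm{div}\,\mathbf D(\bar{\mathbf v})$, and $\overline{\mathbf P}\nabla\bar{\mathbf v}$ to the surface quantities $\nabla_{\mathbf v}\mathbf v$, $\Delta_H\mathbf v$, $\overline{\nabla_{S^2}\mathbf v}$ yields
\[
  \partial_t\bar{\mathbf v}+(\bar{\mathbf v}\cdot\nabla)\bar{\mathbf v}-2\nu\,\mathrm{div}\,\mathbf D(\bar{\mathbf v})+\nabla\bar q=\bar{\mathbf f}+\mathbf R^\varepsilon
\]
in a weak sense, where $\mathbf R^\varepsilon$ collects the curvature corrections and the mismatch between the 3D and surface convective/viscous terms. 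The first key estimate is to bound $\mathbf R^\varepsilon$ in $\mathcal V_\varepsilon^\ast$: because the extension is constant in the radial direction the pointwise error is genuinely $O(\varepsilon)$, and combined with the $\sqrt\varepsilon$ thin-shell scaling of $L^2(\Omega_\varepsilon)$ and the normalisation by $1/(\varepsilon\nu)$ this is what produces the term $\varepsilon^2 G_{\mathbf v}(t)$.

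Next I would derive the energy balance for the difference $\mathbf d^\varepsilon:=\mathbf u^\varepsilon-\bar{\mathbf v}$. Testing the residual equation with $\mathbf u^\varepsilon$, testing the weak form of \eqref{E:NS_TSS} with $\bar{\mathbf v}$, and combining these with the energy inequality \eqref{E:TSS_Ener} for $\mathbf u^\varepsilon$ and the energy equality for $\mathbf v$ on $S^2$ (valid for a two-dimensional Leray--Hopf solution), the dissipation is rewritten through a Korn-type identity adapted to the Navier boundary condition. This is where the precise form of $D_{\mathrm{sol}}^\varepsilon$ emerges: $2\nu\|\mathbf D(\mathbf u^\varepsilon)\|^2$ splits into the tangential part $\|\overline{\mathbf P}\nabla\mathbf u^\varepsilon-\overline{\nabla_{S^2}\mathbf v}\|^2$ and the normal part $\|\partial_{\mathbf n}\mathbf u^\varepsilon-\bar{\mathbf v}\|^2$, the shift by $\bar{\mathbf v}$ in the latter being exactly the curvature contribution forced by the slip condition $\mathbf P_\varepsilon\mathbf D(\mathbf u^\varepsilon)\mathbf n_\varepsilon=\mathbf 0$ together with the fact that the shape operator of $S^2$ is the identity. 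After these manipulations one arrives at
\[
  \tfrac{1}{2}\tfrac{d}{dt}\|\mathbf d^\varepsilon\|_{L^2(\Omega_\varepsilon)}^2+c\nu\,\mathcal D[\mathbf d^\varepsilon]\le -b(\mathbf d^\varepsilon,\bar{\mathbf v},\mathbf d^\varepsilon)+\langle\mathbf f^\varepsilon-\bar{\mathbf f},\mathbf d^\varepsilon\rangle_{\mathcal V_\varepsilon}-\langle\mathbf R^\varepsilon,\mathbf d^\varepsilon\rangle_{\mathcal V_\varepsilon}+\nu\,\mathcal R[\mathbf d^\varepsilon],
\]
where $\mathcal D[\mathbf d^\varepsilon]$ is the dissipation appearing in $D_{\mathrm{sol}}^\varepsilon$, $b$ is the trilinear convective form, and $\mathcal R[\mathbf d^\varepsilon]$ denotes lower-order geometric remainders proportional to $\|\mathbf d^\varepsilon\|_{L^2}^2$.

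The hard part will be controlling the trilinear term $b(\mathbf d^\varepsilon,\bar{\mathbf v},\mathbf d^\varepsilon)=\int_{\Omega_\varepsilon}(\mathbf d^\varepsilon\cdot\nabla)\bar{\mathbf v}\cdot\mathbf d^\varepsilon\,dx$, since $\mathbf v$ is assumed no more regular than a Leray--Hopf solution on $S^2$, so $\bar{\mathbf v}$ is not a classical strong solution. The idea is to exploit the effectively two-dimensional structure. Integrating by parts (using $\mathrm{div}\,\mathbf d^\varepsilon=0$) to move the derivative onto $\mathbf d^\varepsilon$ gives $|b|\le\|\mathbf d^\varepsilon\|_{L^4(\Omega_\varepsilon)}\|\nabla\mathbf d^\varepsilon\|_{L^2(\Omega_\varepsilon)}\|\bar{\mathbf v}\|_{L^4(\Omega_\varepsilon)}$. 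Applying an anisotropic thin-shell Ladyzhenskaya inequality of the form $\|\mathbf d^\varepsilon\|_{L^4(\Omega_\varepsilon)}^2\le C\varepsilon^{-1/2}\|\mathbf d^\varepsilon\|_{L^2}\|\mathbf d^\varepsilon\|_{H^1}$, together with $\|\bar{\mathbf v}\|_{L^4(\Omega_\varepsilon)}\le C\varepsilon^{1/4}\|\mathbf v\|_{L^4(S^2)}$ (whose $\varepsilon^{1/4}$ cancels the $\varepsilon^{-1/4}$ from the previous bound) and the genuinely two-dimensional Ladyzhenskaya inequality $\|\mathbf v\|_{L^4(S^2)}^4\le C\|\mathbf v\|_{L^2(S^2)}^2\|\mathbf v\|_{H^1(S^2)}^2$, Young's inequality applied to the resulting $3/2$ power of $\|\nabla\mathbf d^\varepsilon\|_{L^2}$ yields
\[
  |b(\mathbf d^\varepsilon,\bar{\mathbf v},\mathbf d^\varepsilon)|\le\tfrac{c\nu}{2}\mathcal D[\mathbf d^\varepsilon]+\frac{C}{\nu^3}\|\mathbf v\|_{L^2(S^2)}^2\|\mathbf v\|_{H^1(S^2)}^2\,\|\mathbf d^\varepsilon\|_{L^2(\Omega_\varepsilon)}^2,
\]
the cubic power of $1/\nu$ coming precisely from the Young exponent paired with the dissipation. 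The dissipation term is absorbed on the left, and the surviving coefficient is exactly the nonlinear integrand of $\log F_{\mathbf v}(t)$.

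Finally, the forcing and residual terms are handled by Young's inequality, contributing $D_{\mathrm{data}}^\varepsilon$ and $\varepsilon^2 G_{\mathbf v}$ after normalisation, while the geometric remainders $\nu\,\mathcal R[\mathbf d^\varepsilon]$ produce the $C_2\nu$ term in the exponent. Dividing the whole inequality by $\varepsilon$ (so that $\tfrac1\varepsilon\|\mathbf d^\varepsilon\|_{L^2}^2$ and $\tfrac\nu\varepsilon\int\mathcal D$ reconstitute $D_{\mathrm{sol}}^\varepsilon$) and applying Gronwall's inequality to $t\mapsto\tfrac1\varepsilon\|\mathbf d^\varepsilon(t)\|_{L^2(\Omega_\varepsilon)}^2$ then gives \eqref{E:Diff_Est}, with $F_{\mathbf v}$ the Gronwall exponential and $C_1,C_2$ absorbing all $\varepsilon$- and $\nu$-independent constants. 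The steps I expect to require the most care are the thin-shell Korn and Ladyzhenskaya inequalities with explicit $\varepsilon$-dependence, which are arranged so that all $\varepsilon$ powers cancel against the $1/\varepsilon$ normalisations and leave $\varepsilon$-independent constants.
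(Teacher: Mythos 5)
Your overall architecture (weak--strong uniqueness with an extension of $\mathbf{v}$ playing the strong role, absorption of the trilinear term via two-dimensional Ladyzhenskaya-type estimates with the $\varepsilon^{1/4}$/$\varepsilon^{-1/4}$ cancellation, Gronwall) matches the paper, but your choice of comparison field breaks the argument at the decisive step. You take the constant extension $\bar{\mathbf{v}}(x)=\mathbf{v}(x/|x|)$ and assert that the residual $\mathbf{R}^\varepsilon$ is ``genuinely $O(\varepsilon)$'' because the extension is constant in the radial direction. That is false for the viscous term: by \eqref{E:Const} and \eqref{E:TGr_Dec},
\begin{align*}
  \mathbf{D}(\bar{\mathbf{v}})(x) = \frac{1}{|x|}\left\{\Bigl[\overline{\mathbf{D}_{S^2}(\mathbf{v})}\Bigr](x)-\frac{1}{2}[\bar{\mathbf{v}}\otimes\bar{\mathbf{n}}](x)-\frac{1}{2}[\bar{\mathbf{n}}\otimes\bar{\mathbf{v}}](x)\right\},
\end{align*}
and the two off-tangential terms are $O(1)$ pointwise, not $O(\varepsilon)$. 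Paired with $\mathbf{D}(\bm{\psi})$ for a test field $\bm{\psi}\in\mathcal{V}_\varepsilon$ (which does \emph{not} satisfy the slip condition $\mathbf{P}_\varepsilon\mathbf{D}(\bm{\psi})\mathbf{n}_\varepsilon=\mathbf{0}$, so no boundary cancellation is available), they contribute a residual of size $\varepsilon^{1/2}\|\mathbf{v}\|_{L^2(S^2)}\|\bm{\psi}\|_{H^1(\Omega_\varepsilon)}$, whereas the Gronwall scheme requires $\varepsilon^{3/2}$ (compare \eqref{E:WFEx_Res}): after Young's inequality and division by $\varepsilon$ your error term is $O(1)$, not $\varepsilon^2 G_{\mathbf{v}}(t)$, and \eqref{E:Diff_Est} does not follow. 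This is exactly why the paper discards $\bar{\mathbf{v}}$ and uses the weighted extension $\mathbf{v}_{\mathrm{E}}(x)=|x|\,\bar{\mathbf{v}}(x)$ of \eqref{E:Def_Ext}, for which $\mathbf{D}(\mathbf{v}_{\mathrm{E}})=\overline{\mathbf{D}_{S^2}(\mathbf{v})}$ exactly (Lemma \ref{L:Ext_Grad}), so the off-tangential terms never appear; the remaining residual is genuinely $O(\varepsilon^{3/2})$ in duality and produces $\varepsilon^2G_{\mathbf{v}}$.

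A related symptom: you claim the dissipation splits with the shifted normal part $\|\partial_{\mathbf{n}}\mathbf{u}^\varepsilon-\bar{\mathbf{v}}\|_{L^2(\Omega_\varepsilon)}^2$ ``forced by the slip condition,'' but with your comparison field this shift cannot emerge, since $\partial_{\mathbf{n}}\bar{\mathbf{v}}=\mathbf{0}$; the gradient of the difference $\mathbf{u}^\varepsilon-\bar{\mathbf{v}}$ would carry $\|\partial_{\mathbf{n}}\mathbf{u}^\varepsilon\|^2$ instead, which does not reproduce $D_{\mathrm{sol}}^\varepsilon$ in \eqref{E:Def_Diff}. In the paper the shift comes from $\nabla\mathbf{v}_{\mathrm{E}}=\overline{\nabla_{S^2}\mathbf{v}}+\bar{\mathbf{n}}\otimes\bar{\mathbf{v}}$, i.e. $\partial_{\mathbf{n}}\mathbf{v}_{\mathrm{E}}=\bar{\mathbf{v}}$, so $|\nabla(\mathbf{u}^\varepsilon-\mathbf{v}_{\mathrm{E}})|^2$ is exactly the integrand of $D_{\mathrm{sol}}^\varepsilon$. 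Two further points, secondary by comparison: (i) you cannot simply ``test'' the weak form of \eqref{E:NS_TSS} with a time-dependent, non-compactly-supported field, since $\partial_t\mathbf{u}^\varepsilon$ is only in $L^{4/3}_{\mathrm{loc}}([0,\infty);\mathcal{V}_\varepsilon^\ast)$; a mollification-in-time step (as in the paper's Section \ref{SS:Df_Moll}) is needed to justify the pairing and to recover the pointwise-in-time $L^2$ term; (ii) your trilinear bound is sound in spirit, though the paper avoids an anisotropic thin-shell $L^4$ interpolation for the difference by using the product estimate \eqref{E:Quad_Thin}, which exploits that one factor is a constant extension and carries an $\varepsilon$-independent constant.
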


In \eqref{E:Def_Diff}, we divide the integrals over $\Omega_\varepsilon$ by the thickness $\varepsilon$ of $\Omega_\varepsilon$.
Thus, taking the square root of \eqref{E:Diff_Est} (and \eqref{E:DEst_Gl} below), we can say that the weak solution $\mathbf{v}$ to \eqref{E:NS_S2} approximates the weak solution $\mathbf{u}^\varepsilon$ to \eqref{E:NS_TSS} of order $\varepsilon$.

Let $\mathcal{M}_\varepsilon^0\mathbf{u}^\varepsilon$ be the average of $\mathbf{u}^\varepsilon$ in the radial direction given by
\begin{align*}
  \mathcal{M}_\varepsilon^0\mathbf{u}^\varepsilon(y,t) := \frac{1}{\varepsilon}\int_1^{1+\varepsilon}\mathbf{u}^\varepsilon(ry,t)\,dr, \quad (y,t)\in S^2\times(0,\infty)
\end{align*}
The difference estimate \eqref{E:Diff_Est} implies the convergence of $\mathcal{M}_\varepsilon^0\mathbf{u}^\varepsilon$ as $\varepsilon\to0$.

\begin{theorem} \label{T:Ave_Conv}
  Under the assumptions of Theorem \ref{T:Diff_Est}, suppose further that
  \begin{align} \label{E:ACL_data}
    \lim_{\varepsilon\to0}\varepsilon^{-1/2}\|\mathbf{u}_0^\varepsilon-\bar{\mathbf{v}}_0\|_{L^2(\Omega_\varepsilon)} = 0, \quad \lim_{\varepsilon\to0}\varepsilon^{-1/2}\|\mathbf{f}^\varepsilon-\bar{\mathbf{f}}\|_{L^2(0,T;\mathcal{V}_\varepsilon^\ast)} = 0
  \end{align}
  for any fixed finite $T>0$.
  Then,
  \begin{align} \label{E:ACL_sol}
    \lim_{\varepsilon\to0}\mathcal{M}_\varepsilon^0\mathbf{u}^\varepsilon = \mathbf{v} \quad\text{strongly in}\quad L^\infty(0,T;L^2(S^2)^3)\cap L^2(0,T;H^1(S^2)^3).
  \end{align}
\end{theorem}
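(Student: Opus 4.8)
\emph{The plan} is to read the convergence \eqref{E:ACL_sol} off directly from the difference estimate \eqref{E:Diff_Est}. The argument naturally splits into two parts: first, showing that the right-hand side of \eqref{E:Diff_Est} vanishes as $\varepsilon\to0$ uniformly on $[0,T]$, and second, converting the resulting smallness of $D_{\mathrm{sol}}^\varepsilon$ into smallness of the averaged difference $\mathcal{M}_\varepsilon^0\mathbf{u}^\varepsilon-\mathbf{v}$ in the two norms appearing in \eqref{E:ACL_sol}.

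First I would verify that $\sup_{t\in[0,T]}D_{\mathrm{sol}}^\varepsilon(t)\to0$. Since $\mathbf{v}$ and $\mathbf{f}$ are fixed, the quantities $F_{\mathbf{v}}$ and $G_{\mathbf{v}}$ in \eqref{E:Def_FG} are finite and nondecreasing in $t$ (using $\mathbf{v}\in L^\infty(0,T;L^2(S^2)^3)\cap L^2(0,T;H^1(S^2)^3)$ and $\mathbf{f}\in L^2(0,T;\mathcal{V}_0^\ast)$ from the definition of a weak solution on $S^2$), so they are bounded by $F_{\mathbf{v}}(T)$ and $G_{\mathbf{v}}(T)$ on $[0,T]$. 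Likewise $D_{\mathrm{data}}^\varepsilon(t)$ is nondecreasing, and the hypotheses \eqref{E:ACL_data} state precisely that $\varepsilon^{-1}\|\mathbf{u}_0^\varepsilon-\bar{\mathbf{v}}_0\|_{L^2(\Omega_\varepsilon)}^2\to0$ and $\varepsilon^{-1}\int_0^T\|\mathbf{f}^\varepsilon-\bar{\mathbf{f}}\|_{\mathcal{V}_\varepsilon^\ast}^2\,ds\to0$, whence $D_{\mathrm{data}}^\varepsilon(T)\to0$ because $\nu$ is fixed. As $\varepsilon^2G_{\mathbf{v}}(T)\to0$ as well, the estimate \eqref{E:Diff_Est} gives $\sup_{t\in[0,T]}D_{\mathrm{sol}}^\varepsilon(t)\le C_1F_{\mathbf{v}}(T)\{D_{\mathrm{data}}^\varepsilon(T)+\varepsilon^2G_{\mathbf{v}}(T)\}\to0$.

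Next I would bound the two target norms by $D_{\mathrm{sol}}^\varepsilon$. For the $L^2$ part, since $\bar{\mathbf{v}}(r\omega)=\mathbf{v}(\omega)$ is constant in $r$, one has $\mathcal{M}_\varepsilon^0\mathbf{u}^\varepsilon-\mathbf{v}=\mathcal{M}_\varepsilon^0(\mathbf{u}^\varepsilon-\bar{\mathbf{v}})$; applying Jensen's (Cauchy--Schwarz) inequality to the radial average and using $dx=r^2\,dr\,d\mathcal{H}^2(\omega)$ with $r\in(1,1+\varepsilon)$ so that $r^2\ge1$ yields $\|\mathcal{M}_\varepsilon^0\mathbf{u}^\varepsilon-\mathbf{v}\|_{L^2(S^2)}^2\le\varepsilon^{-1}\|(\mathbf{u}^\varepsilon-\bar{\mathbf{v}})(t)\|_{L^2(\Omega_\varepsilon)}^2\le D_{\mathrm{sol}}^\varepsilon(t)$, and taking the supremum over $[0,T]$ gives strong convergence in $L^\infty(0,T;L^2(S^2)^3)$. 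For the gradient part I would differentiate the radial average tangentially and commute $\nabla_{S^2}$ past $\mathcal{M}_\varepsilon^0$: differentiating $\omega\mapsto\mathbf{w}(r\omega)$ produces the factor $r=1+O(\varepsilon)$, and the tangential gradient of $\bar{\mathbf{v}}$ on the sphere of radius $r$ equals $r^{-1}\overline{\nabla_{S^2}\mathbf{v}}$, so the same Cauchy--Schwarz argument gives $\|\nabla_{S^2}(\mathcal{M}_\varepsilon^0\mathbf{u}^\varepsilon-\mathbf{v})\|_{L^2(S^2)}^2\le C\varepsilon^{-1}\|\overline{\mathbf{P}}\nabla\mathbf{u}^\varepsilon-\overline{\nabla_{S^2}\mathbf{v}}\|_{L^2(\Omega_\varepsilon)}^2+C\varepsilon^2\|\mathbf{v}\|_{H^1(S^2)}^2$. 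Integrating in time, the first term is at most $C\nu^{-1}D_{\mathrm{sol}}^\varepsilon(T)\to0$ and the second is $C\varepsilon^2\int_0^T\|\mathbf{v}\|_{H^1(S^2)}^2\,ds\to0$; together with the $L^2$ bound this yields strong convergence in $L^2(0,T;H^1(S^2)^3)$.

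The first step is routine. I expect the main obstacle to be the gradient estimate in the last step, where $\mathcal{M}_\varepsilon^0$ and $\nabla_{S^2}$ fail to commute exactly. One must carefully track the geometric scaling factor $r^{-1}$ relating the surface gradient on the sphere of radius $r$ to that on $S^2$, and verify that the resulting commutator error is genuinely of order $\varepsilon$ and is absorbed by the bounded $\mathbf{v}$-dependent quantity $\int_0^T\|\mathbf{v}\|_{H^1(S^2)}^2\,ds$ rather than by $D_{\mathrm{sol}}^\varepsilon$ itself; the precise constants follow from the change of variables $x=r\omega$ and the preliminary averaging estimates of Section \ref{S:Prelim}.
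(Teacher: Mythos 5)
Your proposal is correct and follows essentially the same route as the paper's proof: the identity $\mathcal{M}_\varepsilon^0\mathbf{u}^\varepsilon-\mathbf{v}=\mathcal{M}_\varepsilon^0(\mathbf{u}^\varepsilon-\bar{\mathbf{v}})$, the commutation $\nabla_{S^2}\mathcal{M}_\varepsilon^0\mathbf{u}^\varepsilon=\mathcal{M}_\varepsilon^1\bigl(\overline{\mathbf{P}}\nabla\mathbf{u}^\varepsilon\bigr)$ producing the $O(\varepsilon)$ term $\tfrac{\varepsilon}{2}\nabla_{S^2}\mathbf{v}$ via $\mathcal{M}_\varepsilon^1\bar{\eta}=(1+\tfrac{\varepsilon}{2})\eta$, and the averaging bound \eqref{E:Ave_L2} combined with the square root of \eqref{E:Diff_Est} and the monotonicity of $D_{\mathrm{data}}^\varepsilon$, $F_{\mathbf{v}}$, $G_{\mathbf{v}}$ on $[0,T]$. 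The commutator issue you flag at the end is exactly what Lemma \ref{L:Ave_TGr} settles, and the resulting error is absorbed into $\varepsilon^2\int_0^T\|\mathbf{v}\|_{H^1(S^2)}^2\,ds$ precisely as you predicted.
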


The difference estimate \eqref{E:Diff_Est} is a local-in-time estimate in the sense that the functions $F_{\mathbf{v}}(t)$ and $G_{\mathbf{v}}(t)$ given by \eqref{E:Def_FG} are increasing in $t$.
In particular, $F_{\mathbf{v}}(t)\geq e^{C_2t}$.
If we make a few additional assumptions, we can get a global-in-time difference estimate.

Let $\mathbf{a}\in\mathbb{R}^3$.
We set $\mathbf{r}_{\mathbf{a}}(x):=\mathbf{a}\times x$ for $x\in\mathbb{R}^3$, where $\times$ is the vector product in $\mathbb{R}^3$.
For the sake of simplicity, we use the same notation $\mathbf{r}_{\mathbf{a}}$ for the restriction of $\mathbf{r}_{\mathbf{a}}$ to $\Omega_\varepsilon$ and $S^2$.
Note that the restriction of $\mathbf{r}_{\mathrm{a}}$ is in $\mathcal{V}_\varepsilon$ as well as in $\mathcal{V}_0$ (see Lemmas \ref{L:Rota_TSS} and \ref{L:Rota_S2}).

\begin{theorem} \label{T:DEst_Gl}
  Let $\varepsilon_0\in(0,1)$ be the constant given in Lemma \ref{L:KoH1_TSS} and $\varepsilon\in(0,\varepsilon_0]$.
  Under the assumptions of Theorem \ref{T:Diff_Est}, suppose further that
  \begin{align*}
    (\mathbf{u}_0^\varepsilon,\mathbf{r}_{\mathbf{a}})_{L^2(\Omega_\varepsilon)} = 0, \quad \langle\mathbf{f}^\varepsilon(t),\mathbf{r}_{\mathbf{a}}\rangle_{\mathcal{V}_\varepsilon} = 0, \quad (\mathbf{v}_0,\mathbf{r}_{\mathbf{a}})_{L^2(S^2)} = 0, \quad \langle\mathbf{f}(t),\mathbf{r}_{\mathbf{a}}\rangle_{\mathcal{V}_0} = 0
  \end{align*}
  for all $\mathbf{a}\in\mathbb{R}^3$ and a.a. $t\in(0,\infty)$, and that $\mathbf{f} \in L^2(0,\infty;\mathcal{V}_0^\ast)$.
  Then,
  \begin{align} \label{E:DEst_Gl}
    D_{\mathrm{sol}}^\varepsilon(t)+\frac{\nu}{\varepsilon}\int_0^t\|\mathbf{u}^\varepsilon-\bar{\mathbf{v}}\|_{L^2(\Omega_\varepsilon)}^2\,ds \leq C_3F_0\{D_{\mathrm{data}}^\varepsilon(t)+\varepsilon^2G_0\} \quad\text{for all}\quad t\geq0.
  \end{align}
  Here, $C_3>0$ is a constant independent of $\varepsilon$, $t$, and $\nu$.
  Also,
  \begin{align} \label{E:Def_GlCon}
    \begin{aligned}
      E_0 &:= \|\mathbf{v}_0\|_{L^2(S^2)}^2+\frac{1}{\nu}\int_0^\infty\|\mathbf{f}\|_{\mathcal{V}_0^\ast}^2\,dt, \\
      F_0 &:= \exp\left(\frac{C_4}{\nu^2}E_0^2\right), \quad G_0 := E_0+\frac{E_0^2}{\nu^2}
    \end{aligned}
  \end{align}
  with a constant $C_4>0$ independent of $\varepsilon$, $t$, and $\nu$.
  In particular,
  \begin{align} \label{E:ACG_sol}
    \lim_{\varepsilon\to0}\mathcal{M}_\varepsilon^0\mathbf{u}^\varepsilon = \mathbf{v} \quad \text{strongly in} \quad L^\infty(0,\infty;L^2(S^2)^3)\cap L^2(0,\infty;H^1(S^2)^3)
  \end{align}
  provided that $\mathbf{f}^\varepsilon \in L^2(0,\infty;\mathcal{V}_\varepsilon^\ast)$ and
  \begin{align} \label{E:ACG_data}
    \lim_{\varepsilon\to0}\varepsilon^{-1/2}\|\mathbf{u}_0^\varepsilon-\bar{\mathbf{v}}_0\|_{L^2(\Omega_\varepsilon)} = 0, \quad \lim_{\varepsilon\to0}\varepsilon^{-1/2}\|\mathbf{f}^\varepsilon-\bar{\mathbf{f}}\|_{L^2(0,\infty;\mathcal{V}_\varepsilon^\ast)} = 0.
  \end{align}
\end{theorem}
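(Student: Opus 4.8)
The plan is to deduce the global estimate \eqref{E:DEst_Gl} from the local one \eqref{E:Diff_Est} of Theorem \ref{T:Diff_Est}, the whole point being to replace the time-dependent prefactors $F_{\mathbf{v}}(t)$ and $G_{\mathbf{v}}(t)$ by the time-uniform quantities $F_0$ and $G_0$. The extra hypotheses supply exactly what is needed. Each rotation field $\mathbf{r}_{\mathbf{a}}$ is a Killing field, lies in $\mathcal{V}_0$ and $\mathcal{V}_\varepsilon$ by Lemmas \ref{L:Rota_S2} and \ref{L:Rota_TSS}, and satisfies $\mathbf{D}_{S^2}(\mathbf{r}_{\mathbf{a}})=\mathbf{0}$ (resp. $\mathbf{D}(\mathbf{r}_{\mathbf{a}})=\mathbf{0}$). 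First I would test \eqref{E:NS_S2} and \eqref{E:NS_TSS} with $\mathbf{r}_{\mathbf{a}}$: the viscous term vanishes after integration by parts because $\mathbf{D}_{S^2}(\mathbf{r}_{\mathbf{a}})=\mathbf{0}$, the pressure term vanishes because $\mathbf{r}_{\mathbf{a}}$ is tangential and solenoidal, and the inertial term vanishes since the symmetric part of $\nabla\mathbf{r}_{\mathbf{a}}$ is zero. Hence $(\mathbf{v}(\cdot),\mathbf{r}_{\mathbf{a}})_{L^2(S^2)}$ has time-derivative $\langle\mathbf{f},\mathbf{r}_{\mathbf{a}}\rangle_{\mathcal{V}_0}=0$, and likewise for $\mathbf{u}^\varepsilon$, so the assumed orthogonality of the data propagates to $(\mathbf{v}(t),\mathbf{r}_{\mathbf{a}})_{L^2(S^2)}=0$ and $(\mathbf{u}^\varepsilon(t),\mathbf{r}_{\mathbf{a}})_{L^2(\Omega_\varepsilon)}=0$ for all $t\ge0$ and all $\mathbf{a}\in\mathbb{R}^3$.

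Next I would establish global-in-time energy bounds for $\mathbf{v}$. Testing \eqref{E:NS_S2} with $\mathbf{v}$ gives the energy relation whose dissipation is $2\nu\|\mathbf{D}_{S^2}(\mathbf{v})\|_{L^2(S^2)}^2$; since $\mathbf{v}(t)$ is orthogonal to all $\mathbf{r}_{\mathbf{a}}$, the Korn--Poincar\'e inequality on $S^2$ makes this coercive, $\|\mathbf{D}_{S^2}(\mathbf{v})\|_{L^2(S^2)}^2\gtrsim\|\mathbf{v}\|_{H^1(S^2)}^2$. Absorbing the forcing by Young's inequality and integrating on $(0,\infty)$ (using $\mathbf{f}\in L^2(0,\infty;\mathcal{V}_0^\ast)$) yields $\sup_{t\ge0}\|\mathbf{v}(t)\|_{L^2(S^2)}^2\lesssim E_0$ and $\nu\int_0^\infty\|\mathbf{v}\|_{H^1(S^2)}^2\,ds\lesssim E_0$, with $E_0$ as in \eqref{E:Def_GlCon}. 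These two bounds immediately control $G_{\mathbf{v}}(t)\le CG_0$ uniformly in $t$, and they make the nonlinear part of the exponent of $F_{\mathbf{v}}$ integrable over all time, since $\int_0^\infty\|\mathbf{v}\|_{L^2(S^2)}^2\|\mathbf{v}\|_{H^1(S^2)}^2\,ds\le\bigl(\sup_{t\ge0}\|\mathbf{v}(t)\|_{L^2(S^2)}^2\bigr)\int_0^\infty\|\mathbf{v}\|_{H^1(S^2)}^2\,ds\lesssim\nu^{-1}E_0^2$; inserted into the exponent this gives a finite, time-independent bound, hence the constant factor $F_0$ of \eqref{E:Def_GlCon}.

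The genuinely new point, and the main obstacle, is that $F_{\mathbf{v}}(t)$ also contains the contribution $\exp(C_2\nu t)$ coming from the constant term $\nu$ inside the time-integral in \eqref{E:Def_FG}, which is unbounded in $t$ and must be eliminated rather than merely estimated. In the proof of Theorem \ref{T:Diff_Est} this linear-in-$\nu$ term arises from the geometric defects of the constant extension $\bar{\mathbf{v}}$ (curvature contributions and the zeroth-order term), and it is sent into Gronwall's inequality as a coefficient of $\|\mathbf{u}^\varepsilon-\bar{\mathbf{v}}\|_{L^2(\Omega_\varepsilon)}^2$ because, without orthogonality, the dissipation in $D_{\mathrm{sol}}^\varepsilon$ is degenerate on the rotation fields and cannot control that $L^2$ norm. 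Here I would instead invoke the uniform Korn inequality on the thin shell, Lemma \ref{L:KoH1_TSS} (valid for $\varepsilon\le\varepsilon_0$), which, using $(\mathbf{u}^\varepsilon(t),\mathbf{r}_{\mathbf{a}})_{L^2(\Omega_\varepsilon)}=0$, upgrades the dissipation to a fully coercive $H^1$-type quantity. For $\varepsilon\le\varepsilon_0$ this coercivity both absorbs the $O(\nu)$ geometric defects into the left-hand side and leaves over the additional dissipation $\tfrac{\nu}{\varepsilon}\int_0^t\|\mathbf{u}^\varepsilon-\bar{\mathbf{v}}\|_{L^2(\Omega_\varepsilon)}^2\,ds$ displayed in \eqref{E:DEst_Gl}. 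After this absorption, the only surviving Gronwall coefficient is the nonlinear one handled in the previous paragraph, so Gronwall's inequality now yields the time-independent factor $F_0$, and \eqref{E:Diff_Est} becomes \eqref{E:DEst_Gl}.

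Finally, for the convergence \eqref{E:ACG_sol} I would argue as in Theorem \ref{T:Ave_Conv} but globally in time. Under \eqref{E:ACG_data} and $\mathbf{f}^\varepsilon\in L^2(0,\infty;\mathcal{V}_\varepsilon^\ast)$, the quantity $D_{\mathrm{data}}^\varepsilon(t)$ is bounded by a null sequence uniformly in $t$, while $\varepsilon^2G_0\to0$; since the right-hand side of \eqref{E:DEst_Gl} is now independent of $t$, $D_{\mathrm{sol}}^\varepsilon(t)$ together with the extra dissipation tends to $0$ uniformly on $(0,\infty)$. Comparing the radial average $\mathcal{M}_\varepsilon^0\mathbf{u}^\varepsilon$ with the constant extension $\bar{\mathbf{v}}$ by a one-dimensional Poincar\'e inequality in the radial variable, whose error is controlled by the terms $\partial_{\mathbf{n}}\mathbf{u}^\varepsilon-\bar{\mathbf{v}}$ and $\overline{\mathbf{P}}\nabla\mathbf{u}^\varepsilon-\overline{\nabla_{S^2}\mathbf{v}}$ already present in $D_{\mathrm{sol}}^\varepsilon$, then upgrades this to the stated convergence in $L^\infty(0,\infty;L^2(S^2)^3)\cap L^2(0,\infty;H^1(S^2)^3)$.
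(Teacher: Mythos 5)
Your proposal is correct in substance and follows essentially the same route as the paper: propagation of the orthogonality $(\cdot,\mathbf{r}_{\mathbf{a}})=0$ from the data to the solutions (the paper's Propositions \ref{P:TSS_Orth} and \ref{P:S2_Orth}), the global energy bound $\|\mathbf{v}(t)\|_{L^2(S^2)}^2+\nu\int_0^t\|\mathbf{v}\|_{H^1(S^2)}^2\,ds\leq cE_0$ via the Korn inequality \eqref{E:KoH1_S2} (Proposition \ref{P:S2_GlEn}), a rerun of the weak--strong Gronwall argument with the uniform Korn inequality \eqref{E:KoH1_TSS} replacing \eqref{E:Korn_Thin}, and a global version of the averaging argument for \eqref{E:ACG_sol}. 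Two points should nevertheless be corrected or made explicit.

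First, you justify the application of Lemma \ref{L:KoH1_TSS} ``using $(\mathbf{u}^\varepsilon(t),\mathbf{r}_{\mathbf{a}})_{L^2(\Omega_\varepsilon)}=0$'', but the dissipation to be upgraded is $2\nu\|\mathbf{D}(\mathbf{w}^\varepsilon)\|_{L^2(\Omega_\varepsilon)}^2$ with $\mathbf{w}^\varepsilon=\mathbf{u}^\varepsilon-\mathbf{v}_{\mathrm{E}}$, so you also need $(\mathbf{v}_{\mathrm{E}}(t),\mathbf{r}_{\mathbf{a}})_{L^2(\Omega_\varepsilon)}=0$, which you never state. It is easy but must be said: since $\mathbf{r}_{\mathbf{a}}(ry)=r\,\mathbf{r}_{\mathbf{a}}(y)$, polar coordinates give $(\mathbf{v}_{\mathrm{E}}(t),\mathbf{r}_{\mathbf{a}})_{L^2(\Omega_\varepsilon)}=\tfrac{1}{5}\{(1+\varepsilon)^5-1\}(\mathbf{v}(t),\mathbf{r}_{\mathbf{a}})_{L^2(S^2)}=0$, exactly the second assertion of Proposition \ref{P:S2_Orth}. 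Second, your diagnosis of the source of the $\exp(C_2\nu t)$ growth is off: it is not caused by ``geometric defects of the constant extension $\bar{\mathbf{v}}$''---the weak--strong argument of Theorem \ref{T:Df_vE} uses the weighted extension $\mathbf{v}_{\mathrm{E}}$, chosen precisely so that $\mathbf{D}(\mathbf{v}_{\mathrm{E}})=\overline{\mathbf{D}_{S^2}(\mathbf{v})}$ and no curvature defects arise. The $\nu$ in the exponent of $F_{\mathbf{v}}$ comes solely from the zeroth-order term of the non-uniform Korn inequality \eqref{E:Korn_Thin}, $\|\nabla\mathbf{w}^\varepsilon\|_{L^2(\Omega_\varepsilon)}^2\leq4\|\mathbf{D}(\mathbf{w}^\varepsilon)\|_{L^2(\Omega_\varepsilon)}^2+c\|\mathbf{w}^\varepsilon\|_{L^2(\Omega_\varepsilon)}^2$, whose $L^2$ term is shifted into the Gronwall coefficient; your second clause (degeneracy of $\mathbf{D}$ on the rotation fields) is the correct essence, and your remedy---full $H^1$ coercivity from \eqref{E:KoH1_TSS} for $\varepsilon\leq\varepsilon_0$, which also yields the extra dissipation $\tfrac{\nu}{\varepsilon}\int_0^t\|\mathbf{u}^\varepsilon-\bar{\mathbf{v}}\|_{L^2(\Omega_\varepsilon)}^2\,ds$ after swapping $\mathbf{v}_{\mathrm{E}}$ for $\bar{\mathbf{v}}$ via \eqref{E:DfEx_L2}---is exactly the paper's. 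With these two repairs the rest goes through as you describe: $\int_0^\infty\nu^{-3}\|\mathbf{v}\|_{L^2(S^2)}^2\|\mathbf{v}\|_{H^1(S^2)}^2\,ds$ is bounded by powers of $E_0$ and $\nu^{-1}$, $G_{\mathbf{v}}(t)\leq cG_0$ uniformly, and the final convergence step, which the paper runs through the averaging estimates \eqref{E:Ave_L2} and \eqref{E:Ave_TGr} rather than a radial Poincar\'e inequality (both packagings work), uses the displayed extra dissipation to control the $L^2$ part of the $H^1$ norm, as you note.
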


We note that the local difference estimate \eqref{E:Diff_Est} was derived in \cite[Theorems 7.29 and 7.30]{Miu20_03} but under additional assumptions including
\begin{align*}
  \mathbf{u}_0^\varepsilon \in \mathcal{V}_\varepsilon, \quad \mathbf{f}^\varepsilon\in L^\infty(0,\infty;L^2(\Omega_\varepsilon)^3), \quad (\mathbf{u}_0^\varepsilon,\mathbf{r}_{\mathbf{a}})_{L^2(\Omega_\varepsilon)} = 0 , \quad (\mathbf{f}^\varepsilon(t),\mathbf{r}_{\mathbf{a}})_{L^2(\Omega_\varepsilon)} = 0
\end{align*}
for all $\mathbf{a}\in\mathbb{R}^3$ and a.a. $t\in(0,\infty)$.
Moreover, it seems that the global difference esitmate \eqref{E:DEst_Gl} has not been obtained in the literature, even in the case of a thin product domain $\omega\times(0,\varepsilon)$ around a 2D flat domain $\omega$.

\subsection{Idea of proof} \label{SS:Int_Idea}
Let us explain the ideas of the proofs of the main results (see Sections \ref{S:Diff} and \ref{S:Global} for details).
The convergence of the average $\mathcal{M}_\varepsilon^0\mathbf{u}^\varepsilon$ in Theorems \ref{T:Ave_Conv} and \ref{T:DEst_Gl} easily follows from the difference estimates \eqref{E:Diff_Est} and \eqref{E:DEst_Gl} and a few properties of the average (see Section \ref{SS:AE_Ave}).
Also, the global difference estimate \eqref{E:DEst_Gl} is shown by a refinement of the proof of the local difference estimate \eqref{E:Diff_Est} with the aid of the (uniform) Korn inequalities
\begin{align*}
  \|\mathbf{u}^\varepsilon\|_{H^1(\Omega_\varepsilon)} \leq c\|\mathbf{D}(\mathbf{u}^\varepsilon)\|_{L^2(\Omega_\varepsilon)}, \quad \|\mathbf{v}\|_{H^1(S^2)} \leq c\|\mathbf{D}_{S^2}(\mathbf{v})\|_{L^2(S^2)}
\end{align*}
with a constant $c>0$ independent of $\varepsilon$, $t$, and $\nu$, which are applicable to the weak solutions $\mathbf{u}^\varepsilon$ to \eqref{E:NS_TSS} and $\mathbf{v}$ to \eqref{E:NS_S2} under the assumptions of Theorem \ref{T:DEst_Gl} (see Sections \ref{SS:Gl_Korn} and \ref{SS:Gl_Orth}).
Thus, the main effort is devoted to the proof of the local difference estimate \eqref{E:Diff_Est}.

Since the weak solutions $\mathbf{u}^\varepsilon$ to \eqref{E:NS_TSS} and $\mathbf{v}$ to \eqref{E:NS_S2} are defined on the different domains $\Omega_\varepsilon$ and $S^2$, we need to make their domains the same in order to compare them by using either of the systems \eqref{E:NS_TSS} or \eqref{E:NS_S2}.
In our previous work \cite{Miu20_03}, we considered the average of a strong solution to \eqref{E:NS_TSS} as an approximate solution to \eqref{E:NS_S2} with some error terms, and estimated the error terms by the $L_t^2H_x^2$- and $L_t^\infty H_x^1$-norms of the strong solution.
Here, we only assume that $\mathbf{u}^\varepsilon$ is a Leray--Hopf weak solution to \eqref{E:NS_TSS}, so the approach of \cite{Miu20_03} is not applicable.
Thus, instead, we extend $\mathbf{v}$ to an approximate solution to \eqref{E:NS_TSS}.

Contrary to the 2D problem \eqref{E:NS_S2}, we cannot apply a standard energy method to weak solutions of the 3D problem \eqref{E:NS_TSS} due to the lack of the integrability of the time derivative.
Thus, to derive the difference estimate \eqref{E:Diff_Est}, we use the weak-strong uniqueness argument (see e.g. \cite{Soh01,BoyFab13}) in which we expect an extension of $\mathbf{v}$ to play a role of a strong solution.
However, the constant extension $\bar{\mathbf{v}}(x)=\mathbf{v}(x/|x|)$ of $\mathbf{v}$ is not appropriate for this purpose, since the strain rate tensor of $\bar{\mathbf{v}}$ is of the form
\begin{align*}
  [\mathbf{D}(\bar{\mathbf{v}})](x) = \frac{1}{|x|}\left\{\Bigl[\overline{\mathbf{D}_{S^2}(\mathbf{v})}\Bigr](x)-\frac{1}{2}[\bar{\mathbf{v}}\otimes\bar{\mathbf{n}}](x)-\frac{1}{2}[\bar{\mathbf{n}}\otimes\bar{\mathbf{v}}](x)\right\}, \quad x\in\mathbb{R}^3\setminus\{0\},
\end{align*}
which follows from \eqref{E:TGr_Dec} and \eqref{E:Const}, and the last two additional terms cannot be negligible in a weak form of \eqref{E:NS_TSS}.
To eliminate them, we introduce the weighted extension
\begin{align*}
  \mathbf{v}_{\mathrm{E}}(x) := |x|\,\bar{\mathbf{v}}(x) = |x|\,\mathbf{v}\left(\frac{x}{|x|}\right), \quad x\in\mathbb{R}^3\setminus\{0\}.
\end{align*}
Then, it turns out (see Lemmas \ref{L:Ext_Grad} and \ref{L:Ext_Sol}) that $\mathbf{v}_{\mathrm{E}}$ satisfies
\begin{align*}
  [\mathbf{D}(\mathbf{v}_{\mathrm{E}})](x) = \Bigl[\overline{\mathbf{D}_{S^2}(\mathbf{v})}\Bigr](x), \quad \mathrm{div}\,\mathbf{v}_{\mathrm{E}}(x) = \overline{\mathrm{div}_{S^2}\mathbf{v}}(x) = 0, \quad x\in\mathbb{R}^3\setminus\{0\}
\end{align*}
and $\mathbf{v}_{\mathrm{E}}\cdot\mathbf{n}_\varepsilon=0$ on $\partial\Omega_\varepsilon$.
Based on this observation, we use $\mathbf{v}_{\mathrm{E}}$ as an approximate solution to \eqref{E:NS_TSS} and apply the weak-strong uniqueness argument to $\mathbf{u}^\varepsilon$ and $\mathbf{v}_{\mathrm{E}}$.

In the actual proof, we need to first show that $\mathbf{v}_{\mathrm{E}}$ is indeed an approximate solution to \eqref{E:NS_TSS}, i.e. it satisfies a weak form of \eqref{E:NS_TSS} with small error terms.
To this end, we take a test function $\bm{\psi}$ of a weak form of \eqref{E:NS_TSS} and substitute the weighted average
\begin{align*}
  \mathcal{M}_\varepsilon^3\bm{\psi}(y) := \frac{1}{\varepsilon}\int_1^{1+\varepsilon}\bm{\psi}(ry)r^3\,dr, \quad y\in S^2
\end{align*}
for a test function of a weak form of \eqref{E:NS_S2}.
Then, we ``unfold'' the average like
\begin{align*}
  \varepsilon\int_{S^2}\mathbf{v}(y)\cdot\mathcal{M}_\varepsilon^3\bm{\psi}(y)\,d\mathcal{H}^2(y) &= \int_{S^2}\int_1^{1+\varepsilon}\mathbf{v}(y)\cdot\bm{\psi}(ry)r^3\,dr\,d\mathcal{H}^2(y) \\
  &= \int_{\Omega_\varepsilon}\mathbf{v}_{\mathrm{E}}(x)\cdot\bm{\psi}(x)\,dx
\end{align*}
by using the polar coordinates $dx=r^2dr\,d\mathcal{H}^2(y)$, where $\mathcal{H}^2$ is the 2D Hausdorff measure (a similar argument was used in the study \cite{Miu25_GL} of the Ginzburg--Landau heat flow).
By this procedure, we find that $\mathbf{v}_{\mathrm{E}}$ satisfies a weak form of \eqref{E:NS_TSS} with error terms coming from the exchange of the derivatives with the average and extension.
Moreover, we can estimate the error terms to show that they are actually small by calculating the derivatives of the average and extension directly (see Section \ref{S:AvEx}).

Another issue is in the regularity of $\mathbf{u}^\varepsilon$ and $\mathbf{v}_{\mathrm{E}}$ and the integrability of trilinear terms.
The weak solution $\mathbf{v}$ to the 2D problem \eqref{E:NS_S2} satisfies (see Proposition \ref{P:S2_Cont})
\begin{align*}
  \mathbf{v} \in L_{\mathrm{loc}}^2([0,\infty);\mathcal{V}_0)\cap C([0,\infty);\mathcal{H}_0), \quad \partial_t\mathbf{v} \in L_{\mathrm{loc}}^2([0,\infty);\mathcal{V}_0^\ast),
\end{align*}
and its extension $\mathbf{v}_{\mathrm{E}}$ has the same regularity on $\Omega_\varepsilon$.
Thus, we can take $\mathbf{v}_{\mathrm{E}}$ itself as a test function of a weak form satisfied by $\mathbf{v}_{\mathrm{E}}$ to get an energy equality for $\mathbf{v}_{\mathrm{E}}$ (with error terms).
On the other hand, we have (see Proposition \ref{P:TSS_WeCo})
\begin{align*}
  \mathbf{u}^\varepsilon \in L_{\mathrm{loc}}^2([0,\infty);\mathcal{V}_\varepsilon)\cap C_{\mathrm{weak}}([0,\infty);\mathcal{H}_\varepsilon), \quad \partial_t\mathbf{u}^\varepsilon \in L_{\mathrm{loc}}^{4/3}([0,\infty);\mathcal{V}_\varepsilon^\ast)
\end{align*}
for the weak solution $\mathbf{u}^\varepsilon$ to \eqref{E:NS_TSS}, where $C_{\mathrm{weak}}$ means the weak continuity in time.
Then, at first glance, it seems that the time integrals of the trilinear terms
\begin{align*}
  \int_0^T\bigl((\mathbf{v}_{\mathrm{E}}\cdot\nabla)\mathbf{v}_{\mathrm{E}},\mathbf{u}^\varepsilon\bigr)_{L^2(\Omega_\varepsilon)}\,dt, \quad \int_0^T\bigl((\mathbf{u}^\varepsilon\cdot\nabla)\mathbf{u}^\varepsilon,\mathbf{v}_{\mathrm{E}}\bigr)_{L^2(\Omega_\varepsilon)}\,dt, \quad T>0
\end{align*}
are not well-defined just by the above regularity of $\mathbf{u}^\varepsilon$ and $\mathbf{v}_{\mathrm{E}}$.
It turns out that, however, these integrals actually make sense due to the fact that $\mathbf{v}_{\mathrm{E}}$ is essentially 2D.
Indeed, we have $|\mathbf{v}_{\mathrm{E}}|\leq2|\bar{\mathbf{v}}|$ in $\Omega_\varepsilon$ by $|x|\leq2$, and the following good product estimate
\begin{align*}
  \|\bar{\eta}\varphi\|_{L^2(\Omega_\varepsilon)} \leq c\|\eta\|_{L^2(S^2)}^{1/2}\|\eta\|_{H^1(S^2)}^{1/2}\|\varphi\|_{L^2(\Omega_\varepsilon)}^{1/2}\|\varphi\|_{H^1(\Omega_\varepsilon)}^{1/2}
\end{align*}
holds for $\varphi\in H^1(\Omega_\varepsilon)$ and the constant extension $\bar{\eta}$ of $\eta\in H^1(S^2)$ (see Lemma \ref{L:Quad_Thin}), which is not valid in general if $\bar{\eta}$ is replaced by a function on $\Omega_\varepsilon$.
This estimate ensures that the above regularity of $\mathbf{u}^\varepsilon$ and $\mathbf{v}$ is sufficient for the integrability of the trilinear terms.
Thus, we can proceed the weak-strong uniqueness argument with $\mathbf{u}^\varepsilon$ and $\mathbf{v}_{\mathrm{E}}$.

\begin{remark} \label{R:Int_Idea}
  The above idea also applies to the case of a thin product domain $\omega\times(0,\varepsilon)$ around a 2D domain $\omega$.
  In that case, it is sufficient to take the constant extension of a weak solution on $\omega$ in the vertical direction to proceed the weak-strong uniqueness argument.
\end{remark}

\subsection{Literature overview} \label{SS:Int_Lit}
Fluid equations in thin domains arise in problems of natural sciences like lubrication and geophysical fluid dynamics (see e.g. \cite{Ped87,OckOck95}).
The mathematical study of the (evolutionary) Navier--Stokes equations in 3D thin domains was initiated by Raugel and Sell \cite{RauSel93_03,RauSel93_01,RauSel94_02}, who considered a thin product domain $\omega\times(0,\varepsilon)$ around a 2D domain $\omega$.
Since then, many authors have studied the Navier--Stokes equations in thin product domains (see \cite{TemZia96,MoTeZi97,Ift99,Mon99,IftRau01,KukZia06,Hu07,KukZia07} and the references cited therein).
There are also several works dealing with flat thin domains with nonflat top and bottom boundaries \cite{IfRaSe07,Hoa10,HoaSel10,Hoa13}, thin spherical shells \cite{TemZia97,BrDhLe_21}, and curved thin domains around general closed surfaces \cite{Miu20_03,Miu21_02,Miu22_01}.

In the above works, one of the main topics is the global-in-time existence of a strong solution for large data depending on the thickness of thin domains, since it is expected that the situation is similar to the 2D case for 3D thin domains with very small thickness.
Another main topic is the thin-film limit, i.e. the limit as the thickness of thin domains tends to zero, and some of the above works obtained the convergence of solutions on 3D thin domains to solutions on 2D limit sets in the thin film limit.
However, they assumed solutions on thin domains to be strong ones and used the regularity of strong solutions to prove the convergence results.
This is probably because the thin-film limit problem is usually studied after getting the global existence of strong solutions.

The convergence results of weak solutions on thin domains have been obtained in the compressible case \cite{MalNov14} and the imhomogeneous incompressible case \cite{SunWan20}.
In these works, the authors considered thin product domains and proved the convergence results by showing difference estimates in terms of relative entropy functionals, but under the assumption that solutions on 2D limit domains are sufficiently smooth.
It seems that difference estimates have been first obtained in this paper when both solutions on thin domains and limit sets are assumed to be only weak ones.

The Navier--Stokes equations on the 2D unit sphere have been studied well in view of applications to geophysical fluid dynamics.
We refer to \cite{IliFil88,TemWan93,SkiAde98,CaRaTi99,Ski02,Ili04,ChaYon13,SaTaYa13,SaTaYa15,Wir15,Ski17,Miu22_L2K,MaeMiu22,Miu23_NL2K} and the references cited therein.
Also, fluid equations on more general surfaces appear in various fields such as biology and engineering (see e.g. \cite{SlSaOh07,ArrDeS09,ToMiAr19}), and there is a large literature on the Navier--Stokes equations on surfaces and manifolds (see e.g. \cite{Ili90,Tay92,Prie94,Nag99,MitTay01,DinMit04,KheMis12,ChaCzu13,ChaCzu16,Lic16,ChCzDi17,Pie17,KorWen18,SamTuo20,PrSiWi21}).
Recently, the Navier--Stokes equations on moving surfaces have been also proposed and studied in \cite{KoLiGi17,JaOlRe18,Miu18,OlReZh22}.
It is the topic of future works to extend the results of this paper to curved thin domains around general surfaces, but one of main difficulties is to find an appropriate extension of a solution to the surface problem which solves the thin-domain problem approximately.

\subsection{Organization of the paper} \label{SS:Int_Org}
The rest of this paper is organized as follows.
We fix notations and give basic results in Section \ref{S:Prelim}.
Section \ref{S:Weak} provides the definition and some properties of weak solutions to \eqref{E:NS_TSS} and \eqref{E:NS_S2}.
In Section \ref{S:AvEx}, we study the average of a function on $\Omega_\varepsilon $ and the extension of a function on $S^2$ in the radial direction.
The proofs of Theorems \ref{T:Diff_Est} and \ref{T:Ave_Conv} are given in Section \ref{S:Diff}.
Also, we prove Theorem \ref{T:DEst_Gl} in Section \ref{S:Global}.

%%% Section 2 %%%
\section{Preliminaries} \label{S:Prelim}
This section gives basic notations and results used in the following sections.

\subsection{Basic notations} \label{SS:Pre_Not}
Throughout this paper, the symbol $c$ stands for a general positive constant independent of $\varepsilon$, $t$, and the viscosity coefficient $\nu$.

Vectors $\mathbf{a}\in\mathbb{R}^3$ and matrices $\mathbf{A}\in\mathbb{R}^{3\times 3}$ are written as
\begin{align*}
  \mathbf{a} =
  \begin{pmatrix}
    a_1 \\ a_2 \\ a_3
  \end{pmatrix}
  = (a_1,a_2,a_3)^{\mathrm{T}}, \quad \mathbf{A} = (A_{ij})_{i,j} =
  \begin{pmatrix}
    A_{11} & A_{12} & A_{13} \\
    A_{21} & A_{22} & A_{23} \\
    A_{31} & A_{32} & A_{33}
  \end{pmatrix}.
\end{align*}
Let $\mathbf{0}\in\mathbb{R}^3$ be the zero vector and $\mathbf{O}_3,\mathbf{I}_3\in\mathbb{R}^{3\times3}$ be the zero and identity matrices.
Also, let $\mathbf{A}^{\mathrm{T}}$ be the transpose of $\mathbf{A}\in\mathbb{R}^{3\times3}$.
The symmetric part of $\mathbf{A}$ and the tensor product of $\mathbf{a},\mathbf{b}\in\mathbb{R}^3$ are denoted by
\begin{align*}
  \mathbf{A}_{\mathrm{S}} := \frac{\mathbf{A}+\mathbf{A}^{\mathrm{T}}}{2}, \quad \mathbf{a}\otimes\mathbf{b}:=(a_ib_j)_{i,j}, \quad \mathbf{a} = (a_1,a_2,a_3)^{\mathrm{T}}, \quad \mathbf{b} = (b_1,b_2,b_3)^{\mathrm{T}}.
\end{align*}
Moreover, we define the inner product and the Frobenius norm of matrices by
\begin{align*}
  \mathbf{A}:\mathbf{B} := \mathrm{tr}[\mathbf{A}^{\mathrm{T}}\mathbf{B}], \quad |\mathbf{A}| := \sqrt{\mathbf{A}:\mathbf{A}}, \quad \mathbf{A},\mathbf{B}\in\mathbb{R}^{3\times 3}.
\end{align*}
For $\mathbf{A},\mathbf{B},\mathbf{C}\in\mathbb{R}^{3\times3}$, we easily observe by $\mathbf{A}:\mathbf{B}=\sum_{i,j=1}^3A_{ij}B_{ij}$ that
\begin{align} \label{E:Mat_Inn}
  \mathbf{A}:\mathbf{B} = \mathbf{A}^{\mathrm{T}}:\mathbf{B}^{\mathrm{T}}, \quad [\mathbf{A}\mathbf{B}]:\mathbf{C} = \mathbf{B}:[\mathbf{A}^{\mathrm{T}}\mathbf{C}] = \mathbf{A}:[\mathbf{C}\mathbf{B}^{\mathrm{T}}].
\end{align}
When $\mathbf{u}=(u_1,u_2,u_3)^{\mathrm{T}}$ and $\bm{\psi}$ are mappings from a subset of $\mathbb{R}^3$ into $\mathbb{R}^3$, we write
\begin{align*}
  \nabla\mathbf{u} := (\partial_iu_j)_{i,j}, \quad \mathbf{D}(\mathbf{u}) := (\nabla\mathbf{u})_{\mathrm{S}}, \quad (\bm{\psi}\cdot\nabla)\mathbf{u} := (\bm{\psi}\cdot\nabla u_1,\bm{\psi}\cdot\nabla u_2,\bm{\psi}\cdot\nabla u_3)^{\mathrm{T}}.
\end{align*}
Note that the indices of $\partial_iu_j$ are reversed in some literature.
Our choice makes
\begin{align*}
  \nabla[\varphi\mathbf{u}] = (\nabla\varphi)\otimes\mathbf{u}+\varphi\nabla\mathbf{u}, \quad \nabla[\mathbf{u}\circ\Phi] = [\nabla\Phi][(\nabla\mathbf{u})\circ\Phi]
\end{align*}
for $\varphi\colon\mathbb{R}^3\to\mathbb{R}$ and $\Phi\colon\mathbb{R}^3\to\mathbb{R}^3$.
Also, we set
\begin{align*}
  \mathrm{div}\,\mathbf{A} := ([\mathrm{div}\,\mathbf{A}]_1,[\mathrm{div}\,\mathbf{A}]_2,[\mathrm{div}\,\mathbf{A}]_3)^{\mathrm{T}}, \quad [\mathrm{div}\,\mathbf{A}]_j := \textstyle\sum_{i=1}^3\partial_iA_{ij}
\end{align*}
for an $\mathbb{R}^{3\times3}$-valued function $\mathbf{A}=(A_{ij})_{i,j}$.

\subsection{Bochner spaces} \label{SS:Pre_Boch}
Let us recall some results on Bochner spaces.

For a Banach space $\mathcal{X}$, let $\mathcal{X}^\ast$ be the dual space of $\mathcal{X}$ and $\langle\cdot,\cdot\rangle_{\mathcal{X}}$ be the duality product between $\mathcal{X}^\ast$ and $\mathcal{X}$.
Also, for an interval $I$ in $\mathbb{R}$, let $C_{\mathrm{weak}}(I;\mathcal{X})$ be the space of weakly continuous functions on $I$ with values in $\mathcal{X}$.
Clearly, $C(I;\mathcal{X}) \subset C_{\mathrm{weak}}(I;\mathcal{X})$.

\begin{lemma} \label{L:BS_WeCo}
  Let $\mathcal{X}$ and $\mathcal{Y}$ be Banach spaces such that $\mathcal{X}$ is reflexive and continuously embedded into $\mathcal{Y}$.
  Then,
  \begin{align*}
    L^\infty(0,T;\mathcal{X})\cap C_{\mathrm{weak}}([0,T];\mathcal{Y})\subset C_{\mathrm{weak}}([0,T];\mathcal{X}).
  \end{align*}
\end{lemma}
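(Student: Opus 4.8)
The plan is to transfer the weak continuity from $\mathcal{Y}$ to $\mathcal{X}$ using the reflexivity of $\mathcal{X}$, via two ingredients: a pointwise-in-time $\mathcal{X}$-bound extracted from the $L^\infty$ membership, and the density in $\mathcal{X}^\ast$ of the image of $\mathcal{Y}^\ast$ under the adjoint embedding. Let $u\in L^\infty(0,T;\mathcal{X})\cap C_{\mathrm{weak}}([0,T];\mathcal{Y})$, write $\iota\colon\mathcal{X}\to\mathcal{Y}$ for the continuous injection, and $\iota^\ast\colon\mathcal{Y}^\ast\to\mathcal{X}^\ast$ for its adjoint, so that $\langle\iota^\ast g,x\rangle_{\mathcal{X}}=\langle g,\iota x\rangle_{\mathcal{Y}}$ for $g\in\mathcal{Y}^\ast$ and $x\in\mathcal{X}$. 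A first observation, which I would record at the outset, is that $\iota^\ast(\mathcal{Y}^\ast)$ is dense in $\mathcal{X}^\ast$: if $\phi\in\mathcal{X}^{\ast\ast}$ annihilates $\iota^\ast(\mathcal{Y}^\ast)$, then identifying $\phi$ with some $x\in\mathcal{X}$ by reflexivity gives $\langle g,\iota x\rangle_{\mathcal{Y}}=0$ for all $g\in\mathcal{Y}^\ast$, whence $\iota x=0$ and $x=0$ by injectivity of $\iota$; Hahn--Banach then yields the density.

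Next I would upgrade the almost-everywhere bound to a pointwise one. Set $M:=\|u\|_{L^\infty(0,T;\mathcal{X})}$, so $\|u(t)\|_{\mathcal{X}}\leq M$ for a.a. $t$. Fixing any $t_0\in[0,T]$, choose good times $t_n\to t_0$ with $\|u(t_n)\|_{\mathcal{X}}\leq M$; by reflexivity a subsequence converges weakly in $\mathcal{X}$ to some $w$ with $\|w\|_{\mathcal{X}}\leq M$ (weak lower semicontinuity of the norm). Testing this weak convergence against $\iota^\ast g$ and rewriting $\langle\iota^\ast g,u(t_n)\rangle_{\mathcal{X}}=\langle g,u(t_n)\rangle_{\mathcal{Y}}$, the weak continuity in $\mathcal{Y}$ forces $\langle g,w\rangle_{\mathcal{Y}}=\langle g,u(t_0)\rangle_{\mathcal{Y}}$ for every $g\in\mathcal{Y}^\ast$, so $u(t_0)=w\in\mathcal{X}$ with $\|u(t_0)\|_{\mathcal{X}}\leq M$. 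Thus $u(t)\in\mathcal{X}$ for every $t\in[0,T]$, with a uniform $\mathcal{X}$-bound.

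With the uniform bound in hand, I would consider the set $\mathcal{F}$ of those $f\in\mathcal{X}^\ast$ for which $t\mapsto\langle f,u(t)\rangle_{\mathcal{X}}$ is continuous on $[0,T]$. Using $\|u(t)\|_{\mathcal{X}}\leq M$ and splitting the increment $\langle f,u(t)\rangle-\langle f,u(s)\rangle$ through an approximating $f_n\to f$ in $\mathcal{X}^\ast$, a routine three-term estimate bounded by $2M\|f-f_n\|_{\mathcal{X}^\ast}$ plus the oscillation of the $f_n$-component shows that $\mathcal{F}$ is a closed linear subspace of $\mathcal{X}^\ast$. Moreover $\mathcal{F}$ contains $\iota^\ast(\mathcal{Y}^\ast)$, since $\langle\iota^\ast g,u(t)\rangle_{\mathcal{X}}=\langle g,u(t)\rangle_{\mathcal{Y}}$ is continuous by hypothesis. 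By the density established in the first step, $\mathcal{F}=\mathcal{X}^\ast$, which is precisely $u\in C_{\mathrm{weak}}([0,T];\mathcal{X})$.

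I expect the only genuinely delicate point to be the density of $\iota^\ast(\mathcal{Y}^\ast)$ in $\mathcal{X}^\ast$: this is where both the reflexivity of $\mathcal{X}$ and the injectivity of the embedding are indispensable, and it is what lets me pass from continuity along a dense family of functionals to continuity along all of $\mathcal{X}^\ast$. The remaining steps—the weak-compactness identification of $u(t_0)$ and the closedness of $\mathcal{F}$—are standard and rely only on the uniform bound $M$ and weak lower semicontinuity of the norm.
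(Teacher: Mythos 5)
Your proof is correct: the density of $\iota^\ast(\mathcal{Y}^\ast)$ in $\mathcal{X}^\ast$ via reflexivity and Hahn--Banach, the everywhere-defined bound $\|u(t)\|_{\mathcal{X}}\leq M$ obtained by weak sequential compactness along good times, and the closedness of the subspace of functionals $f$ with $t\mapsto\langle f,u(t)\rangle_{\mathcal{X}}$ continuous are all sound and fit together without gaps. The paper itself gives no proof but only cites \cite[Chapter 3, Lemma 1.4]{Tem01} and \cite[Lemma II.5.9]{BoyFab13}, and your argument is essentially the standard one found in those references, so there is nothing to correct.
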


\begin{proof}
  We refer to \cite[Chapter 3, Lemma 1.4]{Tem01} and \cite[Lemma II.5.9]{BoyFab13} for the proof.
\end{proof}

Let $u,v\in L^1(0,T;\mathcal{X})$.
We write $v=\partial_tu$ if
\begin{align*}
  \int_0^T\langle\varphi(t),v(t)\rangle_{\mathcal{X}}\,dt = -\int_0^T\langle\partial_t\varphi(t),u(t)\rangle_{\mathcal{X}}\,dt
\end{align*}
for all $\varphi\in C_c^1(0,T;\mathcal{X}^\ast)$.
In this case, $u\in C([0,T];\mathcal{X})$.

\begin{lemma} \label{L:BS_Ipl}
  Let $\mathcal{H}$ and $\mathcal{V}$ be Hilbert spaces such that $\mathcal{V}$ is continuously and densely embedded into $\mathcal{H}$.
  We consider $\mathcal{V}\hookrightarrow\mathcal{H}\hookrightarrow\mathcal{V}^\ast$ by
  \begin{align*}
    \langle u,v\rangle_{\mathcal{V}} := (u,v)_{\mathcal{H}}, \quad u\in\mathcal{H}, \, v\in\mathcal{V},
  \end{align*}
  where $(\cdot,\cdot)_{\mathcal{H}}$ is the inner product of $\mathcal{H}$.
  For $T>0$, we define
  \begin{align} \label{E:Def_ETV}
    \mathbb{E}_T(\mathcal{V}) := \{u\in L^2(0,T;\mathcal{V}) \mid \partial_tu\in L^2(0,T;\mathcal{V}^\ast)\}.
  \end{align}
  Then, $\mathbb{E}_T(\mathcal{V})\subset C([0,T];\mathcal{H})$.
  Moreover,
  \begin{align} \label{E:Dt_IbP}
    \frac{d}{dt}\bigl(u_1(t),u_2(t)\bigr)_{\mathcal{H}} = \langle\partial_tu_1(t),u_2(t)\rangle_{\mathcal{V}}+\langle\partial_tu_2(t),u_1(t)\rangle_{\mathcal{V}}
  \end{align}
  for all $u_1,u_2\in\mathbb{E}_T(\mathcal{V})$ and a.a. $t\in(0,T)$.
\end{lemma}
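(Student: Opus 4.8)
The plan is to reduce both assertions to the scalar energy identity
\begin{equation} \label{E:EnerId}
  \tfrac{1}{2}\|u(t)\|_{\mathcal{H}}^2-\tfrac{1}{2}\|u(s)\|_{\mathcal{H}}^2 = \int_s^t\langle\partial_\tau u(\tau),u(\tau)\rangle_{\mathcal{V}}\,d\tau, \quad u\in\mathbb{E}_T(\mathcal{V}),
\end{equation}
valid for all $s,t\in[0,T]$. Granting this, the bilinear formula \eqref{E:Dt_IbP} follows by polarization: since $\mathbb{E}_T(\mathcal{V})$ is a linear space, applying \eqref{E:EnerId} to $u_1+u_2$, $u_1$, and $u_2$, using $(u_1,u_2)_{\mathcal{H}}=\tfrac12(\|u_1+u_2\|_{\mathcal{H}}^2-\|u_1\|_{\mathcal{H}}^2-\|u_2\|_{\mathcal{H}}^2)$, and differentiating in $t$ gives exactly $\langle\partial_tu_1,u_2\rangle_{\mathcal{V}}+\langle\partial_tu_2,u_1\rangle_{\mathcal{V}}$. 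Here differentiation is legitimate a.e.\ because $\tau\mapsto\langle\partial_\tau u,u\rangle_{\mathcal{V}}$ lies in $L^1(0,T)$ by Cauchy--Schwarz ($\|\partial_\tau u\|_{\mathcal{V}^\ast}\|u\|_{\mathcal{V}}\in L^1$), so the right-hand side of \eqref{E:EnerId} is absolutely continuous with a.e.\ derivative $\langle\partial_tu,u\rangle_{\mathcal{V}}$.

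To establish \eqref{E:EnerId} I would argue by mollification in time. Fix a standard mollifier $\rho_n$ and set $u_n(\tau):=\int_{\mathbb{R}}\rho_n(\tau-\sigma)u(\sigma)\,d\sigma$ on a subinterval $[s,t]\Subset(0,T)$. Since $\rho_n$ is smooth and $u(\sigma)\in\mathcal{V}$, the map $u_n$ is $C^\infty$ with values in $\mathcal{V}$, so the classical chain rule gives $\tfrac{d}{d\tau}\tfrac12\|u_n(\tau)\|_{\mathcal{H}}^2=\langle u_n'(\tau),u_n(\tau)\rangle_{\mathcal{V}}$, and \eqref{E:EnerId} holds for each $u_n$ after integration. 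Integration by parts in the convolution yields $u_n'=\rho_n*\partial_\tau u$, whence $u_n\to u$ in $L^2_{\mathrm{loc}}(\mathcal{V})$ and $u_n'\to\partial_\tau u$ in $L^2_{\mathrm{loc}}(\mathcal{V}^\ast)$; consequently $\langle u_n',u_n\rangle_{\mathcal{V}}\to\langle\partial_\tau u,u\rangle_{\mathcal{V}}$ in $L^1_{\mathrm{loc}}$, while along a subsequence $\|u_n(\tau)\|_{\mathcal{H}}\to\|u(\tau)\|_{\mathcal{H}}$ for a.e.\ $\tau$. Passing to the limit gives \eqref{E:EnerId} for a.e.\ interior $s,t$, hence for all $s,t\in[0,T]$ after extending $u$ across the endpoints. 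The main obstacle is precisely this endpoint handling: the relation $u_n'=\rho_n*\partial_\tau u$ relies on the weak-derivative identity, which is available only in the interior unless $u$ is first extended to a larger interval in a way that preserves both $u\in L^2(\mathcal{V})$ and $\partial_tu\in L^2(\mathcal{V}^\ast)$. A reflection of $u$ about $t=0$ and $t=T$ achieves this, the continuity of $u$ into $\mathcal{V}^\ast$ (see below) guaranteeing that no singular contribution to the weak derivative is created at the gluing points.

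With \eqref{E:EnerId} in hand, strong continuity into $\mathcal{H}$ follows in two steps. First, the right-hand side of \eqref{E:EnerId} is absolutely continuous in $t$, so $t\mapsto\|u(t)\|_{\mathcal{H}}$ agrees a.e.\ with a continuous function; in particular $u\in L^\infty(0,T;\mathcal{H})$ and the norm is continuous. Second, since $u,\partial_tu\in L^2(0,T;\mathcal{V}^\ast)\subset L^1(0,T;\mathcal{V}^\ast)$, the elementary fact recalled just before the lemma gives $u\in C([0,T];\mathcal{V}^\ast)\subset C_{\mathrm{weak}}([0,T];\mathcal{V}^\ast)$; as $\mathcal{H}$ is a Hilbert space (hence reflexive) continuously embedded in $\mathcal{V}^\ast$, Lemma \ref{L:BS_WeCo} yields $u\in C_{\mathrm{weak}}([0,T];\mathcal{H})$. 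Combining weak continuity with continuity of $\|u(\cdot)\|_{\mathcal{H}}$ and the fact that weak convergence together with norm convergence implies strong convergence in a Hilbert space, we conclude $u\in C([0,T];\mathcal{H})$, which together with the polarization of the previous paragraph completes the proof.
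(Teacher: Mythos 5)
The paper does not prove this lemma at all: it simply cites \cite[Chapter 3, Lemma 1.2]{Tem01} and \cite[Theorems II.5.12 and II.5.13]{BoyFab13}, so your proposal is to be measured against the classical argument in those texts, whose skeleton (reflection extension, mollification in time, energy identity, polarization to get \eqref{E:Dt_IbP}, weak continuity via Lemma \ref{L:BS_WeCo}) you correctly reproduce. Most individual steps are sound: the identity $u_n'=\rho_n\ast\partial_t u$ after extension, the $L^1$ convergence of $\langle u_n',u_n\rangle_{\mathcal{V}}$ by Cauchy--Schwarz, the polarization, and the a.e.\ differentiation are all fine, and your remark about the reflection creating no singular contribution at the gluing points is the right thing to check.

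There is, however, a genuine gap where you pass from your energy identity holding for a.e.\ $s,t$ to ``for all $s,t\in[0,T]$'', and correspondingly in the final inference ``weak continuity plus continuity of $\|u(\cdot)\|_{\mathcal{H}}$ implies strong continuity''. The mollification limit gives $\|u_n(\tau)\|_{\mathcal{H}}\to\|u(\tau)\|_{\mathcal{H}}$ only for a.e.\ $\tau$, so what you actually obtain is that $t\mapsto\|u(t)\|_{\mathcal{H}}^2$ agrees \emph{a.e.}\ with a continuous function $\phi$; for the weakly continuous representative $\tilde{u}$ furnished by Lemma \ref{L:BS_WeCo}, weak lower semicontinuity yields only $\|\tilde{u}(t_0)\|_{\mathcal{H}}^2\leq\phi(t_0)$ at the exceptional points, and nothing in your argument excludes a strict norm drop at some $t_0$ --- asserting that it cannot happen is precisely the continuity you are trying to prove, so the step as written is circular. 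The standard repair, and the route taken in \cite{BoyFab13}, avoids pointwise limits entirely: apply the smooth energy identity to the difference $u_n-u_m$ to get, for a Lebesgue point $s$ at which $u_n(s)\to u(s)$ in $\mathcal{H}$,
\begin{equation*}
  \sup_{t\in[0,T]}\|u_n(t)-u_m(t)\|_{\mathcal{H}}^2 \leq \|u_n(s)-u_m(s)\|_{\mathcal{H}}^2+2\|u_n'-u_m'\|_{L^2(0,T;\mathcal{V}^\ast)}\|u_n-u_m\|_{L^2(0,T;\mathcal{V})},
\end{equation*}
conclude that $(u_n)$ is Cauchy in $C([0,T];\mathcal{H})$, and obtain the strongly continuous representative directly; the energy identity for all $s,t$, and then \eqref{E:Dt_IbP} by your polarization, follow by passing to the limit in this uniform topology. (An alternative patch: for a.e.\ $s$ one has $u(s)\in\mathcal{V}$, so the constant-shifted function $u-u(s)$ lies in $\mathbb{E}_T(\mathcal{V})$ and the a.e.\ identity applied to it gives $\|u(t)-u(s)\|_{\mathcal{H}}^2=2\int_s^t\langle\partial_\tau u,u-u(s)\rangle_{\mathcal{V}}\,d\tau$ for a.e.\ $t$, from which uniform continuity of $u$ into $\mathcal{H}$ on a full-measure set, hence a continuous representative, follows.) With either repair the rest of your argument goes through.
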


\begin{proof}
  We refer to \cite[Chapter 3, Lemma 1.2]{Tem01} and \cite[Theorems II.5.12 and II.5.13]{BoyFab13} for the proof.
\end{proof}

\subsection{Unit sphere} \label{SS:Pre_Sph}
Let $S^2$ be the unit sphere in $\mathbb{R}^3$ and $\mathbf{n}(y):=y$ be the unit outward normal vector field of $S^2$.
Also, let $\mathbf{P}:=\mathbf{I}_3-\mathbf{n}\otimes\mathbf{n}$ be the orthogonal projection onto the tangent plane of $S^2$.
Then, we easily find that
\begin{align*}
  \mathbf{P}^{\mathrm{T}} = \mathbf{P}^2 = \mathbf{P}, \quad |\mathbf{P}|^2 = \mathrm{tr}[\mathbf{P}^{\mathrm{T}}\mathbf{P}] = \mathrm{tr}[\mathbf{P}] = 2 \quad\text{on}\quad S^2.
\end{align*}
In what follows, we frequently use these relations (sometimes without mention).

We define the tangential gradient of a function $\eta$ on $S^2$ by
\begin{align*}
  \nabla_{S^2}\eta = (\underline{D}_1\eta,\underline{D}_2\eta,\underline{D}_3\eta)^{\mathrm{T}} := \mathbf{P}\nabla\tilde{\eta} \quad\text{on}\quad S^2.
\end{align*}
Here, $\tilde{\eta}$ is an extension of $\eta$ to an open neighborhood of $S^2$ in $\mathbb{R}^3$ and the value of $\nabla_{S^2}\eta$ does not depend on the choice of $\tilde{\eta}$.
Next, we define the tangential gradient matrix, the surface divergence, and the surface strain rate tensor of a vector field $\mathbf{v}$ on $S^2$ by
\begin{align*}
  \nabla_{S^2}\mathbf{v} := (\underline{D}_iv_j)_{i,j}, \quad \mathrm{div}_{S^2}\mathbf{v} := \mathrm{tr}[\nabla_{S^2}\mathbf{v}], \quad \mathbf{D}_{S^2}(\mathbf{v}) := \mathbf{P}(\nabla_{S^2}\mathbf{v})_{\mathrm{S}}\mathbf{P} \quad\text{on}\quad S^2,
\end{align*}
respectively, where $\mathbf{v}=(v_1,v_2,v_3)^{\mathrm{T}}$.
The indices of $\underline{D}_iv_j$ are reversed in some literature.
Our notation yields $\nabla_{S^2}\mathbf{v}=\mathbf{P}\nabla\tilde{\mathbf{v}}$ for any extension $\tilde{\mathbf{v}}$ of $\mathbf{v}$.
In particular, we have
\begin{align} \label{E:TGr_Nor}
  \nabla_{S^2}\mathbf{n} = \mathbf{P}, \quad \mathrm{div}_{S^2}\mathbf{n} = \mathrm{tr}[\mathbf{P}] = 2 \quad\text{on}\quad S^2
\end{align}
by taking the extension $\tilde{\mathbf{n}}(x):=x$.
Note that $H:=-\mathrm{div}_{S^2}\mathbf{n}=-2$ is the mean curvature of $S^2$.
We define the Laplace--Beltrami operator on $S^2$ by
\begin{align*}
  \Delta_{S^2}\eta := \mathrm{div}_{S^2}(\nabla_{S^2}\eta) \quad\text{on}\quad S^2, \quad \eta\colon S^2\to\mathbb{R}.
\end{align*}
Let $\mathbf{v}=(v_1,v_2,v_3)^{\mathrm{T}}$ and $\mathbf{w}$ be vector fields on $S^2$.
We set
\begin{align*}
  (\mathbf{w}\cdot\nabla_{S^2})\mathbf{v} := (\mathbf{w}\cdot\nabla_{S^2}v_1,\mathbf{w}\cdot\nabla_{S^2}v_2,\mathbf{w}\cdot\nabla_{S^2}v_3)^{\mathrm{T}} \quad\text{on}\quad S^2.
\end{align*}
When $\mathbf{v}$ and $\mathbf{w}$ are tangential (i.e. $\mathbf{v}\cdot\mathbf{n}=\mathbf{w}\cdot\mathbf{n}=0$) on $S^2$, we define
\begin{align*}
  \nabla_{\mathbf{w}}\mathbf{v} := \mathbf{P}[(\mathbf{w}\cdot\nabla_{S^2})\mathbf{v}] \quad\text{on}\quad S^2
\end{align*}
and call it the covariant derivative of $\mathbf{v}$ along $\mathbf{w}$.
We also set
\begin{align*}
  \mathrm{div}_{S^2}\mathbf{A} := ([\mathrm{div}_{S^2}\mathbf{A}]_1,[\mathrm{div}_{S^2}\mathbf{A}]_2,[\mathrm{div}_{S^2}\mathbf{A}]_3)^{\mathrm{T}}, \quad [\mathrm{div}_{S^2}\mathbf{A}]_j := \textstyle\sum_{i=1}^3\underline{D}_iA_{ij}
\end{align*}
on $S^2$ for an $\mathbb{R}^{3\times3}$-valued function $\mathbf{A}=(A_{ij})_{i,j}$ on $S^2$.
Let us give basic formulas.

\begin{lemma} \label{L:Vec_Tan}
  For $\mathbf{v}\colon S^2\to\mathbb{R}^3$, let $\mathbf{v}_\tau:=\mathbf{P}\mathbf{v}$ be its tangential component.
  Then,
  \begin{align}
    \nabla_{S^2}\mathbf{v}_\tau &= (\nabla_{S^2}\mathbf{v})\mathbf{P}-\mathbf{v}_\tau\otimes\mathbf{n}-(\mathbf{v}\cdot\mathbf{n})\mathbf{P} \quad\text{on}\quad S^2, \label{E:Vec_Tan} \\
    \mathbf{D}_{S^2}(\mathbf{v}_\tau) &= \mathbf{D}_{S^2}(\mathbf{v})-(\mathbf{v}\cdot\mathbf{n})\mathbf{P} \quad\text{on}\quad S^2. \label{E:VT_str}
  \end{align}
\end{lemma}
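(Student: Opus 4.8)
The plan is to reduce everything to pointwise matrix algebra, starting from the decomposition $\mathbf{v}_\tau = \mathbf{P}\mathbf{v} = \mathbf{v} - (\mathbf{v}\cdot\mathbf{n})\mathbf{n}$, which follows from $\mathbf{P} = \mathbf{I}_3 - \mathbf{n}\otimes\mathbf{n}$. The only genuine inputs are the Leibniz rules for $\nabla_{S^2}$ (which transfer from the Euclidean product rules recorded in Section \ref{SS:Pre_Not} by writing $\nabla_{S^2}(\cdot) = \mathbf{P}\nabla(\widetilde{\,\cdot\,})$ and pulling $\mathbf{P}$ through the extension) together with the identity $\nabla_{S^2}\mathbf{n} = \mathbf{P}$ from \eqref{E:TGr_Nor}. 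No analytic machinery is needed; the whole lemma is an algebraic identity valid pointwise on $S^2$.

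For \eqref{E:Vec_Tan}, I would first set $\eta := \mathbf{v}\cdot\mathbf{n}$ and apply the scalar-times-vector product rule to $\mathbf{v}_\tau = \mathbf{v} - \eta\mathbf{n}$, giving $\nabla_{S^2}\mathbf{v}_\tau = \nabla_{S^2}\mathbf{v} - (\nabla_{S^2}\eta)\otimes\mathbf{n} - \eta\nabla_{S^2}\mathbf{n}$, and replace $\nabla_{S^2}\mathbf{n}$ by $\mathbf{P}$. The key step is to compute $\nabla_{S^2}\eta = \nabla_{S^2}(\mathbf{v}\cdot\mathbf{n})$. Writing $\mathbf{v}\cdot\mathbf{n} = \sum_k v_k n_k$ and using the scalar product rule componentwise gives $\nabla_{S^2}\eta = \sum_k(n_k\nabla_{S^2}v_k + v_k\nabla_{S^2}n_k)$; with the convention $\nabla_{S^2}\mathbf{v} = (\underline{D}_i v_j)_{i,j}$ and $\underline{D}_i n_k = P_{ik}$, these two sums are exactly $(\nabla_{S^2}\mathbf{v})\mathbf{n}$ and $\mathbf{v}_\tau$, respectively, so $\nabla_{S^2}\eta = (\nabla_{S^2}\mathbf{v})\mathbf{n} + \mathbf{v}_\tau$. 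Substituting this back and using $[(\nabla_{S^2}\mathbf{v})\mathbf{n}]\otimes\mathbf{n} = (\nabla_{S^2}\mathbf{v})(\mathbf{n}\otimes\mathbf{n}) = (\nabla_{S^2}\mathbf{v})(\mathbf{I}_3 - \mathbf{P})$, the copies of $\nabla_{S^2}\mathbf{v}$ cancel and \eqref{E:Vec_Tan} drops out after collecting terms.

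For \eqref{E:VT_str}, I would insert \eqref{E:Vec_Tan} into $\mathbf{D}_{S^2}(\mathbf{v}_\tau) = \mathbf{P}(\nabla_{S^2}\mathbf{v}_\tau)_{\mathrm{S}}\mathbf{P}$. Since $\mathbf{P}$ is symmetric, $\mathbf{P}(\cdot)_{\mathrm{S}}\mathbf{P} = (\mathbf{P}(\cdot)\mathbf{P})_{\mathrm{S}}$, so it suffices to conjugate $\nabla_{S^2}\mathbf{v}_\tau$ by $\mathbf{P}$ and then symmetrize. Using $\mathbf{P}^2 = \mathbf{P}$, the first term $(\nabla_{S^2}\mathbf{v})\mathbf{P}$ becomes $\mathbf{P}(\nabla_{S^2}\mathbf{v})\mathbf{P}$; the middle term vanishes because $\mathbf{P}(\mathbf{v}_\tau\otimes\mathbf{n})\mathbf{P} = (\mathbf{P}\mathbf{v}_\tau)\otimes(\mathbf{P}\mathbf{n}) = \mathbf{O}_3$ (as $\mathbf{P}\mathbf{n}=\mathbf{0}$); and the last term gives $-(\mathbf{v}\cdot\mathbf{n})\mathbf{P}$. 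Recognizing $(\mathbf{P}(\nabla_{S^2}\mathbf{v})\mathbf{P})_{\mathrm{S}} = \mathbf{D}_{S^2}(\mathbf{v})$ and $\mathbf{P}_{\mathrm{S}} = \mathbf{P}$ then yields \eqref{E:VT_str}.

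The main obstacle---really the only place requiring care---is the index bookkeeping forced by the (nonstandard) convention $\nabla_{S^2}\mathbf{v} = (\underline{D}_i v_j)_{i,j}$: one must check that $\sum_k n_k\nabla_{S^2}v_k$ equals $(\nabla_{S^2}\mathbf{v})\mathbf{n}$ rather than its transpose, and that $(\mathbf{A}\mathbf{n})\otimes\mathbf{n} = \mathbf{A}(\mathbf{n}\otimes\mathbf{n})$, both of which hinge on the placement of indices and on the symmetry of $\mathbf{P}$. Once these bookkeeping identities are settled, both formulas follow by routine cancellation.
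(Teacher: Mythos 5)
Your proposal is correct and follows essentially the same route as the paper: the same decomposition $\mathbf{v}_\tau=\mathbf{v}-(\mathbf{v}\cdot\mathbf{n})\mathbf{n}$, the same key computation $\nabla_{S^2}(\mathbf{v}\cdot\mathbf{n})=(\nabla_{S^2}\mathbf{v})\mathbf{n}+\mathbf{v}_\tau$ via \eqref{E:TGr_Nor}, the same cancellation through $[(\nabla_{S^2}\mathbf{v})\mathbf{n}]\otimes\mathbf{n}=(\nabla_{S^2}\mathbf{v})(\mathbf{n}\otimes\mathbf{n})$, and the same conjugation of \eqref{E:Vec_Tan} by $\mathbf{P}$ (killing $\mathbf{v}_\tau\otimes\mathbf{n}$ via $\mathbf{P}\mathbf{n}=\mathbf{0}$) followed by symmetrization to get \eqref{E:VT_str}. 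Your explicit index checks of the convention $(\nabla_{S^2}\mathbf{v})_{ij}=\underline{D}_iv_j$ and the identity $\mathbf{P}(\cdot)_{\mathrm{S}}\mathbf{P}=(\mathbf{P}(\cdot)\mathbf{P})_{\mathrm{S}}$ are correct and merely make explicit what the paper leaves implicit.
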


\begin{proof}
  Since $\mathbf{v}_\tau=\mathbf{P}\mathbf{v}=\mathbf{v}-(\mathbf{v}\cdot\mathbf{n})\mathbf{n}$ on $S^2$,
  \begin{align*}
    \nabla_{S^2}\mathbf{v}_\tau = \nabla_{S^2}\mathbf{v}-[\nabla_{S^2}(\mathbf{v}\cdot\mathbf{n})]\otimes\mathbf{n}-(\mathbf{v}\cdot\mathbf{n})\nabla_{S^2}\mathbf{n}
  \end{align*}
  on $S^2$.
  Moreover, we see by \eqref{E:TGr_Nor} that
  \begin{align*}
    \nabla_{S^2}(\mathbf{v}\cdot\mathbf{n}) = (\nabla_{S^2}\mathbf{v})\mathbf{n}+(\nabla_{S^2}\mathbf{n})\mathbf{v} = (\nabla_{S^2}\mathbf{v})\mathbf{n}+\mathbf{P}\mathbf{v} = (\nabla_{S^2}\mathbf{v})\mathbf{n}+\mathbf{v}_\tau
  \end{align*}
  on $S^2$.
  Combining the above relations and using \eqref{E:TGr_Nor} and
  \begin{align*}
    \nabla_{S^2}\mathbf{v}-[(\nabla_{S^2}\mathbf{v})\mathbf{n}]\otimes\mathbf{n} = \nabla_{S^2}\mathbf{v}-(\nabla_{S^2}\mathbf{v})(\mathbf{n}\otimes\mathbf{n}) = (\nabla_{S^2}\mathbf{v})\mathbf{P},
  \end{align*}
  we obtain \eqref{E:Vec_Tan}.
  Also, applying $\mathbf{P}$ to \eqref{E:Vec_Tan} from both sides, and using
  \begin{align*}
    \mathbf{P}^2 = \mathbf{P}^{\mathrm{T}} = \mathbf{P}, \quad (\mathbf{v}_\tau\otimes\mathbf{n})\mathbf{P} = \mathbf{v}_\tau\otimes[\mathbf{P^{\mathrm{T}}\mathbf{n}}] = \mathbf{v}_\tau\otimes[\mathbf{P}\mathbf{n}] = \mathbf{v}_\tau\otimes\mathbf{0} = \mathbf{O}_3,
  \end{align*}
  we get $\mathbf{P}(\nabla_{S^2}\mathbf{v}_\tau)\mathbf{P}=\mathbf{P}(\nabla_{S^2}\mathbf{v})\mathbf{P}-(\mathbf{v}\cdot\mathbf{n})\mathbf{P}$, whose symmetric part gives \eqref{E:VT_str}.
\end{proof}

\begin{lemma} \label{L:Gauss}
  For tangential vector fields $\mathbf{v}$ and $\mathbf{w}$ on $S^2$, we have
  \begin{align}
    (\mathbf{w}\cdot\nabla_{S^2})\mathbf{v} &= \nabla_{\mathbf{w}}\mathbf{v}-(\mathbf{v}\cdot\mathbf{w})\mathbf{n} \quad\text{on}\quad S^2, \label{E:Gauss} \\
    \nabla_{S^2}\mathbf{v} &= \mathbf{P}(\nabla_{S^2}\mathbf{v})\mathbf{P}-\mathbf{v}\otimes\mathbf{n} \quad\text{on}\quad S^2. \label{E:TGr_Dec}
  \end{align}
  Note that the first formula is the Gauss formula.
\end{lemma}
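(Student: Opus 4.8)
The plan is to reduce both identities to a single auxiliary formula, namely that $(\nabla_{S^2}\mathbf{v})\mathbf{n}=-\mathbf{v}$ on $S^2$ for every tangential $\mathbf{v}$. To establish it, I would differentiate the tangency relation $\mathbf{v}\cdot\mathbf{n}=0$ using the product rule for the tangential gradient, exactly as in the proof of Lemma~\ref{L:Vec_Tan}, to obtain
\begin{align*}
  \mathbf{0} = \nabla_{S^2}(\mathbf{v}\cdot\mathbf{n}) = (\nabla_{S^2}\mathbf{v})\mathbf{n}+(\nabla_{S^2}\mathbf{n})\mathbf{v} = (\nabla_{S^2}\mathbf{v})\mathbf{n}+\mathbf{P}\mathbf{v},
\end{align*}
where the last equality uses $\nabla_{S^2}\mathbf{n}=\mathbf{P}$ from \eqref{E:TGr_Nor}. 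Since $\mathbf{v}$ is tangential, $\mathbf{P}\mathbf{v}=\mathbf{v}$, and the auxiliary formula follows.

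For the Gauss formula \eqref{E:Gauss}, I would decompose $(\mathbf{w}\cdot\nabla_{S^2})\mathbf{v}$ into its tangential and normal components via $\mathbf{I}_3=\mathbf{P}+\mathbf{n}\otimes\mathbf{n}$. By the definition of the covariant derivative, $\mathbf{P}[(\mathbf{w}\cdot\nabla_{S^2})\mathbf{v}]=\nabla_{\mathbf{w}}\mathbf{v}$, so it remains to identify the normal part $(\mathbf{n}\otimes\mathbf{n})[(\mathbf{w}\cdot\nabla_{S^2})\mathbf{v}]=\{\mathbf{n}\cdot[(\mathbf{w}\cdot\nabla_{S^2})\mathbf{v}]\}\mathbf{n}$. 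Writing out the components of $(\mathbf{w}\cdot\nabla_{S^2})\mathbf{v}$ and exchanging the two summations, I would recognize
\begin{align*}
  \mathbf{n}\cdot[(\mathbf{w}\cdot\nabla_{S^2})\mathbf{v}] = \mathbf{w}\cdot[(\nabla_{S^2}\mathbf{v})\mathbf{n}] = -(\mathbf{v}\cdot\mathbf{w})
\end{align*}
by the auxiliary formula. Summing the tangential and normal parts then yields \eqref{E:Gauss}.

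For \eqref{E:TGr_Dec}, I would first note that $\mathbf{P}(\nabla_{S^2}\mathbf{v})=\nabla_{S^2}\mathbf{v}$, since $\nabla_{S^2}\mathbf{v}=\mathbf{P}\nabla\tilde{\mathbf{v}}$ and $\mathbf{P}^2=\mathbf{P}$, so that $\mathbf{P}(\nabla_{S^2}\mathbf{v})\mathbf{P}=(\nabla_{S^2}\mathbf{v})\mathbf{P}$. Then, using $\mathbf{A}(\mathbf{n}\otimes\mathbf{n})=(\mathbf{A}\mathbf{n})\otimes\mathbf{n}$ together with the auxiliary formula, I would compute
\begin{align*}
  (\nabla_{S^2}\mathbf{v})\mathbf{P} = (\nabla_{S^2}\mathbf{v})(\mathbf{I}_3-\mathbf{n}\otimes\mathbf{n}) = \nabla_{S^2}\mathbf{v}-[(\nabla_{S^2}\mathbf{v})\mathbf{n}]\otimes\mathbf{n} = \nabla_{S^2}\mathbf{v}+\mathbf{v}\otimes\mathbf{n},
\end{align*}
which rearranges at once to \eqref{E:TGr_Dec}.

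There is no genuine analytic difficulty here: the whole argument is algebraic once the auxiliary formula is in hand. The only point that requires care is the index convention $\nabla_{S^2}\mathbf{v}=(\underline{D}_iv_j)_{i,j}$ (reversed relative to some references), which governs whether $\mathbf{n}$ is contracted on the left or the right of $\nabla_{S^2}\mathbf{v}$ and, together with $\nabla_{S^2}\mathbf{n}=\mathbf{P}$, fixes the sign in $(\nabla_{S^2}\mathbf{v})\mathbf{n}=-\mathbf{v}$. Keeping this ordering consistent is precisely what makes the normal component in \eqref{E:Gauss} and the correction term in \eqref{E:TGr_Dec} emerge with the stated signs.
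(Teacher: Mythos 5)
Your proof is correct and follows essentially the same route as the paper: both hinge on the auxiliary identity $(\nabla_{S^2}\mathbf{v})\mathbf{n}=-\mathbf{v}$, obtained by applying $\nabla_{S^2}$ to $\mathbf{v}\cdot\mathbf{n}=0$ and using $\nabla_{S^2}\mathbf{n}=\mathbf{P}$, and then deduce \eqref{E:Gauss} from the normal component of $(\mathbf{w}\cdot\nabla_{S^2})\mathbf{v}$ and \eqref{E:TGr_Dec} from $\nabla_{S^2}\mathbf{v}=(\nabla_{S^2}\mathbf{v})\mathbf{P}+[(\nabla_{S^2}\mathbf{v})\mathbf{n}]\otimes\mathbf{n}$ together with $\mathbf{P}\nabla_{S^2}\mathbf{v}=\nabla_{S^2}\mathbf{v}$. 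Your phrasing of \eqref{E:Gauss} via the explicit decomposition $\mathbf{I}_3=\mathbf{P}+\mathbf{n}\otimes\mathbf{n}$ is just a more spelled-out version of the paper's one-line conclusion, not a different argument.
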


\begin{proof}
  Applying $\nabla_{S^2}$ to $\mathbf{v}\cdot\mathbf{n}=0$ on $S^2$ and using \eqref{E:TGr_Nor}, we have
  \begin{align*}
    (\nabla_{S^2}\mathbf{v})\mathbf{n}+(\nabla_{S^2}\mathbf{n})\mathbf{v} = \mathbf{0}, \quad (\nabla_{S^2}\mathbf{v})\mathbf{n} = -(\nabla_{S^2}\mathbf{n})\mathbf{v} = -\mathbf{P}\mathbf{v} = -\mathbf{v}.
  \end{align*}
  Hence, $[(\mathbf{w}\cdot\nabla_{S^2})\mathbf{v}]\cdot\mathbf{n}=\mathbf{w}\cdot[(\nabla_{S^2}\mathbf{v})\mathbf{n}]=-\mathbf{w}\cdot\mathbf{v}$ and \eqref{E:Gauss} follows.
  Also, by
  \begin{align*}
    \nabla_{S^2}\mathbf{v} = (\nabla_{S^2}\mathbf{v})\mathbf{P}+(\nabla_{S^2}\mathbf{v})(\mathbf{n}\otimes\mathbf{n}) = (\nabla_{S^2}\mathbf{v})\mathbf{P}+[(\nabla_{S^2}\mathbf{v})\mathbf{n}]\otimes\mathbf{n},
  \end{align*}
  and by $\nabla_{S^2}\mathbf{v}=\mathbf{P}\nabla_{S^2}\mathbf{v}$ and $(\nabla_{S^2}\mathbf{v})\mathbf{n}=-\mathbf{v}$, we have \eqref{E:TGr_Dec}.
\end{proof}

\begin{lemma} \label{L:Const}
  For a function $\eta$ on $S^2$ and $x\in\mathbb{R}^3\setminus\{0\}$, let $\bar{\eta}(x):=\eta(x/|x|)$ be the constant extension of $\eta$ in the radial direction.
  Then,
  \begin{align} \label{E:Const}
    \nabla\bar{\eta}(x) = \frac{1}{|x|}\,\overline{\nabla_{S^2}\eta}(x), \quad x\in\mathbb{R}^3\setminus\{0\}.
  \end{align}
\end{lemma}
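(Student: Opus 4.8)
The plan is to write the constant extension as a composition and differentiate by the chain rule. Fix an extension $\tilde{\eta}$ of $\eta$ to an open neighborhood of $S^2$ in $\mathbb{R}^3$ (one may simply take $\tilde{\eta}=\bar{\eta}$ itself), and set $\pi(x):=x/|x|$ for $x\in\mathbb{R}^3\setminus\{0\}$, the radial projection onto $S^2$. Since $\pi(x)\in S^2$, we have $\bar{\eta}(x)=\eta(\pi(x))=\tilde{\eta}(\pi(x))$, that is, $\bar{\eta}=\tilde{\eta}\circ\pi$ on $\mathbb{R}^3\setminus\{0\}$. Applying the chain rule $\nabla[\tilde{\eta}\circ\pi]=[\nabla\pi]\,[(\nabla\tilde{\eta})\circ\pi]$ in the form stated in Section \ref{SS:Pre_Not} then reduces the whole problem to computing the Jacobian matrix $\nabla\pi$.

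The key computation is $\nabla\pi$. Differentiating $\pi_j(x)=x_j/|x|$ directly gives
\[
  \partial_i\pi_j(x) = \frac{\delta_{ij}}{|x|}-\frac{x_ix_j}{|x|^3} = \frac{1}{|x|}\left(\delta_{ij}-\frac{x_ix_j}{|x|^2}\right),
\]
so that $\nabla\pi(x)=\frac{1}{|x|}\bigl(\mathbf{I}_3-\frac{x\otimes x}{|x|^2}\bigr)$. At the point $y=\pi(x)\in S^2$ the outward normal is $\mathbf{n}(y)=y=x/|x|$, hence $\mathbf{n}\otimes\mathbf{n}=x\otimes x/|x|^2$ and $\mathbf{P}=\mathbf{I}_3-\mathbf{n}\otimes\mathbf{n}$ there; recognizing this, we obtain $\nabla\pi(x)=\frac{1}{|x|}\overline{\mathbf{P}}(x)$, the constant extension of $\mathbf{P}$ divided by $|x|$.

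Combining the two steps yields $\nabla\bar{\eta}(x)=\frac{1}{|x|}\overline{\mathbf{P}}(x)\,[(\nabla\tilde{\eta})(\pi(x))]$. Since $\overline{\mathbf{P}}(x)\,[(\nabla\tilde{\eta})(\pi(x))]=[\mathbf{P}\nabla\tilde{\eta}](\pi(x))$ and $\mathbf{P}\nabla\tilde{\eta}=\nabla_{S^2}\eta$ on $S^2$ by the definition of the tangential gradient (independently of the choice of $\tilde{\eta}$), the right-hand side equals $\frac{1}{|x|}[\nabla_{S^2}\eta](\pi(x))=\frac{1}{|x|}\overline{\nabla_{S^2}\eta}(x)$, which is precisely \eqref{E:Const}.

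I do not expect a genuine obstacle here: the content is the chain rule together with the identification of the Jacobian of $x\mapsto x/|x|$ with the rescaled tangential projection. The only points requiring care are bookkeeping — using the index convention $\nabla\mathbf{u}=(\partial_iu_j)_{i,j}$ consistently so that the chain rule takes the stated form $[\nabla\pi]\,[(\nabla\tilde{\eta})\circ\pi]$, and passing through an extension so that $\nabla\tilde{\eta}$ is a genuine Euclidean gradient. As a consistency check, $\bar{\eta}$ is homogeneous of degree $0$, so $x\cdot\nabla\bar{\eta}(x)=0$; this matches the tangentiality of the right-hand side, since $\bar{\mathbf{n}}(x)\cdot\overline{\nabla_{S^2}\eta}(x)=[\mathbf{n}\cdot\nabla_{S^2}\eta](\pi(x))=0$. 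Indeed, this homogeneity observation gives a slicker alternative route: $\nabla\bar{\eta}$ is homogeneous of degree $-1$, so $\nabla\bar{\eta}(x)=\frac{1}{|x|}\nabla\bar{\eta}(\pi(x))$, while on $S^2$ the vanishing radial derivative forces $\nabla\bar{\eta}=\mathbf{P}\nabla\bar{\eta}=\nabla_{S^2}\eta$, and the two combine to the same conclusion.
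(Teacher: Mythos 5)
Your proof is correct and follows essentially the same route as the paper: both differentiate $\bar{\eta}=\tilde{\eta}\circ\pi$ with $\pi(x)=x/|x|$ by the chain rule and identify the Jacobian $\nabla\pi(x)=\frac{1}{|x|}\overline{\mathbf{P}}(x)$, which is exactly the paper's computation \eqref{Pf_Co:Gr_x} (with $\tilde{\eta}=\bar{\eta}$, as you note one may take). Your explicit index verification of $\nabla\pi$ and the homogeneity consistency check are harmless additions to the same argument.
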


\begin{proof}
  Since $\mathbf{P}(y)=\mathbf{I}_3-y\otimes y$ for $y\in S^2$, we have
  \begin{align} \label{Pf_Co:Gr_x}
    \nabla\left(\frac{x}{|x|}\right) = \frac{1}{|x|}\left(\mathbf{I}_3-\frac{x}{|x|}\otimes\frac{x}{|x|}\right) = \frac{1}{|x|}\mathbf{P}\left(\frac{x}{|x|}\right) = \frac{1}{|x|}\overline{\mathbf{P}}(x)
  \end{align}
  for $x\in\mathbb{R}^3\setminus\{0\}$.
  We differentiate $\bar{\eta}(x)=\bar{\eta}(x/|x|)$ in $x$.
  Then, we see that
  \begin{align*}
    \nabla\bar{\eta}(x) = \frac{1}{|x|}\,\mathbf{P}\left(\frac{x}{|x|}\right)[\nabla\bar{\eta}]\left(\frac{x}{|x|}\right) = \frac{1}{|x|}\nabla_{S^2}\eta\left(\frac{x}{|x|}\right) = \frac{1}{|x|}\overline{\nabla_{S^2}\eta}(x)
  \end{align*}
  by \eqref{Pf_Co:Gr_x} and $\mathbf{P}(y)\nabla\bar{\eta}(y)=\nabla_{S^2}\eta(y)$ for $y=x/|x|\in S^2$.
  Hence, \eqref{E:Const} holds.
\end{proof}

Let $\mathcal{H}^2$ be the two-dimensional Hausdorff measure.
For functions $\eta$ and $\zeta$ on $S^2$, it is known that the integration by parts formula
\begin{align} \label{E:IbP_S2}
  \int_{S^2}(\underline{D}_i\eta)\zeta\,d\mathcal{H}^2 = -\int_{S^2}\eta(\underline{D}_i\zeta-2\zeta n_i)\,d\mathcal{H}^2, \quad i=1,2,3
\end{align}
holds (see \cite[Lemma 16.1]{GilTru01} and \cite[Theorem 2.10]{DziEll13_AN}), where $-2=H$ is the mean curvature of $S^2$ and $\mathbf{n}=(n_1,n_2,n_3)^{\mathrm{T}}$.
For $\eta\in L^2(S^2)$, we define $\underline{D}_i\eta\in L^2(S^2)$ as a function satisfying \eqref{E:IbP_S2} for all $\zeta\in C^1(S^2)$ (if exists), and set the Sobolev space
\begin{align*}
  H^1(S^2) := \{\eta\in L^2(S^2) \mid \nabla_{S^2}\eta = (\underline{D}_1\eta,\underline{D}_2\eta,\underline{D}_3\eta)^{\mathrm{T}}\in L^2(S^2)^3\}
\end{align*}
with inner product $(\eta_1,\eta_2)_{H^1(S^2)}:=(\eta_1,\eta_2)_{L^2(S^2)}+(\nabla_{S^2}\eta_1,\nabla_{S^2}\eta_2)_{L^2(S^2)}$.

We define the spaces of tangential vector fields on $S^2$ of class $\mathcal{X}$ by
\begin{align*}
  \mathcal{X}_\tau(S^2) := \{\mathbf{v}\in\mathcal{X}(S^2)^3 \mid \text{$\mathbf{v}\cdot\mathbf{n}=0$ on $S^2$}\}.
\end{align*}
Note that $\mathcal{X}_\tau(S^2)$ is a closed subspace of $\mathcal{X}(S^2)^3$.
Next, we define the spaces of tangential and solenoidal vector fields on $S^2$ by
\begin{align*}
  \mathcal{H}_0 := \{\mathbf{v}\in L_\tau^2(S^2) \mid \text{$\mathrm{div}_{S^2}\mathbf{v}=0$ on $S^2$}\}, \quad \mathcal{V}_0 := \mathcal{H}_0\cap H_\tau^1(S^2).
\end{align*}
In the above, we consider $\mathrm{div}_{S^2}\mathbf{v}$ for $\mathbf{v}\in L_\tau^2(S^2)$ as an element of $[H^1(S^2)]^\ast$ by
\begin{align*}
  \langle\mathrm{div}_{S^2}\mathbf{v},\eta\rangle_{H^1(S^2)} := -(\mathbf{v},\nabla_{S^2}\eta)_{L^2(S^2)}, \quad \eta\in H^1(S^2).
\end{align*}
We easily see that $\mathcal{H}_0$ and $\mathcal{V}_0$ are closed subspaces of $L_\tau^2(S^2)$ and $H_\tau^1(S^2)$, respectively.
Let $\mathbb{L}_0$ be the Helmholtz--Leray projection on $L_\tau^2(S^2)$, i.e. the orthogonal projection from $L_\tau^2(S^2)$ onto $\mathcal{H}_0$.
As in the planar case, $\mathbb{L}_0\mathbf{v}=\mathbf{v}-\nabla_{S^2}\eta$ for $\mathbf{v}\in L_\tau^2(S^2)$, where $\eta$ is a weak solution to Poisson's equation
\begin{align} \label{E:Pois_S2}
  \Delta_{S^2}\eta = \mathrm{div}_{S^2}\mathbf{v} \quad\text{on}\quad S^2, \quad \eta\in H^1(S^2), \quad \int_{S^2}\eta\,d\mathcal{H}^2 = 0,
\end{align}
which is uniquely solvable by Poincare's inequality and the Lax--Milgram theorem.

Let us state basic results related to functions on $S^2$.

\begin{lemma} \label{L:Den_S2}
  Let $\mathcal{X}=L^2,H^1$.
  The space $C_\tau^\infty(S^2)$ is dense in $\mathcal{X}_\tau(S^2)$.
\end{lemma}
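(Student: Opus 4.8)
The plan is to reduce the statement to the density of smooth vector fields in the \emph{full} space $\mathcal{X}(S^2)^3$ (without the tangency constraint) and then to restore tangency by applying the orthogonal projection $\mathbf{P}=\mathbf{I}_3-\mathbf{n}\otimes\mathbf{n}$. The crucial structural fact is that, since $\mathbf{n}(y)=y$ is the restriction of a smooth (indeed polynomial) map, the matrix field $\mathbf{P}$ has $C^\infty$ entries on $S^2$; consequently pointwise multiplication by $\mathbf{P}$ sends $C^\infty(S^2)^3$ into $C_\tau^\infty(S^2)$ and is a bounded linear operator on each of $L^2(S^2)^3$ (trivially) and $H^1(S^2)^3$.

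First I would invoke the standard fact that $C^\infty(S^2)^3$ is dense in $\mathcal{X}(S^2)^3$ for $\mathcal{X}=L^2,H^1$, which for the sphere follows, e.g., from the expansion in (vector) spherical harmonics, or from a partition of unity together with mollification in local charts. Given $\mathbf{v}\in\mathcal{X}_\tau(S^2)$, I choose $\mathbf{w}_k\in C^\infty(S^2)^3$ with $\mathbf{w}_k\to\mathbf{v}$ in $\mathcal{X}(S^2)^3$ and set $\mathbf{v}_k:=\mathbf{P}\mathbf{w}_k$. Each $\mathbf{v}_k$ lies in $C_\tau^\infty(S^2)$, because $\mathbf{P}$ is smooth and $(\mathbf{P}\mathbf{w}_k)\cdot\mathbf{n}=\mathbf{w}_k\cdot(\mathbf{P}\mathbf{n})=0$ by $\mathbf{P}^{\mathrm{T}}=\mathbf{P}$ and $\mathbf{P}\mathbf{n}=\mathbf{0}$. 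Since $\mathbf{v}$ is already tangential we have $\mathbf{P}\mathbf{v}=\mathbf{v}$, so $\mathbf{v}_k-\mathbf{v}=\mathbf{P}(\mathbf{w}_k-\mathbf{v})$, and the boundedness of $\mathbf{P}$ on $\mathcal{X}$ yields $\|\mathbf{v}_k-\mathbf{v}\|_{\mathcal{X}}\leq c\|\mathbf{w}_k-\mathbf{v}\|_{\mathcal{X}}\to0$.

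The only genuine verification is the boundedness of multiplication by $\mathbf{P}$ on $H^1(S^2)^3$; the $L^2$ bound is immediate from $|\mathbf{P}|\leq c$. For $H^1$ I apply the Leibniz rule for the tangential derivatives $\underline{D}_i$, namely $\underline{D}_i(\mathbf{P}\mathbf{w})_j=\sum_k(\underline{D}_iP_{jk})w_k+\sum_kP_{jk}\underline{D}_iw_k$, and use that the entries $P_{jk}$ and their tangential derivatives $\underline{D}_iP_{jk}$ are bounded on the compact sphere. Summing over the indices gives $\|\nabla_{S^2}(\mathbf{P}\mathbf{w})\|_{L^2(S^2)}\leq c\|\mathbf{w}\|_{H^1(S^2)}$, hence $\|\mathbf{P}\mathbf{w}\|_{H^1(S^2)}\leq c\|\mathbf{w}\|_{H^1(S^2)}$, which is the required estimate. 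I do not expect any serious obstacle beyond this routine product-rule computation, and the argument is identical for both choices of $\mathcal{X}$.
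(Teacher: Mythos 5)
Your proposal is correct and follows essentially the same route as the paper: approximate in the full space $\mathcal{X}(S^2)^3$ by smooth fields, apply the smooth projection $\mathbf{P}$, and use $\mathbf{P}\mathbf{v}=\mathbf{v}$ together with the boundedness of multiplication by $\mathbf{P}$ on $\mathcal{X}(S^2)^3$. The only difference is cosmetic: you spell out the Leibniz-rule verification of the $H^1$ bound, which the paper leaves implicit.
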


\begin{proof}
  We first note that $C^\infty(S^2)$ is dense in $\mathcal{X}(S^2)$.
  This is shown by standard localization and mollification arguments, so we omit details here.

  Let $\mathbf{v}\in\mathcal{X}_\tau(S^2)\subset \mathcal{X}(S^2)^3$.
  We can take $\mathbf{w}_k\in C^\infty(S^2)^3$ such that $\mathbf{w}_k\to\mathbf{v}$ strongly in $\mathcal{X}(S^2)^3$ as $k\to\infty$.
  Then, $\mathbf{v}_k:=\mathbf{P}\mathbf{w}_k \in C_\tau^\infty(S^2)$ by the smoothness of $\mathbf{P}$.
  Moreover,
  \begin{align*}
    \|\mathbf{v}-\mathbf{v}_k\|_{\mathcal{X}(S^2)} = \|\mathbf{P}(\mathbf{v}-\mathbf{w}_k)\|_{\mathcal{X}(S^2)} \leq c\|\mathbf{v}-\mathbf{w}_k\|_{\mathcal{X}(S^2)} \to 0
  \end{align*}
  as $k\to\infty$, since $\mathbf{P}\mathbf{v}=\mathbf{v}$ by $\mathbf{v}\cdot\mathbf{n}=0$ on $S^2$.
  Hence, $C_\tau^\infty(S^2)$ is dense in $\mathcal{X}_\tau(S^2)$.
\end{proof}

\begin{lemma} \label{L:Lad_S2}
  For $\eta\in H^1(S^2)$, we have Ladyzhenskaya's inequality
  \begin{align} \label{E:Lad_S2}
    \|\eta\|_{L^4(S^2)} \leq c\|\eta\|_{L^2(S^2)}^{1/2}\|\eta\|_{H^1(S^2)}^{1/2}.
  \end{align}
\end{lemma}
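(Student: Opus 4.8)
The plan is to prove the inequality \eqref{E:Lad_S2} by reducing it to the standard Sobolev embedding $H^1(S^2)\hookrightarrow L^4(S^2)$ combined with an interpolation argument, the point being to capture the precise product structure $\|\eta\|_{L^2}^{1/2}\|\eta\|_{H^1}^{1/2}$ rather than merely a bound by $\|\eta\|_{H^1}$. First I would establish the pointwise/integral identity that drives the two-dimensional Ladyzhenskaya estimate, namely that $\eta^2$ can be controlled by integrals of $\eta\,|\nabla_{S^2}\eta|$ along coordinate directions on the surface. Concretely, one works in a finite atlas of charts covering $S^2$ subordinate to a smooth partition of unity $\{\chi_k\}$; on each chart the surface metric is comparable (uniformly, by compactness of $S^2$) to the flat metric, so the classical planar proof applies to $\chi_k\eta$.

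The key steps, in order, are as follows. First, write $\eta^2=\sum_k\chi_k\eta^2$ and estimate each piece $\|\sqrt{\chi_k}\,\eta\|_{L^4}$ in the local chart. Second, in each chart invoke the flat two-dimensional Ladyzhenskaya inequality
\begin{align*}
  \|\phi\|_{L^4(\mathbb{R}^2)}^2 \le c\,\|\phi\|_{L^2(\mathbb{R}^2)}\,\|\nabla\phi\|_{L^2(\mathbb{R}^2)}
\end{align*}
applied to $\phi=\sqrt{\chi_k}\,\eta$ pulled back to the chart domain, using that the Jacobian of the chart and the metric coefficients are bounded above and below uniformly in $k$ (there are finitely many charts). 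Third, the product rule produces $\nabla(\sqrt{\chi_k}\,\eta)$ terms involving both $\nabla_{S^2}\eta$ and $\eta$ times bounded derivatives of the partition of unity; the latter are absorbed into the $H^1$-norm since $\|\eta\|_{L^2}\le\|\eta\|_{H^1}$. Fourth, summing over the finitely many charts and using $\|\cdot\|_{L^4}^4=\sum_k\|\sqrt{\chi_k}\,\eta\|_{L^4}^4$ together with the equivalence of $\ell^2$ and $\ell^4$ sums over a finite index set yields
\begin{align*}
  \|\eta\|_{L^4(S^2)}^2 \le c\,\|\eta\|_{L^2(S^2)}\,\|\eta\|_{H^1(S^2)},
\end{align*}
and taking square roots gives \eqref{E:Lad_S2}.

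The main obstacle I anticipate is bookkeeping the chart transitions cleanly: one must verify that the constants arising from the metric comparison and from the derivatives of the cutoffs are uniform (which they are, since $S^2$ is compact and the atlas finite), and one must handle the cross terms in $\nabla(\sqrt{\chi_k}\,\eta)$ so that only $\|\eta\|_{L^2}\|\nabla_{S^2}\eta\|_{L^2}$ and lower-order $\|\eta\|_{L^2}^2$ contributions survive, the latter being controlled by $\|\eta\|_{L^2}\|\eta\|_{H^1}$. An alternative, perhaps cleaner, route avoids charts entirely: one extends $\eta$ to $\bar\eta$ on a thin shell via Lemma \ref{L:Const}, applies a three-dimensional Gagliardo--Nirenberg/Agmon-type inequality, and uses the relation $\nabla\bar\eta=|x|^{-1}\overline{\nabla_{S^2}\eta}$ to convert ambient derivatives into surface ones; however, the dimensional exponents differ between $\mathbb{R}^2$ and $\mathbb{R}^3$, so the chart-based two-dimensional argument is the most direct path to the stated $1/2$--$1/2$ exponents. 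Since this is a standard surface estimate, I would expect the proof to be brief, citing the localization once the two-dimensional flat inequality is in hand.
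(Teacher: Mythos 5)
Your approach is sound, but note that the paper itself does not prove this lemma at all: its ``proof'' is a citation to \cite[Lemma 4.1]{Miu21_02}, where the inequality is established for general closed surfaces. Your chart-based localization is the standard self-contained argument and does deliver the stated $1/2$--$1/2$ exponents: the flat two-dimensional inequality $\|\phi\|_{L^4(\mathbb{R}^2)}^2\leq c\|\phi\|_{L^2(\mathbb{R}^2)}\|\nabla\phi\|_{L^2(\mathbb{R}^2)}$ is scale-critical, the cutoff commutator terms $\eta\nabla\varphi_k$ are lower order and absorbed into $\|\eta\|_{L^2}\|\eta\|_{H^1}$ exactly as you say, and compactness of $S^2$ makes all metric and cutoff constants uniform over the finite atlas. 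You are also right to reject the thin-shell extension route for this purpose: the three-dimensional interpolation exponents are $1/4$--$3/4$, and the $\varepsilon$-weights in \eqref{E:CoEx_L2} do not conspire to restore the two-dimensional exponents (indeed, the paper proves \eqref{E:Quad_Thin} by citation precisely because the anisotropic argument is delicate). What your route buys over the paper's is self-containedness; what the citation buys is generality (arbitrary closed surfaces) and brevity.

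Two small repairs are needed in your write-up, neither a genuine gap. First, $\sqrt{\chi_k}$ need not be smooth where $\chi_k$ vanishes; the standard fix is to work with a partition by squares, e.g. $\varphi_k:=\chi_k\bigl(\sum_j\chi_j^2\bigr)^{-1/2}$, which is smooth with $\sum_k\varphi_k^2=1$, and apply the flat inequality to $\varphi_k\eta$. Second, your claimed identity $\|\eta\|_{L^4}^4=\sum_k\|\sqrt{\chi_k}\,\eta\|_{L^4}^4$ is false as stated (with $\sum_k\chi_k=1$ one has $\sum_k\chi_k^2\leq1$, so the sum underestimates the norm); what you need is the reverse bound $\|\eta\|_{L^4}^4\leq N\sum_k\|\varphi_k\eta\|_{L^4}^4$, which follows from Cauchy--Schwarz, $1=\bigl(\sum_k\varphi_k^2\bigr)^2\leq N\sum_k\varphi_k^4$, with $N$ the number of charts. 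Finally, since the paper defines $H^1(S^2)$ via the weak integration-by-parts formula \eqref{E:IbP_S2}, you should run the argument for $\eta\in C^\infty(S^2)$ and conclude by the density noted in the proof of Lemma \ref{L:Den_S2}.
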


\begin{proof}
  We refer to \cite[Lemma 4.1]{Miu21_02} for the proof.
\end{proof}

\begin{lemma} \label{L:Korn_S2}
  For $\mathbf{v}\in H_\tau^1(S^2)$, we have Korn's inequality
  \begin{align} \label{E:Korn_S2}
    \|\nabla_{S^2}\mathbf{v}\|_{L^2(S^2)} \leq c\Bigl(\|\mathbf{D}_{S^2}(\mathbf{v})\|_{L^2(S^2)}+\|\mathbf{v}\|_{L^2(S^2)}\Bigr).
  \end{align}
\end{lemma}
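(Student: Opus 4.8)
The plan is to establish the stronger Korn \emph{identity} on $S^2$ and then discard a favorable sign. Write $\mathbf{B}:=\mathbf{P}(\nabla_{S^2}\mathbf{v})\mathbf{P}$. Since $\mathbf{v}$ is tangential, \eqref{E:TGr_Dec} reads $\nabla_{S^2}\mathbf{v}=\mathbf{B}-\mathbf{v}\otimes\mathbf{n}$, and because $\mathbf{P}\mathbf{n}=\mathbf{0}$ we have $\mathbf{B}\mathbf{n}=\mathbf{0}$, so $\mathbf{B}:(\mathbf{v}\otimes\mathbf{n})=\mathbf{v}\cdot(\mathbf{B}\mathbf{n})=0$ pointwise; using $|\mathbf{v}\otimes\mathbf{n}|=|\mathbf{v}|$ this gives the exact splitting $|\nabla_{S^2}\mathbf{v}|^2=|\mathbf{B}|^2+|\mathbf{v}|^2$, so it remains to bound $\|\mathbf{B}\|_{L^2(S^2)}$. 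The symmetric part of $\mathbf{B}$ is exactly $\mathbf{D}_{S^2}(\mathbf{v})$, so writing $\mathbf{B}_A:=\tfrac12(\mathbf{B}-\mathbf{B}^{\mathrm T})$ we get $|\mathbf{B}|^2=|\mathbf{D}_{S^2}(\mathbf{v})|^2+|\mathbf{B}_A|^2$, and the elementary relation $|\mathbf{B}_A|^2=|\mathbf{D}_{S^2}(\mathbf{v})|^2-\mathbf{B}:\mathbf{B}^{\mathrm T}$ reduces the whole estimate to computing $\int_{S^2}\mathbf{B}:\mathbf{B}^{\mathrm T}\,d\mathcal H^2=\int_{S^2}\mathrm{tr}(\mathbf{B}^2)\,d\mathcal H^2$.

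Next I would check the pointwise identity $\mathrm{tr}(\mathbf{B}^2)=\mathrm{tr}\bigl((\nabla_{S^2}\mathbf{v})^2\bigr)=\sum_{i,j}(\underline{D}_iv_j)(\underline{D}_jv_i)$: expanding $\mathbf{B}=\nabla_{S^2}\mathbf{v}+\mathbf{v}\otimes\mathbf{n}$ and using $\mathbf{v}\cdot\mathbf{n}=0$ together with $(\nabla_{S^2}\mathbf{v})\mathbf{n}=-\mathbf{v}$ and $\mathbf{n}\cdot\nabla_{S^2}v_j=0$ (both consequences of \eqref{E:TGr_Nor}), all cross terms and the $(\mathbf{v}\otimes\mathbf{n})^2$ term drop. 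The heart of the argument is then to compare $I:=\int_{S^2}\sum_{i,j}(\underline{D}_iv_j)(\underline{D}_jv_i)\,d\mathcal H^2$ with $J:=\int_{S^2}(\mathrm{div}_{S^2}\mathbf{v})^2\,d\mathcal H^2$ by one integration by parts each, using the surface formula \eqref{E:IbP_S2} with its mean-curvature correction $-2\zeta n_i$. Moving $\underline{D}_i$ off the first factor of $I$ and off the first factor of $J$ and subtracting, the correction terms collapse via $\sum_i(\underline{D}_jv_i)n_i=-v_j$ and $\mathbf{v}\cdot\mathbf{n}=0$, leaving only a commutator, so that $I=J-2\|\mathbf{v}\|_{L^2(S^2)}^2-\int_{S^2}\sum_{i,j}v_i[\underline{D}_j,\underline{D}_i]v_j\,d\mathcal H^2$.

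The remaining ingredient is the commutator of tangential derivatives. Differentiating $P_{jl}=\delta_{jl}-n_jn_l$ and using $\underline{D}_in_j=P_{ij}$ from \eqref{E:TGr_Nor}, I expect the clean formula $[\underline{D}_i,\underline{D}_j]g=n_i\underline{D}_jg-n_j\underline{D}_ig$ for scalars $g$ on $S^2$ (this encodes that $S^2$ has Gauss curvature one). Contracting with $g=v_j$ and using $\mathbf{v}\cdot\mathbf{n}=0$ and $\sum_j n_j\underline{D}_iv_j=-v_i$ gives the order-zero value $\int_{S^2}\sum_{i,j}v_i[\underline{D}_j,\underline{D}_i]v_j\,d\mathcal H^2=-\|\mathbf{v}\|_{L^2(S^2)}^2$, hence $\int_{S^2}\mathrm{tr}(\mathbf{B}^2)\,d\mathcal H^2=I=\|\mathrm{div}_{S^2}\mathbf{v}\|_{L^2(S^2)}^2-\|\mathbf{v}\|_{L^2(S^2)}^2$. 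Feeding this back through $\|\mathbf{B}_A\|_{L^2(S^2)}^2=\|\mathbf{D}_{S^2}(\mathbf{v})\|_{L^2(S^2)}^2-\int_{S^2}\mathrm{tr}(\mathbf{B}^2)\,d\mathcal H^2$ and the splitting of the first paragraph yields the exact identity
\[
  \|\nabla_{S^2}\mathbf{v}\|_{L^2(S^2)}^2 = 2\|\mathbf{D}_{S^2}(\mathbf{v})\|_{L^2(S^2)}^2 - \|\mathrm{div}_{S^2}\mathbf{v}\|_{L^2(S^2)}^2 + 2\|\mathbf{v}\|_{L^2(S^2)}^2 .
\]
Discarding the nonpositive term $-\|\mathrm{div}_{S^2}\mathbf{v}\|_{L^2(S^2)}^2$ and taking square roots gives \eqref{E:Korn_S2} with $c=\sqrt2$.

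The main obstacle is not a single hard estimate but the careful bookkeeping of the two integrations by parts against the curvature correction in \eqref{E:IbP_S2}, together with the commutator computation. These are precisely the places where the geometry of $S^2$ enters, and the point of the argument is that every such contribution is of order $\|\mathbf{v}\|_{L^2(S^2)}^2$ with no loss of derivatives; this is exactly why the lower-order term $\|\mathbf{v}\|_{L^2(S^2)}$ on the right-hand side of \eqref{E:Korn_S2} is unavoidable and sufficient.
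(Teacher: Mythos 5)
Your proposal is correct, and it takes a genuinely different route from the paper, which does not prove Lemma \ref{L:Korn_S2} internally at all but defers to \cite[Lemma 4.1]{JaOlRe18} and \cite[Lemma 4.2]{Miu20_03}, where Korn-type inequalities are established for general closed surfaces. Your argument is instead a self-contained computation that exploits the special geometry of the round sphere ($\nabla_{S^2}\mathbf{n}=\mathbf{P}$ from \eqref{E:TGr_Nor}, the mean-curvature term $-2\zeta n_i$ in \eqref{E:IbP_S2}, Gauss curvature one), and every nonobvious step checks out: the pointwise splittings $|\nabla_{S^2}\mathbf{v}|^2=|\mathbf{B}|^2+|\mathbf{v}|^2$ and $|\mathbf{B}_A|^2=|\mathbf{D}_{S^2}(\mathbf{v})|^2-\mathrm{tr}(\mathbf{B}^2)$; the bookkeeping $I=J-2\|\mathbf{v}\|_{L^2(S^2)}^2-\int_{S^2}\sum_{i,j}v_i[\underline{D}_j,\underline{D}_i]v_j\,d\mathcal{H}^2$; and the commutator formula $[\underline{D}_i,\underline{D}_j]g=n_i\underline{D}_jg-n_j\underline{D}_ig$, which you only say you ``expect'' but which is indeed valid on $S^2$ --- taking the radial extension $\tilde g(x)=g(x/|x|)$ and using \eqref{E:Const} one finds $\underline{D}_i\underline{D}_jg=\partial_i\partial_j\tilde g+n_i\underline{D}_jg$ on $S^2$, and antisymmetrizing kills the symmetric Hessian. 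The resulting identity
\[
  \|\nabla_{S^2}\mathbf{v}\|_{L^2(S^2)}^2 = 2\|\mathbf{D}_{S^2}(\mathbf{v})\|_{L^2(S^2)}^2-\|\mathrm{div}_{S^2}\mathbf{v}\|_{L^2(S^2)}^2+2\|\mathbf{v}\|_{L^2(S^2)}^2
\]
passes independent sanity checks: for $\mathbf{v}=\mathbf{r}_{\mathbf{a}}$, where $\mathbf{D}_{S^2}(\mathbf{r}_{\mathbf{a}})=\mathbf{O}_3$ and $\mathrm{div}_{S^2}\mathbf{r}_{\mathbf{a}}=0$ by Lemma \ref{L:Rota_S2}, a direct computation confirms $\|\nabla_{S^2}\mathbf{r}_{\mathbf{a}}\|_{L^2(S^2)}^2=2\|\mathbf{r}_{\mathbf{a}}\|_{L^2(S^2)}^2$; and for the conformal field $\mathbf{v}=\nabla_{S^2}n_3$ both sides equal $16\pi/3$. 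What your approach buys is a sharp quantitative statement --- \eqref{E:Korn_S2} with the explicit constant $c=\sqrt{2}$, plus the favorable extra term $-\|\mathrm{div}_{S^2}\mathbf{v}\|_{L^2(S^2)}^2$, which even vanishes on $\mathcal{V}_0$ --- at the cost of being tied to $S^2$, whereas the cited references cover arbitrary closed surfaces, where the curvature contributions do not collapse into an exact multiple of $\|\mathbf{v}\|_{L^2(S^2)}^2$. One small repair to make the write-up airtight: the two integrations by parts and the commutator step use second derivatives of $\mathbf{v}$, so you should first prove the identity for $\mathbf{v}\in C_\tau^\infty(S^2)$ and then pass to $H_\tau^1(S^2)$ by density (Lemma \ref{L:Den_S2}), which is legitimate because every term of the final identity is continuous in the $H^1(S^2)$ norm.
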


\begin{proof}
  We refer to \cite[Lemma 4.1]{JaOlRe18} and \cite[Lemma 4.2]{Miu20_03} for the proof.
\end{proof}

\begin{lemma} \label{L:HLH1_S2}
  If $\mathbf{v}\in H_\tau^1(S^2)$, then $\mathbb{L}_0\mathbf{v}\in\mathcal{V}_0$ and
  \begin{align} \label{E:HLH1_S2}
    \|\mathbf{v}-\mathbb{L}_0\mathbf{v}\|_{H^1(S^2)} \leq c\|\mathrm{div}_{S^2}\mathbf{v}\|_{L^2(S^2)}, \quad \|\mathbb{L}_0\mathbf{v}\|_{H^1(S^2)} \leq c\|\mathbf{v}\|_{H^1(S^2)}.
  \end{align}
\end{lemma}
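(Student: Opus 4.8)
The plan is to exploit the explicit representation $\mathbb{L}_0\mathbf{v}=\mathbf{v}-\nabla_{S^2}\eta$, where $\eta\in H^1(S^2)$ is the mean-zero weak solution of the Poisson equation \eqref{E:Pois_S2} with right-hand side $f:=\mathrm{div}_{S^2}\mathbf{v}$. Since $\mathbf{v}\in H_\tau^1(S^2)$, we have $f\in L^2(S^2)$ with $\|f\|_{L^2(S^2)}\le\|\nabla_{S^2}\mathbf{v}\|_{L^2(S^2)}\le\|\mathbf{v}\|_{H^1(S^2)}$. Because $\mathbf{v}-\mathbb{L}_0\mathbf{v}=\nabla_{S^2}\eta$, both desired inequalities reduce to bounding $\nabla_{S^2}\eta$ in $H^1(S^2)$ by $\|f\|_{L^2(S^2)}$, that is, to an $H^2$-estimate for $\eta$.

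First I would record the energy estimate. Testing the weak form of \eqref{E:Pois_S2} with $\eta$ itself gives $\|\nabla_{S^2}\eta\|_{L^2(S^2)}^2=-(f,\eta)_{L^2(S^2)}\le\|f\|_{L^2(S^2)}\|\eta\|_{L^2(S^2)}$, and since $\int_{S^2}\eta\,d\mathcal{H}^2=0$, Poincar\'e's inequality yields $\|\eta\|_{L^2(S^2)}\le c\|\nabla_{S^2}\eta\|_{L^2(S^2)}$; hence $\|\eta\|_{H^1(S^2)}\le c\|f\|_{L^2(S^2)}$. Next I would invoke $H^2$-elliptic regularity for the Laplace--Beltrami operator on the closed smooth surface $S^2$: from $\Delta_{S^2}\eta=f\in L^2(S^2)$ one obtains $\nabla_{S^2}\eta\in H^1(S^2)^3$ together with $\|\nabla_{S^2}\eta\|_{H^1(S^2)}\le c\bigl(\|f\|_{L^2(S^2)}+\|\eta\|_{L^2(S^2)}\bigr)$. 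Combining this with the energy estimate gives $\|\nabla_{S^2}\eta\|_{H^1(S^2)}\le c\|f\|_{L^2(S^2)}$.

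With this in hand both assertions follow quickly. The first inequality is immediate from $\mathbf{v}-\mathbb{L}_0\mathbf{v}=\nabla_{S^2}\eta$ and the $H^1$-bound on $\nabla_{S^2}\eta$. For the second, the triangle inequality and $\|f\|_{L^2(S^2)}\le\|\mathbf{v}\|_{H^1(S^2)}$ give $\|\mathbb{L}_0\mathbf{v}\|_{H^1(S^2)}\le\|\mathbf{v}\|_{H^1(S^2)}+\|\nabla_{S^2}\eta\|_{H^1(S^2)}\le c\|\mathbf{v}\|_{H^1(S^2)}$. Finally, $\mathbb{L}_0\mathbf{v}\in\mathcal{H}_0$ holds by definition of the Helmholtz--Leray projection, and $\mathbb{L}_0\mathbf{v}=\mathbf{v}-\nabla_{S^2}\eta\in H_\tau^1(S^2)$ because $\mathbf{v}\in H_\tau^1(S^2)$ while $\nabla_{S^2}\eta$ is tangential (being of the form $\mathbf{P}\nabla\tilde{\eta}$, so $\mathbf{n}\cdot\nabla_{S^2}\eta=0$) and lies in $H^1(S^2)^3$; therefore $\mathbb{L}_0\mathbf{v}\in\mathcal{H}_0\cap H_\tau^1(S^2)=\mathcal{V}_0$.

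The main obstacle is the $H^2$-regularity step, since everything else is elementary. This is a classical fact for second-order elliptic operators on a compact smooth Riemannian manifold without boundary, provable by transporting the interior $H^2$-estimate to $S^2$ through a finite atlas of coordinate charts and a subordinate partition of unity (in each chart the Laplace--Beltrami operator is uniformly elliptic with smooth coefficients). In the write-up I would cite a standard reference for elliptic regularity on manifolds, or the corresponding estimate in the author's earlier work, rather than reproduce the localization argument here.
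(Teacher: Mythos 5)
Your proposal is correct and follows essentially the same route as the paper, which also writes $\mathbb{L}_0\mathbf{v}=\mathbf{v}-\nabla_{S^2}\eta$ with $\eta$ the mean-zero weak solution of \eqref{E:Pois_S2} and quotes the $H^2$-elliptic regularity estimate $\|\eta\|_{H^2(S^2)}\leq c\|\mathrm{div}_{S^2}\mathbf{v}\|_{L^2(S^2)}$ (citing \cite[Theorem 3.3]{DziEll13_AN}) to conclude both inequalities and $\mathbb{L}_0\mathbf{v}\in\mathcal{V}_0$. Your extra steps (the energy/Poincar\'e estimate and the tangentiality check for $\nabla_{S^2}\eta$) are details the paper absorbs into the cited regularity theorem and the definitions, so nothing is missing.
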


\begin{proof}
  Since $\mathrm{div}_{S^2}\mathbf{v}\in L^2(S^2)$, the weak solution $\eta$ to \eqref{E:Pois_S2} satisfies
  \begin{align*}
    \eta \in H^2(S^2), \quad \|\eta\|_{H^2(S^2)} \leq c\|\mathrm{div}_{S^2}\mathbf{v}\|_{L^2(S^2)}
  \end{align*}
  by the elliptic regularity theorem (see e.g. \cite[Theorem 3.3]{DziEll13_AN}).
  Since $\mathbb{L}_0\mathbf{v}=\mathbf{v}-\nabla_{S^2}\eta$, the above results show that $\mathbb{L}_0\mathbf{v}\in\mathcal{V}_0$ and \eqref{E:HLH1_S2} holds.
\end{proof}

\begin{lemma} \label{L:Sol_S2}
  The space $\mathcal{V}_0$ is dense in $\mathcal{H}_0$.
\end{lemma}

\begin{proof}
  Let $\mathbf{v}\in\mathcal{H}_0\subset L_\tau^2(S^2)$.
  By Lemma \ref{L:Den_S2}, we can take $\mathbf{w}_k\in C_\tau^\infty(S^2)$ such that $\mathbf{w}_k\to\mathbf{v}$ strongly in $L^2(S^2)^3$ as $k\to\infty$.
  Then, $\mathbf{v}_k:=\mathbb{L}_0\mathbf{w}_k\in\mathcal{V}_0$ by Lemma \ref{L:HLH1_S2} and
  \begin{align*}
    \|\mathbf{v}-\mathbf{v}_k\|_{L^2(S^2)} = \|\mathbb{L}_0(\mathbf{v}-\mathbf{w}_k)\|_{L^2(S^2)} \leq \|\mathbf{v}-\mathbf{w}_k\|_{L^2(S^2)} \to 0
  \end{align*}
  as $k\to\infty$, since $\mathbf{v}\in\mathcal{H}_0$ and $\mathbb{L}_0$ is the orthogonal projection from $L_\tau^2(S^2)$ onto $\mathcal{H}_0$.
  Hence, $\mathcal{V}_0$ is dense in $\mathcal{H}_0$.
\end{proof}

By Lemma \ref{L:Sol_S2}, we can apply Lemma \ref{L:BS_Ipl} with $\mathcal{H}=\mathcal{H}_0$ and $\mathcal{V}=\mathcal{V}_0$.

\subsection{Thin spherical shell} \label{SS:Pre_TSS}
For a sufficiently small $\varepsilon\in(0,1)$, let
\begin{align*}
  \Omega_\varepsilon = \{x\in\mathbb{R}^3 \mid 1<|x|<1+\varepsilon\}
\end{align*}
be the thin spherical shell in $\mathbb{R}^3$ and $\mathbf{n}_\varepsilon$ be the unit outward normal vector field of $\partial\Omega_\varepsilon$.
The following results are shown in \cite{Miu21_02,Miu22_01} for curved thin domains around general closed surfaces including the thin spherical shell.
Recall that $c$ denotes a general positive constant independent of $\varepsilon$ and $\bar{\eta}(x)=\eta(x/|x|)$ is the constant extension of a function $\eta$ on $S^2$.

\begin{lemma} \label{L:Korn_Thin}
  For $\mathbf{u}\in H^1(\Omega_\varepsilon)^3$ satisfying $\mathbf{u}\cdot\mathbf{n}_\varepsilon=0$ on $\partial\Omega_\varepsilon$, we have
  \begin{align} \label{E:Korn_Thin}
    \|\nabla\mathbf{u}\|_{L^2(\Omega_\varepsilon)}^2 \leq 4\|\mathbf{D}(\mathbf{u})\|_{L^2(\Omega_\varepsilon)}^2+c\|\mathbf{u}\|_{L^2(\Omega_\varepsilon)}^2.
  \end{align}
\end{lemma}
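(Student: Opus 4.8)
The plan is to prove this \emph{uniform} Korn inequality by a double integration by parts that converts the gap between $\|\nabla\mathbf{u}\|_{L^2}^2$ and $\|\mathbf{D}(\mathbf{u})\|_{L^2}^2$ into a boundary integral, and then to exploit the radial geometry of $\Omega_\varepsilon$ to control that boundary term by interior quantities with a constant independent of $\varepsilon$. I start from the pointwise identity
\[
  2|\mathbf{D}(\mathbf{u})|^2 = |\nabla\mathbf{u}|^2 + \nabla\mathbf{u}:(\nabla\mathbf{u})^{\mathrm{T}},
\]
which follows from $\mathbf{D}(\mathbf{u})=(\nabla\mathbf{u})_{\mathrm{S}}$ together with \eqref{E:Mat_Inn}. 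After integrating over $\Omega_\varepsilon$, the whole problem reduces to bounding the cross term $J:=\int_{\Omega_\varepsilon}\nabla\mathbf{u}:(\nabla\mathbf{u})^{\mathrm{T}}\,dx=\int_{\Omega_\varepsilon}\sum_{i,j}\partial_i u_j\,\partial_j u_i\,dx$.

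By a standard density argument it suffices to treat $\mathbf{u}\in C^\infty(\overline{\Omega_\varepsilon})^3$ with $\mathbf{u}\cdot\mathbf{n}_\varepsilon=0$ on $\partial\Omega_\varepsilon$. Integrating $J$ by parts twice (first moving $\partial_j$ off $u_i$, then $\partial_i$ off $u_j$) and discarding the boundary term $\int_{\partial\Omega_\varepsilon}(\mathrm{div}\,\mathbf{u})(\mathbf{u}\cdot\mathbf{n}_\varepsilon)\,d\mathcal{H}^2$, which vanishes by the boundary condition, I obtain
\[
  J = \int_{\Omega_\varepsilon}(\mathrm{div}\,\mathbf{u})^2\,dx + \int_{\partial\Omega_\varepsilon}\mathbf{n}_\varepsilon\cdot[(\mathbf{u}\cdot\nabla)\mathbf{u}]\,d\mathcal{H}^2.
\]
Substituting back and dropping the nonnegative term $\int_{\Omega_\varepsilon}(\mathrm{div}\,\mathbf{u})^2\,dx$ yields $\|\nabla\mathbf{u}\|_{L^2(\Omega_\varepsilon)}^2 \le 2\|\mathbf{D}(\mathbf{u})\|_{L^2(\Omega_\varepsilon)}^2 - \int_{\partial\Omega_\varepsilon}\mathbf{n}_\varepsilon\cdot[(\mathbf{u}\cdot\nabla)\mathbf{u}]\,d\mathcal{H}^2$, so everything hinges on the boundary integral.

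To evaluate it I use that $\mathbf{u}$ is tangential on $\partial\Omega_\varepsilon$, where $\mathbf{n}_\varepsilon=\pm\bar{\mathbf{n}}$ with $\bar{\mathbf{n}}(x)=x/|x|$. Differentiating $\mathbf{u}\cdot\bar{\mathbf{n}}=0$ along the tangential field $\mathbf{u}$ and using $\nabla\bar{\mathbf{n}}=|x|^{-1}\overline{\mathbf{P}}$ (cf.\ the computation \eqref{Pf_Co:Gr_x}) together with $\overline{\mathbf{P}}\mathbf{u}=\mathbf{u}$ on $\partial\Omega_\varepsilon$, I find $\bar{\mathbf{n}}\cdot[(\mathbf{u}\cdot\nabla)\mathbf{u}]=-|x|^{-1}|\mathbf{u}|^2$ on $\partial\Omega_\varepsilon$. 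Writing $I_1:=\int_{|x|=1}|\mathbf{u}|^2\,d\mathcal{H}^2$ and $I_2:=\int_{|x|=1+\varepsilon}|\mathbf{u}|^2\,d\mathcal{H}^2$, the boundary integral thus equals $I_1-\tfrac{1}{1+\varepsilon}I_2$, so its negative is bounded above by $I_2-I_1$ (using $\tfrac{1}{1+\varepsilon}\le1$ and $I_2\ge0$).

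The key point is that this particular signed combination of traces is harmless. Applying the divergence theorem to the field $|\mathbf{u}|^2\,(x/|x|)$ and using $\mathrm{div}(x/|x|)=2/|x|$ converts it into the interior integral $\int_{\Omega_\varepsilon}\{\,2\,\mathbf{u}\cdot\partial_{\mathbf{n}}\mathbf{u}+2|x|^{-1}|\mathbf{u}|^2\,\}\,dx$, which by $|x|\ge1$, $|\partial_{\mathbf{n}}\mathbf{u}|\le|\nabla\mathbf{u}|$ and Young's inequality is at most $\delta\|\nabla\mathbf{u}\|_{L^2(\Omega_\varepsilon)}^2+(\delta^{-1}+2)\|\mathbf{u}\|_{L^2(\Omega_\varepsilon)}^2$ for any $\delta\in(0,1)$, with constants independent of $\varepsilon$. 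Choosing $\delta=\tfrac12$ and absorbing $\tfrac12\|\nabla\mathbf{u}\|_{L^2(\Omega_\varepsilon)}^2$ into the left-hand side yields exactly \eqref{E:Korn_Thin} with $c=8$. I expect the main obstacle to be precisely this $\varepsilon$-uniformity: a naive trace inequality applied to the outer boundary integral $I_2$ would cost a factor $\varepsilon^{-1}$ and ruin the estimate, whereas the divergence-theorem identity uses the exact radial structure so that the dangerous trace survives only in the benign combination $I_2-I_1$; the choice $\delta=\tfrac12$ is also what pins down the coefficient $4$ in front of $\|\mathbf{D}(\mathbf{u})\|_{L^2(\Omega_\varepsilon)}^2$.
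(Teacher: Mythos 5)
Your proof is correct, and it takes a genuinely different route from the paper, which does not prove Lemma \ref{L:Korn_Thin} in-text at all but defers to \cite[Lemma 5.1]{Miu22_01}, where the inequality is established for curved thin domains around general closed surfaces. Your argument is the classical double integration by parts, $2\|\mathbf{D}(\mathbf{u})\|_{L^2(\Omega_\varepsilon)}^2=\|\nabla\mathbf{u}\|_{L^2(\Omega_\varepsilon)}^2+\|\mathrm{div}\,\mathbf{u}\|_{L^2(\Omega_\varepsilon)}^2+\int_{\partial\Omega_\varepsilon}\mathbf{n}_\varepsilon\cdot[(\mathbf{u}\cdot\nabla)\mathbf{u}]\,d\mathcal{H}^2$, specialized to the radial geometry, and every step checks out: the identity $\bar{\mathbf{n}}\cdot[(\mathbf{u}\cdot\nabla)\mathbf{u}]=-|x|^{-1}|\mathbf{u}|^2$ on $\partial\Omega_\varepsilon$ is legitimate because $\mathbf{u}\cdot\bar{\mathbf{n}}$ vanishes identically on each boundary sphere and $\mathbf{u}$ is tangential there, so differentiating along $\mathbf{u}$ and using $(\mathbf{u}\cdot\nabla)\bar{\mathbf{n}}=(\nabla\bar{\mathbf{n}})^{\mathrm{T}}\mathbf{u}=|x|^{-1}\overline{\mathbf{P}}\mathbf{u}$ (cf.\ \eqref{Pf_Co:Gr_x}) gives the claim; the signs $\mathbf{n}_\varepsilon=-\bar{\mathbf{n}}$ on $|x|=1$ and $\mathbf{n}_\varepsilon=\bar{\mathbf{n}}$ on $|x|=1+\varepsilon$ produce exactly $I_1-(1+\varepsilon)^{-1}I_2$; and the divergence theorem applied to $|\mathbf{u}|^2\,x/|x|$ with $\mathrm{div}(x/|x|)=2/|x|$ turns the dangerous trace combination $I_2-I_1$ into interior terms, after which Young's inequality with $\delta=1/2$ yields \eqref{E:Korn_Thin} with coefficient $4$ and $c=8$, uniformly in $\varepsilon$, as you say. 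Two points merit a line each in a write-up: the density of $\{\mathbf{u}\in C^\infty(\overline{\Omega_\varepsilon})^3 : \mathbf{u}\cdot\mathbf{n}_\varepsilon=0 \text{ on } \partial\Omega_\varepsilon\}$ in the corresponding $H^1$ class (e.g.\ split $\mathbf{u}=\overline{\mathbf{P}}\mathbf{u}+(\mathbf{u}\cdot\bar{\mathbf{n}})\bar{\mathbf{n}}$, observe $\mathbf{u}\cdot\bar{\mathbf{n}}\in H_0^1(\Omega_\varepsilon)$, and approximate the two pieces separately), and the remark that only the final inequality, not the intermediate boundary integrals, needs to pass to the $H^1$ limit. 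What your route buys is a short, self-contained proof with explicit constants, transparently exhibiting why the estimate is $\varepsilon$-uniform for the shell; what the cited result buys is generality to arbitrary closed limit surfaces, which is what the author needs for the broader program mentioned in the introduction.
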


\begin{proof}
  We refer to \cite[Lemma 5.1]{Miu22_01} for the proof.
\end{proof}

\begin{lemma} \label{L:Nor_Thin}
  For $\mathbf{u}\in H^1(\Omega_\varepsilon)^3$ satisfying $\mathbf{u}\cdot\mathbf{n}_\varepsilon=0$ on $\partial\Omega_\varepsilon$, we have
  \begin{align} \label{E:Nor_Thin}
    \|\mathbf{u}\cdot\bar{\mathbf{n}}\|_{L^2(\Omega_\varepsilon)} \leq c\varepsilon\|\mathbf{u}\|_{H^1(\Omega_\varepsilon)}.
  \end{align}
\end{lemma}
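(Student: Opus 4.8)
The plan is to reduce the estimate to a one-dimensional Poincar\'e inequality along radial lines, exploiting that the boundary condition $\mathbf{u}\cdot\mathbf{n}_\varepsilon=0$ forces the radial component $\mathbf{u}\cdot\bar{\mathbf{n}}$ to vanish on \emph{both} spherical components of $\partial\Omega_\varepsilon$. First I would observe that the outward unit normal satisfies $\mathbf{n}_\varepsilon=\bar{\mathbf{n}}$ on the outer sphere $\{|x|=1+\varepsilon\}$ and $\mathbf{n}_\varepsilon=-\bar{\mathbf{n}}$ on the inner sphere $\{|x|=1\}$, where $\bar{\mathbf{n}}(x)=x/|x|$. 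Hence the scalar function $g:=\mathbf{u}\cdot\bar{\mathbf{n}}\in H^1(\Omega_\varepsilon)$ has vanishing trace on all of $\partial\Omega_\varepsilon$.

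Next I would pass to polar coordinates $x=ry$ with $r\in(1,1+\varepsilon)$ and $y\in S^2$, so that $dx=r^2\,dr\,d\mathcal{H}^2(y)$. For a.a.\ $y\in S^2$, the function $r\mapsto g(ry)$ lies in $H^1(1,1+\varepsilon)$ and vanishes at $r=1$ and $r=1+\varepsilon$, and the one-dimensional Poincar\'e inequality yields
\[
  \int_1^{1+\varepsilon}|g(ry)|^2\,dr \le \varepsilon^2\int_1^{1+\varepsilon}|\partial_r[g(ry)]|^2\,dr.
\]
The key identification is that the radial derivative agrees with the normal derivative: since $\bar{\mathbf{n}}(ry)=y$, one has $\partial_r[\mathbf{u}(ry)]=[(\bar{\mathbf{n}}\cdot\nabla)\mathbf{u}](ry)=\partial_{\mathbf{n}}\mathbf{u}(ry)$, and keeping $y$ fixed gives $\partial_r[g(ry)]=\partial_{\mathbf{n}}\mathbf{u}(ry)\cdot\bar{\mathbf{n}}(ry)$, whence $|\partial_r[g(ry)]|\le|\partial_{\mathbf{n}}\mathbf{u}(ry)|\le|\nabla\mathbf{u}(ry)|$.

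It then remains to restore the weight $r^2$. Because $r\in(1,1+\varepsilon)\subset(1,2)$ we have $1\le r^2\le4$, so multiplying the Poincar\'e estimate by $r^2$, integrating over $S^2$, and re-inserting the weight on the right-hand side gives
\[
  \|\mathbf{u}\cdot\bar{\mathbf{n}}\|_{L^2(\Omega_\varepsilon)}^2 \le c\varepsilon^2\|\nabla\mathbf{u}\|_{L^2(\Omega_\varepsilon)}^2 \le c\varepsilon^2\|\mathbf{u}\|_{H^1(\Omega_\varepsilon)}^2,
\]
and taking square roots yields \eqref{E:Nor_Thin}.

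The only genuinely technical point—and thus the main obstacle—is the rigorous justification of the radial Poincar\'e step for the merely $H^1$ (not smooth) field $\mathbf{u}$: namely, the absolute continuity of $r\mapsto g(ry)$ on a.a.\ rays together with the correct vanishing endpoint values, and the identification of $\partial_r[g(ry)]$ with $\partial_{\mathbf{n}}\mathbf{u}\cdot\bar{\mathbf{n}}$ in the $L^2$ sense. This can be handled either by a density argument in $H^1(\Omega_\varepsilon)$ that tracks the boundary trace, or by a direct Fubini/ACL argument in polar coordinates using that $\partial_{\mathbf{n}}\mathbf{u}\in L^2(\Omega_\varepsilon)$; everything else reduces to the elementary weighted one-dimensional estimate above.
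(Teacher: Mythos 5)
Your proof is correct. Note, though, that the paper itself does not prove Lemma \ref{L:Nor_Thin}: it cites \cite[Lemma 4.5]{Miu21_02}, where the estimate is established for curved thin domains around a general closed surface. In that general setting the boundary normal $\mathbf{n}_\varepsilon$ is \emph{not} exactly $\pm\bar{\mathbf{n}}$ (it acquires tangential corrections from the gradients of the thickness functions), so $\mathbf{u}\cdot\bar{\mathbf{n}}$ vanishes on $\partial\Omega_\varepsilon$ only up to $O(\varepsilon)$ boundary terms, and the proof there must combine the fundamental-theorem-of-calculus argument in the normal direction with trace estimates controlling those error terms. Your argument is a clean specialization to the spherical shell: since $\mathbf{n}_\varepsilon=-\bar{\mathbf{n}}$ on $\{|x|=1\}$ and $\mathbf{n}_\varepsilon=\bar{\mathbf{n}}$ on $\{|x|=1+\varepsilon\}$, the scalar $g=\mathbf{u}\cdot\bar{\mathbf{n}}$ lies in $H_0^1(\Omega_\varepsilon)$ exactly, and the estimate reduces to a ray-wise one-dimensional Poincar\'e inequality with the harmless weight $1\le r^2\le 4$; your identification $\partial_r[g(ry)]=[\partial_{\mathbf{n}}\mathbf{u}](ry)\cdot y$ is right because $\bar{\mathbf{n}}$ is constant along each ray, so no derivative of $\bar{\mathbf{n}}$ enters. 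The technical point you flag (ACL representative and endpoint values for merely $H^1$ data) is genuine but standard, and either of your two suggested remedies works; indeed, since $g\in H_0^1(\Omega_\varepsilon)$, approximating by $C_c^\infty(\Omega_\varepsilon)$ functions and passing to the limit via \eqref{E:CoV_Thin} settles it, and vanishing at a single endpoint already suffices for the $\varepsilon^2$ constant. What your route buys is a short, self-contained proof with no error terms; what the cited general proof buys is applicability beyond the sphere, which is the setting of \cite{Miu21_02}.
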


\begin{proof}
  We refer to \cite[Lemma 4.5]{Miu21_02} for the proof.
\end{proof}

\begin{lemma} \label{L:Quad_Thin}
  For $\eta\in H^1(S^2)$ and $\varphi\in H^1(\Omega_\varepsilon)$, we have
  \begin{align} \label{E:Quad_Thin}
    \|\bar{\eta}\varphi\|_{L^2(\Omega_\varepsilon)} \leq c\|\eta\|_{L^2(S^2)}^{1/2}\|\eta\|_{H^1(S^2)}^{1/2}\|\varphi\|_{L^2(\Omega_\varepsilon)}^{1/2}\|\varphi\|_{H^1(\Omega_\varepsilon)}^{1/2}.
  \end{align}
\end{lemma}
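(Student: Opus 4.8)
The plan is to reduce this three-dimensional estimate to a purely two-dimensional product estimate on $S^2$, where Ladyzhenskaya's inequality (Lemma \ref{L:Lad_S2}) is available. Writing points of $\Omega_\varepsilon$ in polar coordinates $x=ry$ with $y\in S^2$, $r\in(1,1+\varepsilon)$ and $dx=r^2\,dr\,d\mathcal{H}^2(y)$, and using that $\bar\eta(ry)=\eta(y)$ is constant along each ray, I would first rewrite
\begin{align*}
  \|\bar\eta\varphi\|_{L^2(\Omega_\varepsilon)}^2 = \int_{S^2}|\eta(y)|^2\,G(y)^2\,d\mathcal{H}^2(y) = \|\eta G\|_{L^2(S^2)}^2, \quad G(y) := \left(\int_1^{1+\varepsilon}|\varphi(ry)|^2 r^2\,dr\right)^{1/2}.
\end{align*}
Thus it suffices to bound $\|\eta G\|_{L^2(S^2)}$, where $\eta$ carries the angular regularity and $G$ encodes the radial $L^2$-profile of $\varphi$.

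For the product I would apply H\"older's inequality on $S^2$ with exponents $4$ and $4$, followed by Ladyzhenskaya's inequality (Lemma \ref{L:Lad_S2}) applied to each factor separately:
\begin{align*}
  \|\eta G\|_{L^2(S^2)} \leq \|\eta\|_{L^4(S^2)}\|G\|_{L^4(S^2)} \leq c\|\eta\|_{L^2(S^2)}^{1/2}\|\eta\|_{H^1(S^2)}^{1/2}\|G\|_{L^2(S^2)}^{1/2}\|G\|_{H^1(S^2)}^{1/2}.
\end{align*}
The desired bound then follows once I establish $\|G\|_{L^2(S^2)}=\|\varphi\|_{L^2(\Omega_\varepsilon)}$ and $\|G\|_{H^1(S^2)}\leq c\|\varphi\|_{H^1(\Omega_\varepsilon)}$. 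The first identity is immediate, since $\int_{S^2}G^2\,d\mathcal{H}^2=\int_{\Omega_\varepsilon}|\varphi|^2\,dx$ by the same change of variables.

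The remaining and main point is to control $\nabla_{S^2}G$. Working first with $\varphi\in C^\infty(\overline{\Omega_\varepsilon})$ (legitimate by density of smooth functions in $H^1(\Omega_\varepsilon)$) and differentiating $G^2=\int_1^{1+\varepsilon}|\varphi(ry)|^2 r^2\,dr$ under the integral sign, I would use the chain-rule identity $\nabla_{S^2}[\varphi(ry)]=r\,\mathbf{P}(y)(\nabla\varphi)(ry)$ — which comes from the same computation as in the proof of Lemma \ref{L:Const} (cf.\ \eqref{Pf_Co:Gr_x}) — to get $|\nabla_{S^2}[\varphi(ry)]|\leq 2|(\nabla\varphi)(ry)|$ for $\varepsilon<1$. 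Hence, by the Cauchy--Schwarz inequality in $r$,
\begin{align*}
  |\nabla_{S^2}(G^2)| \leq c\int_1^{1+\varepsilon}|\varphi(ry)|\,|(\nabla\varphi)(ry)|\,r^2\,dr \leq c\,G(y)\left(\int_1^{1+\varepsilon}|(\nabla\varphi)(ry)|^2 r^2\,dr\right)^{1/2}.
\end{align*}
Since $\nabla_{S^2}(G^2)=2G\nabla_{S^2}G$, dividing by $G$ where $G>0$ gives a pointwise bound for $|\nabla_{S^2}G|$, and integrating over $S^2$ yields $\|\nabla_{S^2}G\|_{L^2(S^2)}^2\leq c\int_{\Omega_\varepsilon}|\nabla\varphi|^2\,dx$.

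The hard part will be the square root in the definition of $G$, which makes $G$ nondifferentiable at points where it vanishes; I expect this to be the main technical obstacle, and I would handle it in the standard way by replacing $G$ with the regularization $(G^2+\delta)^{1/2}$, deriving the gradient bound uniformly in $\delta>0$, and then letting $\delta\to0$. Combining $\|G\|_{L^2(S^2)}=\|\varphi\|_{L^2(\Omega_\varepsilon)}$ with $\|\nabla_{S^2}G\|_{L^2(S^2)}\leq c\|\nabla\varphi\|_{L^2(\Omega_\varepsilon)}$ gives $\|G\|_{H^1(S^2)}\leq c\|\varphi\|_{H^1(\Omega_\varepsilon)}$; substituting into the product estimate of the second paragraph and removing the smoothness assumption on $\varphi$ by density completes the proof.
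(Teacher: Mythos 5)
Your proof is correct. Note that the paper itself gives no in-text proof of Lemma \ref{L:Quad_Thin}: it only cites \cite[Lemma 6.19]{Miu21_02}, where the estimate is proved for general curved thin domains, so your argument serves as a self-contained replacement specialized to the radial geometry of $\Omega_\varepsilon$. Every step checks out: the reduction $\|\bar{\eta}\varphi\|_{L^2(\Omega_\varepsilon)}=\|\eta G\|_{L^2(S^2)}$ via \eqref{E:CoV_Thin}; the $L^4$--$L^4$ H\"older split with Ladyzhenskaya's inequality \eqref{E:Lad_S2} applied to each factor; the identity $\|G\|_{L^2(S^2)}=\|\varphi\|_{L^2(\Omega_\varepsilon)}$; and the gradient bound, since the computation behind \eqref{Pf_Co:Gr_x} indeed gives $\nabla_{S^2}[\varphi(ry)]=r\,\mathbf{P}(y)(\nabla\varphi)(ry)$, whence $|\nabla_{S^2}(G^2)|\leq c\,G(y)N(y)$ with $N(y):=\bigl(\int_1^{1+\varepsilon}|(\nabla\varphi)(ry)|^2r^2\,dr\bigr)^{1/2}$, and $r\leq 2$ keeps all constants independent of $\varepsilon$ — which is essential for the applications in Sections \ref{S:Diff} and \ref{S:Global}. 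You also correctly identified the one genuine technical obstacle, the nondifferentiability of $G$ where it vanishes, and the regularization $(G^2+\delta)^{1/2}$, whose tangential gradient is bounded by $cN$ uniformly in $\delta$, disposes of it in the standard way. Two small remarks. First, the regularization can be bypassed entirely: only $\|G\|_{L^4(S^2)}$ is needed, and since $\|G\|_{L^4(S^2)}^2=\|G^2\|_{L^2(S^2)}$, the two-dimensional embedding $W^{1,1}(S^2)\hookrightarrow L^2(S^2)$ applied to $w=G^2$ yields $\|G^2\|_{L^2(S^2)}\leq c\bigl(\|G^2\|_{L^1(S^2)}+\|\nabla_{S^2}(G^2)\|_{L^1(S^2)}\bigr)\leq c\|\varphi\|_{L^2(\Omega_\varepsilon)}\|\varphi\|_{H^1(\Omega_\varepsilon)}$, which is exactly the required bound without ever differentiating $G$ itself. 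Second, in the closing density step it is worth spelling out that applying the inequality to differences $\varphi_k-\varphi_m$ of smooth approximants shows $\bar{\eta}\varphi_k$ is Cauchy in $L^2(\Omega_\varepsilon)$, and an a.e.\ convergent subsequence identifies the limit as $\bar{\eta}\varphi$; with that, the limit passage is airtight.
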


\begin{proof}
  We refer to \cite[Lemma 6.19]{Miu21_02} for the proof.
\end{proof}

We define the space of $L^2$ solenoidal vector fields on $\Omega_\varepsilon$ by
\begin{align*}
  \mathcal{H}_\varepsilon := \{\mathbf{u}\in L^2(\Omega_\varepsilon)^3 \mid \text{$\mathrm{div}\,\mathbf{u}=0$ in $\Omega_\varepsilon$, $\mathbf{u}\cdot\mathbf{n}_\varepsilon=0$ on $\partial\Omega_\varepsilon$}\}.
\end{align*}
It is known that $\mathcal{H}_\varepsilon$ is the closure in $L^2(\Omega_\varepsilon)^3$ of
\begin{align*}
  C_{c,\sigma}^\infty(\Omega_\varepsilon) := \{\mathbf{u}\in C_c^\infty(\Omega_\varepsilon)^3 \mid \text{$\mathrm{div}\,\mathbf{u}=0$ in $\Omega_\varepsilon$}\},
\end{align*}
see \cite[Chapter 1, Theorem 1.4]{Tem01} and \cite[Theorem IV.3.5]{BoyFab13}.
In particular, $\mathcal{H}_\varepsilon$ is a closed subspace of $L^2(\Omega_\varepsilon)^3$.
We also define the space of $H^1$ solenoidal vector fields
\begin{align*}
  \mathcal{V}_\varepsilon := \mathcal{H}_\varepsilon\cap H^1(\Omega_\varepsilon) = \{\mathbf{u}\in H^1(\Omega_\varepsilon)^3 \mid \text{$\mathrm{div}\,\mathbf{u}=0$ in $\Omega_\varepsilon$, $\mathbf{u}\cdot\mathbf{n}_\varepsilon=0$ on $\partial\Omega_\varepsilon$}\}.
\end{align*}
It is a closed subspace of $H^1(\Omega_\varepsilon)^3$.
Also, $\mathcal{V}_\varepsilon$ is dense in $\mathcal{H}_\varepsilon$, since $C_{c,\sigma}^\infty(\Omega_\varepsilon)\subset\mathcal{V}_\varepsilon$.
Thus, we can use Lemma \ref{L:BS_Ipl} with $\mathcal{H}=\mathcal{H}_\varepsilon$ and $\mathcal{V}=\mathcal{V}_\varepsilon$.
Note that
\begin{align*}
  \mathcal{V}_\varepsilon \neq \{\mathbf{u} \in H^1(\Omega_\varepsilon)^3 \mid \text{$\mathrm{div}\,\mathbf{u}=0$ in $\Omega_\varepsilon$, $\mathbf{u}=\mathbf{0}$ on $\partial\Omega_\varepsilon$}\},
\end{align*}
where the latter space is the closure of $C_{c,\sigma}^\infty(\Omega_\varepsilon)$ in $H^1(\Omega_\varepsilon)^3$.

%%% Section 3 %%%
\section{Definition and properties of weak solutions} \label{S:Weak}
We recall the definition and properties of weak solutions to the Navier--Stokes equations in the $L^2$-setting.
For details of the theory of weak solutions, see \cite{Soh01,Tem01,BoyFab13}.

In what follows, we sometimes suppress the time variable of functions and write
\begin{align*}
  (\varphi_1,\varphi_2)_{L^2(\Omega_\varepsilon)} = \int_{\Omega_\varepsilon}\varphi_1(x)\varphi_2(x)\,dx, \quad (\eta_1,\eta_2)_{L^2(S^2)} = \int_{S^2}\eta_1(y)\eta_2(y)\,d\mathcal{H}^2(y)
\end{align*}
whenever the right-hand integrals make sense.

\subsection{Problem on the thin spherical shell} \label{SS:We_TSS}
First, we consider the problem \eqref{E:NS_TSS} on $\Omega_\varepsilon$.
Let $\mathbf{u}_1$ and $\mathbf{u}_2$ be smooth vector fields on $\overline{\Omega_\varepsilon}$.
If $\mathrm{div}\,\mathbf{u}_1=0$ in $\Omega_\varepsilon$, then
\begin{align*}
  \Delta\mathbf{u}_1 = \Delta\mathbf{u}_1+\nabla(\mathrm{div}\,\mathbf{u}_1) = 2\,\mathrm{div}\,\mathbf{D}(\mathbf{u}_1) \quad\text{in}\quad \Omega_\varepsilon.
\end{align*}
Thus, if in addition $\mathbf{P}_\varepsilon\mathbf{D}(\mathbf{u}_1)\mathbf{n}_\varepsilon=\mathbf{0}$ and $\mathbf{u}_2\cdot\mathbf{n}_\varepsilon=0$ on $\partial\Omega_\varepsilon$, then
\begin{align*}
  \int_{\Omega_\varepsilon}(\Delta\mathbf{u}_1)\cdot\mathbf{u}_2\,dx &= 2\int_{\Omega_\varepsilon}\{\mathrm{div}\,\mathbf{D}(\mathbf{u}_1)\}\cdot\mathbf{u}_2\,dx = -2\int_{\Omega_\varepsilon}\mathbf{D}(\mathbf{u}_1):\nabla\mathbf{u}_2\,dx \\
  &= -2\int_{\Omega_\varepsilon}\mathbf{D}(\mathbf{u}_1):\mathbf{D}(\mathbf{u}_2)\,dx
\end{align*}
by integration by parts and the symmetry of $\mathbf{D}(\mathbf{u}_1)=(\nabla\mathbf{u}_1)_{\mathrm{S}}$.
Based on this observation, we define a weak solution to \eqref{E:NS_TSS} as follows.

\begin{definition} \label{D:WS_TSS}
  For given data
  \begin{align*}
    \mathbf{u}_0^\varepsilon \in \mathcal{H}_\varepsilon, \quad \mathbf{f}^\varepsilon \in L_{\mathrm{loc}}^2([0,\infty);\mathcal{V}_\varepsilon^\ast),
  \end{align*}
  we say that $\mathbf{u}^\varepsilon$ is a weak solution to \eqref{E:NS_TSS} if
  \begin{align*}
    \mathbf{u}^\varepsilon \in L_{\mathrm{loc}}^\infty([0,\infty);\mathcal{H}_\varepsilon)\cap L_{\mathrm{loc}}^2([0,\infty);\mathcal{V}_\varepsilon)
  \end{align*}
  and the following equality holds for all $\bm{\psi}\in C_c^1([0,\infty);\mathcal{V}_\varepsilon)$:
  \begin{multline} \label{E:WF_TSS}
    -\int_0^\infty(\mathbf{u}^\varepsilon,\partial_t\bm{\psi})_{L^2(\Omega_\varepsilon)}\,dt+2\nu\int_0^\infty\bigl(\mathbf{D}(\mathbf{u}^\varepsilon),\mathbf{D}(\bm{\psi})\bigr)_{L^2(\Omega_\varepsilon)}\,dt \\
    +\int_0^\infty\bigl((\mathbf{u}^\varepsilon\cdot\nabla)\mathbf{u}^\varepsilon,\bm{\psi}\bigr)_{L^2(\Omega_\varepsilon)}\,dt = \bigl(\mathbf{u}_0^\varepsilon,\bm{\psi}(0)\bigr)_{L^2(\Omega_\varepsilon)}+\int_0^\infty\langle\mathbf{f}^\varepsilon,\bm{\psi}\rangle_{\mathcal{V}_\varepsilon}\,dt.
  \end{multline}
\end{definition}

Let us state a few properties of a weak solution to \eqref{E:NS_TSS}.
In what follows, we write $c_\varepsilon$ for a general positive constant which may depend on $\varepsilon$.
We do not take the limit $\varepsilon\to0$ in this subsection, so the dependence of each $c_\varepsilon$ on $\varepsilon$ does not matter.

\begin{lemma} \label{L:Tri_TSS}
  For $\mathbf{u},\mathbf{z},\bm{\psi}\in H^1(\Omega_\varepsilon)^3$, we have
  \begin{align} \label{E:TrTS_Bdd}
    \Bigl|\bigl((\mathbf{u}\cdot\nabla)\mathbf{z},\bm{\psi}\bigr)_{L^2(\Omega_\varepsilon)}\Bigr| \leq c_\varepsilon\|\mathbf{u}\|_{L^2(\Omega_\varepsilon)}^{1/4}\|\mathbf{u}\|_{H^1(\Omega_\varepsilon)}^{3/4}\|\mathbf{z}\|_{H^1(\Omega_\varepsilon)}\|\bm{\psi}\|_{L^2(\Omega_\varepsilon)}^{1/4}\|\bm{\psi}\|_{H^1(\Omega_\varepsilon)}^{3/4}.
  \end{align}
  Moreover, if $\mathbf{u}\in\mathcal{V}_\varepsilon$, then
  \begin{align} \label{E:TrTS_AnS}
    \bigl((\mathbf{u}\cdot\nabla)\mathbf{z},\bm{\psi}\bigr)_{L^2(\Omega_\varepsilon)} = -\bigl(\mathbf{z},(\mathbf{u}\cdot\nabla)\bm{\psi}\bigr)_{L^2(\Omega_\varepsilon)}, \quad \bigl((\mathbf{u}\cdot\nabla)\mathbf{z},\mathbf{z}\bigr)_{L^2(\Omega_\varepsilon)} = 0.
  \end{align}
\end{lemma}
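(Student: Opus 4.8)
The plan is to establish the boundedness estimate \eqref{E:TrTS_Bdd} first, because it immediately shows that the trilinear form $(\mathbf{u},\mathbf{z},\bm{\psi})\mapsto\bigl((\mathbf{u}\cdot\nabla)\mathbf{z},\bm{\psi}\bigr)_{L^2(\Omega_\varepsilon)}$ is continuous in each of its three arguments with respect to the $H^1(\Omega_\varepsilon)$-norm. This continuity is precisely the tool needed to transfer the algebraic identities \eqref{E:TrTS_AnS} from smooth fields to general $H^1$ fields by density, so the two parts of the lemma are naturally linked.

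For \eqref{E:TrTS_Bdd}, I would write out $\bigl((\mathbf{u}\cdot\nabla)\mathbf{z},\bm{\psi}\bigr)_{L^2(\Omega_\varepsilon)}=\sum_{i,j}\int_{\Omega_\varepsilon}u_i(\partial_iz_j)\psi_j\,dx$ and apply H\"older's inequality with exponents $(4,2,4)$ to bound it by $c\|\mathbf{u}\|_{L^4(\Omega_\varepsilon)}\|\nabla\mathbf{z}\|_{L^2(\Omega_\varepsilon)}\|\bm{\psi}\|_{L^4(\Omega_\varepsilon)}$. I would then use $\|\nabla\mathbf{z}\|_{L^2(\Omega_\varepsilon)}\leq\|\mathbf{z}\|_{H^1(\Omega_\varepsilon)}$ together with the three-dimensional Gagliardo--Nirenberg interpolation inequality $\|\varphi\|_{L^4(\Omega_\varepsilon)}\leq c_\varepsilon\|\varphi\|_{L^2(\Omega_\varepsilon)}^{1/4}\|\varphi\|_{H^1(\Omega_\varepsilon)}^{3/4}$, applied to both $\mathbf{u}$ and $\bm{\psi}$. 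Multiplying these bounds reproduces the right-hand side of \eqref{E:TrTS_Bdd} exactly. The interpolation constant depends on the domain $\Omega_\varepsilon$ and hence on $\varepsilon$, which is why it is recorded as $c_\varepsilon$; no uniformity in $\varepsilon$ is asserted or required here.

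For the first identity in \eqref{E:TrTS_AnS}, I would argue by density. Fix $\mathbf{u}\in\mathcal{V}_\varepsilon$ and first take $\mathbf{z},\bm{\psi}\in C^\infty(\overline{\Omega_\varepsilon})^3$, so that the scalar $\phi:=\mathbf{z}\cdot\bm{\psi}$ lies in $C^\infty(\overline{\Omega_\varepsilon})\subset H^1(\Omega_\varepsilon)$. Applying the Gauss--Green formula to the $H^1$ vector field $\mathbf{u}$ against $\phi$ yields
\begin{align*}
  \int_{\Omega_\varepsilon}\mathbf{u}\cdot\nabla\phi\,dx = \int_{\partial\Omega_\varepsilon}(\mathbf{u}\cdot\mathbf{n}_\varepsilon)\phi\,d\mathcal{H}^2 - \int_{\Omega_\varepsilon}(\mathrm{div}\,\mathbf{u})\phi\,dx = 0,
\end{align*}
where the boundary integral drops because $\mathbf{u}\cdot\mathbf{n}_\varepsilon=0$ on $\partial\Omega_\varepsilon$ in the trace sense and the volume integral drops because $\mathrm{div}\,\mathbf{u}=0$ in $\Omega_\varepsilon$. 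Since $\mathbf{u}\cdot\nabla\phi=\bigl((\mathbf{u}\cdot\nabla)\mathbf{z}\bigr)\cdot\bm{\psi}+\mathbf{z}\cdot\bigl((\mathbf{u}\cdot\nabla)\bm{\psi}\bigr)$, this is exactly the first identity for smooth $\mathbf{z},\bm{\psi}$. I would then take sequences $\mathbf{z}_k,\bm{\psi}_k\in C^\infty(\overline{\Omega_\varepsilon})^3$ converging to $\mathbf{z},\bm{\psi}$ in $H^1(\Omega_\varepsilon)^3$ and pass to the limit: both $\bigl((\mathbf{u}\cdot\nabla)\mathbf{z},\bm{\psi}\bigr)_{L^2(\Omega_\varepsilon)}$ and $\bigl(\mathbf{z},(\mathbf{u}\cdot\nabla)\bm{\psi}\bigr)_{L^2(\Omega_\varepsilon)}$ are bounded bilinear forms in $(\mathbf{z},\bm{\psi})$ by \eqref{E:TrTS_Bdd} (the second after relabelling $\mathbf{z}\leftrightarrow\bm{\psi}$), so the identity survives the limit for all $\mathbf{z},\bm{\psi}\in H^1(\Omega_\varepsilon)^3$.

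The second identity in \eqref{E:TrTS_AnS} then follows by setting $\bm{\psi}=\mathbf{z}$ in the first, which gives $\bigl((\mathbf{u}\cdot\nabla)\mathbf{z},\mathbf{z}\bigr)_{L^2(\Omega_\varepsilon)}=-\bigl((\mathbf{u}\cdot\nabla)\mathbf{z},\mathbf{z}\bigr)_{L^2(\Omega_\varepsilon)}$ and hence vanishing. I expect the only delicate point to be the Gauss--Green step: one must know that the trace of $\mathbf{u}\in H^1(\Omega_\varepsilon)^3$ on $\partial\Omega_\varepsilon$ is well-defined and that the condition $\mathbf{u}\cdot\mathbf{n}_\varepsilon=0$ encoded in the definition of $\mathcal{V}_\varepsilon$ is read in this trace sense, so that the boundary term genuinely disappears. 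As $\Omega_\varepsilon$ is a smooth (in particular Lipschitz) domain, the standard trace theorem and the Gauss--Green formula apply, and this causes no real obstruction.
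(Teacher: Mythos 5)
Your proposal is correct and follows essentially the same route as the paper's own proof: H\"older's inequality with exponents $(4,2,4)$ combined with the interpolation estimate $\|\varphi\|_{L^4(\Omega_\varepsilon)}\leq c_\varepsilon\|\varphi\|_{L^2(\Omega_\varepsilon)}^{1/4}\|\varphi\|_{H^1(\Omega_\varepsilon)}^{3/4}$ for \eqref{E:TrTS_Bdd}, and a density argument with smooth $\mathbf{z},\bm{\psi}$ plus integration by parts (boundary term killed by $\mathbf{u}\cdot\mathbf{n}_\varepsilon=0$, volume term by $\mathrm{div}\,\mathbf{u}=0$) for \eqref{E:TrTS_AnS}. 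Your explicit remarks on the trace-sense reading of $\mathbf{u}\cdot\mathbf{n}_\varepsilon=0$ and on passing to the limit via the bilinear continuity from \eqref{E:TrTS_Bdd} merely spell out what the paper leaves implicit.
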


\begin{proof}
  We see by H\"{o}lder's inequality that
  \begin{align*}
    \Bigl|\bigl((\mathbf{u}\cdot\nabla)\mathbf{z},\bm{\psi}\bigr)_{L^2(\Omega_\varepsilon)}\Bigr| \leq \|\mathbf{u}\|_{L^4(\Omega_\varepsilon)}\|\nabla\mathbf{z}\|_{L^2(\Omega_\varepsilon)}\|\bm{\psi}\|_{L^4(\Omega_\varepsilon)}.
  \end{align*}
  Moreover, we have the interpolation inequality (see \cite[Theorem 5.8]{AdaFou03})
  \begin{align*}
    \|\varphi\|_{L^4(\Omega_\varepsilon)} \leq c_\varepsilon\|\varphi\|_{L^2(\Omega_\varepsilon)}^{1/4}\|\varphi\|_{H^1(\Omega_\varepsilon)}^{3/4}, \quad \varphi\in H^1(\Omega_\varepsilon).
  \end{align*}
  Combining these inequalities, we obtain \eqref{E:TrTS_Bdd}.

  Next, we show \eqref{E:TrTS_AnS}.
  Since $\Omega_\varepsilon$ is a bounded smooth domain, $C^\infty(\overline{\Omega_\varepsilon})$ is dense in $H^1(\Omega_\varepsilon)$ (see \cite[Theorem 3.22]{AdaFou03}).
  By this fact, \eqref{E:TrTS_Bdd}, and a density argument, we may assume that $\mathbf{z}$ and $\bm{\psi}$ are in $C^\infty(\overline{\Omega_\varepsilon})^3$.
  Then, we can carry out integration by parts to get
  \begin{align*}
    \int_{\Omega_\varepsilon}\{(\mathbf{u}\cdot\nabla)\mathbf{z}\}\cdot\bm{\psi}\,dx = \int_{\partial\Omega_\varepsilon}(\mathbf{u}\cdot\mathbf{n}_\varepsilon)(\mathbf{z}\cdot\bm{\psi})\,d\mathcal{H}^2-\int_{\Omega_\varepsilon}\mathbf{z}\cdot\{(\mathrm{div}\,\mathbf{u})\bm{\psi}+(\mathbf{u}\cdot\nabla)\bm{\psi}\}\,dx.
  \end{align*}
  Since $\mathrm{div}\,\mathbf{u}=0$ in $\Omega_\varepsilon$ and $\mathbf{u}\cdot\mathbf{n}_\varepsilon=0$ on $\partial\Omega_\varepsilon$ by $\mathbf{u}\in\mathcal{V}_\varepsilon$, the above equality gives the first relation of \eqref{E:TrTS_AnS}.
  The second relation of \eqref{E:TrTS_AnS} follows from the first one with $\bm{\psi}=\mathbf{z}$.
\end{proof}

\begin{proposition} \label{P:TSS_WeCo}
  For given $\mathbf{u}_0^\varepsilon\in\mathcal{H}_\varepsilon$ and $\mathbf{f}^\varepsilon\in L_{\mathrm{loc}}^2([0,\infty);\mathcal{V}_\varepsilon^\ast)$, let $\mathbf{u}^\varepsilon$ be a weak solution to \eqref{E:NS_TSS}.
  Then, $\mathbf{u}^\varepsilon\in C_{\mathrm{weak}}([0,T];\mathcal{H}_\varepsilon)$ for all $T>0$.
\end{proposition}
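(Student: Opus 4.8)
The plan is to show that $\mathbf{u}^\varepsilon$ agrees almost everywhere with a function that is weakly continuous from $[0,T]$ into $\mathcal{H}_\varepsilon$. The natural route is to first establish that $\partial_t\mathbf{u}^\varepsilon$ lies in some negative-regularity Bochner space, identify a space $\mathcal{Y}$ into which $\mathbf{u}^\varepsilon$ is (strongly) continuous, and then upgrade this to weak continuity into $\mathcal{H}_\varepsilon$ via Lemma \ref{L:BS_WeCo}. Since the definition of a weak solution already gives $\mathbf{u}^\varepsilon\in L^\infty_{\mathrm{loc}}([0,\infty);\mathcal{H}_\varepsilon)$ and $\mathcal{H}_\varepsilon$ is a Hilbert space (hence reflexive), Lemma \ref{L:BS_WeCo} with $\mathcal{X}=\mathcal{H}_\varepsilon$ will do the job once I have continuity into a larger space $\mathcal{Y}$.

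First I would read off from the weak formulation \eqref{E:WF_TSS} the equation satisfied by the distributional time derivative: for test functions $\bm{\psi}\in\mathcal{V}_\varepsilon$ one has
\begin{align*}
  \langle\partial_t\mathbf{u}^\varepsilon,\bm{\psi}\rangle_{\mathcal{V}_\varepsilon} = -2\nu\bigl(\mathbf{D}(\mathbf{u}^\varepsilon),\mathbf{D}(\bm{\psi})\bigr)_{L^2(\Omega_\varepsilon)}-\bigl((\mathbf{u}^\varepsilon\cdot\nabla)\mathbf{u}^\varepsilon,\bm{\psi}\bigr)_{L^2(\Omega_\varepsilon)}+\langle\mathbf{f}^\varepsilon,\bm{\psi}\rangle_{\mathcal{V}_\varepsilon}.
\end{align*}
I would then estimate each term in the $\mathcal{V}_\varepsilon^\ast$-norm. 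The viscous term and the forcing term are harmless: the former is bounded by $c\nu\|\mathbf{u}^\varepsilon\|_{H^1(\Omega_\varepsilon)}$ and the latter is $\|\mathbf{f}^\varepsilon\|_{\mathcal{V}_\varepsilon^\ast}$. For the trilinear term I would invoke the bound \eqref{E:TrTS_Bdd} from Lemma \ref{L:Tri_TSS} with $\mathbf{z}=\mathbf{u}^\varepsilon$, giving
\begin{align*}
  \Bigl|\bigl((\mathbf{u}^\varepsilon\cdot\nabla)\mathbf{u}^\varepsilon,\bm{\psi}\bigr)_{L^2(\Omega_\varepsilon)}\Bigr| \leq c_\varepsilon\|\mathbf{u}^\varepsilon\|_{L^2(\Omega_\varepsilon)}^{1/4}\|\mathbf{u}^\varepsilon\|_{H^1(\Omega_\varepsilon)}^{7/4}\|\bm{\psi}\|_{H^1(\Omega_\varepsilon)},
\end{align*}
so that the $\mathcal{V}_\varepsilon^\ast$-norm of the trilinear term is controlled by $c_\varepsilon\|\mathbf{u}^\varepsilon\|_{L^2(\Omega_\varepsilon)}^{1/4}\|\mathbf{u}^\varepsilon\|_{H^1(\Omega_\varepsilon)}^{7/4}$. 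Using $\mathbf{u}^\varepsilon\in L^\infty_{\mathrm{loc}}([0,\infty);\mathcal{H}_\varepsilon)\cap L^2_{\mathrm{loc}}([0,\infty);\mathcal{V}_\varepsilon)$, the exponent $7/4$ on the $H^1$-norm yields a $\mathcal{V}_\varepsilon^\ast$-valued bound that is integrable to the power $8/7$ in time; combined with the $L^2$ contribution of the other two terms this shows $\partial_t\mathbf{u}^\varepsilon\in L^{4/3}_{\mathrm{loc}}([0,\infty);\mathcal{V}_\varepsilon^\ast)$, matching the regularity quoted in the introduction.

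From $\mathbf{u}^\varepsilon\in L^2_{\mathrm{loc}}(\mathcal{V}_\varepsilon)$ with $\partial_t\mathbf{u}^\varepsilon\in L^{4/3}_{\mathrm{loc}}(\mathcal{V}_\varepsilon^\ast)$ one gets, after the standard mollification-in-time argument, that $\mathbf{u}^\varepsilon$ coincides a.e.\ with a function in $C([0,T];\mathcal{V}_\varepsilon^\ast)$, and in particular $\mathbf{u}^\varepsilon\in C_{\mathrm{weak}}([0,T];\mathcal{V}_\varepsilon^\ast)$. Taking $\mathcal{Y}=\mathcal{V}_\varepsilon^\ast$ (into which $\mathcal{H}_\varepsilon$ embeds continuously via the Gelfand triple $\mathcal{V}_\varepsilon\hookrightarrow\mathcal{H}_\varepsilon\hookrightarrow\mathcal{V}_\varepsilon^\ast$) and $\mathcal{X}=\mathcal{H}_\varepsilon$, Lemma \ref{L:BS_WeCo} then upgrades this to $\mathbf{u}^\varepsilon\in C_{\mathrm{weak}}([0,T];\mathcal{H}_\varepsilon)$, which is the claim. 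The main delicate point is ensuring the trilinear term is genuinely a locally integrable $\mathcal{V}_\varepsilon^\ast$-valued function with an exponent summing correctly in time; once \eqref{E:TrTS_Bdd} is in hand this is a routine Hölder-in-time computation, but it is the only place where the nonlinearity (and hence the reduced $L^{4/3}$ rather than $L^2$ integrability of $\partial_t\mathbf{u}^\varepsilon$) enters, so it deserves care.
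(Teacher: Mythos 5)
Your route is the same as the paper's: extract a bound on $\partial_t\mathbf{u}^\varepsilon$ in a $\mathcal{V}_\varepsilon^\ast$-valued Bochner space from the weak form \eqref{E:WF_TSS}, conclude $\mathbf{u}^\varepsilon\in C([0,T];\mathcal{V}_\varepsilon^\ast)\subset C_{\mathrm{weak}}([0,T];\mathcal{V}_\varepsilon^\ast)$, and upgrade via Lemma \ref{L:BS_WeCo} with $\mathcal{X}=\mathcal{H}_\varepsilon$, $\mathcal{Y}=\mathcal{V}_\varepsilon^\ast$, using $\mathbf{u}^\varepsilon\in L^\infty(0,T;\mathcal{H}_\varepsilon)$. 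However, your handling of the trilinear term contains a concrete quantitative error. The paper does \emph{not} apply \eqref{E:TrTS_Bdd} directly with $\mathbf{z}=\mathbf{u}^\varepsilon$; it first antisymmetrizes via \eqref{E:TrTS_AnS}, writing $\bigl((\mathbf{u}^\varepsilon\cdot\nabla)\mathbf{u}^\varepsilon,\bm{\psi}\bigr)_{L^2(\Omega_\varepsilon)}=-\bigl(\mathbf{u}^\varepsilon,(\mathbf{u}^\varepsilon\cdot\nabla)\bm{\psi}\bigr)_{L^2(\Omega_\varepsilon)}$, so that the full gradient lands on the test function and only $L^4$-interpolation norms of $\mathbf{u}^\varepsilon$ appear; this yields the bound $c_\varepsilon\|\mathbf{u}^\varepsilon\|_{L^2(\Omega_\varepsilon)}^{1/2}\|\mathbf{u}^\varepsilon\|_{H^1(\Omega_\varepsilon)}^{3/2}\|\bm{\psi}\|_{H^1(\Omega_\varepsilon)}$, whose exponent $3/2$ is exactly what makes the time integrability come out as $L^{4/3}$ (dualizing against $\bm{\psi}\in L^4(0,T;\mathcal{V}_\varepsilon)$). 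Your direct application of \eqref{E:TrTS_Bdd} produces the exponent $7/4$ on $\|\mathbf{u}^\varepsilon\|_{H^1(\Omega_\varepsilon)}$, and with $\mathbf{u}^\varepsilon\in L^2(0,T;\mathcal{V}_\varepsilon)$ this is integrable in time only to the power $8/7$, as you yourself note. Since $8/7<4/3$, your final assertion that the computation shows $\partial_t\mathbf{u}^\varepsilon\in L^{4/3}_{\mathrm{loc}}([0,\infty);\mathcal{V}_\varepsilon^\ast)$ ``matching the regularity quoted in the introduction'' is a non sequitur: as written you obtain only $\partial_t\mathbf{u}^\varepsilon\in L^{8/7}_{\mathrm{loc}}([0,\infty);\mathcal{V}_\varepsilon^\ast)$, a strictly weaker space on bounded intervals.

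This slip does not invalidate the proposition itself: any $L^1_{\mathrm{loc}}$-in-time bound on $\partial_t\mathbf{u}^\varepsilon$ in $\mathcal{V}_\varepsilon^\ast$ already gives $\mathbf{u}^\varepsilon\in C([0,T];\mathcal{V}_\varepsilon^\ast)$, after which Lemma \ref{L:BS_WeCo} delivers $\mathbf{u}^\varepsilon\in C_{\mathrm{weak}}([0,T];\mathcal{H}_\varepsilon)$ exactly as you argue, so your proof of weak continuity stands once the exponent is corrected to $8/7$. But if you want the sharp $L^{4/3}$ regularity of $\partial_t\mathbf{u}^\varepsilon$, you must antisymmetrize first, as the paper does: the two uses of Lemma \ref{L:Tri_TSS} are not interchangeable, because in one the derivative is paid by the test function while in the other a gradient of $\mathbf{u}^\varepsilon$ must be absorbed, costing an extra quarter power of the $H^1$-norm.
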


\begin{proof}
  Let $\bm{\psi}\in C_c^1(0,T;\mathcal{V}_\varepsilon)$ in \eqref{E:WF_TSS}.
  Since
  \begin{align*}
    \Bigl|\bigl((\mathbf{u}^\varepsilon\cdot\nabla)\mathbf{u}^\varepsilon,\bm{\psi}\bigr)_{L^2(\Omega_\varepsilon)}\Bigr| = \Bigl|\bigl(\mathbf{u}^\varepsilon,(\mathbf{u}^\varepsilon\cdot\nabla)\bm{\psi}\bigr)_{L^2(\Omega_\varepsilon)}\Bigr| \leq c_\varepsilon\|\mathbf{u}^\varepsilon\|_{L^2(\Omega_\varepsilon)}^{1/2}\|\mathbf{u}^\varepsilon\|_{H^1(\Omega_\varepsilon)}^{3/2}\|\bm{\psi}\|_{H^1(\Omega_\varepsilon)}
  \end{align*}
  on $(0,T)$ by $\mathbf{u}^\varepsilon\in L^2(0,T;\mathcal{V}_\varepsilon)$ and Lemma \ref{L:Tri_TSS}, we see that
  \begin{align*}
    \left|\int_0^T\bigl((\mathbf{u}^\varepsilon\cdot\nabla)\mathbf{u}^\varepsilon,\bm{\psi}\bigr)_{L^2(\Omega_\varepsilon)}\,dt\right| \leq c_\varepsilon K_1^\varepsilon\|\bm{\psi}\|_{L^4(0,T;H^1(\Omega_\varepsilon))}
  \end{align*}
  by H\"{o}lder's inequality, where
  \begin{align*}
    K_1^\varepsilon := \|\mathbf{u}^\varepsilon\|_{L^\infty(0,T;L^2(\Omega_\varepsilon))}^{1/2}\|\mathbf{u}^\varepsilon\|_{L^2(0,T;H^1(\Omega_\varepsilon))}^{3/2}.
  \end{align*}
  We apply this inequality and H\"{o}lder's inequality to \eqref{E:WF_TSS} and use
  \begin{align*}
    \|\bm{\psi}\|_{L^2(0,T;H^1(\Omega_\varepsilon))} \leq T^{1/4}\|\bm{\psi}\|_{L^4(0,T;H^1(\Omega_\varepsilon))}
  \end{align*}
  by H\"{o}lder's inequality.
  Then, noting that $\bm{\psi}(0)=\mathbf{0}$, we find that
  \begin{align*}
    \left|\int_0^\infty(\mathbf{u}^\varepsilon,\partial_t\bm{\psi})_{L^2(\Omega_\varepsilon)}\,dt\right| \leq c_{\varepsilon,\nu}(K_1^\varepsilon+T^{1/4}K_2^\varepsilon)\|\bm{\psi}\|_{L^4(0,T;H^1(\Omega_\varepsilon))}
  \end{align*}
  for $\bm{\psi}\in C_c^1(0,T;\mathcal{V}_\varepsilon)$, where $c_{\varepsilon,\nu}>0$ is a constant depending on $\varepsilon$ and $\nu$, and
  \begin{align*}
    K_2^\varepsilon := \|\mathbf{D}(\mathbf{u}^\varepsilon)\|_{L^2(0,T;L^2(\Omega_\varepsilon))}+\|\mathbf{f}^\varepsilon\|_{L^2(0,T;\mathcal{V}_\varepsilon^\ast)}.
  \end{align*}
  Since $C_c^1(0,T;\mathcal{V}_\varepsilon)$ is dense in $L^4(0,T;\mathcal{V}_\varepsilon)$, it follows that
  \begin{align*}
    \partial_t\mathbf{u}^\varepsilon \in [L^4(0,T;\mathcal{V}_\varepsilon)]^\ast = L^{4/3}(0,T;\mathcal{V}_\varepsilon^\ast).
  \end{align*}
  By this result and $\mathbf{u}^\varepsilon\in L^2(0,T;\mathcal{V}_\varepsilon)\subset L^2(0,T;\mathcal{V}_\varepsilon^\ast)$, we get
  \begin{align*}
    \mathbf{u}^\varepsilon \in C([0,T];\mathcal{V}_\varepsilon^\ast) \subset C_{\mathrm{weak}}([0,T];\mathcal{V}_\varepsilon^\ast).
  \end{align*}
  Since $\mathbf{u}^\varepsilon$ is also in $L^\infty(0,T;\mathcal{H}_\varepsilon)$, Lemma \ref{L:BS_WeCo} yields $\mathbf{u}^\varepsilon\in C_{\mathrm{weak}}([0,T];\mathcal{H}_\varepsilon)$.
\end{proof}

The space $\mathcal{V}_\varepsilon$ can be seen as a subspace of the separable space $L^2(\Omega_\varepsilon)^{12}$ by
\begin{align*}
  \{(\mathbf{u},\nabla\mathbf{u}) \mid \mathbf{u}\in\mathcal{V}_\varepsilon\} \subset L^2(\Omega_\varepsilon)^{12}.
\end{align*}
Thus, $\mathcal{V}_\varepsilon$ is separable and we can take a countable basis $\{\mathbf{u}_k\}_{k=1}^\infty$ of $\mathcal{V}_\varepsilon$.
We may assume that $\{\mathbf{u}_k\}_{k=1}^\infty$ is an orthonormal basis of $\mathcal{H}_\varepsilon$ by applying the Gram--Schmidt orthonormalization to $\{\mathbf{u}_k\}_{k=1}^\infty$ and denoting the resulting elements by $\{\mathbf{u}_k\}_{k=1}^\infty$ again.
Using this basis and the inequality \eqref{E:Korn_Thin}, we can carry out the Galerkin method to get the next existence result as in the case of the no-slip boundary condition (but the uniqueness is open).

\begin{proposition} \label{P:TSS_Exist}
  For all $\mathbf{u}_0^\varepsilon\in\mathcal{H}_\varepsilon$ and $\mathbf{f}^\varepsilon\in L_{\mathrm{loc}}^2([0,\infty);\mathcal{V}_\varepsilon^\ast)$, there exists at least one weak solution $\mathbf{u}^\varepsilon$ to \eqref{E:NS_TSS} that satisfies the energy inequality \eqref{E:TSS_Ener}.
\end{proposition}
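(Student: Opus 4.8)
The plan is to construct a solution by the Galerkin method, using the orthonormal basis $\{\mathbf{u}_k\}_{k=1}^\infty$ of $\mathcal{H}_\varepsilon$ (which also spans a dense subspace of $\mathcal{V}_\varepsilon$) fixed above, and then to recover the energy inequality \eqref{E:TSS_Ener} by weak lower semicontinuity. For each $m\in\mathbb{N}$, I would seek an approximate solution $\mathbf{u}_m^\varepsilon(t)=\sum_{k=1}^m g_{m,k}(t)\mathbf{u}_k$ satisfying, for $k=1,\dots,m$,
\begin{align*}
  \frac{d}{dt}(\mathbf{u}_m^\varepsilon,\mathbf{u}_k)_{L^2(\Omega_\varepsilon)}+2\nu\bigl(\mathbf{D}(\mathbf{u}_m^\varepsilon),\mathbf{D}(\mathbf{u}_k)\bigr)_{L^2(\Omega_\varepsilon)}+\bigl((\mathbf{u}_m^\varepsilon\cdot\nabla)\mathbf{u}_m^\varepsilon,\mathbf{u}_k\bigr)_{L^2(\Omega_\varepsilon)}=\langle\mathbf{f}^\varepsilon,\mathbf{u}_k\rangle_{\mathcal{V}_\varepsilon},
\end{align*}
with $\mathbf{u}_m^\varepsilon(0)$ the $\mathcal{H}_\varepsilon$-orthogonal projection of $\mathbf{u}_0^\varepsilon$ onto $\mathrm{span}\{\mathbf{u}_1,\dots,\mathbf{u}_m\}$. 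Since $\{\mathbf{u}_k\}$ is orthonormal in $\mathcal{H}_\varepsilon$, this is a system of ODEs for $(g_{m,1},\dots,g_{m,m})$ whose right-hand side is affine plus quadratic in the unknowns, hence locally Lipschitz; the Cauchy--Lipschitz theorem yields a unique solution on a maximal interval $[0,T_m)$.

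Next I would derive the a priori estimates. Multiplying the $k$-th equation by $g_{m,k}(t)$ and summing over $k$ amounts to testing with $\mathbf{u}_m^\varepsilon$ itself; since $\mathbf{u}_m^\varepsilon\in\mathcal{V}_\varepsilon$, the trilinear term vanishes by the second identity in \eqref{E:TrTS_AnS}, leaving
\begin{align*}
  \frac{1}{2}\frac{d}{dt}\|\mathbf{u}_m^\varepsilon\|_{L^2(\Omega_\varepsilon)}^2+2\nu\|\mathbf{D}(\mathbf{u}_m^\varepsilon)\|_{L^2(\Omega_\varepsilon)}^2=\langle\mathbf{f}^\varepsilon,\mathbf{u}_m^\varepsilon\rangle_{\mathcal{V}_\varepsilon}.
\end{align*}
Korn's inequality \eqref{E:Korn_Thin} makes $\|\mathbf{D}(\mathbf{u}_m^\varepsilon)\|_{L^2(\Omega_\varepsilon)}^2$ control $\|\mathbf{u}_m^\varepsilon\|_{H^1(\Omega_\varepsilon)}^2$ up to a lower-order $\|\mathbf{u}_m^\varepsilon\|_{L^2(\Omega_\varepsilon)}^2$ term (this is where the Navier condition, as opposed to no-slip, is accommodated, since $\mathcal{V}_\varepsilon$ is not contained in the $H_0^1$-type space), so estimating the right-hand side by $\|\mathbf{f}^\varepsilon\|_{\mathcal{V}_\varepsilon^\ast}\|\mathbf{u}_m^\varepsilon\|_{\mathcal{V}_\varepsilon}$ and absorbing through Young's and Gronwall's inequalities gives bounds for $\mathbf{u}_m^\varepsilon$ in $L^\infty(0,T;\mathcal{H}_\varepsilon)\cap L^2(0,T;\mathcal{V}_\varepsilon)$ uniform in $m$ for every fixed $T>0$. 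These bounds also preclude blow-up of $|g_{m,k}|$, so $T_m=\infty$. Using the trilinear bound \eqref{E:TrTS_Bdd} together with these estimates, I would then bound $\partial_t\mathbf{u}_m^\varepsilon$ (computed as the projection onto $\mathrm{span}\{\mathbf{u}_1,\dots,\mathbf{u}_m\}$ of the right-hand side) uniformly in $L^{4/3}(0,T;\mathcal{V}_\varepsilon^\ast)$.

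With these uniform bounds in hand, I would extract a subsequence converging weakly-$\ast$ in $L^\infty(0,T;\mathcal{H}_\varepsilon)$ and weakly in $L^2(0,T;\mathcal{V}_\varepsilon)$ to some $\mathbf{u}^\varepsilon$. The compact embedding $\mathcal{V}_\varepsilon\hookrightarrow\mathcal{H}_\varepsilon$ (Rellich, as $\Omega_\varepsilon$ is bounded and smooth) and the uniform $L^{4/3}(0,T;\mathcal{V}_\varepsilon^\ast)$ bound on $\partial_t\mathbf{u}_m^\varepsilon$ let the Aubin--Lions lemma upgrade this to strong convergence in $L^2(0,T;\mathcal{H}_\varepsilon)$. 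Testing the Galerkin equations against $\phi(t)\mathbf{u}_k$ with $\phi\in C_c^1([0,\infty))$ and passing to the limit in $m$, I would recover the weak formulation \eqref{E:WF_TSS} for such test functions; by linearity and density of finite linear combinations in $C_c^1([0,\infty);\mathcal{V}_\varepsilon)$, it then holds for all admissible $\bm{\psi}$. I expect the main obstacle to be the passage to the limit in the convection term: after rewriting it as $-(\mathbf{u}_m^\varepsilon,(\mathbf{u}_m^\varepsilon\cdot\nabla)\bm{\psi})_{L^2(\Omega_\varepsilon)}$ via \eqref{E:TrTS_AnS}, the strong $L^2$ convergence is exactly what is needed to pass the quadratic product to the limit (weak convergence alone would not suffice), after which the first identity in \eqref{E:TrTS_AnS} applied to $\mathbf{u}^\varepsilon$ returns the term to the form in \eqref{E:WF_TSS}.

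Finally, I would obtain the energy inequality. Each Galerkin solution satisfies the energy \emph{equality} obtained by integrating the identity above in time. Letting $m\to\infty$, the left-hand terms are weakly lower semicontinuous (the quantity $\|\mathbf{u}_m^\varepsilon(t)\|_{L^2(\Omega_\varepsilon)}$ using convergence in $\mathcal{H}_\varepsilon$, and $\int_0^t\|\mathbf{D}(\mathbf{u}_m^\varepsilon)\|_{L^2(\Omega_\varepsilon)}^2\,ds$ using weak convergence in $L^2(0,t;\mathcal{V}_\varepsilon)$), while the right-hand terms converge ($\mathbf{u}_m^\varepsilon(0)\to\mathbf{u}_0^\varepsilon$ in $\mathcal{H}_\varepsilon$ and the forcing term by weak $L^2(0,t;\mathcal{V}_\varepsilon)$ convergence). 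This yields \eqref{E:TSS_Ener}. I emphasize that one only obtains an inequality rather than an equality precisely because $\partial_t\mathbf{u}^\varepsilon$ lies merely in $L^{4/3}(0,T;\mathcal{V}_\varepsilon^\ast)$, so Lemma \ref{L:BS_Ipl} does not apply to justify testing the limit equation with $\mathbf{u}^\varepsilon$ itself; this lack of integrability of the time derivative is also the source of the open uniqueness problem.
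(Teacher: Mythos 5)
Your proposal follows exactly the route the paper intends: the paper gives no detailed proof of Proposition \ref{P:TSS_Exist}, only the remark preceding it that one takes a countable basis of $\mathcal{V}_\varepsilon$, Gram--Schmidt orthonormalized in $\mathcal{H}_\varepsilon$, and carries out the Galerkin method ``as in the case of the no-slip boundary condition,'' with the Korn inequality \eqref{E:Korn_Thin} supplying the coercivity that the Poincar\'e inequality provides under no-slip. Your energy estimate, the vanishing of the trilinear term via \eqref{E:TrTS_AnS}, the compactness step, the limit passage in the convection term using strong $L^2$ convergence, and the recovery of \eqref{E:TSS_Ener} by weak lower semicontinuity are precisely the standard ingredients being invoked.

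One step needs repair as written. You bound $\partial_t\mathbf{u}_m^\varepsilon$ in $L^{4/3}(0,T;\mathcal{V}_\varepsilon^\ast)$ by viewing it as the $\mathcal{H}_\varepsilon$-orthogonal projection $P_m$ of the right-hand side onto $\mathrm{span}\{\mathbf{u}_1,\dots,\mathbf{u}_m\}$; but estimating $\|\partial_t\mathbf{u}_m^\varepsilon\|_{\mathcal{V}_\varepsilon^\ast}=\sup_{\|\bm{\psi}\|_{\mathcal{V}_\varepsilon}\leq1}\langle\partial_t\mathbf{u}_m^\varepsilon,P_m\bm{\psi}\rangle_{\mathcal{V}_\varepsilon}$ requires $\sup_m\|P_m\|_{\mathcal{V}_\varepsilon\to\mathcal{V}_\varepsilon}<\infty$, and a basis produced by Gram--Schmidt from an arbitrary dense family of $\mathcal{V}_\varepsilon$ need not satisfy this ($P_m$ has norm $1$ only on $\mathcal{H}_\varepsilon$). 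Two standard fixes: (i) take instead the special basis of eigenfunctions of the Stokes-type operator associated with the bilinear form $2\nu\bigl(\mathbf{D}(\cdot),\mathbf{D}(\cdot)\bigr)_{L^2(\Omega_\varepsilon)}+(\cdot,\cdot)_{L^2(\Omega_\varepsilon)}$ on $\mathcal{V}_\varepsilon$, which is self-adjoint with compact resolvent by Rellich and \eqref{E:Korn_Thin}, so that $P_m$ is orthogonal in both $\mathcal{H}_\varepsilon$ and an equivalent $\mathcal{V}_\varepsilon$ inner product; or (ii) bypass the time-derivative bound altogether and obtain strong $L^2(0,T;\mathcal{H}_\varepsilon)$ compactness via fractional-in-time estimates or the equicontinuity form of the Aubin--Lions--Simon lemma. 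A second, smaller point: strong $L^2(0,T;\mathcal{H}_\varepsilon)$ convergence yields $\|\mathbf{u}^\varepsilon(t)\|_{L^2(\Omega_\varepsilon)}\leq\liminf_{m\to\infty}\|\mathbf{u}_m^\varepsilon(t)\|_{L^2(\Omega_\varepsilon)}$ only for a.a.\ $t$ (along a further subsequence); since \eqref{E:TSS_Ener} is assumed for all $t\geq0$ in Theorem \ref{T:Diff_Est}, you should upgrade the a.e.\ inequality using the weak continuity $\mathbf{u}^\varepsilon\in C_{\mathrm{weak}}([0,T];\mathcal{H}_\varepsilon)$ of Proposition \ref{P:TSS_WeCo} together with the continuity in $t$ of the remaining terms.
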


\subsection{Problem on the unit sphere} \label{SS:We_Sph}
Next, we consider the problem \eqref{E:NS_S2} on $S^2$.
Let $\mathbf{v}_1$ and $\mathbf{v}_2$ be smooth tangential vector fields on $S^2$.
Then, carrying out integration by parts \eqref{E:IbP_S2} componentwisely, we see that
\begin{align*}
  \int_{S^2}\Bigl(\mathbf{P}\,\mathrm{div}_{S^2}[\mathbf{D}_{S^2}(\mathbf{v}_1)]\Bigr)\cdot\mathbf{v}_2\,d\mathcal{H}^2 &= \int_{S^2}\Bigl(\mathrm{div}_{S^2}[\mathbf{D}_{S^2}(\mathbf{v}_1)]\Bigr)\cdot\mathbf{v}_2\,d\mathcal{H}^2 \\
  &= -\int_{S^2}\mathbf{D}_{S^2}(\mathbf{v}_1):[\nabla_{S^2}\mathbf{v}_2-2\mathbf{n}\otimes\mathbf{v}_2]\,d\mathcal{H}^2,
\end{align*}
where $\mathbf{D}_{S^2}(\mathbf{v}_1)=\mathbf{P}(\nabla_{S^2}\mathbf{v}_1)_{\mathrm{S}}\mathbf{P}$.
Moreover, since $\mathbf{P}^{\mathrm{T}}=\mathbf{P}^2=\mathbf{P}$ and
\begin{align*}
  \mathbf{D}_{S^2}(\mathbf{v}_1) = \mathbf{P}\mathbf{D}_{S^2}(\mathbf{v}_1)\mathbf{P}, \quad \mathbf{P}(\mathbf{n}\otimes\mathbf{v}_2) = (\mathbf{P}\mathbf{n})\otimes\mathbf{v}_2 = \mathbf{0}\otimes\mathbf{v}_2 = \mathbf{O}_3,
\end{align*}
and since $\mathbf{D}_{S^2}(\mathbf{v}_1)$ is symmetric, we have
\begin{align*}
  \mathbf{D}_{S^2}(\mathbf{v}_1):[\nabla_{S^2}\mathbf{v}_2-2\mathbf{n}\otimes\mathbf{v}_2] &= \mathbf{D}_{S^2}(\mathbf{v}_1):\mathbf{P}[\nabla_{S^2}\mathbf{v}_2-2\mathbf{n}\otimes\mathbf{v}_2]\mathbf{P} \\
  &= \mathbf{D}_{S^2}(\mathbf{v}_1):\mathbf{P}(\nabla_{S^2}\mathbf{v}_2)\mathbf{P} \\
  &= \mathbf{D}_{S^2}(\mathbf{v}_1):\mathbf{D}_{S^2}(\mathbf{v}_2)
\end{align*}
on $S^2$.
Hence, we obtain
\begin{align*}
  \int_{S^2}\Bigl(\mathbf{P}\,\mathrm{div}_{S^2}[\mathbf{D}_{S^2}(\mathbf{v}_1)]\Bigr)\cdot\mathbf{v}_2\,d\mathcal{H}^2 = -\int_{S^2}\mathbf{D}_{S^2}(\mathbf{v}_1):\mathbf{D}_{S^2}(\mathbf{v}_2)\,d\mathcal{H}^2.
\end{align*}
Using this formula, we give the definition of a weak solution to \eqref{E:NS_S2} as follows.

\begin{definition} \label{D:WS_S2}
  For given data
  \begin{align*}
    \mathbf{v}_0 \in \mathcal{H}_0, \quad \mathbf{f} \in L_{\mathrm{loc}}^2([0,\infty);\mathcal{V}_0^\ast),
  \end{align*}
  we say that $\mathbf{v}$ is a weak solution to \eqref{E:NS_S2} if
  \begin{align*}
    \mathbf{v} \in L_{\mathrm{loc}}^\infty([0,\infty);\mathcal{H}_0)\cap L_{\mathrm{loc}}^2([0,\infty);\mathcal{V}_0)
  \end{align*}
  and the following equality holds for all $\bm{\zeta}\in C_c^1([0,\infty);\mathcal{V}_0)$:
  \begin{multline} \label{E:WF_S2}
    -\int_0^\infty(\mathbf{v},\partial_t\bm{\zeta})_{L^2(S^2)}\,dt+2\nu\int_0^\infty\bigl(\mathbf{D}_{S^2}(\mathbf{v}),\mathbf{D}_{S^2}(\bm{\zeta})\bigr)_{L^2(S^2)}\,dt \\
    +\int_0^\infty(\nabla_{\mathbf{v}}\mathbf{v},\bm{\zeta})_{L^2(S^2)}\,dt = \bigl(\mathbf{v}_0,\bm{\zeta}(0)\bigr)_{L^2(S^2)}+\int_0^\infty\langle\mathbf{f},\bm{\zeta}\rangle_{\mathcal{V}_0}\,dt.
  \end{multline}
\end{definition}

Since $S^2$ is of dimension two, a weak solution to \eqref{E:NS_S2} has better properties than those of a weak solution to \eqref{E:NS_TSS} stated in Proposition \ref{P:TSS_WeCo}.

\begin{lemma} \label{L:Tri_S2}
  For $\mathbf{v},\mathbf{w},\bm{\zeta}\in H_\tau^1(S^2)$, we have
  \begin{align} \label{E:TrS2_Bdd}
    |(\nabla_{\mathbf{v}}\mathbf{w},\bm{\zeta})_{L^2(S^2)}| \leq c\|\mathbf{v}\|_{L^2(S^2)}^{1/2}\|\mathbf{v}\|_{H^1(S^2)}^{1/2}\|\mathbf{w}\|_{H^1(S^2)}\|\bm{\zeta}\|_{L^2(S^2)}^{1/2}\|\bm{\zeta}\|_{H^1(S^2)}^{1/2}.
  \end{align}
  Moreover, if $\mathbf{v}\in\mathcal{V}_0$, then
  \begin{align} \label{E:TrS2_AnS}
    (\nabla_{\mathbf{v}}\mathbf{w},\bm{\zeta})_{L^2(S^2)} = -(\mathbf{w},\nabla_{\mathbf{v}}\bm{\zeta})_{L^2(S^2)}, \quad (\nabla_{\mathbf{v}}\mathbf{w},\mathbf{w})_{L^2(S^2)} = 0.
  \end{align}
\end{lemma}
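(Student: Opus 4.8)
The plan is to mirror the proof of Lemma \ref{L:Tri_TSS}: first establish the continuity estimate \eqref{E:TrS2_Bdd}, and then combine it with a density reduction to smooth tangential fields in order to prove the integration-by-parts identities \eqref{E:TrS2_AnS}. A useful preliminary observation, used throughout, is that since $\bm{\zeta}$ is tangential and $\mathbf{P}$ is symmetric with $\mathbf{P}\bm{\zeta}=\bm{\zeta}$, we have $(\nabla_{\mathbf{v}}\mathbf{w},\bm{\zeta})_{L^2(S^2)}=(\mathbf{P}[(\mathbf{v}\cdot\nabla_{S^2})\mathbf{w}],\bm{\zeta})_{L^2(S^2)}=((\mathbf{v}\cdot\nabla_{S^2})\mathbf{w},\bm{\zeta})_{L^2(S^2)}$, so the covariant derivative can be replaced by the plain tangential directional derivative inside the $L^2(S^2)$-pairing against a tangential field.

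For the bound \eqref{E:TrS2_Bdd}, I would apply H\"older's inequality to get $|((\mathbf{v}\cdot\nabla_{S^2})\mathbf{w},\bm{\zeta})_{L^2(S^2)}|\le\|\mathbf{v}\|_{L^4(S^2)}\|\nabla_{S^2}\mathbf{w}\|_{L^2(S^2)}\|\bm{\zeta}\|_{L^4(S^2)}$, then use Ladyzhenskaya's inequality \eqref{E:Lad_S2} componentwise on $\mathbf{v}$ and $\bm{\zeta}$ together with $\|\nabla_{S^2}\mathbf{w}\|_{L^2(S^2)}\le\|\mathbf{w}\|_{H^1(S^2)}$. This is routine once the preliminary reduction is in place.

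For the antisymmetry, I would first prove the general identity $((\mathbf{v}\cdot\nabla_{S^2})\mathbf{w},\bm{\zeta})_{L^2(S^2)}=-(\mathbf{w},(\mathbf{v}\cdot\nabla_{S^2})\bm{\zeta})_{L^2(S^2)}-((\mathrm{div}_{S^2}\mathbf{v})\mathbf{w},\bm{\zeta})_{L^2(S^2)}$ for all $\mathbf{v},\mathbf{w},\bm{\zeta}\in H_\tau^1(S^2)$. Writing this componentwise, it comes from the surface integration-by-parts formula \eqref{E:IbP_S2} applied with $\eta=w_j$ and the test function $v_i\zeta_j$, using $\underline{D}_i(v_i\zeta_j)=(\underline{D}_iv_i)\zeta_j+v_i\underline{D}_i\zeta_j$ and noting that the curvature contribution $\sum_i 2v_i\zeta_jn_i=2(\mathbf{v}\cdot\mathbf{n})\zeta_j$ vanishes since $\mathbf{v}$ is tangential; summing over $i$ produces the divergence term and the $(\mathbf{v}\cdot\nabla_{S^2})\bm{\zeta}$ term. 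All three terms are continuous trilinear forms in the $H^1(S^2)$-topology (the first two by \eqref{E:TrS2_Bdd} and its analogue with the roles of $\mathbf{w}$ and $\bm{\zeta}$ interchanged, the divergence term by H\"older and \eqref{E:Lad_S2} via $\|\mathrm{div}_{S^2}\mathbf{v}\|_{L^2(S^2)}\|\mathbf{w}\|_{L^4(S^2)}\|\bm{\zeta}\|_{L^4(S^2)}$), so I would justify the identity at this regularity by approximating $\mathbf{v},\mathbf{w},\bm{\zeta}$ with smooth tangential fields through Lemma \ref{L:Den_S2} and passing to the limit. Once this is done, taking $\mathbf{v}\in\mathcal{V}_0$ kills the divergence term, and using $\mathbf{P}\mathbf{w}=\mathbf{w}$, $\mathbf{P}\bm{\zeta}=\bm{\zeta}$ to rewrite the directional derivatives as covariant derivatives yields the first relation of \eqref{E:TrS2_AnS}; the second relation follows by setting $\bm{\zeta}=\mathbf{w}$.

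The main obstacle I anticipate is purely one of regularity bookkeeping in the integration by parts: the natural test function $v_i\zeta_j$ is a product of two $H^1(S^2)$ functions and hence lies only in $W^{1,4/3}(S^2)$, not in $H^1(S^2)$, so \eqref{E:IbP_S2} cannot be invoked directly at the stated regularity. The density reduction to smooth tangential fields described above is precisely what circumvents this; the only point demanding care is to check that every term—including the divergence term that ultimately drops out—is dominated by the $H^1(S^2)$-norms, so that the passage to the limit through the approximating sequences is legitimate.
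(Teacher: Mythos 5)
Your proposal is correct, and its skeleton (H\"older plus Ladyzhenskaya for \eqref{E:TrS2_Bdd}, then Lemma \ref{L:Den_S2} and the continuity of the trilinear form to reduce \eqref{E:TrS2_AnS} to smooth tangential fields, with the surface integration by parts \eqref{E:IbP_S2} doing the real work) matches the paper's. The one genuine difference is in how the antisymmetry is organized. You mollify all three fields and therefore must prove the more general identity
\begin{align*}
  \bigl((\mathbf{v}\cdot\nabla_{S^2})\mathbf{w},\bm{\zeta}\bigr)_{L^2(S^2)} = -\bigl(\mathbf{w},(\mathbf{v}\cdot\nabla_{S^2})\bm{\zeta}\bigr)_{L^2(S^2)}-\bigl((\mathrm{div}_{S^2}\mathbf{v})\mathbf{w},\bm{\zeta}\bigr)_{L^2(S^2)},
\end{align*}
obtained componentwise from \eqref{E:IbP_S2} with test function $v_i\zeta_j$, and you then kill the divergence term at the end by $\mathbf{v}\in\mathcal{V}_0$; this forces you to verify $H^1$-continuity of the divergence term as well, which you do correctly via $\|\mathrm{div}_{S^2}\mathbf{v}\|_{L^2(S^2)}\|\mathbf{w}\|_{L^4(S^2)}\|\bm{\zeta}\|_{L^4(S^2)}$ and \eqref{E:Lad_S2} (and, since the approximants from Lemma \ref{L:Den_S2} are tangential, the curvature contribution $2(\mathbf{v}\cdot\mathbf{n})$ indeed vanishes along the whole sequence). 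The paper instead keeps $\mathbf{v}\in\mathcal{V}_0$ fixed and smooths only $\mathbf{w}$ and $\bm{\zeta}$: setting the scalar $\eta:=\mathbf{w}\cdot\bm{\zeta}\in C^\infty(S^2)$, the product rule gives $\mathbf{v}\cdot\nabla_{S^2}\eta=\nabla_{\mathbf{v}}\mathbf{w}\cdot\bm{\zeta}+\mathbf{w}\cdot\nabla_{\mathbf{v}}\bm{\zeta}$, and a single application of \eqref{E:IbP_S2} yields $\int_{S^2}\mathbf{v}\cdot\nabla_{S^2}\eta\,d\mathcal{H}^2=-\int_{S^2}(\mathrm{div}_{S^2}\mathbf{v}-2\mathbf{v}\cdot\mathbf{n})\eta\,d\mathcal{H}^2=0$, so the divergence term never enters and the regularity issue you flag (the product $v_i\zeta_j$ of two $H^1$ functions lying only in $W^{1,4/3}(S^2)$) is sidestepped by pairing the $H^1$ field $\mathbf{v}$ against the smooth scalar $\eta$ rather than by approximating $\mathbf{v}$. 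Your route costs a little more bookkeeping but buys a Leibniz-type antisymmetry identity valid for arbitrary, not necessarily solenoidal, $\mathbf{v}\in H_\tau^1(S^2)$; the paper's is shorter but tied to $\mathbf{v}\in\mathcal{V}_0$. Both are complete and rigorous.
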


\begin{proof}
  Since $\nabla_{\mathbf{v}}\mathbf{w}=\mathbf{P}[(\mathbf{v}\cdot\nabla_{S^2})\mathbf{w}]$ on $S^2$, we see by H\"{o}lder's inequality that
  \begin{align*}
    |(\nabla_{\mathbf{v}}\mathbf{w},\bm{\zeta})_{L^2(S^2)}| \leq c\int_{S^2}|\mathbf{v}|\,|\nabla_{S^2}\mathbf{w}|\,|\bm{\zeta}|\,d\mathcal{H}^2 \leq c\|\mathbf{v}\|_{L^4(S^2)}\|\nabla_{S^2}\mathbf{w}\|_{L^2(S^2)}\|\bm{\zeta}\|_{L^4(S^2)}.
  \end{align*}
  Applying \eqref{E:Lad_S2} to the right-hand side, we obtain \eqref{E:TrS2_Bdd}.

  Next, we prove \eqref{E:TrS2_AnS}.
  By Lemma \ref{L:Den_S2}, the estimate \eqref{E:TrS2_Bdd}, and a density argument, it is sufficient to consider the case $\mathbf{w},\bm{\zeta}\in C_\tau^\infty(S^2)$.
  Let $\eta:=\mathbf{w}\cdot\bm{\zeta}$ on $S^2$.
  Then,
  \begin{align} \label{Pf_TS2:Dr}
    \mathbf{v}\cdot\nabla_{S^2}\eta = [(\mathbf{v}\cdot\nabla_{S^2})\mathbf{w}]\cdot\bm{\zeta}+\mathbf{w}\cdot[(\mathbf{v}\cdot\nabla_{S^2})\bm{\zeta}] = \nabla_{\mathbf{v}}\mathbf{w}\cdot\bm{\zeta}+\mathbf{w}\cdot\nabla_{\mathbf{v}}\bm{\zeta}
  \end{align}
  on $S^2$, since $\mathbf{w}$ are $\bm{\zeta}$ are tangential on $S^2$.
  Moreover, since $\eta\in C^\infty(S^2)$ and $\mathbf{v}\in\mathcal{V}_0$, we can carry out integration by parts \eqref{E:IbP_S2} to get
  \begin{align*}
     \int_{S^2}\mathbf{v}\cdot\nabla_{S^2}\eta\,d\mathcal{H}^2 = -\int_{S^2}(\mathrm{div}_{S^2}\mathbf{v}-2\mathbf{v}\cdot\mathbf{n})\eta\,d\mathcal{H}^2 = 0.
  \end{align*}
  Thus, integrating \eqref{Pf_TS2:Dr} over $S^2$, we obtain the first equality of \eqref{E:TrS2_AnS}.
  Also, setting $\bm{\zeta}=\mathbf{w}$ in the first equality, we get the second equality of \eqref{E:TrS2_AnS}.
\end{proof}

\begin{proposition} \label{P:S2_Cont}
  For given $\mathbf{v}_0\in\mathcal{H}_0$ and $\mathbf{f}\in L_{\mathrm{loc}}^2([0,\infty);\mathcal{V}_0^\ast)$, let $\mathbf{v}$ be a weak solution to \eqref{E:NS_S2}.
  Then, for all $T>0$, we have
  \begin{align*}
    \mathbf{v}\in \mathbb{E}_T(\mathcal{V}_0) \subset C([0,T];\mathcal{H}_0),
  \end{align*}
  where $\mathbb{E}_T(\mathcal{V}_0)$ is the function space given by \eqref{E:Def_ETV}, and $\mathbf{v}$ satisfies
  \begin{multline} \label{E:WFS2_Ano}
    \int_0^T\langle\partial_t\mathbf{v},\bm{\zeta}\rangle_{\mathcal{V}_0}\,dt+2\nu\int_0^T\bigl(\mathbf{D}_{S^2}(\mathbf{v}),\mathbf{D}_{S^2}(\bm{\zeta})\bigr)_{L^2(S^2)}\,dt \\
    +\int_0^T(\nabla_{\mathbf{v}}\mathbf{v},\bm{\zeta})_{L^2(S^2)}\,dt = \int_0^T\langle\mathbf{f},\bm{\zeta}\rangle_{\mathcal{V}_0}\,dt
  \end{multline}
  for all $\bm{\zeta}\in L^2(0,T;\mathcal{V}_0)$.
  Also, for all $t\in[0,\infty)$, we have the energy equality
  \begin{align} \label{E:S2_Ener}
    \frac{1}{2}\|\mathbf{v}(t)\|_{L^2(S^2)}^2+2\nu\int_0^t\|\mathbf{D}_{S^2}(\mathbf{v})\|_{L^2(S^2)}^2\,dt = \frac{1}{2}\|\mathbf{v}_0\|_{L^2(S^2)}^2+\int_0^t\langle\mathbf{f},\mathbf{v}\rangle_{\mathcal{V}_0}\,ds.
  \end{align}
\end{proposition}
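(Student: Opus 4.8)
The plan is to follow the standard weak-solution theory for the two-dimensional Navier--Stokes equations, the crucial point being that in dimension two the time derivative lies in $L^2(0,T;\mathcal{V}_0^\ast)$ rather than merely in $L^{4/3}$, which is what makes the energy relation an \emph{equality}. I would organize the argument in three steps: identify $\partial_t\mathbf{v}$ and deduce $\mathbf{v}\in\mathbb{E}_T(\mathcal{V}_0)\subset C([0,T];\mathcal{H}_0)$; upgrade \eqref{E:WF_S2} to the integrated weak form \eqref{E:WFS2_Ano}; and finally test with $\mathbf{v}$ itself to obtain \eqref{E:S2_Ener}.

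First I would identify the time derivative. Testing \eqref{E:WF_S2} with $\bm{\zeta}(t)=\phi(t)\mathbf{w}$ for $\phi\in C_c^1(0,T)$ and $\mathbf{w}\in\mathcal{V}_0$ shows that, in the distributional sense on $(0,T)$,
\[
  \langle\partial_t\mathbf{v},\mathbf{w}\rangle_{\mathcal{V}_0} = -2\nu\bigl(\mathbf{D}_{S^2}(\mathbf{v}),\mathbf{D}_{S^2}(\mathbf{w})\bigr)_{L^2(S^2)}-(\nabla_{\mathbf{v}}\mathbf{v},\mathbf{w})_{L^2(S^2)}+\langle\mathbf{f},\mathbf{w}\rangle_{\mathcal{V}_0}.
\]
The viscous term is bounded by $c\|\mathbf{v}\|_{H^1(S^2)}\|\mathbf{w}\|_{H^1(S^2)}$, hence defines an element of $L^2(0,T;\mathcal{V}_0^\ast)$ since $\mathbf{v}\in L^2(0,T;\mathcal{V}_0)$; the force is in $L^2(0,T;\mathcal{V}_0^\ast)$ by assumption. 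The decisive step is the trilinear term: for a.a. $t$ we have $\mathbf{v}\in\mathcal{V}_0$, so the antisymmetry \eqref{E:TrS2_AnS} gives $(\nabla_{\mathbf{v}}\mathbf{v},\mathbf{w})_{L^2(S^2)}=-(\nabla_{\mathbf{v}}\mathbf{w},\mathbf{v})_{L^2(S^2)}$, and then \eqref{E:TrS2_Bdd} (applied with $\mathbf{v}$ in the last slot) yields
\[
  |(\nabla_{\mathbf{v}}\mathbf{v},\mathbf{w})_{L^2(S^2)}| \leq c\|\mathbf{v}\|_{L^2(S^2)}\|\mathbf{v}\|_{H^1(S^2)}\|\mathbf{w}\|_{H^1(S^2)}.
\]
Thus $\|\nabla_{\mathbf{v}}\mathbf{v}\|_{\mathcal{V}_0^\ast}\leq c\|\mathbf{v}\|_{L^2(S^2)}\|\mathbf{v}\|_{H^1(S^2)}$, and combining $\mathbf{v}\in L^\infty(0,T;\mathcal{H}_0)$ with $\mathbf{v}\in L^2(0,T;\mathcal{V}_0)$ shows this is in $L^2(0,T;\mathcal{V}_0^\ast)$. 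Hence $\partial_t\mathbf{v}\in L^2(0,T;\mathcal{V}_0^\ast)$, so $\mathbf{v}\in\mathbb{E}_T(\mathcal{V}_0)$, and Lemma \ref{L:BS_Ipl} (applicable by Lemma \ref{L:Sol_S2}) gives $\mathbf{v}\in C([0,T];\mathcal{H}_0)$.

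Next I would derive \eqref{E:WFS2_Ano}. With $\partial_t\mathbf{v}$ now identified as an element of $L^2(0,T;\mathcal{V}_0^\ast)$, the displayed identity holds in $\mathcal{V}_0^\ast$ for a.a. $t$; pairing it with an arbitrary $\bm{\zeta}\in L^2(0,T;\mathcal{V}_0)$ and integrating over $(0,T)$ produces \eqref{E:WFS2_Ano} directly, the trilinear integral being finite by the estimate above and $\bm{\zeta}\in L^2(0,T;\mathcal{V}_0)$. For the energy equality I would then take $\bm{\zeta}=\mathbf{v}$ on $(0,t)$, which is now admissible because $\mathbf{v}\in\mathbb{E}_T(\mathcal{V}_0)$. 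Formula \eqref{E:Dt_IbP} of Lemma \ref{L:BS_Ipl} with $u_1=u_2=\mathbf{v}$ gives $\int_0^t\langle\partial_t\mathbf{v},\mathbf{v}\rangle_{\mathcal{V}_0}\,ds=\tfrac{1}{2}(\|\mathbf{v}(t)\|_{L^2(S^2)}^2-\|\mathbf{v}_0\|_{L^2(S^2)}^2)$, where $\mathbf{v}(0)=\mathbf{v}_0$ follows from $\mathbf{v}\in C([0,T];\mathcal{H}_0)$ and the initial datum built into \eqref{E:WF_S2}. Using $(\mathbf{D}_{S^2}(\mathbf{v}),\mathbf{D}_{S^2}(\mathbf{v}))_{L^2(S^2)}=\|\mathbf{D}_{S^2}(\mathbf{v})\|_{L^2(S^2)}^2$ and the cancellation $(\nabla_{\mathbf{v}}\mathbf{v},\mathbf{v})_{L^2(S^2)}=0$ from \eqref{E:TrS2_AnS} then yields \eqref{E:S2_Ener}.

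I expect the main obstacle to be the first step. Obtaining $L^2$-in-time integrability of $\partial_t\mathbf{v}$ genuinely exploits the two-dimensionality through the combination of the antisymmetry \eqref{E:TrS2_AnS} with the Ladyzhenskaya-type estimate \eqref{E:TrS2_Bdd}: the naive bound $\|\nabla_{\mathbf{v}}\mathbf{v}\|_{\mathcal{V}_0^\ast}\leq c\|\mathbf{v}\|_{L^2(S^2)}^{1/2}\|\mathbf{v}\|_{H^1(S^2)}^{3/2}$ only gives $L^{4/3}$ integrability, which is insufficient for $\mathbf{v}\in\mathbb{E}_T(\mathcal{V}_0)$ and would produce merely an energy inequality. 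Once $\mathbf{v}\in\mathbb{E}_T(\mathcal{V}_0)$ is secured, the remaining steps are routine applications of Lemma \ref{L:BS_Ipl}.
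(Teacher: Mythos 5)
Your proposal is correct and follows essentially the same route as the paper: both arguments hinge on combining the antisymmetry \eqref{E:TrS2_AnS} with the Ladyzhenskaya-type bound \eqref{E:TrS2_Bdd} to place $\nabla_{\mathbf{v}}\mathbf{v}$ in $L^2(0,T;\mathcal{V}_0^\ast)$, conclude $\mathbf{v}\in\mathbb{E}_T(\mathcal{V}_0)\subset C([0,T];\mathcal{H}_0)$ via Lemma \ref{L:BS_Ipl}, extend the weak form to $\bm{\zeta}\in L^2(0,T;\mathcal{V}_0)$ by density, and test with $\mathbf{v}$ itself using \eqref{E:Dt_IbP}. The only cosmetic difference is that you test with separated functions $\phi(t)\mathbf{w}$ while the paper works with general $\bm{\zeta}\in C_c^1(0,T;\mathcal{V}_0)$ and dualizes; your explicit remark that $\mathbf{v}(0)=\mathbf{v}_0$ must be recovered from the initial datum in \eqref{E:WF_S2} is in fact slightly more careful than the paper's write-up.
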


\begin{proof}
  Let $\bm{\zeta}\in C_c^1(0,T;\mathcal{V}_0)$ in \eqref{E:WF_S2}.
  We see that
  \begin{align*}
    |(\nabla_{\mathbf{v}}\mathbf{v},\bm{\zeta})_{L^2(S^2)}| = |(\mathbf{v},\nabla_{\mathbf{v}}\bm{\zeta})_{L^2(S^2)}| \leq c\|\mathbf{v}\|_{L^2(S^2)}\|\mathbf{v}\|_{H^1(S^2)}\|\bm{\zeta}\|_{H^1(S^2)}
  \end{align*}
  on $(0,T)$ by $\mathbf{v}\in L^2(0,T;\mathcal{V}_0)$ and Lemma \ref{L:Tri_S2}.
  Thus, by H\"{o}lder's inequality,
  \begin{align} \label{Pf_S2C:Tri}
    \left|\int_0^T(\nabla_{\mathbf{v}}\mathbf{v},\bm{\zeta})_{L^2(S^2)}\,dt\right| \leq c\|\mathbf{v}\|_{L^\infty(0,T;L^2(S^2))}\|\mathbf{v}\|_{L^2(0,T;H^1(S^2))}\|\bm{\zeta}\|_{L^2(0,T;H^1(S^2))}.
  \end{align}
  We apply $\bm{\zeta}(0)=\mathbf{0}$, H\"{o}lder's inequality, and \eqref{Pf_S2C:Tri} to \eqref{E:WF_S2}.
  Then,
  \begin{align*}
    \left|\int_0^T(\mathbf{v},\partial_t\bm{\zeta})_{L^2(S^2)}\,dt\right| \leq c_\nu K_0\|\bm{\zeta}\|_{L^2(0,T;H^1(S^2))},
  \end{align*}
  where $c_\nu>0$ is a constant depending on $\nu$ and
  \begin{align*}
    K_0 := \Bigl(1+\|\mathbf{v}\|_{L^\infty(0,T;L^2(S^2))}\Bigr)\|\mathbf{v}\|_{L^2(0,T;H^1(S^2))}+\|\mathbf{f}\|_{L^2(0,T;\mathcal{V}_0^\ast)}.
  \end{align*}
  Since $C_c^1(0,T;\mathcal{V}_0)$ is dense in $L^2(0;T;\mathcal{V}_0)$, we get
  \begin{align*}
    \partial_t\mathbf{v} \in [L^2(0,T;\mathcal{V}_0)]^\ast = L^2(0,T;\mathcal{V}_0^\ast).
  \end{align*}
  Also, $\mathbf{v}\in L^2(0,T;\mathcal{V}_0)$.
  Thus, $\mathbf{v}\in\mathbb{E}_T(\mathcal{V}_0)\subset C([0,T];\mathcal{H}_0)$ by Lemma \ref{L:BS_Ipl}.

  Next, for $\bm{\zeta}\in C_c^1(0,T;\mathcal{V}_0)$, we have \eqref{E:WFS2_Ano} by applying \eqref{E:Dt_IbP} to \eqref{E:WF_S2}.
  Since each term of \eqref{E:WFS2_Ano} is linear and bounded in $\bm{\zeta}\in L^2(0,T;\mathcal{V}_0)$ by \eqref{Pf_S2C:Tri} and
  \begin{align*}
    \mathbf{v} \in L^2(0,T;\mathcal{V}_0), \quad \partial_t\mathbf{v},\mathbf{f}\in L^2(0,T;\mathcal{V}_0^\ast),
  \end{align*}
  and since $C_c^1(0,T;\mathcal{V}_0)$ is dense in $L^2(0,T;\mathcal{V}_0)$, we see by a density argument that \eqref{E:WFS2_Ano} also holds for $\bm{\zeta}\in L^2(0,T;\mathcal{V}_0)$.
  Thus, we can set $\bm{\zeta}=\mathbf{v}$ in \eqref{E:WFS2_Ano} and use \eqref{E:Dt_IbP} and \eqref{E:TrS2_AnS} to get \eqref{E:S2_Ener} with $t$ replaced by $T$.
\end{proof}

As in \eqref{E:NS_TSS}, we can get the existence of a weak solution to \eqref{E:NS_S2} by the Galerkin method with the aid of \eqref{E:Korn_S2}.
Moreover, by Proposition \ref{P:S2_Cont}, we can carry out the energy method to show that a weak solution is unique.
In summary, the following result holds.

\begin{proposition} \label{P:S2_Exist}
  For all $\mathbf{v}_0\in\mathcal{H}_0$ and $\mathbf{f}\in L_{\mathrm{loc}}^2([0,\infty);\mathcal{V}_0^\ast)$, there exists a unique weak solution $\mathbf{v}$ to \eqref{E:NS_S2}.
  Moreover, $\mathbf{v}$ satisfies the energy equality \eqref{E:S2_Ener}.
\end{proposition}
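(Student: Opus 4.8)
The plan is to obtain existence by the Galerkin method and uniqueness by the energy method; the energy equality \eqref{E:S2_Ener} itself requires no further work, being already part of Proposition \ref{P:S2_Cont}. For existence, since $\mathcal{V}_0$ is a closed subspace of the separable space $H_\tau^1(S^2)$ it is separable, so I would fix a countable basis $\{\mathbf{w}_k\}_{k=1}^\infty$ of $\mathcal{V}_0$ that is orthonormal in $\mathcal{H}_0$ (apply Gram--Schmidt in $\mathcal{H}_0$, which is legitimate because $\mathcal{V}_0$ is dense in $\mathcal{H}_0$ by Lemma \ref{L:Sol_S2}). Writing $\mathbf{v}_m(t)=\sum_{k=1}^m g_k^m(t)\mathbf{w}_k$ and imposing the finite-dimensional projection of \eqref{E:NS_S2} yields a system of ODEs for $(g_k^m)_{k=1}^m$, locally solvable by Cauchy--Lipschitz. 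Testing with $\mathbf{v}_m$ and using the cancellation $(\nabla_{\mathbf{v}_m}\mathbf{v}_m,\mathbf{v}_m)_{L^2(S^2)}=0$ from \eqref{E:TrS2_AnS} gives
\[
  \frac{1}{2}\frac{d}{dt}\|\mathbf{v}_m\|_{L^2(S^2)}^2+2\nu\|\mathbf{D}_{S^2}(\mathbf{v}_m)\|_{L^2(S^2)}^2 = \langle\mathbf{f},\mathbf{v}_m\rangle_{\mathcal{V}_0},
\]
and combining this with Korn's inequality \eqref{E:Korn_S2} and Young's inequality to absorb the gradient term produces, after Gronwall, bounds for $\{\mathbf{v}_m\}$ in $L^\infty(0,T;\mathcal{H}_0)\cap L^2(0,T;\mathcal{V}_0)$ that are uniform in $m$; these in particular make the ODE solutions global in $t$.

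The decisive step is the passage to the limit in the nonlinear term, and this is where I expect the main (though standard) obstacle to lie. The crucial two-dimensional feature is that the trilinear estimate \eqref{E:TrS2_Bdd} gives $\|\nabla_{\mathbf{v}_m}\mathbf{v}_m\|_{\mathcal{V}_0^\ast}\le c\|\mathbf{v}_m\|_{L^2(S^2)}\|\mathbf{v}_m\|_{H^1(S^2)}$, which is bounded in $L^2(0,T)$ by the previous estimates; projecting the Galerkin equation then shows that $\partial_t\mathbf{v}_m$ is bounded in $L^2(0,T;\mathcal{V}_0^\ast)$. The Aubin--Lions lemma now furnishes a subsequence converging strongly in $L^2(0,T;\mathcal{H}_0)$ and weakly in $L^2(0,T;\mathcal{V}_0)$, and the strong convergence is exactly what is needed to identify the limit of $(\nabla_{\mathbf{v}_m}\mathbf{v}_m,\mathbf{w}_k)_{L^2(S^2)}$ with $(\nabla_{\mathbf{v}}\mathbf{v},\mathbf{w}_k)_{L^2(S^2)}$. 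A routine density argument then verifies that the limit $\mathbf{v}$ satisfies \eqref{E:WF_S2} for every $\bm{\zeta}\in C_c^1([0,\infty);\mathcal{V}_0)$, so $\mathbf{v}$ is a weak solution.

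For uniqueness, let $\mathbf{v}_1,\mathbf{v}_2$ be two weak solutions with the same data and set $\mathbf{w}:=\mathbf{v}_1-\mathbf{v}_2$. By Proposition \ref{P:S2_Cont}, both lie in $\mathbb{E}_T(\mathcal{V}_0)$ and satisfy \eqref{E:WFS2_Ano} for all test functions in $L^2(0,T;\mathcal{V}_0)$; subtracting the two identities and taking the admissible choice $\bm{\zeta}=\mathbf{w}$, I would use \eqref{E:Dt_IbP} for the time derivative together with the decomposition $\nabla_{\mathbf{v}_1}\mathbf{v}_1-\nabla_{\mathbf{v}_2}\mathbf{v}_2=\nabla_{\mathbf{w}}\mathbf{v}_1+\nabla_{\mathbf{v}_2}\mathbf{w}$. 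The second term pairs to zero with $\mathbf{w}$ by \eqref{E:TrS2_AnS}, while the first is bounded by \eqref{E:TrS2_Bdd} as $c\|\mathbf{w}\|_{L^2(S^2)}\|\mathbf{w}\|_{H^1(S^2)}\|\mathbf{v}_1\|_{H^1(S^2)}$; absorbing the factor $\|\mathbf{w}\|_{H^1(S^2)}$ into the viscous term $2\nu\|\mathbf{D}_{S^2}(\mathbf{w})\|_{L^2(S^2)}^2$ via Korn's inequality \eqref{E:Korn_S2} and Young's inequality leaves a differential inequality of the form $\frac{d}{dt}\|\mathbf{w}\|_{L^2(S^2)}^2\le C(t)\|\mathbf{w}\|_{L^2(S^2)}^2$ with $C\in L^1(0,T)$ (governed by $\|\mathbf{v}_1\|_{H^1(S^2)}^2$). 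Since $\mathbf{w}(0)=\mathbf{0}$, Gronwall's lemma forces $\mathbf{w}\equiv\mathbf{0}$, which gives uniqueness and completes the proof. By contrast with the existence argument, uniqueness is comparatively routine once the regularity $\mathbf{v}_i\in\mathbb{E}_T(\mathcal{V}_0)$ from Proposition \ref{P:S2_Cont} is available.
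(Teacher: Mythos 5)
Your proposal is correct and follows essentially the same route as the paper, which proves this proposition only by a one-sentence remark: existence by the Galerkin method with the aid of Korn's inequality \eqref{E:Korn_S2} (just as in Proposition \ref{P:TSS_Exist}, with a countable basis of $\mathcal{V}_0$ orthonormalized in $\mathcal{H}_0$), uniqueness by the energy method made admissible by the regularity $\mathbf{v}\in\mathbb{E}_T(\mathcal{V}_0)$ from Proposition \ref{P:S2_Cont}, and the energy equality \eqref{E:S2_Ener} already contained in that proposition. Your write-up supplies the standard details (Aubin--Lions for the nonlinear term, the decomposition $\nabla_{\mathbf{v}_1}\mathbf{v}_1-\nabla_{\mathbf{v}_2}\mathbf{v}_2=\nabla_{\mathbf{w}}\mathbf{v}_1+\nabla_{\mathbf{v}_2}\mathbf{w}$ with \eqref{E:TrS2_AnS} and \eqref{E:TrS2_Bdd}) that the paper leaves implicit, and matches its intent.
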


%%% Section 4 %%%
\section{Average and extension} \label{S:AvEx}
In this section, we study the average of a function on $\Omega_\varepsilon$ and the extension of a function on $S^2$ in the radial direction.
Recall that the change of variables formula
\begin{align} \label{E:CoV_Thin}
  \int_{\Omega_\varepsilon}\varphi(x)\,dx = \int_{S^2}\int_1^{1+\varepsilon}\varphi(ry)r^2\,dr\,d\mathcal{H}^2(y)
\end{align}
holds for a function $\varphi$ on $\Omega_\varepsilon$.
Also, we write $\bar{\eta}(x)=\eta(x/|x|)$ for the constant extension of a function $\eta$ on $S^2$ in the radial direction.
Then, by \eqref{E:CoV_Thin} and $r\leq2$, we have
\begin{align} \label{E:CoEx_L2}
  \|\bar{\eta}\|_{L^2(\Omega_\varepsilon)} = \left(\int_{S^2}\int_1^{1+\varepsilon}|\eta(y)|^2r^2\,dr\,d\mathcal{H}^2(y)\right)^{1/2} \leq 2\varepsilon^{1/2}\|\eta\|_{L^2(S^2)}.
\end{align}

\subsection{Average} \label{SS:AE_Ave}
For $k\in\mathbb{Z}_{\geq0}$ and a function $\varphi$ on $\Omega_\varepsilon$, we define the weighted average
\begin{align} \label{E:Def_Ave}
  \mathcal{M}_\varepsilon^k\varphi(y) := \frac{1}{\varepsilon}\int_1^{1+\varepsilon}\varphi(ry)r^k\,dr, \quad y\in S^2.
\end{align}
We use the same notation for a vector field $\mathbf{u}$ on $\Omega_\varepsilon$, and write
\begin{align} \label{E:Def_Atan}
  \mathcal{M}_{\varepsilon,\tau}^k\mathbf{u}(y) := \mathbf{P}(y)\mathcal{M}_\varepsilon^k\mathbf{u}(y) = \mathcal{M}_\varepsilon^k\mathbf{u}(y)-\{\mathcal{M}_\varepsilon^k\mathbf{u}(y)\cdot\mathbf{n}(y)\}\mathbf{n}(y), \quad y\in S^2
\end{align}
for the tangential component of the weighted average of $\mathbf{u}$.
The reason why we introduce the weighted averages of different order is that we will encounter the expression
\begin{align*}
  \int_{\Omega_\varepsilon}|x|^k\bar{\eta}(x)\varphi(x)\,dx &= \int_{S^2}\int_1^{1+\varepsilon}\{r^k\eta(y)\varphi(ry)\}r^2\,dr\,d\mathcal{H}^2(y) \\
  &= \varepsilon\int_{S^2}\eta(y)\mathcal{M}_\varepsilon^{k+2}\varphi(y)\,d\mathcal{H}^2(y)
\end{align*}
for functions $\varphi$ on $\Omega_\varepsilon$ and $\eta$ on $S^2$ and for several $k\geq0$ later.

Let us show some properties of the weighted averages.
We write $c$ for a general positive constant which may depend on $k,\ell\in\mathbb{Z}_{\geq0}$ but is independent of $\varepsilon\in(0,1)$.

\begin{lemma} \label{L:Ave_L2}
  For $k,\ell\in\mathbb{Z}_{\geq0}$ and $\varphi\in L^2(\Omega_\varepsilon)$, we have
  \begin{align}
    \|\mathcal{M}_\varepsilon^k\varphi\|_{L^2(S^2)} &\leq c\varepsilon^{-1/2}\|\varphi\|_{L^2(\Omega_\varepsilon)}, \label{E:Ave_L2} \\
    \|\mathcal{M}_\varepsilon^k\varphi-\mathcal{M}_\varepsilon^\ell\varphi\|_{L^2(S^2)} &\leq c\varepsilon^{1/2}\|\varphi\|_{L^2(\Omega_\varepsilon)}. \label{E:AvDf_L2}
  \end{align}
\end{lemma}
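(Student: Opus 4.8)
The plan is to prove both inequalities directly from the definition \eqref{E:Def_Ave} of the weighted average together with the change of variables formula \eqref{E:CoV_Thin}, using nothing more than the Cauchy--Schwarz inequality and the elementary bounds $1\leq r\leq 2$ on the radial variable over the thin shell. The key observation is that on $(1,1+\varepsilon)$ every power $r^k$ is comparable to a constant, so the weight only contributes an $\varepsilon$-independent factor, and all the $\varepsilon$-scaling comes from the length $\varepsilon$ of the interval.

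For \eqref{E:Ave_L2}, I would first apply the Cauchy--Schwarz inequality in the radial integral defining $\mathcal{M}_\varepsilon^k\varphi(y)$. Writing
\begin{align*}
  |\mathcal{M}_\varepsilon^k\varphi(y)|^2 = \frac{1}{\varepsilon^2}\left|\int_1^{1+\varepsilon}\varphi(ry)r^k\,dr\right|^2 \leq \frac{1}{\varepsilon^2}\left(\int_1^{1+\varepsilon}r^{2k-2}\,dr\right)\left(\int_1^{1+\varepsilon}|\varphi(ry)|^2r^2\,dr\right),
\end{align*}
where I have split the weight $r^k$ as $r^{k-1}\cdot r$ so that the second factor carries the Jacobian weight $r^2$. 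The first factor is bounded by $c\varepsilon$ since $r^{2k-2}\leq 2^{2k-2}$ on the interval, so after integrating over $S^2$ and using \eqref{E:CoV_Thin} one gets
\begin{align*}
  \|\mathcal{M}_\varepsilon^k\varphi\|_{L^2(S^2)}^2 \leq \frac{c}{\varepsilon}\int_{S^2}\int_1^{1+\varepsilon}|\varphi(ry)|^2 r^2\,dr\,d\mathcal{H}^2(y) = \frac{c}{\varepsilon}\|\varphi\|_{L^2(\Omega_\varepsilon)}^2,
\end{align*}
which is \eqref{E:Ave_L2} after taking square roots.

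For the difference estimate \eqref{E:AvDf_L2}, the gain of a full power of $\varepsilon$ (from $\varepsilon^{-1/2}$ to $\varepsilon^{1/2}$) must come from the fact that $r^k-r^\ell$ is small on the thin interval rather than from the integration length alone. I would write $\mathcal{M}_\varepsilon^k\varphi(y)-\mathcal{M}_\varepsilon^\ell\varphi(y) = \frac{1}{\varepsilon}\int_1^{1+\varepsilon}\varphi(ry)(r^k-r^\ell)\,dr$ and exploit that $|r^k-r^\ell|\leq c\,|r-1|\leq c\varepsilon$ for $r\in(1,1+\varepsilon)$, since $r^k-r^\ell$ vanishes at $r=1$ and is Lipschitz on the bounded interval. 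Then Cauchy--Schwarz in the radial variable, splitting the weight as $(r^k-r^\ell)=\frac{r^k-r^\ell}{r}\cdot r$ so the Jacobian $r^2$ reappears, yields an extra factor of $\varepsilon^2$ from $|r^k-r^\ell|^2$, which combines with the $\varepsilon^{-2}$ prefactor and the interval length to produce the net $\varepsilon^{1/2}$ scaling. The only mild subtlety, and the step I would be most careful about, is bookkeeping the powers of $\varepsilon$ correctly when applying Cauchy--Schwarz: one must track the $1/\varepsilon^2$ from the averaging normalization, the $\varepsilon$ from the length of the integration interval in the ``weight'' factor, and the $\varepsilon^2$ gained from $|r^k-r^\ell|^2$, so that they combine to give $\varepsilon^{-2}\cdot\varepsilon\cdot\varepsilon^2=\varepsilon$, matching the squared bound $\varepsilon\|\varphi\|_{L^2(\Omega_\varepsilon)}^2$. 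Everything else is routine, and no result beyond \eqref{E:CoV_Thin} and elementary inequalities is required.
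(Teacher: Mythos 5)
Your proposal is correct and follows essentially the same route as the paper: for \eqref{E:Ave_L2} the paper uses exactly your Cauchy--Schwarz split $r^k=r^{k-1}\cdot r$ so that the Jacobian weight $r^2$ appears, and for \eqref{E:AvDf_L2} it uses the same key bound $|r^k-r^\ell|\leq c|r-1|\leq c\varepsilon$ on the thin interval. The only cosmetic difference is that the paper finishes the second estimate by bounding pointwise $|\mathcal{M}_\varepsilon^k\varphi-\mathcal{M}_\varepsilon^\ell\varphi|\leq c\varepsilon\,\mathcal{M}_\varepsilon^\ell(|\varphi|)$ and then reapplying \eqref{E:Ave_L2}, whereas you apply Cauchy--Schwarz directly; your power-of-$\varepsilon$ bookkeeping ($\varepsilon^{-2}\cdot\varepsilon\cdot\varepsilon^2=\varepsilon$ for the squared norm) is accurate.
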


\begin{proof}
  For $y\in S^2$, we see by H\"{o}lder's inequality (and $0<\varepsilon<1$) that
  \begin{align*}
    |\mathcal{M}_\varepsilon^k\varphi(y)| &\leq \frac{1}{\varepsilon}\left(\int_1^{1+\varepsilon}|\varphi(ry)|^2r^2\,dr\right)^{1/2}\left(\int_1^{1+\varepsilon}r^{2k-2}\,dr\right)^{1/2} \\
    &\leq \frac{\max\{1,2^{k-1}\}}{\varepsilon^{1/2}}\left(\int_1^{1+\varepsilon}|\varphi(ry)|^2r^2\,dr\right)^{1/2}.
  \end{align*}
  We integrate the square of both sides over $S^2$ and use \eqref{E:CoV_Thin} to get \eqref{E:Ave_L2}.

  Next, we show \eqref{E:AvDf_L2}.
  We may assume $k>\ell$ without loss of generality.
  Then, since
  \begin{align*}
    \mathcal{M}_\varepsilon^k\varphi(y)-\mathcal{M}_\varepsilon^\ell\varphi(y) = \frac{1}{\varepsilon}\int_1^{1+\varepsilon}\varphi(ry)r^\ell(r^{k-\ell}-1)\,dr, \quad y\in S^2
  \end{align*}
  and $|r^{k-\ell}-1|=|r-1|\sum_{j=0}^{k-\ell-1}r^j\leq c\varepsilon$ for $r\in[1,1+\varepsilon]$, we have
  \begin{align*}
    |\mathcal{M}_\varepsilon^k\varphi(y)-\mathcal{M}_\varepsilon^\ell\varphi(y)| \leq c\int_1^{1+\varepsilon}|\varphi(ry)|r^\ell\,dr = c\varepsilon[\mathcal{M}_\varepsilon^\ell(|\varphi|)](y), \quad y\in S^2.
  \end{align*}
  Using this estimate and applying \eqref{E:Ave_L2} to $\mathcal{M}_\varepsilon^\ell(|\varphi|)$, we get \eqref{E:AvDf_L2}.
\end{proof}

\begin{lemma} \label{L:AvCo_Df}
  For $k\in\mathbb{Z}_{\geq0}$ and $\varphi\in H^1(S^2)$, we have
  \begin{align} \label{E:AvCo_Df}
    \Bigl\|\varphi-\overline{\mathcal{M}_\varepsilon^k\varphi}\Bigr\|_{L^2(S^2)} \leq c\varepsilon\|\varphi\|_{H^1(\Omega_\varepsilon)}.
  \end{align}
\end{lemma}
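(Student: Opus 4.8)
The plan is to read this as a radial Poincaré estimate: here $\varphi$ is a function on $\Omega_\varepsilon$ and the left-hand norm is over $\Omega_\varepsilon$, so the claim asserts that $\varphi$ differs from the constant extension of its weighted radial average by $O(\varepsilon)$ times $\|\nabla\varphi\|_{L^2(\Omega_\varepsilon)}$. First I would reduce to the case $\varphi\in C^\infty(\overline{\Omega_\varepsilon})$ by density, which is legitimate because the map $\varphi\mapsto\varphi-\overline{\mathcal{M}_\varepsilon^k\varphi}$ is bounded from $L^2(\Omega_\varepsilon)$ to $L^2(\Omega_\varepsilon)$: indeed $\mathcal{M}_\varepsilon^k$ is bounded $L^2(\Omega_\varepsilon)\to L^2(S^2)$ by \eqref{E:Ave_L2} and the constant extension is bounded $L^2(S^2)\to L^2(\Omega_\varepsilon)$ by \eqref{E:CoEx_L2}, so both sides of the desired inequality are continuous in the $H^1(\Omega_\varepsilon)$ topology. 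For smooth $\varphi$, the fundamental theorem of calculus along radial lines is available.

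Next I treat the representative case $k=0$. Fixing $y\in S^2$ and writing $g(r):=\varphi(ry)$, and noting $\overline{\mathcal{M}_\varepsilon^0\varphi}(ry)=\mathcal{M}_\varepsilon^0\varphi(y)$, I express the deviation as the double average
\[
  \varphi(ry)-\mathcal{M}_\varepsilon^0\varphi(y) = \frac{1}{\varepsilon}\int_1^{1+\varepsilon}\bigl(g(r)-g(s)\bigr)\,ds = \frac{1}{\varepsilon}\int_1^{1+\varepsilon}\int_s^r g'(\tau)\,d\tau\,ds.
\]
Since $[s,r]\subset[1,1+\varepsilon]$ and $|g'(\tau)|=|y\cdot\nabla\varphi(\tau y)|\leq|\nabla\varphi(\tau y)|$, the Cauchy--Schwarz inequality gives the $r$-independent bound $|\varphi(ry)-\mathcal{M}_\varepsilon^0\varphi(y)|^2\leq\varepsilon\int_1^{1+\varepsilon}|\nabla\varphi(\tau y)|^2\,d\tau$. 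Integrating this bound in $r$ over $[1,1+\varepsilon]$ supplies a second factor $\varepsilon$; the radial weights are harmless because $r^2\leq4$ on the left while $\tau^2\geq1$ lets me insert the Jacobian on the right. Integrating over $S^2$ and applying the change of variables \eqref{E:CoV_Thin} then yields $\|\varphi-\overline{\mathcal{M}_\varepsilon^0\varphi}\|_{L^2(\Omega_\varepsilon)}\leq 2\varepsilon\|\nabla\varphi\|_{L^2(\Omega_\varepsilon)}$.

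Finally I pass from $k=0$ to general $k$ by the triangle inequality, writing $\varphi-\overline{\mathcal{M}_\varepsilon^k\varphi}=(\varphi-\overline{\mathcal{M}_\varepsilon^0\varphi})+\overline{\mathcal{M}_\varepsilon^0\varphi-\mathcal{M}_\varepsilon^k\varphi}$. The first term is controlled by the case just treated. For the second, estimate \eqref{E:AvDf_L2} gives $\|\mathcal{M}_\varepsilon^0\varphi-\mathcal{M}_\varepsilon^k\varphi\|_{L^2(S^2)}\leq c\varepsilon^{1/2}\|\varphi\|_{L^2(\Omega_\varepsilon)}$, and constant-extending via \eqref{E:CoEx_L2} costs another $\varepsilon^{1/2}$, producing $c\varepsilon\|\varphi\|_{L^2(\Omega_\varepsilon)}$; summing gives the claimed bound $c\varepsilon\|\varphi\|_{H^1(\Omega_\varepsilon)}$. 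I expect the only real care to lie in the $k=0$ step, namely extracting the two independent factors of $\varepsilon$ from the single radial Poincaré argument (one from the Cauchy--Schwarz length in $\tau$, the other from the outer integration in $r$) and verifying that the radial weights $r^2,\tau^2\in[1,4]$ never degrade the order, rather than in any genuinely delicate analysis.
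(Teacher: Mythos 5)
Your proof is correct and takes essentially the same route as the paper: the paper splits $\varphi(ry)-\overline{\mathcal{M}_\varepsilon^k\varphi}(ry)$ into a radial fundamental-theorem-of-calculus term $K_1$ and a weight-deviation term $K_2$ bounded via $|1-r_1^k|\leq c\varepsilon$, which is exactly the content of your $k=0$ radial Poincar\'e step plus your reduction to general $k$ through \eqref{E:AvDf_L2} (itself proved by the same $|r^k-1|\leq c\varepsilon$ device), with only cosmetic differences in bookkeeping (your direct Cauchy--Schwarz in $\tau$ versus the paper's $r$-independent pointwise bound pushed through \eqref{E:CoEx_L2} and \eqref{E:Ave_L2}). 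You were also right to read the statement as an estimate of $\|\varphi-\overline{\mathcal{M}_\varepsilon^k\varphi}\|_{L^2(\Omega_\varepsilon)}$ for $\varphi\in H^1(\Omega_\varepsilon)$; the printed $L^2(S^2)$ and $H^1(S^2)$ are typos, as the paper's own proof confirms.
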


\begin{proof}
  For $y\in S^2$ and $r\in[1,1+\varepsilon]$, we have
  \begin{align*}
    &\varphi(ry)-\overline{\mathcal{M}_\varepsilon^k\varphi}(ry) = \varphi(ry)-\frac{1}{\varepsilon}\int_1^{1+\varepsilon}\varphi(r_1y)r_1^k\,dr_1 = K_1+K_2, \\
    &K_1 := \frac{1}{\varepsilon}\int_1^{1+\varepsilon}\{\varphi(ry)-\varphi(r_1y)\}\,dr_1, \quad K_2 := \frac{1}{\varepsilon}\int_1^{1+\varepsilon}\varphi(r_1y)(1-r_1^k)\,dr_1.
  \end{align*}
  Since $\partial_r[\varphi(ry)]=y\cdot\nabla\varphi(ry)$ and $|y|=1$ for $y\in S^2$,
  \begin{align*}
    |\varphi(ry)-\varphi(r_1y)| = \left|\int_{r_1}^ry\cdot\nabla\varphi(r_2y)\,dr_2\right| \leq \int_1^{1+\varepsilon}|\nabla\varphi(r_2y)|\,dr_2,
  \end{align*}
  where right-hand side is independent of $r_1$.
  Thus,
  \begin{align*}
    |K_1| \leq \int_1^{1+\varepsilon}|\nabla\varphi(r_2y)|\,dr_2 = \varepsilon[\mathcal{M}_\varepsilon^0(|\nabla\varphi|)](y).
  \end{align*}
  Also, $|K_2|\leq c\int_1^{1+\varepsilon}|\varphi(r_1y)|\,dr_1=c\varepsilon[\mathcal{M}_\varepsilon^0(|\varphi|)](y)$ by $|1-r_1^k|\leq c\varepsilon$.
  Hence,
  \begin{align*}
    \Bigl|\varphi(ry)-\overline{\mathcal{M}_\varepsilon^k\varphi}(ry)\Bigr| \leq |K_1|+|K_2| \leq c\varepsilon[\mathcal{M}_\varepsilon^0(|\varphi|+|\nabla\varphi|)](y).
  \end{align*}
  Since the right-hand side is independent of $r$, we see by \eqref{E:CoEx_L2} that
  \begin{align*}
    \Bigl\|\varphi-\overline{\mathcal{M}_\varepsilon^k\varphi}\Bigr\|_{L^2(\Omega_\varepsilon)} \leq c\varepsilon^{3/2}\|\mathcal{M}_\varepsilon^0(|\varphi|+|\nabla\varphi|)\|_{L^2(S^2)}.
  \end{align*}
  Applying \eqref{E:Ave_L2} to the right-hand side, we get \eqref{E:AvCo_Df}.
\end{proof}

\begin{lemma} \label{L:Ave_NC}
  Let $\mathbf{u}\in H^1(\Omega_\varepsilon)^3$ satisfy $\mathbf{u}\cdot\mathbf{n}_\varepsilon=0$ on $\partial\Omega_\varepsilon$.
  Then, for $k\in\mathbb{Z}_{\geq0}$,
  \begin{align} \label{E:Ave_NC}
    \|\mathcal{M}_\varepsilon^k\mathbf{u}\cdot\mathbf{n}\|_{L^2(S^2)} = \|\mathcal{M}_\varepsilon^k(\mathbf{u}\cdot\bar{\mathbf{n}})\|_{L^2(S^2)} \leq c\varepsilon^{1/2}\|\mathbf{u}\|_{H^1(\Omega_\varepsilon)}.
  \end{align}
  Moreover, for the tangential component $\mathcal{M}_{\varepsilon,\tau}^k\mathbf{u}$, we have
  \begin{align} \label{E:Atan_Con}
    \Bigl\|\mathbf{u}-\overline{\mathcal{M}_{\varepsilon,\tau}^k\mathbf{u}}\Bigr\|_{L^2(\Omega_\varepsilon)} \leq c\varepsilon\|\mathbf{u}\|_{H^1(\Omega_\varepsilon)}.
  \end{align}
\end{lemma}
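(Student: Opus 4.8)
The plan is to establish the two estimates in turn, both reducing to the scalar averaging bounds of Lemmas~\ref{L:Ave_L2} and \ref{L:AvCo_Df} together with the normal-component smallness of Lemma~\ref{L:Nor_Thin}.

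First I would prove the pointwise identity underlying \eqref{E:Ave_NC}. Since $\mathbf{n}(y)=y$ and $\bar{\mathbf{n}}(x)=x/|x|$, for every $y\in S^2$ and $r>0$ we have $\bar{\mathbf{n}}(ry)=ry/|ry|=y=\mathbf{n}(y)$. Hence, pulling the $r$-independent vector $\mathbf{n}(y)$ inside the radial integral defining $\mathcal{M}_\varepsilon^k$,
\[
  \mathcal{M}_\varepsilon^k\mathbf{u}(y)\cdot\mathbf{n}(y)
  = \frac{1}{\varepsilon}\int_1^{1+\varepsilon}\mathbf{u}(ry)\cdot\bar{\mathbf{n}}(ry)\,r^k\,dr
  = [\mathcal{M}_\varepsilon^k(\mathbf{u}\cdot\bar{\mathbf{n}})](y),
\]
which yields the claimed equality of the two $L^2(S^2)$-norms. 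For the bound I would apply \eqref{E:Ave_L2} to the scalar field $\varphi=\mathbf{u}\cdot\bar{\mathbf{n}}\in L^2(\Omega_\varepsilon)$, producing a factor $\varepsilon^{-1/2}$, and then invoke Lemma~\ref{L:Nor_Thin} to control $\|\mathbf{u}\cdot\bar{\mathbf{n}}\|_{L^2(\Omega_\varepsilon)}$ by $c\varepsilon\|\mathbf{u}\|_{H^1(\Omega_\varepsilon)}$. The two powers of $\varepsilon$ combine to $\varepsilon^{1/2}$, giving \eqref{E:Ave_NC}.

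For \eqref{E:Atan_Con} I would use the definition \eqref{E:Def_Atan}, from which $\mathcal{M}_\varepsilon^k\mathbf{u}-\mathcal{M}_{\varepsilon,\tau}^k\mathbf{u}=\{\mathcal{M}_\varepsilon^k\mathbf{u}\cdot\mathbf{n}\}\mathbf{n}$, and decompose
\[
  \mathbf{u}-\overline{\mathcal{M}_{\varepsilon,\tau}^k\mathbf{u}}
  = \Bigl(\mathbf{u}-\overline{\mathcal{M}_\varepsilon^k\mathbf{u}}\Bigr)
   +\overline{\{\mathcal{M}_\varepsilon^k\mathbf{u}\cdot\mathbf{n}\}\mathbf{n}}.
\]
The first term is handled by applying \eqref{E:AvCo_Df} componentwise to $\mathbf{u}$, giving a contribution of order $c\varepsilon\|\mathbf{u}\|_{H^1(\Omega_\varepsilon)}$. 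For the second term I would use that $\mathbf{n}$ is a unit vector, so the $L^2(S^2)$-norm of $\{\mathcal{M}_\varepsilon^k\mathbf{u}\cdot\mathbf{n}\}\mathbf{n}$ equals $\|\mathcal{M}_\varepsilon^k\mathbf{u}\cdot\mathbf{n}\|_{L^2(S^2)}$; the radial extension then costs only $\varepsilon^{1/2}$ by \eqref{E:CoEx_L2}, while the just-proved \eqref{E:Ave_NC} supplies a further $\varepsilon^{1/2}\|\mathbf{u}\|_{H^1(\Omega_\varepsilon)}$, so this term is again of order $c\varepsilon\|\mathbf{u}\|_{H^1(\Omega_\varepsilon)}$. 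The triangle inequality then completes the estimate.

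The argument is essentially a bookkeeping of $\varepsilon$-powers, so I do not expect a serious analytic obstacle; the one point requiring care is the consistent accounting that each average $\mathcal{M}_\varepsilon^k$ costs a factor $\varepsilon^{-1/2}$ in passing from $L^2(\Omega_\varepsilon)$ to $L^2(S^2)$, while each radial extension $\overline{(\cdot)}$ gains a factor $\varepsilon^{1/2}$, and that the genuine smallness making the right-hand sides of order $\varepsilon^{1/2}$ and $\varepsilon$ respectively originates solely from the normal-component gain in Lemma~\ref{L:Nor_Thin}. A minor subtlety worth flagging is that \eqref{E:AvCo_Df} must be read with $\varphi\in H^1(\Omega_\varepsilon)$ and its left-hand norm taken in $L^2(\Omega_\varepsilon)$, as its proof in fact shows, so that the componentwise application to $\mathbf{u}$ is legitimate.
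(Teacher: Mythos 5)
Your proof is correct and follows the paper's own argument essentially verbatim: the identity $\mathcal{M}_\varepsilon^k\mathbf{u}\cdot\mathbf{n}=\mathcal{M}_\varepsilon^k(\mathbf{u}\cdot\bar{\mathbf{n}})$ combined with \eqref{E:Ave_L2} and \eqref{E:Nor_Thin} for \eqref{E:Ave_NC}, and the same triangle-inequality decomposition via \eqref{E:Def_Atan}, \eqref{E:AvCo_Df}, \eqref{E:CoEx_L2}, and the just-proved \eqref{E:Ave_NC} for \eqref{E:Atan_Con}. Your flagged reading of Lemma \ref{L:AvCo_Df} --- hypothesis $\varphi\in H^1(\Omega_\varepsilon)$ with the left-hand norm in $L^2(\Omega_\varepsilon)$ --- is indeed what its proof establishes and is exactly how the paper applies it here.
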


\begin{proof}
  By the definition \eqref{E:Def_Ave} of $\mathcal{M}_\varepsilon^k$, we have $\mathcal{M}_\varepsilon^k\mathbf{u}\cdot\mathbf{n}=\mathcal{M}_\varepsilon^k(\mathbf{u}\cdot\bar{\mathbf{n}})$ on $S^2$.
  Also,
  \begin{align*}
    \|\mathcal{M}_\varepsilon^k(\mathbf{u}\cdot\bar{\mathbf{n}})\|_{L^2(S^2)} \leq c\varepsilon^{-1/2}\|\mathbf{u}\cdot\bar{\mathbf{n}}\|_{L^2(\Omega_\varepsilon)} \leq c\varepsilon^{1/2}\|\mathbf{u}\|_{H^1(\Omega_\varepsilon)}
  \end{align*}
  by \eqref{E:Ave_L2} and then by \eqref{E:Nor_Thin}.
  Thus, \eqref{E:Ave_NC} is valid.
  Also, since
  \begin{align*}
    \Bigl\|\mathbf{u}-\overline{\mathcal{M}_{\varepsilon,\tau}^k\mathbf{u}}\Bigr\|_{L^2(\Omega_\varepsilon)} \leq \Bigl\|\mathbf{u}-\overline{\mathcal{M}_\varepsilon^k\mathbf{u}}\Bigr\|_{L^2(\Omega_\varepsilon)}+\Bigl\|\overline{\mathcal{M}_\varepsilon^k\mathbf{u}\cdot\mathbf{n}}\Bigr\|_{L^2(\Omega_\varepsilon)}
  \end{align*}
  by \eqref{E:Def_Atan} and $|\mathbf{n}|=1$ on $S^2$, we apply \eqref{E:CoEx_L2}, \eqref{E:AvCo_Df}, and \eqref{E:Ave_NC} to get \eqref{E:Atan_Con}.
\end{proof}

\begin{lemma} \label{L:Ave_TGr}
  For $k\in\mathbb{Z}_{\geq0}$ and $\varphi\in H^1(\Omega_\varepsilon)$, we have
  \begin{align} \label{E:Ave_TGr}
    \nabla_{S^2}\mathcal{M}_\varepsilon^k\varphi = \mathcal{M}_\varepsilon^{k+1}\Bigl(\overline{\mathbf{P}}\nabla\varphi\Bigr) = \mathbf{P}\mathcal{M}_\varepsilon^{k+1}(\nabla\varphi) \quad\text{on}\quad S^2.
  \end{align}
\end{lemma}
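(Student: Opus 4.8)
The plan is to reduce to smooth $\varphi$ by density and compute the left-hand side directly through the constant extension. Set $\eta := \mathcal{M}_\varepsilon^k\varphi$ on $S^2$. By the definition of the tangential gradient, $\nabla_{S^2}\eta = \mathbf{P}\nabla\bar{\eta}$ on $S^2$, where $\bar{\eta}(x)=\eta(x/|x|)=\tfrac1\varepsilon\int_1^{1+\varepsilon}\varphi(rx/|x|)r^k\,dr$ is the constant extension. Thus it suffices to differentiate $\bar{\eta}$ in $x$ and evaluate on $S^2$.

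First I would assume $\varphi\in C^\infty(\overline{\Omega_\varepsilon})$, so that differentiation under the integral sign and the chain rule are legitimate. Using $\nabla(x/|x|)=|x|^{-1}\overline{\mathbf{P}}(x)$ (see \eqref{Pf_Co:Gr_x}) together with the chain rule $\nabla[\varphi\circ\Phi]=[\nabla\Phi][(\nabla\varphi)\circ\Phi]$ applied to $\Phi(x)=rx/|x|$, I obtain $\nabla\bar{\eta}(x)=\frac{1}{|x|}\overline{\mathbf{P}}(x)\,\frac1\varepsilon\int_1^{1+\varepsilon}[(\nabla\varphi)(rx/|x|)]\,r^{k+1}\,dr$; note that the extra factor $r/|x|$ from $\nabla\Phi$ raises the radial weight from $r^k$ to $r^{k+1}$. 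Evaluating at $x=y\in S^2$, where $|x|=1$, $\overline{\mathbf{P}}(y)=\mathbf{P}(y)$, and $ry/|ry|=y$, gives $\nabla\bar{\eta}(y)=\mathbf{P}(y)\,\mathcal{M}_\varepsilon^{k+1}(\nabla\varphi)(y)$. Applying $\mathbf{P}$ and using $\mathbf{P}^2=\mathbf{P}$ then yields $\nabla_{S^2}\eta=\mathbf{P}\,\mathcal{M}_\varepsilon^{k+1}(\nabla\varphi)$, which is the last equality in \eqref{E:Ave_TGr}. For the middle equality I pull the projection inside the radial integral: since $\overline{\mathbf{P}}(ry)=\mathbf{P}(ry/|ry|)=\mathbf{P}(y)$ is independent of $r$, we have $\mathbf{P}(y)\,\mathcal{M}_\varepsilon^{k+1}(\nabla\varphi)(y)=\mathcal{M}_\varepsilon^{k+1}\bigl(\overline{\mathbf{P}}\nabla\varphi\bigr)(y)$.

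Finally I would remove the smoothness assumption by density. Taking $\varphi_j\in C^\infty(\overline{\Omega_\varepsilon})$ with $\varphi_j\to\varphi$ in $H^1(\Omega_\varepsilon)$, the estimate \eqref{E:Ave_L2} gives $\mathcal{M}_\varepsilon^k\varphi_j\to\mathcal{M}_\varepsilon^k\varphi$ and $\mathcal{M}_\varepsilon^{k+1}(\nabla\varphi_j)\to\mathcal{M}_\varepsilon^{k+1}(\nabla\varphi)$ in $L^2(S^2)$ componentwise, whence $\mathbf{P}\,\mathcal{M}_\varepsilon^{k+1}(\nabla\varphi_j)\to\mathbf{P}\,\mathcal{M}_\varepsilon^{k+1}(\nabla\varphi)$ in $L^2(S^2)^3$ since $\mathbf{P}$ is bounded. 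Passing to the limit in the integration-by-parts identity \eqref{E:IbP_S2} for each $\zeta\in C^1(S^2)$ identifies the limit of $\nabla_{S^2}\mathcal{M}_\varepsilon^k\varphi_j$ with $\nabla_{S^2}$ of the limit, so $\mathcal{M}_\varepsilon^k\varphi\in H^1(S^2)$ and \eqref{E:Ave_TGr} holds in $L^2(S^2)$.

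I expect the smooth computation to be entirely routine once the chain-rule and projection conventions are tracked; the only step requiring genuine care is the passage to the limit, where one must use the closedness of the weak surface gradient (i.e. pass to the limit in \eqref{E:IbP_S2}) rather than differentiate the $H^1$ limit directly, since $\varphi$ is merely in $H^1(\Omega_\varepsilon)$.
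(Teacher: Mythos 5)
Your proof is correct and follows essentially the same route as the paper: differentiate the constant extension $\overline{\mathcal{M}_\varepsilon^k\varphi}$ using $\nabla(x/|x|)=|x|^{-1}\overline{\mathbf{P}}(x)$ from \eqref{Pf_Co:Gr_x}, observe that the chain rule raises the radial weight from $r^k$ to $r^{k+1}$, and evaluate at $x=y\in S^2$, pulling $\mathbf{P}(y)$ through the radial integral for the middle equality. Your additional density step, passing to the limit in \eqref{E:IbP_S2} to invoke the closedness of the weak surface gradient, is a careful justification of what the paper leaves implicit for $\varphi\in H^1(\Omega_\varepsilon)$, but it does not change the substance of the argument.
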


\begin{proof}
  The constant extension of $\mathcal{M}_\varepsilon^k\varphi$ in the radial direction is given by
  \begin{align*}
    \overline{\mathcal{M}_\varepsilon^k\varphi}(x) = \frac{1}{\varepsilon}\int_1^{1+\varepsilon}\varphi\left(r\,\frac{x}{|x|}\right)r^k\,dr, \quad x\in\mathbb{R}^3\setminus\{0\}.
  \end{align*}
  We differentiate both sides in $x$ and use \eqref{Pf_Co:Gr_x} to get
  \begin{align*}
    \nabla\overline{\mathcal{M}_\varepsilon^k\varphi}(x) = \frac{1}{\varepsilon}\int_1^{1+\varepsilon}\frac{r}{|x|}\overline{\mathbf{P}}(x)\nabla\varphi\left(r\,\frac{x}{|x|}\right)r^k\,dr, \quad x\in\mathbb{R}^3\setminus\{0\}.
  \end{align*}
  Combining this equality and \eqref{E:Const}, and setting $x=y\in S^2$, we get \eqref{E:Ave_TGr}.
\end{proof}

\begin{lemma} \label{L:Ave_H1}
  For $k,\ell\in\mathbb{Z}_{\geq0}$ and $\varphi\in H^1(\Omega_\varepsilon)$, we have
  \begin{align}
    \|\nabla_{S^2}\mathcal{M}_\varepsilon^k\varphi\|_{L^2(S^2)} &\leq c\varepsilon^{-1/2}\|\nabla\varphi\|_{L^2(\Omega_\varepsilon)}, \label{E:Ave_H1} \\
    \|\nabla_{S^2}\mathcal{M}_\varepsilon^k\varphi-\nabla_{S^2}\mathcal{M}_\varepsilon^\ell\varphi\|_{L^2(S^2)} &\leq c\varepsilon^{1/2}\|\nabla\varphi\|_{L^2(\Omega_\varepsilon)}. \label{E:AvDf_H1}
  \end{align}
\end{lemma}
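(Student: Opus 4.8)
The plan is to reduce both estimates directly to the $L^2$ bounds on weighted averages already established in Lemma \ref{L:Ave_L2}, using the commutation identity of Lemma \ref{L:Ave_TGr} to transfer the tangential gradient onto $\nabla\varphi$ at the cost of raising the weight by one order.

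First I would invoke the second form of \eqref{E:Ave_TGr}, namely $\nabla_{S^2}\mathcal{M}_\varepsilon^k\varphi = \mathbf{P}\mathcal{M}_\varepsilon^{k+1}(\nabla\varphi)$ on $S^2$. Since $\mathbf{P}$ is an orthogonal projection, we have the pointwise bound $|\mathbf{P}\mathbf{a}|\leq|\mathbf{a}|$ for every $\mathbf{a}\in\mathbb{R}^3$, so that
\[
  \|\nabla_{S^2}\mathcal{M}_\varepsilon^k\varphi\|_{L^2(S^2)} \leq \|\mathcal{M}_\varepsilon^{k+1}(\nabla\varphi)\|_{L^2(S^2)}.
\]
Applying \eqref{E:Ave_L2} componentwise to the vector field $\nabla\varphi\in L^2(\Omega_\varepsilon)^3$ (with $k$ replaced by $k+1$) and summing over components yields $\|\mathcal{M}_\varepsilon^{k+1}(\nabla\varphi)\|_{L^2(S^2)} \leq c\varepsilon^{-1/2}\|\nabla\varphi\|_{L^2(\Omega_\varepsilon)}$, which gives \eqref{E:Ave_H1}. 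For \eqref{E:AvDf_H1} I would apply the same identity for the two orders $k$ and $\ell$ and subtract, obtaining
\[
  \nabla_{S^2}\mathcal{M}_\varepsilon^k\varphi - \nabla_{S^2}\mathcal{M}_\varepsilon^\ell\varphi = \mathbf{P}\bigl[\mathcal{M}_\varepsilon^{k+1}(\nabla\varphi) - \mathcal{M}_\varepsilon^{\ell+1}(\nabla\varphi)\bigr]
\]
on $S^2$. Again using $|\mathbf{P}\mathbf{a}|\leq|\mathbf{a}|$ and then the difference estimate \eqref{E:AvDf_L2} applied componentwise to $\nabla\varphi$ (with orders $k+1$ and $\ell+1$), I arrive at \eqref{E:AvDf_H1}.

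The argument is essentially immediate once Lemma \ref{L:Ave_TGr} is in hand, so there is no genuine obstacle; the heavy lifting was already done in proving the commutation identity. The only point requiring minor care is that $\nabla\varphi$ is vector-valued, so the scalar estimates of Lemma \ref{L:Ave_L2} must be summed over the three components, contributing only a harmless dimensional constant. It is worth emphasizing that the gain of one order in the weight (passing from $\mathcal{M}_\varepsilon^k$ to $\mathcal{M}_\varepsilon^{k+1}$) is exactly what lets the difference estimate \eqref{E:AvDf_H1} inherit the same $\varepsilon^{1/2}$ rate as its $L^2$-level counterpart \eqref{E:AvDf_L2}.
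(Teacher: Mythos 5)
Your proof is correct and is exactly the paper's argument: the paper's proof of this lemma is the one-line observation that Lemmas \ref{L:Ave_L2} and \ref{L:Ave_TGr} together with the pointwise bound $|\mathbf{P}\mathbf{a}|\leq|\mathbf{a}|$ on $S^2$ yield both estimates. Your write-up simply spells out the same reduction (applying \eqref{E:Ave_L2} and \eqref{E:AvDf_L2} componentwise to $\nabla\varphi$ with the weight raised to $k+1$, $\ell+1$) in more detail.
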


\begin{proof}
  Lemmas \ref{L:Ave_L2} and \ref{L:Ave_TGr} imply \eqref{E:Ave_H1} and \eqref{E:AvDf_H1}, since $|\mathbf{P}\mathbf{a}|\leq|\mathbf{a}|$ on $S^2$ for $\mathbf{a}\in\mathbb{R}^3$.
\end{proof}

\begin{lemma} \label{L:Atan_H1}
  For $k,\ell\in\mathbb{Z}_{\geq0}$ and $\mathbf{u}\in H^1(\Omega_\varepsilon)^3$, we have $\mathcal{M}_{\varepsilon,\tau}^k\mathbf{u}\in H_\tau^1(S^2)$ and
  \begin{align}
    \|\mathcal{M}_{\varepsilon,\tau}^k\mathbf{u}\|_{H^1(S^2)} &\leq c\varepsilon^{-1/2}\|\mathbf{u}\|_{H^1(\Omega_\varepsilon)}, \label{E:Atan_H1} \\
    \|\mathcal{M}_{\varepsilon,\tau}^k\mathbf{u}-\mathcal{M}_{\varepsilon,\tau}^\ell\mathbf{u}\|_{H^1(S^2)} &\leq c\varepsilon^{1/2}\|\mathbf{u}\|_{H^1(\Omega_\varepsilon)}. \label{E:Atan_Df}
  \end{align}
\end{lemma}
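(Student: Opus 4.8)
The plan is to reduce everything to the scalar averaging estimates of Lemmas \ref{L:Ave_L2} and \ref{L:Ave_H1}, applied componentwise, together with the tangential-projection formula \eqref{E:Vec_Tan}. First I would note that each component of $\mathcal{M}_\varepsilon^k\mathbf{u}$ is the scalar average $\mathcal{M}_\varepsilon^k u_j$ of $u_j\in H^1(\Omega_\varepsilon)$, so \eqref{E:Ave_L2} and \eqref{E:Ave_H1} give $\mathcal{M}_\varepsilon^k\mathbf{u}\in H^1(S^2)^3$ with
\[
\|\mathcal{M}_\varepsilon^k\mathbf{u}\|_{L^2(S^2)}+\|\nabla_{S^2}\mathcal{M}_\varepsilon^k\mathbf{u}\|_{L^2(S^2)}\le c\varepsilon^{-1/2}\|\mathbf{u}\|_{H^1(\Omega_\varepsilon)}.
\]
Since $\mathbf{P}$ is smooth on $S^2$, the tangential component $\mathcal{M}_{\varepsilon,\tau}^k\mathbf{u}=\mathbf{P}\mathcal{M}_\varepsilon^k\mathbf{u}$ then lies in $H_\tau^1(S^2)$, which settles the membership claim.

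For the bound \eqref{E:Atan_H1}, I would apply \eqref{E:Vec_Tan} with $\mathbf{v}=\mathcal{M}_\varepsilon^k\mathbf{u}$, so that $\mathbf{v}_\tau=\mathcal{M}_{\varepsilon,\tau}^k\mathbf{u}$ and
\[
\nabla_{S^2}\mathcal{M}_{\varepsilon,\tau}^k\mathbf{u}=(\nabla_{S^2}\mathcal{M}_\varepsilon^k\mathbf{u})\mathbf{P}-\mathcal{M}_{\varepsilon,\tau}^k\mathbf{u}\otimes\mathbf{n}-(\mathcal{M}_\varepsilon^k\mathbf{u}\cdot\mathbf{n})\mathbf{P}.
\]
Using $|\mathbf{P}\mathbf{a}|\le|\mathbf{a}|$, $|\mathbf{n}|=1$, and $|\mathbf{A}\mathbf{P}|\le|\mathbf{A}|$, each of the three terms on the right is bounded in $L^2(S^2)$ by either $\|\nabla_{S^2}\mathcal{M}_\varepsilon^k\mathbf{u}\|_{L^2(S^2)}$ or $\|\mathcal{M}_\varepsilon^k\mathbf{u}\|_{L^2(S^2)}$, hence by $c\varepsilon^{-1/2}\|\mathbf{u}\|_{H^1(\Omega_\varepsilon)}$ by the first step. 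Note that no boundary condition on $\mathbf{u}$ is needed here, because for the third term I only use the crude $L^2$ bound on $\mathcal{M}_\varepsilon^k\mathbf{u}\cdot\mathbf{n}$ coming from \eqref{E:Ave_L2}, not the sharper \eqref{E:Ave_NC}. Together with $\|\mathcal{M}_{\varepsilon,\tau}^k\mathbf{u}\|_{L^2(S^2)}\le\|\mathcal{M}_\varepsilon^k\mathbf{u}\|_{L^2(S^2)}$, this gives \eqref{E:Atan_H1}.

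For the difference estimate \eqref{E:Atan_Df}, I would repeat the same computation with $\mathbf{v}=\mathcal{M}_\varepsilon^k\mathbf{u}-\mathcal{M}_\varepsilon^\ell\mathbf{u}$ in \eqref{E:Vec_Tan}, using the linearity of $\mathbf{P}$ so that $\mathbf{v}_\tau=\mathcal{M}_{\varepsilon,\tau}^k\mathbf{u}-\mathcal{M}_{\varepsilon,\tau}^\ell\mathbf{u}$. The three resulting terms are now controlled by $\|\nabla_{S^2}\mathcal{M}_\varepsilon^k\mathbf{u}-\nabla_{S^2}\mathcal{M}_\varepsilon^\ell\mathbf{u}\|_{L^2(S^2)}$ and $\|\mathcal{M}_\varepsilon^k\mathbf{u}-\mathcal{M}_\varepsilon^\ell\mathbf{u}\|_{L^2(S^2)}$, both of which are bounded by $c\varepsilon^{1/2}\|\mathbf{u}\|_{H^1(\Omega_\varepsilon)}$ via the difference estimates \eqref{E:AvDf_L2} and \eqref{E:AvDf_H1} applied componentwise; combining them yields \eqref{E:Atan_Df}. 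There is no genuine obstacle in this argument, and the only point requiring care is that $\mathcal{M}_\varepsilon^k\mathbf{u}$ need not be tangential, so its surface gradient picks up the two extra lower-order terms $\mathbf{v}_\tau\otimes\mathbf{n}$ and $(\mathbf{v}\cdot\mathbf{n})\mathbf{P}$ in \eqref{E:Vec_Tan}; one must simply verify these are harmless, which they are since they involve only $L^2$ norms of the (difference of) averages rather than their gradients.
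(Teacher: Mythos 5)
Your proof is correct and takes essentially the same route as the paper: both reduce to the componentwise scalar estimates \eqref{E:Ave_L2} and \eqref{E:Ave_H1} (and their difference versions \eqref{E:AvDf_L2}, \eqref{E:AvDf_H1}) and then use the projection formula \eqref{E:Vec_Tan} to absorb the lower-order terms $\mathbf{v}_\tau\otimes\mathbf{n}$ and $(\mathbf{v}\cdot\mathbf{n})\mathbf{P}$ into $L^2$ bounds on the averages. Your remark that only the crude bound \eqref{E:Ave_L2} on $\mathcal{M}_\varepsilon^k\mathbf{u}\cdot\mathbf{n}$ is needed, so no boundary condition on $\mathbf{u}$ enters, is likewise consistent with the paper's proof, which also avoids \eqref{E:Ave_NC} here.
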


\begin{proof}
  Since $\mathcal{M}_{\varepsilon,\tau}^k\mathbf{u}=\mathbf{P}\mathcal{M}_\varepsilon^k\mathbf{u}$ is the tangential component of $\mathcal{M}_\varepsilon^k\mathbf{u}$, we have
  \begin{align*}
    \|\mathcal{M}_{\varepsilon,\tau}^k\mathbf{u}\|_{L^2(S^2)} \leq \|\mathcal{M}_\varepsilon^k\mathbf{u}\|_{L^2(S^2)} \leq c\varepsilon^{-1/2}\|\mathbf{u}\|_{L^2(\Omega_\varepsilon)}
  \end{align*}
  by \eqref{E:Ave_L2}.
  Also, we see by \eqref{E:Vec_Tan}, $|\mathbf{n}|=1$ and $|\mathbf{P}|=\sqrt{2}$ on $S^2$, \eqref{E:Ave_L2}, and \eqref{E:Ave_H1} that
  \begin{align*}
    \|\nabla_{S^2}\mathcal{M}_{\varepsilon,\tau}^k\mathbf{u}\|_{L^2(S^2)} \leq c\Bigl(\|\nabla_{S^2}\mathcal{M}_\varepsilon^k\mathbf{u}\|_{L^2(S^2)}+\|\mathcal{M}_\varepsilon^k\mathbf{u}\|_{L^2(S^2)}\Bigr) \leq c\varepsilon^{-1/2}\|\mathbf{u}\|_{H^1(\Omega_\varepsilon)}.
  \end{align*}
  Hence, \eqref{E:Atan_H1} follows.
  Similarly, we can get \eqref{E:Atan_Df} by \eqref{E:AvDf_L2} and \eqref{E:AvDf_H1}.
\end{proof}

\begin{lemma} \label{L:Ave_div}
  Let $\mathbf{u}\in H^1(\Omega_\varepsilon)^3$ satisfy $\mathbf{u}\cdot\mathbf{n}_\varepsilon=0$ on $\partial\Omega_\varepsilon$.
  Then, for $k\in\mathbb{Z}_{\geq0}$,
  \begin{align} \label{E:Ave_div}
    \mathrm{div}_{S^2}\mathcal{M}_\varepsilon^k\mathbf{u} = \mathcal{M}_\varepsilon^{k+1}(\mathrm{div}\,\mathbf{u})+(k+1)\mathcal{M}_\varepsilon^k(\mathbf{u}\cdot\bar{\mathbf{n}}) \quad\text{on}\quad S^2.
  \end{align}
  Moreover, for the tangential component $\mathcal{M}_{\varepsilon,\tau}^k\mathbf{u}$, we have
  \begin{align} \label{E:Atan_div}
    \mathrm{div}_{S^2}\mathcal{M}_{\varepsilon,\tau}^k\mathbf{u} = \mathcal{M}_\varepsilon^{k+1}(\mathrm{div}\,\mathbf{u})+(k-1)\mathcal{M}_\varepsilon^k(\mathbf{u}\cdot\bar{\mathbf{n}}) \quad\text{on}\quad S^2.
  \end{align}
\end{lemma}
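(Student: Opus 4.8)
The plan is to prove \eqref{E:Ave_div} by differentiating the average tangentially via Lemma \ref{L:Ave_TGr} and then removing the resulting normal derivative through a radial integration by parts. First I would write $\mathrm{div}_{S^2}\mathcal{M}_\varepsilon^k\mathbf{u} = \sum_i\underline{D}_i\mathcal{M}_\varepsilon^k u_i$ and apply \eqref{E:Ave_TGr} to each scalar component $u_i$, obtaining $\underline{D}_i\mathcal{M}_\varepsilon^k u_i = \sum_l P_{il}\mathcal{M}_\varepsilon^{k+1}(\partial_l u_i)$. Since $\mathbf{P}(y)=\overline{\mathbf{P}}(ry)$ is independent of the radial variable, each factor $P_{il}$ may be carried inside the average, so that summing over $i$ gives
\begin{align*}
  \mathrm{div}_{S^2}\mathcal{M}_\varepsilon^k\mathbf{u} = \mathcal{M}_\varepsilon^{k+1}\bigl(\mathrm{tr}[\overline{\mathbf{P}}\nabla\mathbf{u}]\bigr).
\end{align*}
Using $\overline{\mathbf{P}}=\mathbf{I}_3-\bar{\mathbf{n}}\otimes\bar{\mathbf{n}}$ and $(\partial_{\mathbf{n}}\mathbf{u})_i=\sum_l\bar{n}_l\partial_l u_i$, the integrand simplifies to $\mathrm{tr}[\overline{\mathbf{P}}\nabla\mathbf{u}]=\mathrm{div}\,\mathbf{u}-\bar{\mathbf{n}}\cdot\partial_{\mathbf{n}}\mathbf{u}$, whence
\begin{align*}
  \mathrm{div}_{S^2}\mathcal{M}_\varepsilon^k\mathbf{u} = \mathcal{M}_\varepsilon^{k+1}(\mathrm{div}\,\mathbf{u}) - \mathcal{M}_\varepsilon^{k+1}(\bar{\mathbf{n}}\cdot\partial_{\mathbf{n}}\mathbf{u}).
\end{align*}

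Next I would identify the second term with the stated normal-component average. The key point is that for fixed $y\in S^2$ one has $\bar{\mathbf{n}}(ry)=y$, so the chain rule gives $\partial_{\mathbf{n}}\mathbf{u}(ry)=\frac{d}{dr}[\mathbf{u}(ry)]$ and therefore
\begin{align*}
  [\bar{\mathbf{n}}\cdot\partial_{\mathbf{n}}\mathbf{u}](ry) = y\cdot\frac{d}{dr}\mathbf{u}(ry) = \frac{d}{dr}\bigl[(\mathbf{u}\cdot\bar{\mathbf{n}})(ry)\bigr].
\end{align*}
Integrating against $r^{k+1}$ over $[1,1+\varepsilon]$ and integrating by parts in $r$ produces the factor $-(k+1)$ together with boundary terms at $r=1$ and $r=1+\varepsilon$; since $\bar{\mathbf{n}}=\pm\mathbf{n}_\varepsilon$ on the two components of $\partial\Omega_\varepsilon$, the hypothesis $\mathbf{u}\cdot\mathbf{n}_\varepsilon=0$ forces $\mathbf{u}\cdot\bar{\mathbf{n}}=0$ there, so the boundary terms vanish and
\begin{align*}
  \mathcal{M}_\varepsilon^{k+1}(\bar{\mathbf{n}}\cdot\partial_{\mathbf{n}}\mathbf{u}) = -(k+1)\mathcal{M}_\varepsilon^k(\mathbf{u}\cdot\bar{\mathbf{n}}).
\end{align*}
Substituting this into the previous display yields \eqref{E:Ave_div}.

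For the tangential version \eqref{E:Atan_div}, I would take the trace of \eqref{E:Vec_Tan} with $\mathbf{v}:=\mathcal{M}_\varepsilon^k\mathbf{u}$, so that $\mathbf{v}_\tau=\mathcal{M}_{\varepsilon,\tau}^k\mathbf{u}$. Using $\nabla_{S^2}\mathbf{v}=\mathbf{P}\nabla_{S^2}\mathbf{v}$ and the cyclicity of the trace one has $\mathrm{tr}[(\nabla_{S^2}\mathbf{v})\mathbf{P}]=\mathrm{div}_{S^2}\mathbf{v}$, while $\mathrm{tr}[\mathbf{v}_\tau\otimes\mathbf{n}]=\mathbf{v}_\tau\cdot\mathbf{n}=0$ and $\mathrm{tr}[\mathbf{P}]=2$; hence
\begin{align*}
  \mathrm{div}_{S^2}\mathcal{M}_{\varepsilon,\tau}^k\mathbf{u} = \mathrm{div}_{S^2}\mathcal{M}_\varepsilon^k\mathbf{u} - 2(\mathcal{M}_\varepsilon^k\mathbf{u}\cdot\mathbf{n}).
\end{align*}
Since $\mathcal{M}_\varepsilon^k\mathbf{u}\cdot\mathbf{n}=\mathcal{M}_\varepsilon^k(\mathbf{u}\cdot\bar{\mathbf{n}})$ (as in the proof of \eqref{E:Ave_NC}), combining this with \eqref{E:Ave_div} replaces the coefficient $(k+1)$ by $(k+1)-2=k-1$, which is exactly \eqref{E:Atan_div}.

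The routine parts are the index bookkeeping in the trace computation and the trace of \eqref{E:Vec_Tan}; the only point requiring genuine care is the radial integration by parts, where one must justify that $r\mapsto(\mathbf{u}\cdot\bar{\mathbf{n}})(ry)$ lies in $H^1(1,1+\varepsilon)$ for a.a.\ $y\in S^2$ with the expected boundary traces, so that the boundary terms indeed vanish by the impermeability condition. This is precisely where the hypothesis $\mathbf{u}\cdot\mathbf{n}_\varepsilon=0$ on $\partial\Omega_\varepsilon$ enters decisively.
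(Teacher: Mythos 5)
Your proposal is correct and takes essentially the same route as the paper's proof: both use \eqref{E:Ave_TGr} to reduce $\mathrm{div}_{S^2}\mathcal{M}_\varepsilon^k\mathbf{u}$ to $\mathcal{M}_\varepsilon^{k+1}(\mathrm{div}\,\mathbf{u})$ minus a normal-derivative term, recognize that term as the radial derivative $\frac{d}{dr}[(\mathbf{u}\cdot\bar{\mathbf{n}})(ry)]$, and integrate by parts in $r$ against $r^{k+1}$, with the boundary terms vanishing by $\mathbf{u}\cdot\mathbf{n}_\varepsilon=0$ since $\bar{\mathbf{n}}=\pm\mathbf{n}_\varepsilon$ on the two boundary spheres. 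For \eqref{E:Atan_div} your tracing of \eqref{E:Vec_Tan} yields exactly the paper's identity $\mathrm{div}_{S^2}[\mathbf{P}\mathbf{v}]=\mathrm{div}_{S^2}\mathbf{v}-2(\mathbf{v}\cdot\mathbf{n})$ with $\mathbf{v}=\mathcal{M}_\varepsilon^k\mathbf{u}$, so the two arguments coincide in substance.
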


\begin{proof}
  Since $\mathbf{P}=\mathbf{I}_3-\mathbf{n}\otimes\mathbf{n}$ on $S^2$, we see by \eqref{E:Ave_TGr} that
  \begin{align*}
    \nabla_{S^2}\mathcal{M}_\varepsilon^k\mathbf{u} = \mathcal{M}_\varepsilon^{k+1}(\nabla\mathbf{u})-(\mathbf{n}\otimes\mathbf{n})\mathcal{M}_\varepsilon^{k+1}(\nabla\mathbf{u}) \quad\text{on}\quad S^2.
  \end{align*}
  Taking the trace of both sides, we get
  \begin{align} \label{Pf_Ad:div}
    \mathrm{div}_{S^2}\mathcal{M}_\varepsilon^k\mathbf{u} = \mathcal{M}_\varepsilon^{k+1}(\mathrm{div}\,\mathbf{u})-f_{\mathbf{u}}, \quad f_{\mathbf{u}} := \sum_{i,j=1}^3n_in_j\mathcal{M}_\varepsilon^{k+1}(\partial_ju_i)
  \end{align}
  on $S^2$.
  Moreover, letting $r_\varepsilon:=1+\varepsilon$, we see by $\mathbf{n}(y)=y$ for $y\in S^2$ that
  \begin{align*}
    \sum_{j=1}^3n_j(y)[\mathcal{M}_\varepsilon^{k+1}(\partial_ju_i)](y) &= \frac{1}{\varepsilon}\sum_{j=1}^3\int_1^{r_\varepsilon}y_j\partial_ju_i(ry)r^{k+1}\,dr \\
    &= \frac{1}{\varepsilon}\int_1^{r_\varepsilon}\left[\frac{\partial}{\partial r}\Bigl(u_i(ry)\Bigr)\right]r^{k+1}\,dr \\
    &= \frac{1}{\varepsilon}\{u_i(r_\varepsilon y)r_\varepsilon^{k+1}-u_i(y)\}-\frac{k+1}{\varepsilon}\int_1^{r_\varepsilon}u_i(ry)r^k\,dr.
  \end{align*}
  We multiply both sides by $n_i(y)=y_i$ and sum over $i=1,2,3$ to get
  \begin{align*}
    f_{\mathbf{u}}(y) = \frac{1}{\varepsilon}\{[\mathbf{u}(r_\varepsilon y)\cdot y]r_\varepsilon^{k+1}-\mathbf{u}(y)\cdot y\}-\frac{k+1}{\varepsilon}\int_1^{r_\varepsilon}[\mathbf{u}(ry)\cdot y]r^k\,dr.
  \end{align*}
  Now, we recall that $\mathbf{u}$ satisfies $\mathbf{u}\cdot\mathbf{n}_\varepsilon=0$ on $\partial\Omega_\varepsilon$ and
  \begin{align*}
    \partial\Omega_\varepsilon = S^2\cup\{r_\varepsilon y \mid y\in S^2\}, \quad \mathbf{n}_\varepsilon(y) = -y, \quad \mathbf{n}_\varepsilon(r_\varepsilon y) = y, \quad y\in S^2.
  \end{align*}
  Thus, $\mathbf{u}(r_\varepsilon y)\cdot y=\mathbf{u}(y)\cdot y=0$.
  Also, for $y\in S^2$ and $r\in[1,r_\varepsilon]$,
  \begin{align*}
    \mathbf{u}(ry)\cdot y = \mathbf{u}(ry)\cdot\mathbf{n}(y) = \mathbf{u}(ry)\cdot\bar{\mathbf{n}}(ry) = [\mathbf{u}\cdot\bar{\mathbf{n}}](ry).
  \end{align*}
  Therefore, noting that $r_\varepsilon=1+\varepsilon$, we have
  \begin{align*}
    f_{\mathbf{u}}(y) = -\frac{k+1}{\varepsilon}\int_1^{r_\varepsilon}[\mathbf{u}\cdot\bar{\mathbf{n}}](ry)r^k\,dr = -(k+1)[\mathcal{M}_\varepsilon^k(\mathbf{u}\cdot\bar{\mathbf{n}})](y)
  \end{align*}
  for $y\in S^2$, and we apply this to \eqref{Pf_Ad:div} to find that \eqref{E:Ave_div} is valid.

  Next, let $\mathbf{v}:=\mathcal{M}_\varepsilon^k\mathbf{u}$ on $S^2$.
  Then, $\mathcal{M}_{\varepsilon,\tau}^k\mathbf{u}=\mathbf{P}\mathbf{v}=\mathbf{v}-(\mathbf{v}\cdot\mathbf{n})\mathbf{n}$ and
  \begin{align*}
    \mathrm{div}_{S^2}[\mathbf{P}\mathbf{v}] = \mathrm{div}_{S^2}\mathbf{v}-[\nabla_{S^2}(\mathbf{v}\cdot\mathbf{n})]\cdot\mathbf{n}-(\mathbf{v}\cdot\mathbf{n})\mathrm{div}_{S^2}\mathbf{n} = \mathrm{div}_{S^2}\mathbf{v}-2(\mathbf{v}\cdot\mathbf{n})
  \end{align*}
  on $S^2$ by \eqref{E:TGr_Nor} and $\nabla_{S^2}\eta\cdot\mathbf{n}=0$ for a function $\eta$ on $S^2$.
  Thus, we obtain \eqref{E:Atan_div} by applying \eqref{E:Ave_div} and $\mathbf{v}\cdot\mathbf{n}=\mathcal{M}_\varepsilon^k(\mathbf{u}\cdot\bar{\mathbf{n}})$ to the above relation.
\end{proof}

\begin{lemma} \label{L:Atdiv_L2}
  For $k\in\mathbb{Z}_{\geq0}$ and $\mathbf{u}\in\mathcal{V}_\varepsilon$, we have
  \begin{align} \label{E:Atdiv_L2}
    \|\mathrm{div}_{S^2}\mathcal{M}_{\varepsilon,\tau}^k\mathbf{u}\|_{L^2(S^2)} \leq c\varepsilon^{1/2}\|\mathbf{u}\|_{H^1(\Omega_\varepsilon)}.
  \end{align}
  Moreover, letting $\mathbb{L}_0$ be the Helmholtz--Leray projection on $L_\tau^2(S^2)$, we have
  \begin{align} \label{E:Ave_HL}
    \|\mathcal{M}_{\varepsilon,\tau}^k\mathbf{u}-\mathbb{L}_0\mathcal{M}_{\varepsilon,\tau}^k\mathbf{u}\|_{H^1(S^2)} \leq c\varepsilon^{1/2}\|\mathbf{u}\|_{H^1(\Omega_\varepsilon)}.
  \end{align}
\end{lemma}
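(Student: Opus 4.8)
The plan is to obtain both estimates by chaining together the divergence formula for the tangential average (Lemma \ref{L:Ave_div}) with the normal-component bound (Lemma \ref{L:Ave_NC}), and then feeding the result into the Helmholtz--Leray estimate (Lemma \ref{L:HLH1_S2}). The key observation is that the solenoidal constraint in the definition of $\mathcal{V}_\varepsilon$ annihilates the leading term in the surface divergence, leaving only a term that is already known to be of order $\varepsilon^{1/2}$.

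First I would prove \eqref{E:Atdiv_L2}. Starting from formula \eqref{E:Atan_div},
\begin{align*}
  \mathrm{div}_{S^2}\mathcal{M}_{\varepsilon,\tau}^k\mathbf{u} = \mathcal{M}_\varepsilon^{k+1}(\mathrm{div}\,\mathbf{u})+(k-1)\mathcal{M}_\varepsilon^k(\mathbf{u}\cdot\bar{\mathbf{n}}) \quad\text{on}\quad S^2,
\end{align*}
I would use $\mathbf{u}\in\mathcal{V}_\varepsilon$, which gives $\mathrm{div}\,\mathbf{u}=0$ in $\Omega_\varepsilon$, so the first term vanishes identically and only $(k-1)\mathcal{M}_\varepsilon^k(\mathbf{u}\cdot\bar{\mathbf{n}})$ survives (in particular the whole expression is zero when $k=1$). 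Taking the $L^2(S^2)$-norm and applying \eqref{E:Ave_NC}, which is available since $\mathbf{u}\cdot\mathbf{n}_\varepsilon=0$ on $\partial\Omega_\varepsilon$ for $\mathbf{u}\in\mathcal{V}_\varepsilon$, yields
\begin{align*}
  \|\mathrm{div}_{S^2}\mathcal{M}_{\varepsilon,\tau}^k\mathbf{u}\|_{L^2(S^2)} = |k-1|\,\|\mathcal{M}_\varepsilon^k(\mathbf{u}\cdot\bar{\mathbf{n}})\|_{L^2(S^2)} \leq c\varepsilon^{1/2}\|\mathbf{u}\|_{H^1(\Omega_\varepsilon)},
\end{align*}
which is \eqref{E:Atdiv_L2}.

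For \eqref{E:Ave_HL}, I would note that $\mathcal{M}_{\varepsilon,\tau}^k\mathbf{u}\in H_\tau^1(S^2)$ by Lemma \ref{L:Atan_H1}, so Lemma \ref{L:HLH1_S2} applies with $\mathbf{v}=\mathcal{M}_{\varepsilon,\tau}^k\mathbf{u}$, giving
\begin{align*}
  \|\mathcal{M}_{\varepsilon,\tau}^k\mathbf{u}-\mathbb{L}_0\mathcal{M}_{\varepsilon,\tau}^k\mathbf{u}\|_{H^1(S^2)} \leq c\|\mathrm{div}_{S^2}\mathcal{M}_{\varepsilon,\tau}^k\mathbf{u}\|_{L^2(S^2)}.
\end{align*}
Combining this with the just-proved bound \eqref{E:Atdiv_L2} delivers \eqref{E:Ave_HL} immediately.

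I do not anticipate a genuine obstacle in this lemma: the substantive work has already been carried out in the supporting lemmas. The only point requiring care is recognizing that the solenoidal hypothesis eliminates the $\mathcal{M}_\varepsilon^{k+1}(\mathrm{div}\,\mathbf{u})$ term \emph{exactly} (not merely up to a small error), so that the entire surface divergence of the tangential average is controlled solely by the normal trace, whose smallness of order $\varepsilon^{1/2}$ is precisely the content of Lemma \ref{L:Ave_NC} (itself resting on Lemma \ref{L:Nor_Thin}). The harmless constant $(k-1)$ and the $H^1$-boundedness needed to invoke Lemma \ref{L:HLH1_S2} are routine.
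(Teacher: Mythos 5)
Your proposal is correct and follows essentially the same route as the paper: apply \eqref{E:Atan_div} with $\mathrm{div}\,\mathbf{u}=0$ to reduce the surface divergence to $(k-1)\mathcal{M}_\varepsilon^k(\mathbf{u}\cdot\bar{\mathbf{n}})$, bound it by \eqref{E:Ave_NC}, and then obtain \eqref{E:Ave_HL} from \eqref{E:HLH1_S2} together with \eqref{E:Atdiv_L2}. Your writing $|k-1|$ (and noting the $k=1$ degeneracy) is a minor cosmetic improvement over the paper's statement, but the argument is identical.
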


\begin{proof}
  Since $\mathrm{div}\,\mathbf{u}=0$ in $\Omega_\varepsilon$ and $\mathbf{u}\cdot\mathbf{n}_\varepsilon=0$ on $\partial\Omega_\varepsilon$ by $\mathbf{u}\in\mathcal{V}_\varepsilon$, we have
  \begin{align*}
    \|\mathrm{div}_{S^2}\mathcal{M}_{\varepsilon,\tau}^k\mathbf{u}\|_{L^2(S^2)} = (k-1)\|\mathcal{M}_\varepsilon^k(\mathbf{u}\cdot\bar{\mathbf{n}})\|_{L^2(S^2)} \leq c\varepsilon^{1/2}\|\mathbf{u}\|_{H^1(\Omega_\varepsilon)}
  \end{align*}
  by \eqref{E:Ave_NC} and \eqref{E:Atan_div}.
  Thus, \eqref{E:Atdiv_L2} holds.
  Also, we have \eqref{E:Ave_HL} by \eqref{E:HLH1_S2} and \eqref{E:Atdiv_L2}.
\end{proof}

\subsection{Extension} \label{SS:AE_Ext}
For a vector field $\mathbf{v}$ on $S^2$, we define the weighted extension
\begin{align} \label{E:Def_Ext}
  \mathbf{v}_{\mathrm{E}}(x) := |x|\,\bar{\mathbf{v}}(x) = |x|\,\mathbf{v}\left(\frac{x}{|x|}\right), \quad x\in\mathbb{R}^3\setminus\{0\}.
\end{align}
Of course, this extension is close to the constant extension $\bar{\mathbf{v}}$ in $L^2(\Omega_\varepsilon)^3$.

\begin{lemma} \label{L:DfEx_L2}
  For $\mathbf{v}\in L^2(S^2)^3$, we have
  \begin{align} \label{E:DfEx_L2}
    \|\mathbf{v}_{\mathrm{E}}-\bar{\mathbf{v}}\|_{L^2(\Omega_\varepsilon)} \leq 2\varepsilon^{3/2}\|\mathbf{v}\|_{L^2(S^2)}.
  \end{align}
\end{lemma}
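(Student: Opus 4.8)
The plan is to exploit the clean factorization of the difference. By the definition \eqref{E:Def_Ext} of the weighted extension together with $\bar{\mathbf{v}}(x)=\mathbf{v}(x/|x|)$, I would first write
\begin{align*}
  \mathbf{v}_{\mathrm{E}}(x)-\bar{\mathbf{v}}(x) = |x|\,\bar{\mathbf{v}}(x)-\bar{\mathbf{v}}(x) = (|x|-1)\bar{\mathbf{v}}(x), \quad x\in\Omega_\varepsilon.
\end{align*}
The only structural input needed is the geometry of the thin shell: for $x\in\Omega_\varepsilon$ we have $1<|x|<1+\varepsilon$, hence $0<|x|-1<\varepsilon$, so that $|x|-1\leq\varepsilon$ holds pointwise on $\Omega_\varepsilon$.

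The next step is to pass to norms. Bounding the integrand pointwise by $|\mathbf{v}_{\mathrm{E}}(x)-\bar{\mathbf{v}}(x)|=(|x|-1)|\bar{\mathbf{v}}(x)|\leq\varepsilon|\bar{\mathbf{v}}(x)|$ and taking the $L^2(\Omega_\varepsilon)$ norm gives
\begin{align*}
  \|\mathbf{v}_{\mathrm{E}}-\bar{\mathbf{v}}\|_{L^2(\Omega_\varepsilon)} \leq \varepsilon\|\bar{\mathbf{v}}\|_{L^2(\Omega_\varepsilon)}.
\end{align*}
Finally I would invoke the already-established estimate \eqref{E:CoEx_L2} for the constant extension, applied componentwise to $\mathbf{v}=(v_1,v_2,v_3)^{\mathrm{T}}$ and summed, which yields $\|\bar{\mathbf{v}}\|_{L^2(\Omega_\varepsilon)}\leq 2\varepsilon^{1/2}\|\mathbf{v}\|_{L^2(S^2)}$. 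Combining the two inequalities,
\begin{align*}
  \|\mathbf{v}_{\mathrm{E}}-\bar{\mathbf{v}}\|_{L^2(\Omega_\varepsilon)} \leq \varepsilon\cdot 2\varepsilon^{1/2}\|\mathbf{v}\|_{L^2(S^2)} = 2\varepsilon^{3/2}\|\mathbf{v}\|_{L^2(S^2)},
\end{align*}
which is exactly \eqref{E:DfEx_L2}.

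There is no genuine obstacle here: the estimate is a one-line consequence of the factorization $\mathbf{v}_{\mathrm{E}}-\bar{\mathbf{v}}=(|x|-1)\bar{\mathbf{v}}$ together with the crude bound $|x|-1\leq\varepsilon$ valid on the shell. The only point meriting a moment's care is that \eqref{E:CoEx_L2} is stated for scalar functions, so strictly one applies it to each component and sums; since the constant extension commutes with taking components this costs nothing. Note also that no regularity on $\mathbf{v}$ beyond $L^2(S^2)$ is used, consistently with the hypothesis, and the extra power of $\varepsilon^{1/2}$ relative to $\|\bar{\mathbf{v}}\|_{L^2(\Omega_\varepsilon)}$ comes entirely from the smallness of $|x|-1$ across the thin direction.
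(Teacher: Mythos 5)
Your proof is correct and is essentially the paper's own argument: the paper computes $\|\mathbf{v}_{\mathrm{E}}-\bar{\mathbf{v}}\|_{L^2(\Omega_\varepsilon)}^2$ directly via the change of variables \eqref{E:CoV_Thin} and applies $|r-1|\leq\varepsilon$ and $r\leq2$, which are exactly your pointwise factorization bound $|x|-1\leq\varepsilon$ and the factor $r\leq2$ already embedded in \eqref{E:CoEx_L2}. Routing the second step through the previously established estimate \eqref{E:CoEx_L2} (applied componentwise) is merely a cosmetic repackaging of the same computation.
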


\begin{proof}
  We see by \eqref{E:CoV_Thin} that
  \begin{align*}
    \|\mathbf{v}_{\mathrm{E}}-\bar{\mathbf{v}}\|_{L^2(\Omega_\varepsilon)}^2 =\int_{S^2}\int_1^{1+\varepsilon}|r-1|^2|\mathbf{v}(y)|^2r^2\,dr\,d\mathcal{H}^2(y).
  \end{align*}
  To the right-hand side, we apply $|r-1|\leq\varepsilon$ and $r\leq2$.
  Then, we get \eqref{E:DfEx_L2}.
\end{proof}

On the other hand, the extension $\mathbf{v}_{\mathrm{E}}$ has the following good properties.

\begin{lemma} \label{L:Ext_Grad}
  Let $\mathbf{v}\in H^1(S^2)^3$.
  Then, for $x\in\mathbb{R}^3\setminus\{0\}$,
  \begin{align}
    \nabla\mathbf{v}_{\mathrm{E}}(x) &= \overline{\nabla_{S^2}\mathbf{v}}(x)+\bar{\mathbf{n}}(x)\otimes\bar{\mathbf{v}}(x), \label{E:Ext_Grad} \\
    \mathrm{div}\,\mathbf{v}_{\mathrm{E}}(x) &= \overline{\mathrm{div}_{S^2}\mathbf{v}}(x)+\bar{\mathbf{n}}(x)\cdot\bar{\mathbf{v}}(x). \label{E:Ext_div}
  \end{align}
  Suppose further that $\mathbf{v}$ is tangential on $S^2$.
  Then, for $x\in\mathbb{R}^3\setminus\{0\}$,
  \begin{align}
    [\mathbf{D}(\mathbf{v}_{\mathrm{E}})](x) &= \Bigl[\overline{\mathbf{D}_{S^2}(\mathbf{v})}\Bigr](x) \label{E:Ext_str} \\
    [(\mathbf{v}_{\mathrm{E}}\cdot\nabla)\mathbf{v}_{\mathrm{E}}](x) &= |x|\Bigl\{\overline{\nabla_{\mathbf{v}}\mathbf{v}}(x)-|\bar{\mathbf{v}}(x)|^2\bar{\mathbf{n}}(x)\Bigr\}. \label{E:Ext_CoDe}
  \end{align}
\end{lemma}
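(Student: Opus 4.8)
The plan is to derive all four identities from the single gradient formula \eqref{E:Ext_Grad}, which itself follows from the product rule and Lemma~\ref{L:Const}; the tangentiality hypothesis then enters only through the surface calculus identities \eqref{E:TGr_Dec} and \eqref{E:Gauss}.

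First I would prove \eqref{E:Ext_Grad} by applying $\nabla[\varphi\mathbf{u}]=(\nabla\varphi)\otimes\mathbf{u}+\varphi\nabla\mathbf{u}$ with $\varphi(x)=|x|$ and $\mathbf{u}=\bar{\mathbf{v}}$. Since $\nabla|x|=x/|x|=\bar{\mathbf{n}}(x)$ and $\nabla\bar{\mathbf{v}}(x)=|x|^{-1}\overline{\nabla_{S^2}\mathbf{v}}(x)$ by \eqref{E:Const} applied componentwise, the two factors of $|x|$ cancel and \eqref{E:Ext_Grad} results. Taking the trace of \eqref{E:Ext_Grad}, and using $\mathrm{tr}[\nabla_{S^2}\mathbf{v}]=\mathrm{div}_{S^2}\mathbf{v}$ together with $\mathrm{tr}[\bar{\mathbf{n}}\otimes\bar{\mathbf{v}}]=\bar{\mathbf{n}}\cdot\bar{\mathbf{v}}$, immediately gives \eqref{E:Ext_div}.

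For the strain-rate identity \eqref{E:Ext_str}, I would assume $\mathbf{v}$ tangential and rewrite the bare extension via the decomposition \eqref{E:TGr_Dec}: extending $\nabla_{S^2}\mathbf{v}=\mathbf{P}(\nabla_{S^2}\mathbf{v})\mathbf{P}-\mathbf{v}\otimes\mathbf{n}$ constantly and substituting into \eqref{E:Ext_Grad} yields
\[
  \nabla\mathbf{v}_{\mathrm{E}} = \overline{\mathbf{P}}\,\overline{\nabla_{S^2}\mathbf{v}}\,\overline{\mathbf{P}} + \bigl(\bar{\mathbf{n}}\otimes\bar{\mathbf{v}}-\bar{\mathbf{v}}\otimes\bar{\mathbf{n}}\bigr).
\]
The parenthesized matrix is antisymmetric, so it vanishes on taking the symmetric part, while $(\overline{\mathbf{P}}\,\overline{\nabla_{S^2}\mathbf{v}}\,\overline{\mathbf{P}})_{\mathrm{S}}=\overline{\mathbf{P}}\,(\overline{\nabla_{S^2}\mathbf{v}})_{\mathrm{S}}\,\overline{\mathbf{P}}$ because $\overline{\mathbf{P}}$ is symmetric; the latter equals $\overline{\mathbf{P}(\nabla_{S^2}\mathbf{v})_{\mathrm{S}}\mathbf{P}}=\overline{\mathbf{D}_{S^2}(\mathbf{v})}$, which is \eqref{E:Ext_str}.

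Finally, for \eqref{E:Ext_CoDe} I would use that in the paper's index convention $(\mathbf{w}\cdot\nabla)\mathbf{u}=(\nabla\mathbf{u})^{\mathrm{T}}\mathbf{w}$, so that $(\mathbf{v}_{\mathrm{E}}\cdot\nabla)\mathbf{v}_{\mathrm{E}}=(\nabla\mathbf{v}_{\mathrm{E}})^{\mathrm{T}}\mathbf{v}_{\mathrm{E}}$. Transposing \eqref{E:Ext_Grad} and inserting $\mathbf{v}_{\mathrm{E}}=|x|\bar{\mathbf{v}}$ gives
\[
  (\mathbf{v}_{\mathrm{E}}\cdot\nabla)\mathbf{v}_{\mathrm{E}} = |x|\Bigl(\overline{(\nabla_{S^2}\mathbf{v})^{\mathrm{T}}}\,\bar{\mathbf{v}} + (\bar{\mathbf{n}}\cdot\bar{\mathbf{v}})\,\bar{\mathbf{v}}\Bigr),
\]
where the second term vanishes by tangentiality $\bar{\mathbf{v}}\cdot\bar{\mathbf{n}}=0$. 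Since $(\nabla_{S^2}\mathbf{v})^{\mathrm{T}}\mathbf{v}=(\mathbf{v}\cdot\nabla_{S^2})\mathbf{v}$, the Gauss formula \eqref{E:Gauss} with $\mathbf{w}=\mathbf{v}$ turns the first term into $\overline{\nabla_{\mathbf{v}}\mathbf{v}}-|\bar{\mathbf{v}}|^2\bar{\mathbf{n}}$, giving \eqref{E:Ext_CoDe}. I expect the one genuinely delicate point to be \eqref{E:Ext_str}: one must recognize that, after using \eqref{E:TGr_Dec}, the leftover rank-one terms assemble into an antisymmetric matrix and hence drop out under symmetrization, so that no spurious normal contributions survive; the remaining three identities are routine once the gradient formula and the index conventions are in hand.
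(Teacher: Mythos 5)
Your proposal is correct and follows essentially the same route as the paper's proof: the product rule together with Lemma \ref{L:Const} for \eqref{E:Ext_Grad}, the trace for \eqref{E:Ext_div}, substitution of \eqref{E:TGr_Dec} into \eqref{E:Ext_Grad} followed by symmetrization (which annihilates the antisymmetric matrix $\bar{\mathbf{n}}\otimes\bar{\mathbf{v}}-\bar{\mathbf{v}}\otimes\bar{\mathbf{n}}$) for \eqref{E:Ext_str}, and tangentiality plus the Gauss formula \eqref{E:Gauss} for \eqref{E:Ext_CoDe}. Your explicit identification $(\mathbf{v}_{\mathrm{E}}\cdot\nabla)\mathbf{v}_{\mathrm{E}}=(\nabla\mathbf{v}_{\mathrm{E}})^{\mathrm{T}}\mathbf{v}_{\mathrm{E}}$ just makes the paper's implicit index computation explicit, so there is nothing to add.
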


\begin{proof}
  We see by \eqref{E:Const} and $\nabla|x|=x/|x|=\bar{\mathbf{n}}(x)$ that
  \begin{align*}
    \nabla\mathbf{v}_{\mathrm{E}}(x) = (\nabla|x|)\otimes\bar{\mathbf{v}}(x)+|x|\nabla\bar{\mathbf{v}}(x) = \bar{\mathbf{n}}(x)\otimes\bar{\mathbf{v}}(x)+\overline{\nabla_{S^2}\mathbf{v}}(x).
  \end{align*}
  Thus, \eqref{E:Ext_Grad} holds, and its trace gives \eqref{E:Ext_div}.

  Next, suppose that $\mathbf{v}$ is tangential on $S^2$.
  Then, by \eqref{E:TGr_Dec} and \eqref{E:Ext_Grad}, we have
  \begin{align*}
    \nabla\mathbf{v}_{\mathrm{E}}(x) = \overline{\mathbf{P}(\nabla_{S^2}\mathbf{v})\mathbf{P}}(x)-\bar{\mathbf{v}}(x)\otimes\bar{\mathbf{n}}(x)+\bar{\mathbf{n}}(x)\otimes\bar{\mathbf{v}}(x).
  \end{align*}
  Taking the symmetric part of both sides, we obtain \eqref{E:Ext_str}.
  Also, since
  \begin{align*}
    [(\mathbf{v}_{\mathrm{E}}\cdot\nabla)\mathbf{v}_{\mathrm{E}}](x) = |x|\Bigl\{\overline{(\mathbf{v}\cdot\nabla_{S^2})\mathbf{v}}(x)+[\bar{\mathbf{v}}(x)\cdot\bar{\mathbf{n}}(x)]\bar{\mathbf{v}}(x)\Bigr\}
  \end{align*}
  by \eqref{E:Ext_Grad}, we apply $\mathbf{v}\cdot\mathbf{n}=0$ on $S^2$ and \eqref{E:Gauss} to the right-hand side to get \eqref{E:Ext_CoDe}.
\end{proof}

\begin{lemma} \label{L:Ext_Bdd}
  Let $\mathcal{X}=L^2,H^1$.
  For $\mathbf{v}\in\mathcal{X}(S^2)^3$, we have
  \begin{align} \label{E:Ext_Bdd}
    \|\mathbf{v}_{\mathrm{E}}\|_{\mathcal{X}(\Omega_\varepsilon)} \leq c\varepsilon^{1/2}\|\mathbf{v}\|_{\mathcal{X}(S^2)}.
  \end{align}
\end{lemma}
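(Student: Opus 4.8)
The plan is to handle the two cases $\mathcal{X}=L^2$ and $\mathcal{X}=H^1$ separately, in both cases reducing everything to the elementary constant-extension estimate \eqref{E:CoEx_L2}, which already asserts $\|\bar{\eta}\|_{L^2(\Omega_\varepsilon)}\leq2\varepsilon^{1/2}\|\eta\|_{L^2(S^2)}$ with the correct $\varepsilon^{1/2}$-scaling. For the $L^2$ part, I would simply note that the weight satisfies $|x|\leq1+\varepsilon\leq2$ throughout $\Omega_\varepsilon$, so that $|\mathbf{v}_{\mathrm{E}}(x)|=|x|\,|\bar{\mathbf{v}}(x)|\leq2|\bar{\mathbf{v}}(x)|$; applying \eqref{E:CoEx_L2} componentwise to $\bar{\mathbf{v}}$ then gives $\|\mathbf{v}_{\mathrm{E}}\|_{L^2(\Omega_\varepsilon)}\leq2\|\bar{\mathbf{v}}\|_{L^2(\Omega_\varepsilon)}\leq c\varepsilon^{1/2}\|\mathbf{v}\|_{L^2(S^2)}$. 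Alternatively one can compute directly via \eqref{E:CoV_Thin}, since $\int_1^{1+\varepsilon}r^4\,dr\leq16\varepsilon$, but the pointwise bound is cleaner.

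For the $H^1$ part, I would take $\mathbf{v}\in H^1(S^2)^3$ and control $\nabla\mathbf{v}_{\mathrm{E}}$ using the explicit gradient formula \eqref{E:Ext_Grad} from Lemma \ref{L:Ext_Grad}, namely
\begin{align*}
  \nabla\mathbf{v}_{\mathrm{E}}(x) = \overline{\nabla_{S^2}\mathbf{v}}(x)+\bar{\mathbf{n}}(x)\otimes\bar{\mathbf{v}}(x), \quad x\in\mathbb{R}^3\setminus\{0\}.
\end{align*}
Since $|\bar{\mathbf{n}}|=1$ on $S^2$ and $|\mathbf{a}\otimes\mathbf{b}|=|\mathbf{a}|\,|\mathbf{b}|$, the tensor-product term is pointwise bounded by $|\bar{\mathbf{v}}|$, so that $|\nabla\mathbf{v}_{\mathrm{E}}|\leq\bigl|\overline{\nabla_{S^2}\mathbf{v}}\bigr|+|\bar{\mathbf{v}}|$ in $\Omega_\varepsilon$. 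Both summands are constant extensions of $L^2(S^2)$-functions, so I would apply \eqref{E:CoEx_L2} once to $\nabla_{S^2}\mathbf{v}$ and once to $\mathbf{v}$ to obtain
\begin{align*}
  \|\nabla\mathbf{v}_{\mathrm{E}}\|_{L^2(\Omega_\varepsilon)} \leq c\varepsilon^{1/2}\Bigl(\|\nabla_{S^2}\mathbf{v}\|_{L^2(S^2)}+\|\mathbf{v}\|_{L^2(S^2)}\Bigr) \leq c\varepsilon^{1/2}\|\mathbf{v}\|_{H^1(S^2)}.
\end{align*}
Combining this with the $L^2$ bound for $\mathbf{v}_{\mathrm{E}}$ itself yields the $H^1$ estimate, completing the proof.

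The argument involves no genuine obstacle; the only point requiring care is that the gradient formula \eqref{E:Ext_Grad} is needed for the $H^1$ case, so I would state at the outset that in the $H^1$ case $\mathbf{v}\in H^1(S^2)^3$, which is exactly the regularity under which Lemma \ref{L:Ext_Grad} was established. One should also keep track of the generic constant $c$ absorbing the factors $2$ and the weight bound $|x|\leq2$, but this is routine.
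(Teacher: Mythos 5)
Your proof is correct and follows essentially the same route as the paper: the paper likewise reduces both cases to the change-of-variables bound $\int_{\Omega_\varepsilon}|x|^{2k}|\bar{\eta}(x)|^2\,dx\leq 2^{2k+2}\varepsilon\|\eta\|_{L^2(S^2)}^2$ for $k=0,1$ (i.e.\ the constant-extension estimate with the weight $|x|\leq2$ absorbed) and then invokes the gradient formula \eqref{E:Ext_Grad} together with $|\mathbf{n}|=1$ on $S^2$ for the $H^1$ case. The only cosmetic difference is that you apply the pointwise bounds $|\mathbf{v}_{\mathrm{E}}|\leq2|\bar{\mathbf{v}}|$ and $|\nabla\mathbf{v}_{\mathrm{E}}|\leq\bigl|\overline{\nabla_{S^2}\mathbf{v}}\bigr|+|\bar{\mathbf{v}}|$ before integrating via \eqref{E:CoEx_L2}, while the paper carries the weight $r^{2k}$ inside a single integral.
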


\begin{proof}
  Let $\eta\in L^2(S^2)$ and $k=0,1$.
  Noting that $r\leq1+\varepsilon\leq2$, we see that
  \begin{align*}
    \int_{\Omega_\varepsilon}|x|^{2k}|\bar{\eta}(x)|^2\,dx = \int_{S^2}\int_1^{1+\varepsilon}|\eta(y)|^2r^{2k+2}\,dr\,d\mathcal{H}^2(y) \leq 2^{2k+2}\varepsilon\int_{S^2}|\eta(y)|^2\,d\mathcal{H}^2(y)
  \end{align*}
  by \eqref{E:CoV_Thin}.
  We get \eqref{E:Ext_Bdd} by this inequality, \eqref{E:Ext_Grad}, and $|\mathbf{n}|=1$ on $S^2$.
\end{proof}

\begin{lemma} \label{L:Ext_Sol}
  If $\mathbf{v}\in\mathcal{V}_0$, then $\mathbf{v}_{\mathrm{E}}\in\mathcal{V}_\varepsilon$.
  Also, if $\mathbf{v}\in\mathcal{H}_0$, then $\mathbf{v}_{\mathrm{E}}\in\mathcal{H}_\varepsilon$.
\end{lemma}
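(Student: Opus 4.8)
The plan is to prove the two assertions separately: the $\mathcal{V}_0$ case by directly verifying the three defining conditions of $\mathcal{V}_\varepsilon$ using the explicit formulas of Lemma \ref{L:Ext_Grad}, and then the $\mathcal{H}_0$ case by a density argument that reduces it to the first case.

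For $\mathbf{v}\in\mathcal{V}_0$, I first note that $\mathbf{v}\in H^1(S^2)^3$, so Lemma \ref{L:Ext_Bdd} with $\mathcal{X}=H^1$ gives $\mathbf{v}_{\mathrm{E}}\in H^1(\Omega_\varepsilon)^3$. To see that $\mathbf{v}_{\mathrm{E}}$ is solenoidal, I would invoke the divergence formula \eqref{E:Ext_div}: since $\mathbf{v}$ is tangential we have $\bar{\mathbf{n}}\cdot\bar{\mathbf{v}}=\overline{\mathbf{n}\cdot\mathbf{v}}=0$, and since $\mathrm{div}_{S^2}\mathbf{v}=0$ on $S^2$ we also have $\overline{\mathrm{div}_{S^2}\mathbf{v}}=0$, so that $\mathrm{div}\,\mathbf{v}_{\mathrm{E}}=0$ in $\Omega_\varepsilon$. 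For the boundary condition $\mathbf{v}_{\mathrm{E}}\cdot\mathbf{n}_\varepsilon=0$, I would use that $\partial\Omega_\varepsilon$ is the union of the spheres $\{|x|=1\}$ and $\{|x|=1+\varepsilon\}$, on which the outward unit normal satisfies $\mathbf{n}_\varepsilon(x)=\pm x/|x|=\pm\bar{\mathbf{n}}(x)$ (as recorded in the proof of Lemma \ref{L:Ave_div}). Because $\mathbf{v}_{\mathrm{E}}(x)=|x|\,\bar{\mathbf{v}}(x)$ and $\bar{\mathbf{v}}\cdot\bar{\mathbf{n}}=\overline{\mathbf{v}\cdot\mathbf{n}}=0$, this yields $\mathbf{v}_{\mathrm{E}}\cdot\mathbf{n}_\varepsilon=\pm|x|\,\bar{\mathbf{v}}\cdot\bar{\mathbf{n}}=0$ on $\partial\Omega_\varepsilon$. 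Collecting these three facts gives $\mathbf{v}_{\mathrm{E}}\in\mathcal{V}_\varepsilon$.

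For $\mathbf{v}\in\mathcal{H}_0$, the pointwise formulas of Lemma \ref{L:Ext_Grad} are no longer available, so I would argue by approximation. By Lemma \ref{L:Sol_S2} there exist $\mathbf{v}_k\in\mathcal{V}_0$ with $\mathbf{v}_k\to\mathbf{v}$ strongly in $L^2(S^2)^3$. Since the extension $\mathbf{v}\mapsto\mathbf{v}_{\mathrm{E}}$ is linear in $\mathbf{v}$, we have $(\mathbf{v}_k)_{\mathrm{E}}-\mathbf{v}_{\mathrm{E}}=(\mathbf{v}_k-\mathbf{v})_{\mathrm{E}}$, and Lemma \ref{L:Ext_Bdd} with $\mathcal{X}=L^2$ gives $\|(\mathbf{v}_k)_{\mathrm{E}}-\mathbf{v}_{\mathrm{E}}\|_{L^2(\Omega_\varepsilon)}\leq c\varepsilon^{1/2}\|\mathbf{v}_k-\mathbf{v}\|_{L^2(S^2)}\to0$ as $k\to\infty$. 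By the first part, $(\mathbf{v}_k)_{\mathrm{E}}\in\mathcal{V}_\varepsilon\subset\mathcal{H}_\varepsilon$; and since $\mathcal{H}_\varepsilon$ is a closed subspace of $L^2(\Omega_\varepsilon)^3$, the strong $L^2$ limit $\mathbf{v}_{\mathrm{E}}$ also belongs to $\mathcal{H}_\varepsilon$.

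The bulk of the work is already packaged in the earlier lemmas, so the argument is short. The only genuinely non-mechanical point is the treatment of the $\mathcal{H}_0$ case: the weak (distributional) divergence-free and normal-trace conditions defining $\mathcal{H}_\varepsilon$ cannot be checked pointwise for a merely $L^2$ field, so they must be recovered through the density and closedness argument rather than from the explicit identities of Lemma \ref{L:Ext_Grad}.
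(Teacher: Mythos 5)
Your proof is correct and follows essentially the same route as the paper: the $\mathcal{V}_0$ case via the identity \eqref{E:Ext_div} together with $\mathbf{v}\cdot\mathbf{n}=0$ and the explicit normals $\mathbf{n}_\varepsilon=\pm\bar{\mathbf{n}}$ on the two boundary spheres, and the $\mathcal{H}_0$ case via density of $\mathcal{V}_0$ in $\mathcal{H}_0$ (Lemma \ref{L:Sol_S2}), the $L^2$ bound \eqref{E:Ext_Bdd}, and closedness of $\mathcal{H}_\varepsilon$ in $L^2(\Omega_\varepsilon)^3$. The only (harmless) addition is that you make the $H^1$ regularity of $\mathbf{v}_{\mathrm{E}}$ explicit via Lemma \ref{L:Ext_Bdd}, which the paper leaves implicit.
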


\begin{proof}
  Let $\mathbf{v}\in\mathcal{V}_0$.
  Then,
  \begin{align*}
    \mathbf{v}(y)\cdot\mathbf{n}(y) = \mathbf{v}(y)\cdot y = 0, \quad \mathrm{div}_{S^2}\mathbf{v}(y) = 0, \quad y\in S^2.
  \end{align*}
  Thus, $\mathrm{div}\,\mathbf{v}_{\mathrm{E}}=0$ in $\mathbb{R}^3\setminus\{0\}$ by \eqref{E:Ext_div}.
  We also have $\mathbf{v}_{\mathrm{E}}\cdot\mathbf{n}_\varepsilon=0$ on $\partial\Omega_\varepsilon$ by the above first relation, since $\mathbf{n}_\varepsilon(y)=-y$ and $\mathbf{n}_\varepsilon((1+\varepsilon)y)=y$ for $y\in S^2$.
  Thus, $\mathbf{v}_{\mathrm{E}}\in\mathcal{V}_\varepsilon$.

  Next, let $\mathbf{v}\in\mathcal{H}_0$.
  By Lemma \ref{L:Sol_S2}, we can take $\mathbf{v}_k\in\mathcal{V}_0$ such that $\mathbf{v}_k\to\mathbf{v}$ strongly in $L_\tau^2(S^2)$ as $k\to\infty$.
  Then, $[\mathbf{v}_k]_{\mathrm{E}}\in\mathcal{V}_\varepsilon\subset\mathcal{H}_\varepsilon$ and $[\mathbf{v}_k]_{\mathrm{E}}\to\mathbf{v}_{\mathrm{E}}$ strongly in $L^2(\Omega_\varepsilon)^3$ as $k\to\infty$ by \eqref{E:Ext_Bdd}.
  Since $\mathcal{H}_\varepsilon$ is closed in $L^2(\Omega_\varepsilon)^3$, it follows that $\mathbf{v}_{\mathrm{E}}\in\mathcal{H}_\varepsilon$.
\end{proof}

When $\mathbf{v}$ is a weak solution to \eqref{E:NS_S2} on $S^2$, we expect that $\mathbf{v}_{\mathrm{E}}$ is an approximate weak solution to \eqref{E:NS_TSS} on $\Omega_\varepsilon$.
To verify it, we show that each term of the weak form \eqref{E:WF_TSS} with $\mathbf{u}^\varepsilon$ replaced by $\mathbf{v}_{\mathrm{E}}$ is approximated by that of the weak form \eqref{E:WF_S2}.

Let $\mathbb{L}_0$ be the Helmholtz--Leray projection on $L_\tau^2(S^2)$.
Also, let $\mathcal{M}_\varepsilon^k$ and $\mathcal{M}_{\varepsilon,\tau}^k$ be the average operators given by \eqref{E:Def_Ave} and \eqref{E:Def_Atan}.
We first give a basic relation.

\begin{lemma} \label{L:ExAv_L2}
  Let $\mathbf{v}\in\mathcal{H}_0$ and $\bm{\psi}\in L^2(\Omega_\varepsilon)^3$.
  Then,
  \begin{align} \label{E:ExAv_L2}
    (\mathbf{v}_{\mathrm{E}},\bm{\psi})_{L^2(\Omega_\varepsilon)} = \varepsilon(\mathbf{v},\mathbb{L}_0\mathcal{M}_{\varepsilon,\tau}^3\bm{\psi})_{L^2(S^2)}.
  \end{align}
\end{lemma}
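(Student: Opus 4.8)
The plan is to unfold the $L^2(\Omega_\varepsilon)$-inner product on the left by passing to polar coordinates and then to recognize the resulting surface integral as a weighted average of $\bm{\psi}$. Concretely, I would start from the change of variables formula \eqref{E:CoV_Thin},
\begin{align*}
  (\mathbf{v}_{\mathrm{E}},\bm{\psi})_{L^2(\Omega_\varepsilon)} = \int_{S^2}\int_1^{1+\varepsilon}\mathbf{v}_{\mathrm{E}}(ry)\cdot\bm{\psi}(ry)\,r^2\,dr\,d\mathcal{H}^2(y),
\end{align*}
and use the fundamental scaling property of the weighted extension \eqref{E:Def_Ext}: for $y\in S^2$ and $r>0$ we have $|ry|=r$ and $ry/|ry|=y$, so $\mathbf{v}_{\mathrm{E}}(ry)=r\,\mathbf{v}(y)$. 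Substituting this, the integrand acquires the combined weight $r\cdot r^2=r^3$, and the radial integral becomes exactly $\varepsilon\,\mathcal{M}_\varepsilon^3\bm{\psi}(y)$ by the definition \eqref{E:Def_Ave}. Pulling $\mathbf{v}(y)$ out of the radial integral then yields
\begin{align*}
  (\mathbf{v}_{\mathrm{E}},\bm{\psi})_{L^2(\Omega_\varepsilon)} = \varepsilon\,(\mathbf{v},\mathcal{M}_\varepsilon^3\bm{\psi})_{L^2(S^2)},
\end{align*}
where $\mathcal{M}_\varepsilon^3\bm{\psi}\in L^2(S^2)^3$ thanks to the bound \eqref{E:Ave_L2}.

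It remains to replace $\mathcal{M}_\varepsilon^3\bm{\psi}$ by $\mathbb{L}_0\mathcal{M}_{\varepsilon,\tau}^3\bm{\psi}$, which I would do in two steps, both exploiting that $\mathbf{v}\in\mathcal{H}_0$ is tangential and solenoidal. First, since $\mathbf{v}\cdot\mathbf{n}=0$ on $S^2$ and $\mathbf{P}$ is the orthogonal projection onto the tangent plane, we have $\mathbf{v}\cdot\mathcal{M}_\varepsilon^3\bm{\psi}=\mathbf{v}\cdot\mathbf{P}\mathcal{M}_\varepsilon^3\bm{\psi}=\mathbf{v}\cdot\mathcal{M}_{\varepsilon,\tau}^3\bm{\psi}$ pointwise on $S^2$ by \eqref{E:Def_Atan}, whence $(\mathbf{v},\mathcal{M}_\varepsilon^3\bm{\psi})_{L^2(S^2)}=(\mathbf{v},\mathcal{M}_{\varepsilon,\tau}^3\bm{\psi})_{L^2(S^2)}$. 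Second, since $\mathcal{M}_{\varepsilon,\tau}^3\bm{\psi}\in L_\tau^2(S^2)$ and $\mathbb{L}_0$ is the orthogonal, hence self-adjoint, projection of $L_\tau^2(S^2)$ onto $\mathcal{H}_0$ with $\mathbb{L}_0\mathbf{v}=\mathbf{v}$, I obtain
\begin{align*}
  (\mathbf{v},\mathcal{M}_{\varepsilon,\tau}^3\bm{\psi})_{L^2(S^2)} = (\mathbb{L}_0\mathbf{v},\mathcal{M}_{\varepsilon,\tau}^3\bm{\psi})_{L^2(S^2)} = (\mathbf{v},\mathbb{L}_0\mathcal{M}_{\varepsilon,\tau}^3\bm{\psi})_{L^2(S^2)}.
\end{align*}
Chaining the displayed identities gives \eqref{E:ExAv_L2}.

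This argument is essentially a bookkeeping computation, so there is no real analytic obstacle; the only point requiring care is the matching of the powers of $r$. The single factor $|x|$ built into the weighted extension $\mathbf{v}_{\mathrm{E}}$ combines with the two factors from the polar Jacobian $r^2\,dr\,d\mathcal{H}^2$ to produce precisely the weight $r^3$, which is exactly what singles out the order-$3$ average $\mathcal{M}_\varepsilon^3$ (rather than $\mathcal{M}_\varepsilon^0$ or any other order) on the right-hand side. Keeping this accounting straight, together with verifying that the averaged quantities lie in $L_\tau^2(S^2)$ so that $\mathbb{L}_0$ may legitimately be inserted, is the whole content of the proof.
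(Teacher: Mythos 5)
Your proof is correct and follows essentially the same route as the paper: unfold the inner product via the change of variables \eqref{E:CoV_Thin} using $\mathbf{v}_{\mathrm{E}}(ry)=r\,\mathbf{v}(y)$ so that the weight $r^3$ produces $\varepsilon(\mathbf{v},\mathcal{M}_\varepsilon^3\bm{\psi})_{L^2(S^2)}$, then insert $\mathbf{P}$ by tangentiality of $\mathbf{v}$ and insert $\mathbb{L}_0$ by its self-adjointness together with $\mathbb{L}_0\mathbf{v}=\mathbf{v}$. This matches the paper's proof step for step, with your remarks on the $r$-power bookkeeping being a correct (if slightly more explicit) rendering of the same computation.
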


\begin{proof}
  By the change of variables \eqref{E:CoV_Thin}, we see that
  \begin{align*}
    (\mathbf{v}_{\mathrm{E}},\bm{\psi})_{L^2(\Omega_\varepsilon)} = \int_{S^2}\int_1^{1+\varepsilon}\{r\mathbf{v}(y)\cdot\bm{\psi}(ry)\}r^2\,dr\,d\mathcal{H}^2(y) = \varepsilon(\mathbf{v},\mathcal{M}_\varepsilon^3\bm{\psi})_{L^2(S^2)}.
  \end{align*}
  Moreover, since $\mathbf{v}\cdot\mathbf{n}=0$ on $S^2$, and since $\mathbf{v}\in\mathcal{H}_0$ and $\mathbb{L}_0$ is the orthogonal projection from $L_\tau^2(S^2)$ onto $\mathcal{H}_0$, it follows that
  \begin{align*}
    (\mathbf{v},\mathcal{M}_\varepsilon^3\bm{\psi})_{L^2(S^2)} = (\mathbf{v},\mathcal{M}_{\varepsilon,\tau}^3\bm{\psi})_{L^2(S^2)} = (\mathbf{v},\mathbb{L}_0\mathcal{M}_{\varepsilon,\tau}^3\bm{\psi})_{L^2(S^2)}.
  \end{align*}
  By these equalities, we obtain \eqref{E:ExAv_L2}.
\end{proof}

Based on \eqref{E:ExAv_L2}, we compare bilinear and trilinear terms on $\Omega_\varepsilon$ involving $\mathbf{v}_{\mathrm{E}}$ and $\bm{\psi}$ with those on $S^2$ involving $\mathbf{v}$ and $\mathbb{L}_0\mathcal{M}_{\varepsilon,\tau}^3\bm{\psi}$.
For the sake of simplicity, we write
\begin{align} \label{E:Def_Le}
  \mathcal{L}_\varepsilon\bm{\psi} := \mathbb{L}_0\mathcal{M}_{\varepsilon,\tau}^3\bm{\psi} \in\mathcal{H}_0, \quad \bm{\psi}\in L^2(\Omega_\varepsilon)^3.
\end{align}
When $\bm{\psi}\in H^1(\Omega_\varepsilon)^3$, we have $\mathcal{L}_\varepsilon\bm{\psi}\in\mathcal{V}_0$ by Lemmas \ref{L:HLH1_S2} and \ref{L:Atan_H1}.

\begin{lemma} \label{L:ExAv_Bi}
  Let $\mathbf{v}\in H_\tau^1(S^2)$ and $\bm{\psi}\in\mathcal{V}_\varepsilon$.
  Then,
  \begin{align} \label{E:ExAv_Bi}
    \begin{aligned}
      &\Bigl|\bigl(\mathbf{D}(\mathbf{v}_{\mathrm{E}}),\mathbf{D}(\bm{\psi})\bigr)_{L^2(\Omega_\varepsilon)}-\varepsilon\bigl(\mathbf{D}_{S^2}(\mathbf{v}),\mathbf{D}_{S^2}(\mathcal{L}_\varepsilon\bm{\psi})\bigr)_{L^2(S^2)}\Bigr| \\
      &\qquad \leq c\varepsilon^{3/2}\|\mathbf{D}_{S^2}(\mathbf{v})\|_{L^2(S^2)}\|\bm{\psi}\|_{H^1(\Omega_\varepsilon)}.
    \end{aligned}
  \end{align}
\end{lemma}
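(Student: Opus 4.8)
The plan is to evaluate the first inner product exactly as a surface integral, then absorb the radial averaging into a genuine surface strain tensor, so that only manifestly small remainders survive; the bound then follows from the average estimates of Section~\ref{S:AvEx}.

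First, since $\mathbf{v}\in H_\tau^1(S^2)$ is tangential, \eqref{E:Ext_str} gives $\mathbf{D}(\mathbf{v}_{\mathrm{E}})=\overline{\mathbf{D}_{S^2}(\mathbf{v})}$, which is constant in the radial direction. Hence by the change of variables \eqref{E:CoV_Thin} and the definition \eqref{E:Def_Ave} of the weighted average,
\[
\bigl(\mathbf{D}(\mathbf{v}_{\mathrm{E}}),\mathbf{D}(\bm{\psi})\bigr)_{L^2(\Omega_\varepsilon)}=\varepsilon\bigl(\mathbf{D}_{S^2}(\mathbf{v}),\mathcal{M}_\varepsilon^2[\mathbf{D}(\bm{\psi})]\bigr)_{L^2(S^2)},
\]
where $\mathcal{M}_\varepsilon^2$ acts entrywise on the matrix $\mathbf{D}(\bm{\psi})$. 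This identity is exact; all of the approximation will come from comparing $\mathcal{M}_\varepsilon^2[\mathbf{D}(\bm{\psi})]$ with $\mathbf{D}_{S^2}(\mathcal{L}_\varepsilon\bm{\psi})$.

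The key exact step is the pointwise identity $\mathbf{D}_{S^2}(\mathcal{M}_\varepsilon^k\bm{\psi})=\mathbf{P}\mathcal{M}_\varepsilon^{k+1}[\mathbf{D}(\bm{\psi})]\mathbf{P}$ on $S^2$, which I would obtain by applying the vector form of \eqref{E:Ave_TGr} entrywise, taking the symmetric part, and sandwiching by $\mathbf{P}$ (using $\mathbf{P}^{\mathrm{T}}=\mathbf{P}^2=\mathbf{P}$ and that averaging commutes with the symmetric part). Specializing to $k=1$ and noting $\mathbf{D}_{S^2}(\mathbf{v})=\mathbf{P}\mathbf{D}_{S^2}(\mathbf{v})\mathbf{P}$, the matrix identities \eqref{E:Mat_Inn} let me move the two projections onto the second factor, so that $\mathbf{D}_{S^2}(\mathbf{v}):\mathcal{M}_\varepsilon^2[\mathbf{D}(\bm{\psi})]=\mathbf{D}_{S^2}(\mathbf{v}):\mathbf{D}_{S^2}(\mathcal{M}_\varepsilon^1\bm{\psi})$ pointwise on $S^2$. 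Integrating, the first inner product equals $\varepsilon\bigl(\mathbf{D}_{S^2}(\mathbf{v}),\mathbf{D}_{S^2}(\mathcal{M}_\varepsilon^1\bm{\psi})\bigr)_{L^2(S^2)}$, and it remains only to estimate $\bigl\|\mathbf{D}_{S^2}(\mathcal{M}_\varepsilon^1\bm{\psi})-\mathbf{D}_{S^2}(\mathcal{L}_\varepsilon\bm{\psi})\bigr\|_{L^2(S^2)}$.

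For this last estimate I would telescope through $\mathcal{M}_{\varepsilon,\tau}^1\bm{\psi}$, $\mathcal{M}_{\varepsilon,\tau}^3\bm{\psi}$, and $\mathbb{L}_0\mathcal{M}_{\varepsilon,\tau}^3\bm{\psi}=\mathcal{L}_\varepsilon\bm{\psi}$ (see \eqref{E:Def_Le}), bounding each jump of the corresponding surface strain tensor via $\|\mathbf{D}_{S^2}(\mathbf{w})\|_{L^2(S^2)}\le c\|\mathbf{w}\|_{H^1(S^2)}$. The jump to the tangential component is the normal correction $(\mathcal{M}_\varepsilon^1\bm{\psi}\cdot\mathbf{n})\mathbf{P}=\mathcal{M}_\varepsilon^1(\bm{\psi}\cdot\bar{\mathbf{n}})\mathbf{P}$ coming from \eqref{E:VT_str}, bounded by $c\varepsilon^{1/2}\|\bm{\psi}\|_{H^1(\Omega_\varepsilon)}$ via \eqref{E:Ave_NC} since $\bm{\psi}\cdot\mathbf{n}_\varepsilon=0$ on $\partial\Omega_\varepsilon$; the order change is controlled by \eqref{E:Atan_Df}; and the Helmholtz--Leray correction by \eqref{E:Ave_HL}. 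Each contributes $c\varepsilon^{1/2}\|\bm{\psi}\|_{H^1(\Omega_\varepsilon)}$, so the Cauchy--Schwarz inequality on $S^2$ delivers the claimed bound $c\varepsilon^{3/2}\|\mathbf{D}_{S^2}(\mathbf{v})\|_{L^2(S^2)}\|\bm{\psi}\|_{H^1(\Omega_\varepsilon)}$.

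The main obstacle is the exact reduction in the third paragraph: recognizing that the entrywise average $\mathcal{M}_\varepsilon^2[\mathbf{D}(\bm{\psi})]$, once tested against the $\mathbf{P}$-sandwiched tensor $\mathbf{D}_{S^2}(\mathbf{v})$, is indistinguishable from the genuine surface strain $\mathbf{D}_{S^2}(\mathcal{M}_\varepsilon^1\bm{\psi})$, and carrying out the projector bookkeeping cleanly. Once that identity is in hand, the remaining comparison is a routine telescoping of the supplied average estimates.
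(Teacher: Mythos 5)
Your proposal is correct and follows essentially the same route as the paper's proof: both reduce the $\Omega_\varepsilon$ pairing exactly to $\varepsilon\bigl(\mathbf{D}_{S^2}(\mathbf{v}),\mathbf{D}_{S^2}(\mathcal{M}_\varepsilon^1\bm{\psi})\bigr)_{L^2(S^2)}$ via \eqref{E:Ext_str}, \eqref{E:CoV_Thin}, \eqref{E:Ave_TGr}, and the projector identities \eqref{E:Mat_Inn}, and then telescope to $\mathcal{L}_\varepsilon\bm{\psi}$ in three $O(\varepsilon^{1/2})$ jumps controlled by \eqref{E:VT_str} with \eqref{E:Ave_NC}, an order-change estimate, and \eqref{E:Ave_HL}. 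The only cosmetic deviations are that you average $\mathbf{D}(\bm{\psi})$ entrywise through the identity $\mathbf{D}_{S^2}(\mathcal{M}_\varepsilon^1\bm{\psi})=\mathbf{P}\mathcal{M}_\varepsilon^2[\mathbf{D}(\bm{\psi})]\mathbf{P}$ (which the paper itself uses in the proof of Lemma \ref{L:KoH1_TSS}) rather than through $\nabla_{S^2}\mathcal{M}_\varepsilon^1\bm{\psi}$ directly, and that you project tangentially before the order change $1\to3$ (invoking \eqref{E:Atan_Df}) whereas the paper changes order first and projects afterwards (invoking \eqref{E:AvDf_H1}); neither difference affects the argument.
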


\begin{proof}
  We see by \eqref{E:Mat_Inn}, the symmetry of $\mathbf{D}(\mathbf{v}_{\mathrm{E}})$, and \eqref{E:Ext_str} that
  \begin{align*}
    \mathbf{D}(\mathbf{v}_{\mathrm{E}}):\mathbf{D}(\bm{\psi}) = \mathbf{D}(\mathbf{v}_{\mathrm{E}}):\nabla\bm{\psi} = \overline{\mathbf{D}_{S^2}(\mathbf{v})}:\nabla\bm{\psi} \quad\text{in}\quad \Omega_\varepsilon.
  \end{align*}
  Moreover, since $\mathbf{P}^2=\mathbf{P}^\mathrm{T}=\mathbf{P}$ on $S^2$, we have $\mathbf{D}_{S^2}(\mathbf{v})=\mathbf{P}\mathbf{D}_{S^2}(\mathbf{v})$ on $S^2$ and
  \begin{align*}
    \mathbf{D}(\mathbf{v}_{\mathrm{E}}):\mathbf{D}(\bm{\psi}) = \Bigl[\overline{\mathbf{P}\mathbf{D}_{S^2}(\mathbf{v})}\Bigr]:\nabla\bm{\psi} = \overline{\mathbf{D}_{S^2}(\mathbf{v})}:\Bigl[\overline{\mathbf{P}}\nabla\bm{\psi}\Bigr]
  \end{align*}
  in $\Omega_\varepsilon$.
  By this equality, \eqref{E:CoV_Thin}, and \eqref{E:Ave_TGr}, we find that
  \begin{align*}
    \bigl(\mathbf{D}(\mathbf{v}_{\mathrm{E}}),\mathbf{D}(\bm{\psi})\bigr)_{L^2(\Omega_\varepsilon)} &= \int_{S^2}\int_1^{1+\varepsilon}\Bigl\{[\mathbf{D}_{S^2}(\mathbf{v})](y):[\mathbf{P}(y)\nabla\bm{\psi}(ry)]\Bigr\}r^2\,dr\,d\mathcal{H}^2(y) \\
    &= \varepsilon\bigl(\mathbf{D}_{S^2}(\mathbf{v}),\mathbf{P}\mathcal{M}_\varepsilon^2(\nabla\bm{\psi})\bigr)_{L^2(S^2)} \\
    &= \varepsilon\bigl(\mathbf{D}_{S^2}(\mathbf{v}),\nabla_{S^2}\mathcal{M}_\varepsilon^1\bm{\psi}\bigr)_{L^2(S^2)}.
  \end{align*}
  Again, since $\mathbf{D}_{S^2}(\mathbf{v})=\mathbf{P}(\nabla_{S^2}\mathbf{v})_{\mathrm{S}}\mathbf{P}$ is symmetric and $\mathbf{P}^2=\mathbf{P}^{\mathrm{T}}=\mathbf{P}$ on $S^2$,
  \begin{align*}
    \mathbf{D}_{S^2}(\mathbf{v}):\nabla_{S^2}\mathcal{M}_\varepsilon^1\bm{\psi} &= \mathbf{D}_{S^2}(\mathbf{v}):(\nabla_{S^2}\mathcal{M}_\varepsilon^1\bm{\psi})_{\mathrm{S}} = [\mathbf{P}\mathbf{D}_{S^2}(\mathbf{v})\mathbf{P}]:(\nabla_{S^2}\mathcal{M}_\varepsilon^1\bm{\psi})_{\mathrm{S}} \\
    &= \mathbf{D}_{S^2}(\mathbf{v}):[\mathbf{P}(\nabla_{S^2}\mathcal{M}_\varepsilon^1\bm{\psi})_{\mathrm{S}}\mathbf{P}] = \mathbf{D}_{S^2}(\mathbf{v}):\mathbf{D}_{S^2}(\mathcal{M}_\varepsilon^1\bm{\psi})
  \end{align*}
  on $S^2$.
  Hence, we can write
  \begin{align} \label{Pf_EAB:Decom}
    \begin{aligned}
      &\bigl(\mathbf{D}(\mathbf{v}_{\mathrm{E}}),\mathbf{D}(\bm{\psi})\bigr)_{L^2(\Omega_\varepsilon)}-\varepsilon\bigl(\mathbf{D}_{S^2}(\mathbf{v}),\mathbf{D}_{S^2}(\mathcal{L}_\varepsilon\bm{\psi})\bigr)_{L^2(S^2)} \\
      &\qquad = \varepsilon\bigl(\mathbf{D}_{S^2}(\mathbf{v}),\mathbf{D}_{S^2}(\mathcal{M}_\varepsilon^1\bm{\psi})\bigr)_{L^2(S^2)}-\varepsilon\bigl(\mathbf{D}_{S^2}(\mathbf{v}),\mathbf{D}_{S^2}(\mathcal{L}_\varepsilon\bm{\psi})\bigr)_{L^2(S^2)} \\
      &\qquad = \varepsilon\bigl(\mathbf{D}_{S^2}(\mathbf{v}),\mathbf{A}_1+\mathbf{A}_2+\mathbf{A}_3\bigr)_{L^2(S^2)},
    \end{aligned}
  \end{align}
  where $\mathbf{A}_1,\mathbf{A}_2,\mathbf{A}_3\colon S^2\to\mathbb{R}^{3\times3}$ are given by
  \begin{align*}
    \mathbf{A}_1 &:= \mathbf{D}_{S^2}(\mathcal{M}_\varepsilon^1\bm{\psi})-\mathbf{D}_{S^2}(\mathcal{M}_\varepsilon^3\bm{\psi}), \\
    \mathbf{A}_2 &:= \mathbf{D}_{S^2}(\mathcal{M}_\varepsilon^3\bm{\psi})-\mathbf{D}_{S^2}(\mathcal{M}_{\varepsilon,\tau}^3\bm{\psi}), \\
    \mathbf{A}_3 &:= \mathbf{D}_{S^2}(\mathcal{M}_{\varepsilon,\tau}^3\bm{\psi})-\mathbf{D}_{S^2}(\mathcal{L}_\varepsilon\bm{\psi}).
  \end{align*}
  Since $\mathbf{D}_{S^2}(\mathbf{w})=\mathbf{P}(\nabla_{S^2}\mathbf{w})_{\mathrm{S}}\mathbf{P}$ for $\mathbf{w}\colon S^2\to\mathbb{R}^3$ and $|\mathbf{P}|=\sqrt{2}$ on $S^2$,
  \begin{align*}
    \|\mathbf{A}_1\|_{L^2(S^2)} \leq c\|\nabla_{S^2}\mathcal{M}_\varepsilon^1\bm{\psi}-\nabla_{S^2}\mathcal{M}_\varepsilon^3\bm{\psi}\|_{L^2(S^2)} \leq c\varepsilon^{1/2}\|\nabla\bm{\psi}\|_{L^2(\Omega_\varepsilon)}
  \end{align*}
  by \eqref{E:AvDf_H1}.
  Next, since $\mathbf{A}_2=(\mathcal{M}_\varepsilon^3\bm{\psi}\cdot\mathbf{n})\mathbf{P}$ on $S^2$ by \eqref{E:VT_str}, and since $\bm{\psi}\in\mathcal{V}_\varepsilon$ satisfies $\bm{\psi}\cdot\mathbf{n}_\varepsilon=0$ on $\partial\Omega_\varepsilon$, we can use \eqref{E:Ave_NC} to find that
  \begin{align*}
    \|\mathbf{A}_2\|_{L^2(S^2)} \leq c\|\mathcal{M}_\varepsilon^3\bm{\psi}\cdot\mathbf{n}\|_{L^2(S^2)} \leq c\varepsilon^{1/2}\|\bm{\psi}\|_{H^1(\Omega_\varepsilon)}.
  \end{align*}
  For $\mathbf{A}_3$, we can apply \eqref{E:Ave_HL} to $\bm{\psi}\in\mathcal{V}_\varepsilon$ to get (recall that $\mathcal{L}_\varepsilon=\mathbb{L}_0\mathcal{M}_{\varepsilon,\tau}^3$)
  \begin{align*}
    \|\mathbf{A}_3\|_{L^2(S^2)} \leq c\|\nabla_{S^2}\mathcal{M}_{\varepsilon,\tau}^3\bm{\psi}-\nabla_{S^2}\mathcal{L}_\varepsilon\bm{\psi}\|_{L^2(S^2)} \leq c\varepsilon^{1/2}\|\bm{\psi}\|_{H^1(\Omega_\varepsilon)}.
  \end{align*}
  Applying H\"{o}lder's inequality and the above estimates to \eqref{Pf_EAB:Decom}, we obtain \eqref{E:ExAv_Bi}.
\end{proof}

\begin{lemma} \label{L:ExAv_Tri}
  Let $\mathbf{v}\in\mathcal{V}_0$ and $\bm{\psi}\in\mathcal{V}_\varepsilon$.
  Then,
  \begin{align} \label{E:ExAv_Tri}
    \begin{aligned}
      &\Bigl|\bigl((\mathbf{v}_{\mathrm{E}}\cdot\nabla)\mathbf{v}_{\mathrm{E}},\bm{\psi}\bigr)_{L^2(\Omega_\varepsilon)}-\varepsilon(\nabla_{\mathbf{v}}\mathbf{v},\mathcal{L}_\varepsilon\bm{\psi})_{L^2(S^2)}\Bigr| \\
      &\qquad \leq c\varepsilon^{3/2}\|\mathbf{v}\|_{L^2(S^2)}\|\mathbf{v}\|_{H^1(S^2)}\|\bm{\psi}\|_{H^1(\Omega_\varepsilon)}.
    \end{aligned}
  \end{align}
\end{lemma}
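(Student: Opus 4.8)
The plan is to start from the pointwise identity \eqref{E:Ext_CoDe} for $(\mathbf{v}_{\mathrm{E}}\cdot\nabla)\mathbf{v}_{\mathrm{E}}$, which is available since $\mathbf{v}\in\mathcal{V}_0$ is tangential, and to reduce the integral over $\Omega_\varepsilon$ to integrals over $S^2$ by the change of variables \eqref{E:CoV_Thin}. Pairing \eqref{E:Ext_CoDe} with $\bm{\psi}$ and using $\overline{\nabla_{\mathbf{v}}\mathbf{v}}(ry)=[\nabla_{\mathbf{v}}\mathbf{v}](y)$, $\bar{\mathbf{v}}(ry)=\mathbf{v}(y)$, $\bar{\mathbf{n}}(ry)=\mathbf{n}(y)$ together with the weight $r\cdot r^2=r^3$ coming from $|x|$ and the volume element, I expect to obtain the exact identity
\begin{align*}
  \bigl((\mathbf{v}_{\mathrm{E}}\cdot\nabla)\mathbf{v}_{\mathrm{E}},\bm{\psi}\bigr)_{L^2(\Omega_\varepsilon)} = \varepsilon\bigl(\nabla_{\mathbf{v}}\mathbf{v},\mathcal{M}_\varepsilon^3\bm{\psi}\bigr)_{L^2(S^2)}-\varepsilon\bigl(|\mathbf{v}|^2,\mathcal{M}_\varepsilon^3\bm{\psi}\cdot\mathbf{n}\bigr)_{L^2(S^2)}.
\end{align*}
Since $\nabla_{\mathbf{v}}\mathbf{v}$ is tangential, the first term equals $\varepsilon(\nabla_{\mathbf{v}}\mathbf{v},\mathcal{M}_{\varepsilon,\tau}^3\bm{\psi})_{L^2(S^2)}$. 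Subtracting $\varepsilon(\nabla_{\mathbf{v}}\mathbf{v},\mathcal{L}_\varepsilon\bm{\psi})_{L^2(S^2)}$ and recalling $\mathcal{L}_\varepsilon\bm{\psi}=\mathbb{L}_0\mathcal{M}_{\varepsilon,\tau}^3\bm{\psi}$, the left-hand side of \eqref{E:ExAv_Tri} then splits into a \emph{projection error} $\varepsilon(\nabla_{\mathbf{v}}\mathbf{v},\mathbf{w})_{L^2(S^2)}$ with $\mathbf{w}:=\mathcal{M}_{\varepsilon,\tau}^3\bm{\psi}-\mathbb{L}_0\mathcal{M}_{\varepsilon,\tau}^3\bm{\psi}$ and a \emph{normal error} $-\varepsilon(|\mathbf{v}|^2,\mathcal{M}_\varepsilon^3\bm{\psi}\cdot\mathbf{n})_{L^2(S^2)}$.

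For the normal error I would use $\||\mathbf{v}|^2\|_{L^2(S^2)}=\|\mathbf{v}\|_{L^4(S^2)}^2\leq c\|\mathbf{v}\|_{L^2(S^2)}\|\mathbf{v}\|_{H^1(S^2)}$ by Ladyzhenskaya's inequality \eqref{E:Lad_S2}, and, since $\bm{\psi}\in\mathcal{V}_\varepsilon$ gives $\bm{\psi}\cdot\mathbf{n}_\varepsilon=0$ on $\partial\Omega_\varepsilon$, the bound $\|\mathcal{M}_\varepsilon^3\bm{\psi}\cdot\mathbf{n}\|_{L^2(S^2)}\leq c\varepsilon^{1/2}\|\bm{\psi}\|_{H^1(\Omega_\varepsilon)}$ from \eqref{E:Ave_NC}; the Cauchy--Schwarz inequality then yields the required $c\varepsilon^{3/2}\|\mathbf{v}\|_{L^2(S^2)}\|\mathbf{v}\|_{H^1(S^2)}\|\bm{\psi}\|_{H^1(\Omega_\varepsilon)}$.

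The delicate step is the projection error, where a direct H\"{o}lder--Ladyzhenskaya bound on $(\nabla_{\mathbf{v}}\mathbf{v},\mathbf{w})_{L^2(S^2)}$ would only give a factor $\|\mathbf{v}\|_{L^2(S^2)}^{1/2}\|\mathbf{v}\|_{H^1(S^2)}^{3/2}$, i.e. a loss of half a derivative in $\mathbf{v}$ relative to the target. To recover the correct power $\|\mathbf{v}\|_{L^2(S^2)}\|\mathbf{v}\|_{H^1(S^2)}$, I would first use the antisymmetry \eqref{E:TrS2_AnS} (valid since $\mathbf{v}\in\mathcal{V}_0$ and $\mathbf{w}$ is tangential) to write $(\nabla_{\mathbf{v}}\mathbf{v},\mathbf{w})_{L^2(S^2)}=-(\mathbf{v},\nabla_{\mathbf{v}}\mathbf{w})_{L^2(S^2)}$, so that both factors of $\mathbf{v}$ appear undifferentiated. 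Then $|(\mathbf{v},\nabla_{\mathbf{v}}\mathbf{w})_{L^2(S^2)}|\leq\|\mathbf{v}\|_{L^4(S^2)}^2\|\nabla_{S^2}\mathbf{w}\|_{L^2(S^2)}\leq c\|\mathbf{v}\|_{L^2(S^2)}\|\mathbf{v}\|_{H^1(S^2)}\|\mathbf{w}\|_{H^1(S^2)}$ by H\"{o}lder and \eqref{E:Lad_S2}, while the Helmholtz--Leray estimate \eqref{E:Ave_HL} applied to $\bm{\psi}\in\mathcal{V}_\varepsilon$ gives $\|\mathbf{w}\|_{H^1(S^2)}\leq c\varepsilon^{1/2}\|\bm{\psi}\|_{H^1(\Omega_\varepsilon)}$. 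Combining the two contributions with the prefactor $\varepsilon$ yields \eqref{E:ExAv_Tri}.
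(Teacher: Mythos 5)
Your proposal is correct and follows essentially the same route as the paper: the exact identity from \eqref{E:Ext_CoDe} and \eqref{E:CoV_Thin}, the split into the normal error $-\varepsilon(|\mathbf{v}|^2,\mathcal{M}_\varepsilon^3\bm{\psi}\cdot\mathbf{n})_{L^2(S^2)}$ handled via \eqref{E:Lad_S2} and \eqref{E:Ave_NC}, and the projection error antisymmetrized by \eqref{E:TrS2_AnS} so that \eqref{E:Ave_HL} yields the $\varepsilon^{1/2}$ gain without losing half a derivative in $\mathbf{v}$. Your explicit remark that a direct H\"{o}lder--Ladyzhenskaya bound would only give $\|\mathbf{v}\|_{L^2(S^2)}^{1/2}\|\mathbf{v}\|_{H^1(S^2)}^{3/2}$ correctly identifies why the antisymmetry step is indispensable, a point the paper uses silently.
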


\begin{proof}
  Since $\mathbf{v}\in\mathcal{V}_0$ is tangential on $S^2$, we use \eqref{E:CoV_Thin} and \eqref{E:Ext_CoDe} to get
  \begin{align*}
    \bigl((\mathbf{v}_{\mathrm{E}}\cdot\nabla)\mathbf{v}_{\mathrm{E}},\bm{\psi}\bigr)_{L^2(\Omega_\varepsilon)} &= \int_{S^2}\int_1^{1+\varepsilon}\{r[\nabla_{\mathbf{v}}\mathbf{v}-|\mathbf{v}|^2\mathbf{n}](y)\cdot\bm{\psi}(ry)\}r^2\,dr\,d\mathcal{H}^2(y) \\
    &= \varepsilon(\nabla_{\mathbf{v}}\mathbf{v},\mathcal{M}_\varepsilon^3\bm{\psi})_{L^2(S^2)}-\varepsilon(|\mathbf{v}|^2,\mathcal{M}_\varepsilon^3\bm{\psi}\cdot\bm{n})_{L^2(S^2)}.
  \end{align*}
  Moreover, noting that $\nabla_{\mathbf{v}}\mathbf{v}$ is tangential on $S^2$, we have
  \begin{align*}
    (\nabla_{\mathbf{v}}\mathbf{v},\mathcal{M}_\varepsilon^3\bm{\psi})_{L^2(S^2)} = (\nabla_{\mathbf{v}}\mathbf{v},\mathbf{P}\mathcal{M}_\varepsilon^3\bm{\psi})_{L^2(S^2)} = (\nabla_{\mathbf{v}}\mathbf{v},\mathcal{M}_{\varepsilon,\tau}^3\bm{\psi})_{L^2(S^2)}.
  \end{align*}
  Also, since $\mathbf{v}\in\mathcal{V}_0$ and $\mathcal{M}_{\varepsilon,\tau}^3\bm{\psi},\mathcal{L}_\varepsilon\bm{\psi}\in H_\tau^1(S^2)$, we see by \eqref{E:TrS2_AnS} that
  \begin{align*}
    (\nabla_{\mathbf{v}}\mathbf{v},\mathbf{w})_{L^2(S^2)} = -(\mathbf{v},\nabla_{\mathbf{v}}\mathbf{w})_{L^2(S^2)}, \quad \mathbf{w}=\mathcal{M}_{\varepsilon,\tau}^3\bm{\psi},\mathcal{L}_\varepsilon\bm{\psi}.
  \end{align*}
  Therefore, we can write
  \begin{align} \label{Pf_EAT:Sum}
    \begin{aligned}
      &\bigl((\mathbf{v}_{\mathrm{E}}\cdot\nabla)\mathbf{v}_{\mathrm{E}},\bm{\psi}\bigr)_{L^2(\Omega_\varepsilon)}-\varepsilon(\nabla_{\mathbf{v}}\mathbf{v},\mathcal{L}_\varepsilon\bm{\psi})_{L^2(S^2)} = \varepsilon(K_1+K_2), \\
      &K_1 := -\bigl(\mathbf{v},\nabla_{\mathbf{v}}\mathcal{M}_{\varepsilon,\tau}^3\bm{\psi}\bigr)_{L^2(S^2)}+\bigl(\mathbf{v},\nabla_{\mathbf{v}}\mathcal{L}_\varepsilon\bm{\psi}\bigr)_{L^2(S^2)}, \\
      &K_2 := -(|\mathbf{v}|^2,\mathcal{M}_\varepsilon^3\bm{\psi}\cdot\bm{n})_{L^2(S^2)}.
    \end{aligned}
  \end{align}
  Now, we see by $\nabla_\mathbf{v}\mathbf{w}=\mathbf{P}[(\mathbf{v}\cdot\nabla_{S^2})\mathbf{w}]$ and H\"{o}lder's inequality that
  \begin{align*}
    |K_1| &\leq c\int_{S^2}|\mathbf{v}|^2|\nabla_{S^2}\mathcal{M}_{\varepsilon,\tau}^3\bm{\psi}-\nabla_{S^2}\mathcal{L}_\varepsilon\bm{\psi}|\,d\mathcal{H}^2 \\
    &\leq c\|\mathbf{v}\|_{L^4(S^2)}^2\|\nabla_{S^2}\mathcal{M}_{\varepsilon,\tau}^3\bm{\psi}-\nabla_{S^2}\mathcal{L}_\varepsilon\bm{\psi}\|_{L^2(S^2)}.
  \end{align*}
  Since $\mathcal{L}_\varepsilon=\mathbb{L}_0\mathcal{M}_{\varepsilon,\tau}^3$ and $\bm{\psi}\in\mathcal{V}_\varepsilon$, we further use \eqref{E:Lad_S2} and \eqref{E:Ave_HL} to get
  \begin{align*}
    |K_1| \leq c\varepsilon^{1/2}\|\mathbf{v}\|_{L^2(S^2)}\|\mathbf{v}\|_{H^1(S^2)}\|\bm{\psi}\|_{H^1(S^2)}.
  \end{align*}
  Also, since $\bm{\psi}\cdot\mathbf{n}_\varepsilon=0$ on $\partial\Omega_\varepsilon$ by $\bm{\psi}\in\mathcal{V}_\varepsilon$, we observe that
  \begin{align*}
    |K_2| \leq \|\mathbf{v}\|_{L^4(S^2)}\|\mathcal{M}_\varepsilon^3\bm{\psi}\cdot\bm{n}\|_{L^2(S^2)} \leq c\varepsilon^{1/2}\|\mathbf{v}\|_{L^2(S^2)}\|\mathbf{v}\|_{H^1(S^2)}\|\bm{\psi}\|_{H^1(S^2)}
  \end{align*}
  by H\"{o}lder's inequality, \eqref{E:Lad_S2}, and \eqref{E:Ave_NC}.
  Applying these to \eqref{Pf_EAT:Sum}, we get \eqref{E:ExAv_Tri}.
\end{proof}

Let us also consider the weak time derivative of $\mathbf{v}_{\mathrm{E}}$.
For $\mathcal{V}=\mathcal{V}_0,\mathcal{V}_\varepsilon$, let $\mathbb{E}_T(\mathcal{V})$ be the function space given by \eqref{E:Def_ETV}.

\begin{lemma} \label{L:Ext_Dt}
  For $T>0$, let $\mathbf{v}\in\mathbb{E}_T(\mathcal{V}_0)$.
  Then, $\mathbf{v}_{\mathrm{E}}\in\mathbb{E}_T(\mathcal{V}_\varepsilon)$ and
  \begin{align} \label{E:Ext_Dt}
    \int_0^T\langle\partial_t\mathbf{v}_{\mathrm{E}}(t),\bm{\psi}(t)\rangle_{\mathcal{V}_\varepsilon}\,dt = \varepsilon\int_0^T\langle\partial_t\mathbf{v}(t),\mathcal{L}_\varepsilon\bm{\psi}(t)\rangle_{\mathcal{V}_0}\,dt
  \end{align}
  for all $\bm{\psi}\in L^2(0,T;\mathcal{V}_\varepsilon)$.
\end{lemma}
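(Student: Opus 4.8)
The plan is to show that the weighted extension commutes with the time derivative: I will exhibit the element $g\in L^2(0,T;\mathcal{V}_\varepsilon^\ast)$ dual to the time-independent map $\bm{\psi}\mapsto\varepsilon\langle\partial_t\mathbf{v},\mathcal{L}_\varepsilon\bm{\psi}\rangle_{\mathcal{V}_0}$ and identify it with $\partial_t\mathbf{v}_{\mathrm{E}}$. The whole argument rests on the fact that the extension $\mathbf{v}\mapsto\mathbf{v}_{\mathrm{E}}$ and the operator $\mathcal{L}_\varepsilon$ are \emph{time-independent} bounded linear maps between the spatial spaces, so they pass freely through $\partial_t$, together with the basic identity \eqref{E:ExAv_L2} that converts the $\Omega_\varepsilon$-duality into the $S^2$-duality.

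First I would verify membership. Since $\mathbf{v}(t)\in\mathcal{V}_0$ for a.a.\ $t$, Lemma \ref{L:Ext_Sol} gives $\mathbf{v}_{\mathrm{E}}(t)\in\mathcal{V}_\varepsilon$, while \eqref{E:Ext_Bdd} yields $\|\mathbf{v}_{\mathrm{E}}(t)\|_{H^1(\Omega_\varepsilon)}\leq c\varepsilon^{1/2}\|\mathbf{v}(t)\|_{H^1(S^2)}$; integrating in $t$ shows $\mathbf{v}_{\mathrm{E}}\in L^2(0,T;\mathcal{V}_\varepsilon)$, with measurability inherited from $\mathbf{v}(\cdot)$ through the bounded linear extension. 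I then record that $\mathcal{L}_\varepsilon=\mathbb{L}_0\mathcal{M}_{\varepsilon,\tau}^3\colon\mathcal{V}_\varepsilon\to\mathcal{V}_0$ is bounded linear: combining \eqref{E:Atan_H1} with the second estimate in \eqref{E:HLH1_S2} gives $\|\mathcal{L}_\varepsilon\bm{\psi}\|_{H^1(S^2)}\leq c\varepsilon^{-1/2}\|\bm{\psi}\|_{H^1(\Omega_\varepsilon)}$.

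Next I would introduce the candidate derivative $g(t)\in\mathcal{V}_\varepsilon^\ast$ defined by $\langle g(t),\bm{\psi}\rangle_{\mathcal{V}_\varepsilon}:=\varepsilon\langle\partial_t\mathbf{v}(t),\mathcal{L}_\varepsilon\bm{\psi}\rangle_{\mathcal{V}_0}$ for $\bm{\psi}\in\mathcal{V}_\varepsilon$. Boundedness of $\mathcal{L}_\varepsilon$ gives $\|g(t)\|_{\mathcal{V}_\varepsilon^\ast}\leq c\varepsilon^{1/2}\|\partial_t\mathbf{v}(t)\|_{\mathcal{V}_0^\ast}$, so $g\in L^2(0,T;\mathcal{V}_\varepsilon^\ast)$ because $\partial_t\mathbf{v}\in L^2(0,T;\mathcal{V}_0^\ast)$. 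To identify $g$ with $\partial_t\mathbf{v}_{\mathrm{E}}$ in the Gelfand triple $\mathcal{V}_\varepsilon\hookrightarrow\mathcal{H}_\varepsilon\hookrightarrow\mathcal{V}_\varepsilon^\ast$, I fix $\phi\in C_c^1(0,T)$ and $\bm{\psi}\in\mathcal{V}_\varepsilon$ and compute $-\int_0^T\phi'(t)(\mathbf{v}_{\mathrm{E}}(t),\bm{\psi})_{L^2(\Omega_\varepsilon)}\,dt$. Applying \eqref{E:ExAv_L2} pointwise in $t$ (valid since $\mathbf{v}(t)\in\mathcal{H}_0$) turns this into $-\varepsilon\int_0^T\phi'(t)(\mathbf{v}(t),\mathcal{L}_\varepsilon\bm{\psi})_{L^2(S^2)}\,dt$; since $\mathcal{L}_\varepsilon\bm{\psi}\in\mathcal{V}_0$ is a fixed element and $\mathbf{v}\in\mathbb{E}_T(\mathcal{V}_0)$, the defining property of $\partial_t\mathbf{v}$ in the triple $\mathcal{V}_0\hookrightarrow\mathcal{H}_0\hookrightarrow\mathcal{V}_0^\ast$ rewrites it as $\varepsilon\int_0^T\phi(t)\langle\partial_t\mathbf{v}(t),\mathcal{L}_\varepsilon\bm{\psi}\rangle_{\mathcal{V}_0}\,dt=\int_0^T\phi(t)\langle g(t),\bm{\psi}\rangle_{\mathcal{V}_\varepsilon}\,dt$. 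This is precisely the relation characterizing $g=\partial_t\mathbf{v}_{\mathrm{E}}$, whence $\mathbf{v}_{\mathrm{E}}\in\mathbb{E}_T(\mathcal{V}_\varepsilon)$.

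Finally, \eqref{E:Ext_Dt} is just the definition of $g$ paired with a general $\bm{\psi}\in L^2(0,T;\mathcal{V}_\varepsilon)$: since $t\mapsto\mathcal{L}_\varepsilon\bm{\psi}(t)$ lies in $L^2(0,T;\mathcal{V}_0)$ by boundedness of $\mathcal{L}_\varepsilon$, the pointwise identity $\langle\partial_t\mathbf{v}_{\mathrm{E}}(t),\bm{\psi}(t)\rangle_{\mathcal{V}_\varepsilon}=\varepsilon\langle\partial_t\mathbf{v}(t),\mathcal{L}_\varepsilon\bm{\psi}(t)\rangle_{\mathcal{V}_0}$ integrates to the claim. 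I expect the only delicate point — and the main obstacle — to be the bookkeeping of the two Gelfand triples and the rigorous justification that the spatially-defined, time-independent operators (the extension and $\mathcal{L}_\varepsilon$) may be moved across $\partial_t$; this is exactly what \eqref{E:ExAv_L2} delivers, serving as the linchpin of the proof.
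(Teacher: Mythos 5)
Your proposal is correct and takes essentially the same route as the paper: both proofs rest on the identity \eqref{E:ExAv_L2}, the time-independence of $\mathcal{L}_\varepsilon$, its boundedness $\|\mathcal{L}_\varepsilon\bm{\psi}\|_{H^1(S^2)}\leq c\varepsilon^{-1/2}\|\bm{\psi}\|_{H^1(\Omega_\varepsilon)}$ from \eqref{E:HLH1_S2} and \eqref{E:Atan_H1}, and $\partial_t\mathbf{v}\in L^2(0,T;\mathcal{V}_0^\ast)$, with only a cosmetic difference in packaging (you name the candidate $g$ and verify the weak-derivative identity on tensor tests $\phi(t)\bm{\psi}$, while the paper bounds the functional $\bm{\psi}\mapsto\int_0^T(\mathbf{v}_{\mathrm{E}},\partial_t\bm{\psi})_{L^2(\Omega_\varepsilon)}\,dt$ on $C_c^1(0,T;\mathcal{V}_\varepsilon)$ and concludes via the duality $[L^2(0,T;\mathcal{V}_\varepsilon)]^\ast=L^2(0,T;\mathcal{V}_\varepsilon^\ast)$ and density). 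The one point to make explicit is that the paper's definition of $\partial_t$ quantifies over all $\varphi\in C_c^1(0,T;\mathcal{V}_\varepsilon)$, so your verification on separable products $\phi(t)\bm{\psi}$ needs the standard (and routine, since $\mathcal{V}_\varepsilon$ is a separable Hilbert space) remark that such products suffice to characterize the vector-valued weak derivative.
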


\begin{proof}
  Since $\mathbf{v}\in L^2(0,T;\mathcal{V}_0)$, we have $\mathbf{v}_{\mathrm{E}}\in L^2(0,T;\mathcal{V}_\varepsilon)$ by Lemmas \ref{L:Ext_Bdd} and \ref{L:Ext_Sol}.

  Next, let $\bm{\psi}\in C_c^1(0,T;\mathcal{V}_\varepsilon)$.
  Then, by $\mathcal{L}_\varepsilon=\mathbb{L}_0\mathcal{M}_{\varepsilon,\tau}^3$, \eqref{E:HLH1_S2}, and \eqref{E:Atan_H1},
  \begin{align*}
    \mathcal{L}_\varepsilon\bm{\psi} \in C_c^1(0,T;\mathcal{V}_0), \quad \|\mathcal{L}_\varepsilon\bm{\psi}\|_{L^2(0,T;H^1(S^2))} \leq c\varepsilon^{-1/2}\|\bm{\psi}\|_{L^2(0,T;H^1(\Omega_\varepsilon))}.
  \end{align*}
  Since $\mathbf{v}\in L^2(0,T;\mathcal{V}_0)$, we see by \eqref{E:ExAv_L2} that
  \begin{align*}
    -\int_0^T\bigl(\mathbf{v}_{\mathrm{E}}(t),\partial_t\bm{\psi}(t)\bigr)_{L^2(\Omega_\varepsilon)}\,dt = -\varepsilon\int_0^T\bigl(\mathbf{v}(t),[\mathcal{L}_\varepsilon(\partial_t\bm{\psi})](t)\bigr)_{L^2(S^2)}\,dt.
  \end{align*}
  Moreover, since $\mathcal{L}_\varepsilon$ is independent of time and $\partial_t\mathbf{v}\in L^2(0,T;\mathcal{V}_0^\ast)$, we can write
  \begin{align} \label{Pf_EAt:Pair}
    \begin{aligned}
      -\int_0^T\bigl(\mathbf{v}_{\mathrm{E}}(t),\partial_t\bm{\psi}(t)\bigr)_{L^2(\Omega_\varepsilon)}\,dt &= -\varepsilon\int_0^T\bigl(\mathbf{v}(t),[\partial_t(\mathcal{L}_\varepsilon\bm{\psi})](t)\bigr)_{L^2(S^2)}\,dt \\
      &= \varepsilon\int_0^T\langle\partial_t\mathbf{v}(t),\mathcal{L}_\varepsilon\bm{\psi}(t)\rangle_{\mathcal{V}_0}\,dt.
    \end{aligned}
  \end{align}
  Hence, H\"{o}lder's inequality and the above estimate for $\mathcal{L}_\varepsilon\bm{\psi}$ imply that
  \begin{align*}
    \left|\int_0^T\bigl(\mathbf{v}_{\mathrm{E}}(t),\partial_t\bm{\psi}(t)\bigr)_{L^2(\Omega_\varepsilon)}\,dt\right| &\leq \varepsilon\|\partial_t\mathbf{v}\|_{L^2(0,T;\mathcal{V}_0^\ast)}\|\mathcal{L}_\varepsilon\bm{\psi}\|_{L^2(0,T;H^1(S^2))} \\
    &\leq c\varepsilon^{1/2}\|\partial_t\mathbf{v}\|_{L^2(0,T;\mathcal{V}_0^\ast)}\|\bm{\psi}\|_{L^2(0,T;H^1(\Omega_\varepsilon))}
  \end{align*}
  for all $\bm{\psi}\in C_c^1(0,T;\mathcal{V}_\varepsilon)$.
  Since this space is dense in $L^2(0,T;\mathcal{V}_\varepsilon)$, we get
  \begin{align*}
    \partial_t\mathbf{v}_{\mathrm{E}} \in [L^2(0,T;\mathcal{V}_\varepsilon)]^\ast = L^2(0,T;\mathcal{V}_\varepsilon^\ast), \quad \mathbf{v}_{\mathrm{E}} \in \mathbb{E}_T(\mathcal{V}_\varepsilon).
  \end{align*}
  Also, we have \eqref{E:Ext_Dt} by \eqref{Pf_EAt:Pair} and a density argument.
\end{proof}

Lastly, we consider the extension of $\mathbf{f}\in\mathcal{V}_0^\ast$.
For $\mathbf{v}\in\mathcal{H}_0$ and $\bm{\psi}\in L^2(\Omega_\varepsilon)^3$, we have
\begin{align*}
  (\bar{\mathbf{v}},\bm{\psi})_{L^2(\Omega_\varepsilon)} = \varepsilon(\mathbf{v},\mathbb{L}_0\mathcal{M}_{\varepsilon,\tau}^2\bm{\psi}), \quad (\mathbf{v}_{\mathrm{E}},\bm{\psi})_{L^2(\Omega_\varepsilon)} = \varepsilon(\mathbf{v},\mathbb{L}_0\mathcal{M}_{\varepsilon,\tau}^3\bm{\psi})_{L^2(S^2)}
\end{align*}
by Lemma \ref{L:ExAv_L2} and its proof.
Based on these relations, we define the constant extension $\bar{\mathbf{f}}$ and the weighted extension $\mathbf{f}_{\mathrm{E}}$ by
\begin{align} \label{E:Def_fext}
  \begin{aligned}
    \langle\bar{\mathbf{f}},\bm{\psi}\rangle_{\mathcal{V}_\varepsilon} &:= \varepsilon\langle\mathbf{f},\mathbb{L}_0\mathcal{M}_{\varepsilon,\tau}^2\bm{\psi}\rangle_{\mathcal{V}_0}, \\
    \langle\mathbf{f}_{\mathrm{E}},\bm{\psi}\rangle_{\mathcal{V}_\varepsilon} &:= \varepsilon\langle\mathbf{f},\mathbb{L}_0\mathcal{M}_{\varepsilon,\tau}^3\bm{\psi}\rangle_{\mathcal{V}_0} = \varepsilon\langle\mathbf{f},\mathcal{L}_\varepsilon\bm{\psi}\rangle_{\mathcal{V}_0}
  \end{aligned}
\end{align}
for $\bm{\psi}\in\mathcal{V}_\varepsilon$.
Let us show that $\bar{\mathbf{f}}$ and $\mathbf{f}_{\mathrm{E}}$ are well-defined and close to each other.

\begin{lemma} \label{L:Func_Ext}
  For $\mathbf{f}\in\mathcal{V}_0^\ast$, we have $\bar{\mathbf{f}},\mathbf{f}_{\mathrm{E}}\in\mathcal{V}_\varepsilon^\ast$ and
  \begin{align} \label{E:Func_Ext}
    \|\bar{\mathbf{f}}\|_{\mathcal{V}_\varepsilon^\ast} \leq c\varepsilon^{1/2}\|\mathbf{f}\|_{\mathcal{V}_0^\ast}, \quad \|\mathbf{f}_{\mathrm{E}}\|_{\mathcal{V}_\varepsilon^\ast} \leq c\varepsilon^{1/2}\|\mathbf{f}\|_{\mathcal{V}_0^\ast}, \quad \|\mathbf{f}_{\mathrm{E}}-\bar{\mathbf{f}}\|_{\mathcal{V}_\varepsilon^\ast} \leq c\varepsilon^{3/2}\|\mathbf{f}\|_{\mathcal{V}_0^\ast}.
  \end{align}
\end{lemma}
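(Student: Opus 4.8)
The plan is to prove all three assertions at once by testing the functionals in \eqref{E:Def_fext} against an arbitrary $\bm{\psi}\in\mathcal{V}_\varepsilon$ and chaining the mapping properties of the tangential averages with the boundedness of the Helmholtz--Leray projection $\mathbb{L}_0$. Recall that the norm on $\mathcal{V}_\varepsilon$ is the $H^1(\Omega_\varepsilon)$-norm, so that $\|\bm{\psi}\|_{\mathcal{V}_\varepsilon}=\|\bm{\psi}\|_{H^1(\Omega_\varepsilon)}$ and the dual norms in \eqref{E:Func_Ext} are the operator norms of the right-hand sides of \eqref{E:Def_fext} over $\bm{\psi}$ with $\|\bm{\psi}\|_{H^1(\Omega_\varepsilon)}\leq1$. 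Linearity of $\bar{\mathbf{f}}$ and $\mathbf{f}_{\mathrm{E}}$ in $\bm{\psi}$ is immediate, since $\mathcal{M}_{\varepsilon,\tau}^k$, $\mathbb{L}_0$, and the pairing $\langle\mathbf{f},\cdot\rangle_{\mathcal{V}_0}$ are all linear; thus it remains only to establish the boundedness, which simultaneously gives $\bar{\mathbf{f}},\mathbf{f}_{\mathrm{E}}\in\mathcal{V}_\varepsilon^\ast$ and the quantitative bounds.

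For the first two estimates I would start from \eqref{E:Def_fext} and bound, for $k=2,3$,
\[
  |\langle\mathbf{f},\mathbb{L}_0\mathcal{M}_{\varepsilon,\tau}^k\bm{\psi}\rangle_{\mathcal{V}_0}| \leq \|\mathbf{f}\|_{\mathcal{V}_0^\ast}\|\mathbb{L}_0\mathcal{M}_{\varepsilon,\tau}^k\bm{\psi}\|_{H^1(S^2)}.
\]
Applying the boundedness of $\mathbb{L}_0$ from \eqref{E:HLH1_S2} and the average bound \eqref{E:Atan_H1} then gives
\[
  \|\mathbb{L}_0\mathcal{M}_{\varepsilon,\tau}^k\bm{\psi}\|_{H^1(S^2)} \leq c\|\mathcal{M}_{\varepsilon,\tau}^k\bm{\psi}\|_{H^1(S^2)} \leq c\varepsilon^{-1/2}\|\bm{\psi}\|_{H^1(\Omega_\varepsilon)}.
\]
Multiplying by the prefactor $\varepsilon$ in \eqref{E:Def_fext} produces the bounds $\|\bar{\mathbf{f}}\|_{\mathcal{V}_\varepsilon^\ast},\|\mathbf{f}_{\mathrm{E}}\|_{\mathcal{V}_\varepsilon^\ast}\leq c\varepsilon^{1/2}\|\mathbf{f}\|_{\mathcal{V}_0^\ast}$.

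For the difference estimate I would subtract the two lines of \eqref{E:Def_fext} and use linearity of $\mathbb{L}_0$ to write $\langle\mathbf{f}_{\mathrm{E}}-\bar{\mathbf{f}},\bm{\psi}\rangle_{\mathcal{V}_\varepsilon}=\varepsilon\langle\mathbf{f},\mathbb{L}_0(\mathcal{M}_{\varepsilon,\tau}^3\bm{\psi}-\mathcal{M}_{\varepsilon,\tau}^2\bm{\psi})\rangle_{\mathcal{V}_0}$, and then estimate the $H^1(S^2)$-norm of $\mathbb{L}_0(\mathcal{M}_{\varepsilon,\tau}^3-\mathcal{M}_{\varepsilon,\tau}^2)\bm{\psi}$ via \eqref{E:HLH1_S2}. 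The one point requiring care—and the only thing I would flag as a potential pitfall rather than a genuine obstacle—is that here one must \emph{not} bound the two averages separately using \eqref{E:Atan_H1} and the triangle inequality, which would only recover the rate $\varepsilon^{1/2}$. Instead one invokes the difference bound \eqref{E:Atan_Df} with $k=3$, $\ell=2$, which encodes the cancellation between consecutive weighted averages and gains an extra power of $\varepsilon$, yielding $\|\mathbb{L}_0(\mathcal{M}_{\varepsilon,\tau}^3-\mathcal{M}_{\varepsilon,\tau}^2)\bm{\psi}\|_{H^1(S^2)}\leq c\varepsilon^{1/2}\|\bm{\psi}\|_{H^1(\Omega_\varepsilon)}$. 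The prefactor $\varepsilon$ then gives the claimed $c\varepsilon^{3/2}$ bound, completing the proof.
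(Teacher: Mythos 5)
Your proof is correct and follows essentially the same route as the paper: the paper likewise bounds the duality pairing in \eqref{E:Def_fext} by $\varepsilon\|\mathbf{f}\|_{\mathcal{V}_0^\ast}\|\mathbb{L}_0\mathcal{M}_{\varepsilon,\tau}^k\bm{\psi}\|_{H^1(S^2)}$ and invokes \eqref{E:HLH1_S2} with \eqref{E:Atan_H1} for the first two estimates and \eqref{E:Atan_Df} for the difference. Your explicit warning that the triangle inequality on the two averages would lose the extra power of $\varepsilon$ is precisely the point the paper compresses into its closing ``similarly'' sentence.
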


\begin{proof}
  For all $\bm{\psi}\in\mathcal{V}_\varepsilon$, we see by \eqref{E:HLH1_S2} and \eqref{E:Atan_H1} that
  \begin{align*}
    |\langle\bar{\mathbf{f}},\bm{\psi}\rangle_{\mathcal{V}_\varepsilon}| = \varepsilon|\langle\mathbf{f},\mathbb{L}_0\mathcal{M}_{\varepsilon,\tau}^2\bm{\psi}\rangle_{\mathcal{V}_0}| \leq \varepsilon\|\mathbf{f}\|_{\mathcal{V}_0^\ast}\|\mathbb{L}_0\mathcal{M}_{\varepsilon,\tau}^2\bm{\psi}\|_{H^1(S^2)} \leq c\varepsilon^{1/2}\|\mathbf{f}\|_{\mathcal{V}_0^\ast}\|\bm{\psi}\|_{H^1(\Omega_\varepsilon)}.
  \end{align*}
  Thus, the first inequality of \eqref{E:Func_Ext} is valid.
  Similarly, we can get the other inequalities by using \eqref{E:HLH1_S2}, \eqref{E:Atan_H1}, and \eqref{E:Atan_Df}.
\end{proof}

%%% Section 5 %%%
\section{Difference estimate for weak solutions} \label{S:Diff}
The goal of this section is to establish Theorems \ref{T:Diff_Est} and \ref{T:Ave_Conv}.

Recall that we write $c$ for a general positive constant independent of $\varepsilon$, $t$, and $\nu$.
Also, we set $\bar{\eta}(x)=\eta(x/|x|)$ for a function $\eta$ on $S^2$.
Let $\mathbb{E}_T(\mathcal{V}_\varepsilon)$ be the function space given by \eqref{E:Def_ETV}.
We define the extensions $\mathbf{v}_{\mathrm{E}}$ and $\mathbf{f}_{\mathrm{E}}$ of a vector field $\mathbf{v}$ on $S^2$ and $\mathbf{f}\in\mathcal{V}_0^\ast$ by \eqref{E:Def_Ext} and \eqref{E:Def_fext}, respectively.
Let $\mathcal{L}_\varepsilon$ be the operator given by \eqref{E:Def_Le}.

Let $\mathbf{u}^\varepsilon$ and $\mathbf{v}$ be weak solutions to \eqref{E:NS_TSS} and \eqref{E:NS_S2} with given data
\begin{align*}
  (\mathbf{u}_0^\varepsilon,\mathbf{f}^\varepsilon) \in \mathcal{H}_\varepsilon \times L_{\mathrm{loc}}^2([0,\infty);\mathcal{V}_\varepsilon^\ast), \quad (\mathbf{v}_0,\mathbf{f}) \in \mathcal{H}_0 \times L_{\mathrm{loc}}^2([0,\infty);\mathcal{V}_0^\ast),
\end{align*}
respectively.
We assume that $\mathbf{u}^\varepsilon$ satisfies the energy inequality \eqref{E:TSS_Ener}.
Note that
\begin{align*}
  \mathbf{v}_{\mathrm{E}} \in \mathbb{E}_T(\mathcal{V}_\varepsilon) \subset C([0,T];\mathcal{H}_\varepsilon), \quad [\mathbf{v}_0]_{\mathrm{E}} = \mathbf{v}_{\mathrm{E}}(0) \in \mathcal{H}_\varepsilon, \quad \mathbf{f}_{\mathrm{E}} \in L_{\mathrm{loc}}^2([0,\infty);\mathcal{V}_\varepsilon^\ast)
\end{align*}
for all $T>0$ by Proposition \ref{P:S2_Cont} and Lemmas \ref{L:BS_Ipl}, \ref{L:Ext_Sol}, \ref{L:Ext_Dt}, and \ref{L:Func_Ext}.

\subsection{Integrability of trilinear terms} \label{SS:Df_Tri}
Let us show the integrability of trilinear terms involving $\mathbf{v}_{\mathrm{E}}$ by using the fact that $\mathbf{v}_{\mathrm{E}}$ is essentially two-dimensional.

\begin{proposition} \label{P:InTr_v2}
  Let $T>0$ and $\bm{\psi}\in L^2(0,T;H^1(\Omega_\varepsilon)^3)$.
  Then,
  \begin{align} \label{E:InTr_v2}
    \begin{aligned}
      &\int_0^T\Bigl|\bigl((\mathbf{v}_{\mathrm{E}}\cdot\nabla)\mathbf{v}_{\mathrm{E}},\bm{\psi}\bigr)_{L^2(\Omega_\varepsilon)}\Bigr|\,dt \\
      &\qquad \leq c\varepsilon^{1/4}\|\mathbf{v}\|_{L^\infty(0,T;L^2(S^2))}\|\mathbf{v}\|_{L^2(0,T;H^1(S^2))}\|\bm{\psi}\|_{L^2(0,T;H^1(\Omega_\varepsilon))}.
    \end{aligned}
  \end{align}
\end{proposition}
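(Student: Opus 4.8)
The plan is to avoid estimating the trilinear term through the pointwise formula \eqref{E:Ext_CoDe}, and instead to first integrate by parts so that the spatial derivative falls on the test function $\bm{\psi}$. Applying \eqref{E:Ext_CoDe} directly would force one to bound $\overline{\nabla_{\mathbf{v}}\mathbf{v}}$, producing a factor $\|\mathbf{v}\|_{H^1(S^2)}^{3/2}$ (one power from the undifferentiated $\mathbf{v}$ via Ladyzhenskaya and a further $\|\nabla_{S^2}\mathbf{v}\|_{L^2(S^2)}$); such an unbalanced bound cannot be integrated in time against only $\|\bm{\psi}\|_{L^2(0,T;H^1(\Omega_\varepsilon))}$ when $\mathbf{v}$ is merely in $L_{\mathrm{loc}}^\infty([0,\infty);\mathcal{H}_0)\cap L_{\mathrm{loc}}^2([0,\infty);\mathcal{V}_0)$. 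Moving the derivative onto $\bm{\psi}$ instead leaves $|\mathbf{v}_{\mathrm{E}}|^2$ undifferentiated, which restores the balanced product $\|\mathbf{v}\|_{L^2(S^2)}\|\mathbf{v}\|_{H^1(S^2)}$ that does integrate correctly.

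Concretely, I would argue for a.a. $t\in(0,T)$ as follows. Since $\mathbf{v}(t)\in\mathcal{V}_0$ for a.a. $t$, Lemma \ref{L:Ext_Sol} gives $\mathbf{v}_{\mathrm{E}}(t)\in\mathcal{V}_\varepsilon$, so the first identity in \eqref{E:TrTS_AnS} (with $\mathbf{u}=\mathbf{z}=\mathbf{v}_{\mathrm{E}}$ and the arbitrary $\bm{\psi}\in H^1(\Omega_\varepsilon)^3$; note that only the first slot must lie in $\mathcal{V}_\varepsilon$) yields
\begin{align*}
  \bigl((\mathbf{v}_{\mathrm{E}}\cdot\nabla)\mathbf{v}_{\mathrm{E}},\bm{\psi}\bigr)_{L^2(\Omega_\varepsilon)} = -\bigl(\mathbf{v}_{\mathrm{E}},(\mathbf{v}_{\mathrm{E}}\cdot\nabla)\bm{\psi}\bigr)_{L^2(\Omega_\varepsilon)}.
\end{align*}
By $|(\mathbf{v}_{\mathrm{E}}\cdot\nabla)\bm{\psi}|\leq|\mathbf{v}_{\mathrm{E}}|\,|\nabla\bm{\psi}|$ and $|\mathbf{v}_{\mathrm{E}}|=|x|\,|\bar{\mathbf{v}}|\leq2|\bar{\mathbf{v}}|$ on $\Omega_\varepsilon$, the Cauchy--Schwarz inequality gives
\begin{align*}
  \Bigl|\bigl((\mathbf{v}_{\mathrm{E}}\cdot\nabla)\mathbf{v}_{\mathrm{E}},\bm{\psi}\bigr)_{L^2(\Omega_\varepsilon)}\Bigr| \leq 4\int_{\Omega_\varepsilon}|\bar{\mathbf{v}}|^2|\nabla\bm{\psi}|\,dx \leq 4\bigl\||\bar{\mathbf{v}}|^2\bigr\|_{L^2(\Omega_\varepsilon)}\|\bm{\psi}\|_{H^1(\Omega_\varepsilon)}.
\end{align*}
Then I would bound $\bigl\||\bar{\mathbf{v}}|^2\bigr\|_{L^2(\Omega_\varepsilon)}=\|\bar{\mathbf{v}}\|_{L^4(\Omega_\varepsilon)}^2$ by the change of variables \eqref{E:CoV_Thin} and $r\leq2$, namely $\|\bar{\mathbf{v}}\|_{L^4(\Omega_\varepsilon)}^2\leq c\varepsilon^{1/2}\|\mathbf{v}\|_{L^4(S^2)}^2$, and finish this step with Ladyzhenskaya's inequality \eqref{E:Lad_S2} to reach $\bigl\||\bar{\mathbf{v}}|^2\bigr\|_{L^2(\Omega_\varepsilon)}\leq c\varepsilon^{1/2}\|\mathbf{v}\|_{L^2(S^2)}\|\mathbf{v}\|_{H^1(S^2)}$.

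It remains to integrate in time. Using the resulting pointwise bound
\begin{align*}
  \Bigl|\bigl((\mathbf{v}_{\mathrm{E}}\cdot\nabla)\mathbf{v}_{\mathrm{E}},\bm{\psi}\bigr)_{L^2(\Omega_\varepsilon)}\Bigr| \leq c\varepsilon^{1/2}\|\mathbf{v}\|_{L^2(S^2)}\|\mathbf{v}\|_{H^1(S^2)}\|\bm{\psi}\|_{H^1(\Omega_\varepsilon)},
\end{align*}
I would pull out $\|\mathbf{v}(t)\|_{L^2(S^2)}\leq\|\mathbf{v}\|_{L^\infty(0,T;L^2(S^2))}$ and apply the Cauchy--Schwarz inequality in $t$ to the remaining product $\|\mathbf{v}\|_{H^1(S^2)}\|\bm{\psi}\|_{H^1(\Omega_\varepsilon)}$, obtaining the bound $c\varepsilon^{1/2}\|\mathbf{v}\|_{L^\infty(0,T;L^2(S^2))}\|\mathbf{v}\|_{L^2(0,T;H^1(S^2))}\|\bm{\psi}\|_{L^2(0,T;H^1(\Omega_\varepsilon))}$; since $\varepsilon\in(0,1)$ we have $\varepsilon^{1/2}\leq\varepsilon^{1/4}$, which yields \eqref{E:InTr_v2} (in fact with a sharper power of $\varepsilon$). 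The step I would treat most carefully, and the only genuine subtlety, is the justification of the integration by parts: one must check that \eqref{E:TrTS_AnS} applies even though $\bm{\psi}$ need not be solenoidal, which is precisely why that identity is stated in Lemma \ref{L:Tri_TSS} with only the first argument constrained to $\mathcal{V}_\varepsilon$, and that the manipulation is legitimate on the full set of times $t$ where $\mathbf{v}(t)\in\mathcal{V}_0$ (a set of full measure in $(0,T)$).
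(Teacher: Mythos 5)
Your proof is correct and takes essentially the same route as the paper's: integration by parts via the first identity of \eqref{E:TrTS_AnS} (where, as you note, only the first argument must lie in $\mathcal{V}_\varepsilon$), an anisotropic $L^4(\Omega_\varepsilon)$ bound gaining $\varepsilon^{1/4}$ per factor through the change of variables \eqref{E:CoV_Thin}, Ladyzhenskaya's inequality \eqref{E:Lad_S2}, and H\"{o}lder in time. The only cosmetic difference is that you insert $|\mathbf{v}_{\mathrm{E}}|\leq2|\bar{\mathbf{v}}|$ before computing the $L^4$ norm, whereas the paper bounds $\|\mathbf{v}_{\mathrm{E}}\|_{L^4(\Omega_\varepsilon)}$ directly; like the paper's argument, yours actually yields the slightly sharper power $\varepsilon^{1/2}$, which is then relaxed to $\varepsilon^{1/4}$ as stated.
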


\begin{proof}
  Since $\mathbf{v}_{\mathrm{E}}\in L^2(0,T;\mathcal{V}_\varepsilon)$, we have
  \begin{align*}
    \Bigl|\bigl((\mathbf{v}_{\mathrm{E}}\cdot\nabla)\mathbf{v}_{\mathrm{E}},\bm{\psi}\bigr)_{L^2(\Omega_\varepsilon)}\Bigr| = \Bigl|\bigl(\mathbf{v}_{\mathrm{E}},(\mathbf{v}_{\mathrm{E}}\cdot\nabla)\bm{\psi}\bigr)_{L^2(\Omega_\varepsilon)}\Bigr| \leq \|\mathbf{v}_{\mathrm{E}}\|_{L^4(\Omega_\varepsilon)}^2\|\nabla\bm{\psi}\|_{L^2(\Omega_\varepsilon)}
  \end{align*}
  on $(0,T)$ by \eqref{E:TrTS_AnS} and H\"{o}lder's inequality.
  Moreover,
  \begin{align*}
    \|\mathbf{v}_{\mathrm{E}}\|_{L^4(\Omega_\varepsilon)} = \left(\int_{S^2}\int_1^{1+\varepsilon}|\mathbf{v}(y)|^4r^6\,dr\,d\mathcal{H}^2(y)\right)^{1/4} \leq 2^{3/2}\varepsilon^{1/4}\|\mathbf{v}\|_{L^4(S^2)}
  \end{align*}
  by \eqref{E:CoV_Thin}, \eqref{E:Def_Ext}, and $r\leq1+\varepsilon\leq2$.
  These estimates and \eqref{E:Lad_S2} show that
  \begin{align*}
    \Bigl|\bigl((\mathbf{v}_{\mathrm{E}}\cdot\nabla)\mathbf{v}_{\mathrm{E}},\bm{\psi}\bigr)_{L^2(\Omega_\varepsilon)}\Bigr| \leq c\varepsilon^{1/4}\|\mathbf{v}\|_{L^2(S^2)}\|\mathbf{v}\|_{H^1(S^2)}\|\bm{\psi}\|_{H^1(\Omega_\varepsilon)}
  \end{align*}
  on $(0,T)$.
  By this estimate and H\"{o}lder's inequality, we obtain \eqref{E:InTr_v2}.
\end{proof}

\begin{proposition} \label{P:InTr_Ps2}
  Let $T>0$.
  Then,
  \begin{align} \label{E:InTr_Ps2}
      \int_0^T\Bigl|\bigl((\bm{\psi}\cdot\nabla)\bm{\psi},\mathbf{v}_{\mathrm{E}}\bigr)_{L^2(\Omega_\varepsilon)}\Bigr|\,dt \leq c\sigma(\mathbf{v};T)R_\varepsilon(\bm{\psi};T)
  \end{align}
  for all $\bm{\psi}\in L^\infty(0,T;L^2(\Omega_\varepsilon)^3)\cap L^2(0,T;H^1(\Omega_\varepsilon)^3)$, where
  \begin{align} \label{E:Def_sigma}
    \begin{aligned}
      \sigma(\mathbf{v};T) &:= \|\mathbf{v}\|_{L^\infty(0,T;L^2(S^2))}^{1/2}\|\mathbf{v}\|_{L^2(0,T;H^1(S^2))}^{1/2}, \\
      R_\varepsilon(\bm{\psi};T) &:= \|\bm{\psi}\|_{L^\infty(0,T;L^2(\Omega_\varepsilon))}^{1/2}\|\bm{\psi}\|_{L^2(0,T;H^1(\Omega_\varepsilon))}^{3/2}.
    \end{aligned}
  \end{align}
\end{proposition}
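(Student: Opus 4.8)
The plan is to bound the integrand pointwise in space, using that $\mathbf{v}_{\mathrm{E}}$ is controlled by the constant extension $\bar{\mathbf{v}}$, to which the good product estimate of Lemma \ref{L:Quad_Thin} applies. First I would note that $|\mathbf{v}_{\mathrm{E}}(x)| = |x|\,|\bar{\mathbf{v}}(x)| \leq 2|\bar{\mathbf{v}}(x)|$ in $\Omega_\varepsilon$ by $|x|\leq 2$, and that $|(\bm{\psi}\cdot\nabla)\bm{\psi}| \leq |\bm{\psi}|\,|\nabla\bm{\psi}|$ pointwise, so that
\[
  \Bigl|\bigl((\bm{\psi}\cdot\nabla)\bm{\psi},\mathbf{v}_{\mathrm{E}}\bigr)_{L^2(\Omega_\varepsilon)}\Bigr| \leq 2\int_{\Omega_\varepsilon}|\bm{\psi}|\,|\nabla\bm{\psi}|\,|\bar{\mathbf{v}}|\,dx.
\]
Applying H\"older's inequality to split off $|\nabla\bm{\psi}|$ in $L^2(\Omega_\varepsilon)$ produces the factor $\|\nabla\bm{\psi}\|_{L^2(\Omega_\varepsilon)}$ times $\bigl\||\bm{\psi}|\,|\bar{\mathbf{v}}|\bigr\|_{L^2(\Omega_\varepsilon)}$, and the whole argument reduces to estimating the latter product.

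The key step is to bound $\bigl\||\bm{\psi}|\,|\bar{\mathbf{v}}|\bigr\|_{L^2(\Omega_\varepsilon)}$ by \eqref{E:Quad_Thin}. Observing that $|\bar{\mathbf{v}}|=\overline{|\mathbf{v}|}$ is the constant extension of the scalar function $|\mathbf{v}|\in H^1(S^2)$, and that $|\bm{\psi}|\in H^1(\Omega_\varepsilon)$, I would apply Lemma \ref{L:Quad_Thin} with $\eta=|\mathbf{v}|$ and $\varphi=|\bm{\psi}|$, together with the elementary bounds $\||\mathbf{v}|\|_{H^1(S^2)}\leq c\|\mathbf{v}\|_{H^1(S^2)}$ and $\||\bm{\psi}|\|_{H^1(\Omega_\varepsilon)}\leq c\|\bm{\psi}\|_{H^1(\Omega_\varepsilon)}$. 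Combining this with $\|\nabla\bm{\psi}\|_{L^2(\Omega_\varepsilon)}\leq\|\bm{\psi}\|_{H^1(\Omega_\varepsilon)}$ yields the pointwise-in-time estimate
\[
  \Bigl|\bigl((\bm{\psi}\cdot\nabla)\bm{\psi},\mathbf{v}_{\mathrm{E}}\bigr)_{L^2(\Omega_\varepsilon)}\Bigr| \leq c\|\mathbf{v}\|_{L^2(S^2)}^{1/2}\|\mathbf{v}\|_{H^1(S^2)}^{1/2}\|\bm{\psi}\|_{L^2(\Omega_\varepsilon)}^{1/2}\|\bm{\psi}\|_{H^1(\Omega_\varepsilon)}^{3/2},
\]
where the power $3/2$ arises from the factor $\|\nabla\bm{\psi}\|_{L^2(\Omega_\varepsilon)}$ and the half power produced by Lemma \ref{L:Quad_Thin}.

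Finally I would integrate in time. After pulling the factors $\|\mathbf{v}\|_{L^\infty(0,T;L^2(S^2))}^{1/2}$ and $\|\bm{\psi}\|_{L^\infty(0,T;L^2(\Omega_\varepsilon))}^{1/2}$ out of the integral, the remaining integral $\int_0^T\|\mathbf{v}\|_{H^1(S^2)}^{1/2}\|\bm{\psi}\|_{H^1(\Omega_\varepsilon)}^{3/2}\,dt$ is handled by H\"older's inequality in $t$ with exponents $4$ and $4/3$, which gives $\|\mathbf{v}\|_{L^2(0,T;H^1(S^2))}^{1/2}\|\bm{\psi}\|_{L^2(0,T;H^1(\Omega_\varepsilon))}^{3/2}$. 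Collecting the factors reproduces exactly $c\,\sigma(\mathbf{v};T)R_\varepsilon(\bm{\psi};T)$ with $\sigma$ and $R_\varepsilon$ as in \eqref{E:Def_sigma}.

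The only genuinely delicate point is the middle step: since $\bm{\psi}$ is a general $H^1(\Omega_\varepsilon)$ field carrying no solenoidal or boundary structure, integration by parts is unavailable here (in contrast to the proof of Proposition \ref{P:InTr_v2}), so the gain must come entirely from the two-dimensional nature of $\bar{\mathbf{v}}$ encoded in Lemma \ref{L:Quad_Thin}. A cruder bound such as $\|\bar{\mathbf{v}}\bm{\psi}\|_{L^2(\Omega_\varepsilon)}\leq\|\bar{\mathbf{v}}\|_{L^4(\Omega_\varepsilon)}\|\bm{\psi}\|_{L^4(\Omega_\varepsilon)}$ would raise the power of $\|\bm{\psi}\|_{H^1(\Omega_\varepsilon)}$ and break the time integrability required for the subsequent weak--strong uniqueness argument.
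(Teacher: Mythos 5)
Your proof is correct and follows essentially the same route as the paper's: the pointwise bound $|\mathbf{v}_{\mathrm{E}}|\leq 2|\bar{\mathbf{v}}|$ in $\Omega_\varepsilon$, H\"older's inequality to split off $\|\nabla\bm{\psi}\|_{L^2(\Omega_\varepsilon)}$, Lemma \ref{L:Quad_Thin} applied to $\bigl\|\,|\bar{\mathbf{v}}|\,|\bm{\psi}|\,\bigr\|_{L^2(\Omega_\varepsilon)}$ to reach the pointwise-in-time estimate \eqref{Pf_ITr:Est}, and then H\"older in time with exponents $4$ and $4/3$ after extracting the $L^\infty$-in-time factors. Your only addition is to make explicit how the scalar estimate \eqref{E:Quad_Thin} is invoked in the vector-valued setting, namely with $\eta=|\mathbf{v}|$ and $\varphi=|\bm{\psi}|$ together with $\|\,|\mathbf{v}|\,\|_{H^1(S^2)}\leq c\|\mathbf{v}\|_{H^1(S^2)}$ and $\|\,|\bm{\psi}|\,\|_{H^1(\Omega_\varepsilon)}\leq c\|\bm{\psi}\|_{H^1(\Omega_\varepsilon)}$, a detail the paper leaves implicit.
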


\begin{proof}
  We see by \eqref{E:Def_Ext} and $|x|\leq1+\varepsilon\leq 2$ for $x\in\Omega_\varepsilon$ that
  \begin{align*}
    |\mathbf{v}_{\mathrm{E}}(x,t)| = |x|\,|\bar{\mathbf{v}}(x,t)| \leq 2|\bar{\mathbf{v}}(x,t)|, \quad (x,t) \in \Omega_\varepsilon\times(0,\infty).
  \end{align*}
  By this inequality and H\"{o}lder's inequality, we have
  \begin{align*}
    \Bigl|\bigl((\bm{\psi}\cdot\nabla)\bm{\psi},\mathbf{v}_{\mathrm{E}}\bigr)_{L^2(\Omega_\varepsilon)}\Bigr| \leq 2\bigl\|\,|\bar{\mathbf{v}}|\,|\bm{\psi}|\,\bigr\|_{L^2(\Omega_\varepsilon)}\|\nabla\bm{\psi}\|_{L^2(\Omega_\varepsilon)}
  \end{align*}
  on $(0,T)$.
  Moreover, we can apply \eqref{E:Quad_Thin} to get
  \begin{align*}
    \bigl\|\,|\bar{\mathbf{v}}|\,|\bm{\psi}|\,\bigr\|_{L^2(\Omega_\varepsilon)} \leq c\|\mathbf{v}\|_{L^2(S^2)}^{1/2}\|\mathbf{v}\|_{H^1(S^2)}^{1/2}\|\bm{\psi}\|_{L^2(\Omega_\varepsilon)}^{1/2}\|\bm{\psi}\|_{H^1(\Omega_\varepsilon)}^{1/2}.
  \end{align*}
  Combining these estimates, we find that
  \begin{align} \label{Pf_ITr:Est}
    \Bigl|\bigl((\bm{\psi}\cdot\nabla)\bm{\psi},\mathbf{v}_{\mathrm{E}}\bigr)_{L^2(\Omega_\varepsilon)}\Bigr| \leq c\|\mathbf{v}\|_{L^2(S^2)}^{1/2}\|\mathbf{v}\|_{H^1(S^2)}^{1/2}\|\bm{\psi}\|_{L^2(\Omega_\varepsilon)}^{1/2}\|\bm{\psi}\|_{H^1(\Omega_\varepsilon)}^{3/2}
  \end{align}
  on $(0,T)$.
  We get \eqref{E:InTr_Ps2} by this inequality and H\"{o}lder's inequality.
\end{proof}

\subsection{Extension of the weak form on the unit sphere} \label{SS:Df_Ext}
Next, we observe that $\mathbf{v}_{\mathrm{E}}$ is an approximate weak solution to \eqref{E:NS_TSS}.

\begin{proposition} \label{P:WF_Ext}
  For all $T>0$ and $\bm{\psi}\in L^2(0,T;\mathcal{V}_\varepsilon)$, we have
  \begin{multline} \label{E:WF_Ext}
    \int_0^T\langle\partial_t\mathbf{v}_{\mathrm{E}},\bm{\psi}\rangle_{\mathcal{V}_\varepsilon}\,dt+2\nu\int_0^T\bigl(\mathbf{D}(\mathbf{v}_{\mathrm{E}}),\mathbf{D}(\bm{\psi})\bigr)_{L^2(\Omega_\varepsilon)}\,dt \\
    +\int_0^T\bigl((\mathbf{v}_{\mathrm{E}}\cdot\nabla)\mathbf{v}_{\mathrm{E}},\bm{\psi}\bigr)_{L^2(\Omega_\varepsilon)}\,dt = \int_0^T\langle\mathbf{f}_{\mathrm{E}},\bm{\psi}\rangle_{\mathcal{V}_\varepsilon}\,dt+\int_0^T\mathcal{R}_{\mathbf{v}}(\bm{\psi})\,dt.
  \end{multline}
  Here, $\mathcal{R}_{\mathbf{v}}(\bm{\psi})$ is a residual term that is linear in $\bm{\psi}$ and satisfies
  \begin{align} \label{E:WFEx_Res}
      \Bigl|\mathcal{R}_{\mathbf{v}}\bigl(\bm{\psi}(t)\bigr)\Bigr| \leq c\varepsilon^{3/2}\eta_{\mathbf{v}}(t)\|\bm{\psi}(t)\|_{H^1(\Omega_\varepsilon)}
  \end{align}
  for a.a. $t\in(0,T)$, where
  \begin{align} \label{E:Def_eta}
    \eta_{\mathbf{v}}(t) := 2\nu\|[\mathbf{D}_{S^2}(\mathbf{v})](t)\|_{L^2(S^2)}+\|\mathbf{v}(t)\|_{L^2(S^2)}\|\mathbf{v}(t)\|_{H^1(S^2)}, \quad t\in(0,\infty).
  \end{align}
\end{proposition}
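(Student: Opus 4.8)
The plan is to derive \eqref{E:WF_Ext} by substituting the averaged test function $\mathcal{L}_\varepsilon\bm{\psi}=\mathbb{L}_0\mathcal{M}_{\varepsilon,\tau}^3\bm{\psi}$ into the weak form \eqref{E:WFS2_Ano} on $S^2$, and then transferring each of the four resulting terms back to $\Omega_\varepsilon$ by means of the identities and estimates of Section \ref{S:AvEx}, collecting every discrepancy into the residual $\mathcal{R}_{\mathbf{v}}$. First I would note that, since $\bm{\psi}\in L^2(0,T;\mathcal{V}_\varepsilon)$, the remark after \eqref{E:Def_Le} together with Lemmas \ref{L:HLH1_S2} and \ref{L:Atan_H1} gives $\mathcal{L}_\varepsilon\bm{\psi}\in L^2(0,T;\mathcal{V}_0)$, so $\mathcal{L}_\varepsilon\bm{\psi}$ is an admissible test function in \eqref{E:WFS2_Ano}. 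Applying \eqref{E:WFS2_Ano} with $\bm{\zeta}=\mathcal{L}_\varepsilon\bm{\psi}$ and multiplying by $\varepsilon$ produces an identity on $S^2$ whose four terms I will match against the four terms of \eqref{E:WF_Ext}.

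Next I would dispose of the two terms that match exactly. By Lemma \ref{L:Ext_Dt} (recall $\mathbf{v}\in\mathbb{E}_T(\mathcal{V}_0)$ by Proposition \ref{P:S2_Cont}), the time-derivative term satisfies $\int_0^T\langle\partial_t\mathbf{v}_{\mathrm{E}},\bm{\psi}\rangle_{\mathcal{V}_\varepsilon}\,dt=\varepsilon\int_0^T\langle\partial_t\mathbf{v},\mathcal{L}_\varepsilon\bm{\psi}\rangle_{\mathcal{V}_0}\,dt$, and by the definition \eqref{E:Def_fext} the force term satisfies $\langle\mathbf{f}_{\mathrm{E}},\bm{\psi}\rangle_{\mathcal{V}_\varepsilon}=\varepsilon\langle\mathbf{f},\mathcal{L}_\varepsilon\bm{\psi}\rangle_{\mathcal{V}_0}$; these contribute nothing to the residual. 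It then remains to define $\mathcal{R}_{\mathbf{v}}$ pointwise in $t$ as the sum of the two discrepancies in the viscous and convective terms,
\begin{align*}
  \mathcal{R}_{\mathbf{v}}(\bm{\psi}) &:= 2\nu\Bigl\{\bigl(\mathbf{D}(\mathbf{v}_{\mathrm{E}}),\mathbf{D}(\bm{\psi})\bigr)_{L^2(\Omega_\varepsilon)}-\varepsilon\bigl(\mathbf{D}_{S^2}(\mathbf{v}),\mathbf{D}_{S^2}(\mathcal{L}_\varepsilon\bm{\psi})\bigr)_{L^2(S^2)}\Bigr\} \\
  &\qquad+\Bigl\{\bigl((\mathbf{v}_{\mathrm{E}}\cdot\nabla)\mathbf{v}_{\mathrm{E}},\bm{\psi}\bigr)_{L^2(\Omega_\varepsilon)}-\varepsilon(\nabla_{\mathbf{v}}\mathbf{v},\mathcal{L}_\varepsilon\bm{\psi})_{L^2(S^2)}\Bigr\},
\end{align*}
which is manifestly linear in $\bm{\psi}$. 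Substituting the two exact identities above and this definition into the $\varepsilon$-scaled version of \eqref{E:WFS2_Ano} yields precisely \eqref{E:WF_Ext}, because the four genuine sphere terms cancel by \eqref{E:WFS2_Ano} and only the two added-and-subtracted discrepancies survive.

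Finally, for the bound \eqref{E:WFEx_Res} I would apply the two key approximation estimates pointwise in $t$: for a.a.\ $t$ one has $\mathbf{v}(t)\in\mathcal{V}_0$ and $\bm{\psi}(t)\in\mathcal{V}_\varepsilon$, so Lemma \ref{L:ExAv_Bi} bounds the viscous discrepancy by $c\varepsilon^{3/2}\|\mathbf{D}_{S^2}(\mathbf{v})\|_{L^2(S^2)}\|\bm{\psi}\|_{H^1(\Omega_\varepsilon)}$ and Lemma \ref{L:ExAv_Tri} bounds the convective discrepancy by $c\varepsilon^{3/2}\|\mathbf{v}\|_{L^2(S^2)}\|\mathbf{v}\|_{H^1(S^2)}\|\bm{\psi}\|_{H^1(\Omega_\varepsilon)}$. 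Multiplying the former by $2\nu$ and adding gives exactly $c\varepsilon^{3/2}\eta_{\mathbf{v}}\|\bm{\psi}\|_{H^1(\Omega_\varepsilon)}$ with $\eta_{\mathbf{v}}$ as in \eqref{E:Def_eta}. I expect no genuine analytic difficulty here, since the hard work is already contained in Lemmas \ref{L:ExAv_Bi}, \ref{L:ExAv_Tri}, and \ref{L:Ext_Dt}; the only points requiring care are verifying that $\mathcal{L}_\varepsilon\bm{\psi}$ is an admissible test function on $S^2$ (so that \eqref{E:WFS2_Ano} may be invoked) and tracking the factor $2\nu$ consistently, so that the viscous contribution assembles into the $2\nu\|\mathbf{D}_{S^2}(\mathbf{v})\|_{L^2(S^2)}$ part of $\eta_{\mathbf{v}}$.
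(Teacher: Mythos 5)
Your proposal is correct and matches the paper's own proof essentially line for line: you test \eqref{E:WFS2_Ano} with $\bm{\zeta}=\mathcal{L}_\varepsilon\bm{\psi}$ (admissible by Lemmas \ref{L:HLH1_S2} and \ref{L:Atan_H1}), scale by $\varepsilon$, use Lemma \ref{L:Ext_Dt} and \eqref{E:Def_fext} for the exact time-derivative and force identities, and define $\mathcal{R}_{\mathbf{v}}$ as precisely the same viscous-plus-convective discrepancy the paper uses. The bound \eqref{E:WFEx_Res} via Lemmas \ref{L:ExAv_Bi} and \ref{L:ExAv_Tri} applied pointwise in $t$, with the $2\nu$ factor assembling $\eta_{\mathbf{v}}$, is also exactly the paper's argument.
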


\begin{proof}
  Let $\bm{\psi}\in L^2(0,T;\mathcal{V}_\varepsilon)$.
  Since $\mathcal{L}_\varepsilon\bm{\psi}\in L^2(0,T;\mathcal{V}_0)$ by Lemmas \ref{L:HLH1_S2} and \ref{L:Atan_H1}, we can set $\bm{\zeta}=\mathcal{L}_\varepsilon\bm{\psi}$ in \eqref{E:WFS2_Ano}.
  Then, we multiply \eqref{E:WFS2_Ano} by $\varepsilon$, use \eqref{E:Ext_Dt} and \eqref{E:Def_fext}, and define
  \begin{multline*}
    \mathcal{R}_{\mathbf{v}}(\bm{\psi}) := 2\nu\Bigl\{\bigl(\mathbf{D}(\mathbf{v}_{\mathrm{E}}),\mathbf{D}(\bm{\psi})\bigr)_{L^2(\Omega_\varepsilon)}-\varepsilon\bigl(\mathbf{D}_{S^2}(\mathbf{v}),\mathbf{D}_{S^2}(\mathcal{L}_\varepsilon\bm{\psi})\bigr)_{L^2(S^2)}\Bigr\} \\
    +\Bigl\{\bigl((\mathbf{v}_{\mathrm{E}}\cdot\nabla)\mathbf{v}_{\mathrm{E}},\bm{\psi}\bigr)_{L^2(\Omega_\varepsilon)}-\varepsilon(\nabla_{\mathbf{v}}\mathbf{v},\mathcal{L}_\varepsilon\bm{\psi})_{L^2(S^2)}\Bigr\}
  \end{multline*}
  on $(0,T)$ to get \eqref{E:WF_Ext}.
  Clearly, $\mathcal{R}_{\mathbf{v}}(\bm{\psi})$ is linear in $\bm{\psi}$.
  Also, since $\mathbf{v}(t)\in\mathcal{V}_0$ and $\bm{\psi}(t)\in\mathcal{V}_\varepsilon$ for a.a. $t\in(0,T)$, we can apply \eqref{E:ExAv_Bi} and \eqref{E:ExAv_Tri} to find that \eqref{E:WFEx_Res} holds.
\end{proof}

\begin{proposition} \label{P:Ext_Ener}
  For all $t\in[0,\infty)$, we have the energy equality with error term
  \begin{multline} \label{E:Ext_Ener}
    \frac{1}{2}\|\mathbf{v}_{\mathrm{E}}(t)\|_{L^2(\Omega_\varepsilon)}^2+2\nu\int_0^t\|\mathbf{D}(\mathbf{v}_{\mathrm{E}})\|_{L^2(\Omega_\varepsilon)}^2\,ds \\
    = \frac{1}{2}\|[\mathbf{v}_0]_{\mathrm{E}}\|_{L^2(\Omega_\varepsilon)}^2+\int_0^t\langle\mathbf{f}_{\mathrm{E}},\mathbf{v}_{\mathrm{E}}\rangle_{\mathcal{V}_\varepsilon}\,ds+\int_0^t\mathcal{R}_{\mathbf{v}}(\mathbf{v}_{\mathrm{E}})\,ds.
  \end{multline}
\end{proposition}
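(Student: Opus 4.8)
The plan is to derive \eqref{E:Ext_Ener} by choosing $\mathbf{v}_{\mathrm{E}}$ itself as the test function in the approximate weak form \eqref{E:WF_Ext}. Since $t\geq0$ is arbitrary and the case $t=0$ is trivial, I would fix $t>0$ and apply Proposition \ref{P:WF_Ext} with $T=t$. The first point to settle is that $\mathbf{v}_{\mathrm{E}}$ is an admissible choice: by Lemma \ref{L:Ext_Dt} we have $\mathbf{v}_{\mathrm{E}}\in\mathbb{E}_t(\mathcal{V}_\varepsilon)$, so in particular $\mathbf{v}_{\mathrm{E}}\in L^2(0,t;\mathcal{V}_\varepsilon)$, and Lemma \ref{L:BS_Ipl} (with $\mathcal{H}=\mathcal{H}_\varepsilon$, $\mathcal{V}=\mathcal{V}_\varepsilon$) gives $\mathbf{v}_{\mathrm{E}}\in C([0,t];\mathcal{H}_\varepsilon)$, which ensures that the endpoint value $\mathbf{v}_{\mathrm{E}}(t)$ in \eqref{E:Ext_Ener} is well-defined and that $\mathbf{v}_{\mathrm{E}}(0)=[\mathbf{v}_0]_{\mathrm{E}}$ (as already recorded before Proposition \ref{P:InTr_v2}, using $\mathbf{v}(0)=\mathbf{v}_0$ and continuity of the extension).

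Next I would transform the four terms that arise from setting $\bm{\psi}=\mathbf{v}_{\mathrm{E}}$ in \eqref{E:WF_Ext}. For the time-derivative term I would apply the integration-by-parts identity \eqref{E:Dt_IbP} with $u_1=u_2=\mathbf{v}_{\mathrm{E}}$ and integrate in time, which yields
$$\int_0^t\langle\partial_t\mathbf{v}_{\mathrm{E}},\mathbf{v}_{\mathrm{E}}\rangle_{\mathcal{V}_\varepsilon}\,ds = \frac{1}{2}\|\mathbf{v}_{\mathrm{E}}(t)\|_{L^2(\Omega_\varepsilon)}^2-\frac{1}{2}\|[\mathbf{v}_0]_{\mathrm{E}}\|_{L^2(\Omega_\varepsilon)}^2.$$
The viscous bilinear term becomes $2\nu\int_0^t\|\mathbf{D}(\mathbf{v}_{\mathrm{E}})\|_{L^2(\Omega_\varepsilon)}^2\,ds$ directly. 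The crucial simplification is that the trilinear term drops out entirely: since $\mathbf{v}_{\mathrm{E}}(s)\in\mathcal{V}_\varepsilon$ for a.a.\ $s$, the second relation of \eqref{E:TrTS_AnS} gives $\bigl((\mathbf{v}_{\mathrm{E}}\cdot\nabla)\mathbf{v}_{\mathrm{E}},\mathbf{v}_{\mathrm{E}}\bigr)_{L^2(\Omega_\varepsilon)}=0$ pointwise in time, whence the whole time integral vanishes. Combining these three computations with the (unchanged) force and residual terms and rearranging yields exactly \eqref{E:Ext_Ener}.

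The only genuine care required --- which I expect to be the main technical point rather than a real obstacle --- is justifying the integrability of each term so that the substitution $\bm{\psi}=\mathbf{v}_{\mathrm{E}}$ into \eqref{E:WF_Ext} is legitimate and all integrals converge. The pairings with $\partial_t\mathbf{v}_{\mathrm{E}}$ and $\mathbf{f}_{\mathrm{E}}$ are controlled by $\mathbf{v}_{\mathrm{E}}\in L^2(0,t;\mathcal{V}_\varepsilon)$ together with $\partial_t\mathbf{v}_{\mathrm{E}},\mathbf{f}_{\mathrm{E}}\in L^2(0,t;\mathcal{V}_\varepsilon^\ast)$; the viscous term is plainly in $L^1(0,t)$; the trilinear term is integrable by Proposition \ref{P:InTr_v2} applied with $\bm{\psi}=\mathbf{v}_{\mathrm{E}}$, so that its vanishing is meaningful and not merely formal; and the residual term is integrable because \eqref{E:WFEx_Res} bounds $|\mathcal{R}_{\mathbf{v}}(\mathbf{v}_{\mathrm{E}}(s))|$ by $c\varepsilon^{3/2}\eta_{\mathbf{v}}(s)\|\mathbf{v}_{\mathrm{E}}(s)\|_{H^1(\Omega_\varepsilon)}$, with $\eta_{\mathbf{v}}\in L^2(0,t)$ coming from $\mathbf{v}\in L^2(0,t;H^1(S^2))$ and $\|\mathbf{v}_{\mathrm{E}}\|_{H^1(\Omega_\varepsilon)}\in L^2(0,t)$ coming from Lemma \ref{L:Ext_Bdd}, so that Cauchy--Schwarz gives $\int_0^t|\mathcal{R}_{\mathbf{v}}(\mathbf{v}_{\mathrm{E}})|\,ds<\infty$. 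Once these integrabilities are in place, the computation above is direct and produces the stated energy equality with error term.
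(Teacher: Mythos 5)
Your proposal is correct and follows essentially the same route as the paper: both take $\bm{\psi}=\mathbf{v}_{\mathrm{E}}$ in \eqref{E:WF_Ext} (legitimate since $\mathbf{v}_{\mathrm{E}}\in\mathbb{E}_T(\mathcal{V}_\varepsilon)$ by Lemma \ref{L:Ext_Dt}), then apply \eqref{E:Dt_IbP} to the time-derivative term and the second relation of \eqref{E:TrTS_AnS} to annihilate the trilinear term. Your additional integrability checks (via Proposition \ref{P:InTr_v2}, \eqref{E:WFEx_Res}, and Lemma \ref{L:Ext_Bdd}) are sound details that the paper leaves implicit, having secured them in the preceding lemmas.
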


\begin{proof}
  Let $T>0$.
  Since $\mathbf{v}_{\mathrm{E}}\in\mathbb{E}_T(\mathcal{V}_\varepsilon)$, we can take $\bm{\psi}=\mathbf{v}_{\mathrm{E}}$ in \eqref{E:WF_Ext}.
  Applying \eqref{E:Dt_IbP} and \eqref{E:TrTS_AnS} to the resulting equality, we find that \eqref{E:Ext_Ener} holds with $t$ replaced by $T$.
\end{proof}

\subsection{Mollification in time} \label{SS:Df_Moll}
To proceed the weak-strong uniqueness argument, we would like to set $\bm{\psi}=\mathbf{v}_{\mathrm{E}}$ in \eqref{E:WF_TSS} and $\bm{\psi}=\mathbf{u}^\varepsilon$ in \eqref{E:WF_Ext}.
The latter is allowed by Proposition \ref{P:WF_Ext}, but the former is not.
Thus, we mollify $\mathbf{u}^\varepsilon$ and $\mathbf{v}_{\mathrm{E}}$ in time.

We take a standard mollifier $\rho\in C^\infty(\mathbb{R})$, i.e.
\begin{align*}
  0\leq\rho(t)\leq1, \quad \rho(-t) = \rho(t) \quad (t\in\mathbb{R}), \quad \rho(t) = 0 \quad (|t|\geq1), \quad \int_{-\infty}^\infty\rho(t)\,dt = 1.
\end{align*}
Let $\mathcal{X}$ be a Banach space.
For $u\in L_{\mathrm{loc}}^1(\mathbb{R};\mathcal{X})$ and $k\in\mathbb{N}$, we set
\begin{align*}
  \rho_k(t) := k\rho(kt), \quad u_k(t) := (\rho_k\ast u)(t) = \int_{-\infty}^\infty\rho_k(t-z)u(z)\,dz, \quad t\in\mathbb{R}.
\end{align*}
We extend $\mathbf{u}^\varepsilon$, $\mathbf{v}$, and $\mathbf{v}_{\mathrm{E}}$ to $\mathbb{R}$ by setting zero on $(-\infty,0)$ and apply the above mollification in time.
Then, $\mathbf{u}_k^\varepsilon\in C^\infty(\mathbb{R};\mathcal{V}_\varepsilon)$ and thus $\mathbf{u}_k^\varepsilon\in\mathbb{E}_T(\mathcal{V}_\varepsilon)$ for all $T>0$.

\begin{proposition} \label{P:WF_Moll}
  Fix any $0<s<T$, and let
  \begin{align*}
    \chi_{(s,T)}(t) :=
    \begin{cases}
      1, &t\in(s,T), \\
      0, &t\in\mathbb{R}\setminus(s,T),
    \end{cases}
    \quad \bm{\psi}_{s,T}^k(t) := [\rho_k\ast(\chi_{(s,T)}\mathbf{v}_{\mathrm{E}})](t), \quad t\in\mathbb{R}
  \end{align*}
  for $k\in\mathbb{N}$ such that $s-1/k>0$ and $T+1/k<2T$.
  Then, we have
  \begin{multline} \label{E:WF_Moll}
    \int_s^T\langle\partial_t\mathbf{u}_k^\varepsilon,\mathbf{v}_{\mathrm{E}}\rangle_{\mathcal{V}_\varepsilon}\,dt+2\nu\int_0^{2T}\bigl(\mathbf{D}(\mathbf{u}^\varepsilon),\mathbf{D}(\bm{\psi}_{s,T}^k)\bigr)_{L^2(\Omega_\varepsilon)}\,dt \\
    +\int_0^{2T}\bigl((\mathbf{u}^\varepsilon\cdot\nabla)\mathbf{u}^\varepsilon,\bm{\psi}_{s,T}^k\bigr)_{L^2(\Omega_\varepsilon)}\,dt = \int_0^{2T}\langle\mathbf{f}^\varepsilon,\bm{\psi}_{s,T}^k\rangle_{\mathcal{V}_\varepsilon}\,dt.
  \end{multline}
\end{proposition}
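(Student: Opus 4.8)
The plan is to use $\bm{\psi}_{s,T}^k$ directly as an admissible test function in the weak form \eqref{E:WF_TSS}, and then to transfer the time-mollification off the test function and onto $\mathbf{u}^\varepsilon$, thereby recovering the term carrying $\partial_t\mathbf{u}_k^\varepsilon$. First I would check admissibility: since $\mathbf{v}_{\mathrm{E}}\in\mathbb{E}_T(\mathcal{V}_\varepsilon)\subset L^2(0,T;\mathcal{V}_\varepsilon)$, the cut-off $\chi_{(s,T)}\mathbf{v}_{\mathrm{E}}$ lies in $L^1(\mathbb{R};\mathcal{V}_\varepsilon)$ with support in $[s,T]$, and its mollification $\bm{\psi}_{s,T}^k=\rho_k\ast(\chi_{(s,T)}\mathbf{v}_{\mathrm{E}})$ is smooth in time with values in the closed subspace $\mathcal{V}_\varepsilon$, hence $\bm{\psi}_{s,T}^k\in C_c^1([0,\infty);\mathcal{V}_\varepsilon)$. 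The convolution support satisfies $\mathrm{supp}\,\bm{\psi}_{s,T}^k\subset[s-1/k,T+1/k]$, which by the hypotheses $s-1/k>0$ and $T+1/k<2T$ is contained in $(0,2T)$; in particular $\bm{\psi}_{s,T}^k(0)=\mathbf{0}$.

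Next I would substitute $\bm{\psi}_{s,T}^k$ into \eqref{E:WF_TSS}. By the support property the initial-data term $(\mathbf{u}_0^\varepsilon,\bm{\psi}_{s,T}^k(0))_{L^2(\Omega_\varepsilon)}$ vanishes and every $\int_0^\infty$ reduces to $\int_0^{2T}$, so the viscous term, the trilinear term, and the forcing term already appear in the exact form of \eqref{E:WF_Moll}. The only term needing work is $-\int_0^\infty(\mathbf{u}^\varepsilon,\partial_t\bm{\psi}_{s,T}^k)_{L^2(\Omega_\varepsilon)}\,dt$. Here I would write $\partial_t\bm{\psi}_{s,T}^k=\rho_k'\ast(\chi_{(s,T)}\mathbf{v}_{\mathrm{E}})$ (the derivative falling on the smooth factor $\rho_k$), apply Fubini's theorem to interchange the $t$- and $z$-integrations, and transfer $\rho_k'$ onto $\mathbf{u}^\varepsilon$. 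The crucial point is that $\rho$ is even, so $\rho_k'$ is odd; this produces exactly one sign flip that turns the convolution back into $+\,\partial_t\mathbf{u}_k^\varepsilon$. Combined with the leading minus sign this gives
\[
-\int_0^\infty(\mathbf{u}^\varepsilon,\partial_t\bm{\psi}_{s,T}^k)_{L^2(\Omega_\varepsilon)}\,dt=\int_s^T(\partial_t\mathbf{u}_k^\varepsilon,\mathbf{v}_{\mathrm{E}})_{L^2(\Omega_\varepsilon)}\,dt.
\]
Finally, since $\mathbf{u}_k^\varepsilon\in C^\infty(\mathbb{R};\mathcal{V}_\varepsilon)$ yields $\partial_t\mathbf{u}_k^\varepsilon(t)\in\mathcal{H}_\varepsilon$ while $\mathbf{v}_{\mathrm{E}}(t)\in\mathcal{V}_\varepsilon$, the identification $\mathcal{H}_\varepsilon\hookrightarrow\mathcal{V}_\varepsilon^\ast$ of Lemma \ref{L:BS_Ipl} rewrites this $L^2$ pairing as $\langle\partial_t\mathbf{u}_k^\varepsilon,\mathbf{v}_{\mathrm{E}}\rangle_{\mathcal{V}_\varepsilon}$, which is precisely the first term of \eqref{E:WF_Moll}. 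Collecting this with the three unchanged integrals establishes the identity.

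I expect the main obstacle to be the bookkeeping in the mollifier-transfer step: one must track the sign generated by the oddness of $\rho_k'$ (an error here would flip the whole time-derivative term) and justify exchanging the $t$- and $z$-integrals together with the $L^2(\Omega_\varepsilon)$ inner product, which is legitimate because $\mathbf{u}^\varepsilon\in L^\infty_{\mathrm{loc}}([0,\infty);\mathcal{H}_\varepsilon)$, $\chi_{(s,T)}\mathbf{v}_{\mathrm{E}}\in L^2$, and $\rho_k'$ is bounded with compact support, so the double integral converges absolutely over the compact time window $[0,2T]$. A secondary but essential point is confirming that $\partial_t\mathbf{u}_k^\varepsilon$ lands in $\mathcal{H}_\varepsilon$ (the mollification is a Bochner average of $\mathcal{H}_\varepsilon$-valued data and $\mathcal{H}_\varepsilon$ is closed), which is what lets the duality pairing collapse to the $L^2$ inner product and makes the identity close up cleanly into the stated form \eqref{E:WF_Moll}.
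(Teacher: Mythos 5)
Your proposal is correct and follows essentially the same route as the paper: substitute $\bm{\psi}_{s,T}^k\in C_c^\infty(0,2T;\mathcal{V}_\varepsilon)$ into \eqref{E:WF_TSS}, use the support condition $s-1/k>0$, $T+1/k<2T$ to kill the initial-data term and truncate the integrals to $(0,2T)$, then transfer the time derivative via Fubini and the identity $\partial_t[\rho_k(t-z)]=-\partial_z[\rho_k(z-t)]$ (the evenness of $\rho$ giving exactly the sign flip you track), and finally rewrite $(\partial_t\mathbf{u}_k^\varepsilon,\mathbf{v}_{\mathrm{E}})_{L^2(\Omega_\varepsilon)}$ as $\langle\partial_t\mathbf{u}_k^\varepsilon,\mathbf{v}_{\mathrm{E}}\rangle_{\mathcal{V}_\varepsilon}$ through the embedding $\mathcal{H}_\varepsilon\hookrightarrow\mathcal{V}_\varepsilon^\ast$. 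Your sign bookkeeping and integrability justifications match the paper's proof, so no gap remains.
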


\begin{proof}
  By $s-1/k>0$ and $T+1/k<2T$, we have $\bm{\psi}_{s,T}^k\in C_c^\infty(0,2T;\mathcal{V}_\varepsilon)$.
  Also,
  \begin{align*}
    &\int_0^\infty\bigl(\mathbf{u}^\varepsilon(t),\partial_t\bm{\psi}_{s,T}^k(t)\bigr)_{L^2(\Omega_\varepsilon)}\,dt = \int_{-\infty}^\infty\bigl(\mathbf{u}^\varepsilon(t),\partial_t\bm{\psi}_{s,T}^k(t)\bigr)_{L^2(\Omega_\varepsilon)}\,dt \\
    &\qquad = \int_{-\infty}^\infty\left(\int_{-\infty}^\infty\partial_t[\rho_k(t-z)]\bigl(\mathbf{u}^\varepsilon(t),[\chi_{(s,T)}\mathbf{v}_{\mathrm{E}}](z)\bigr)_{L^2(\Omega_\varepsilon)}\,dz\right)\,dt \\
    &\qquad = -\int_{-\infty}^\infty\bigl(\partial_z\mathbf{u}_k^\varepsilon(z),[\chi_{(s,T)}\mathbf{v}_{\mathrm{E}}](z)\bigr)_{L^2(\Omega_\varepsilon)}\,dz
  \end{align*}
  by Fubini's theorem and $\partial_t[\rho_k(t-z)]=-\partial_z[\rho_k(z-t)]$, which follows from
  \begin{align*}
    \rho_k(t-z) = \rho_k(z-t), \quad \partial_t[\rho_k(t-z)] = -\rho_k'(z-t) = -\partial_z[\rho_k(z-t)].
  \end{align*}
  Since $\chi_{(s,T)}=0$ on $\mathbb{R}\setminus(s,T)$ and $\mathcal{H}_\varepsilon\subset\mathcal{V}_\varepsilon^\ast$, we can further write
  \begin{align*}
    \int_0^\infty(\mathbf{u}^\varepsilon,\partial_t\bm{\psi}_{s,T}^k)_{L^2(\Omega_\varepsilon)}\,dt = -\int_s^T(\partial_t\mathbf{u}_k^\varepsilon,\mathbf{v}_{\mathrm{E}})_{L^2(\Omega_\varepsilon)}\,dt = -\int_s^T\langle\partial_t\mathbf{u}_k^\varepsilon,\mathbf{v}_{\mathrm{E}}\rangle_{\mathcal{V}_\varepsilon}\,dt.
  \end{align*}
  We set $\bm{\psi}=\bm{\psi}_{s,T}^k$ in \eqref{E:WF_TSS} and use this equality and $\bm{\psi}_{s,T}^k(0)=\mathbf{0}$ to get \eqref{E:WF_Moll}.
\end{proof}

Before combining \eqref{E:WF_Ext} and \eqref{E:WF_Moll}, we give the convergence results as $k\to\infty$.

\begin{proposition} \label{P:MC_Psi}
  For all $0<s<T$, we have
  \begin{align}
    \lim_{k\to\infty}\int_0^{2T}\bigl(\mathbf{D}(\mathbf{u}^\varepsilon),\mathbf{D}(\bm{\psi}_{s,T}^k)\bigr)_{L^2(\Omega_\varepsilon)}\,dt &= \int_s^T\bigl(\mathbf{D}(\mathbf{u}^\varepsilon),\mathbf{D}(\mathbf{v}_{\mathrm{E}})\bigr)_{L^2(\Omega_\varepsilon)}\,dt, \label{E:MC_PsBi} \\
    \lim_{k\to\infty}\int_0^{2T}\bigl((\mathbf{u}^\varepsilon\cdot\nabla)\mathbf{u}^\varepsilon,\bm{\psi}_{s,T}^k\bigr)_{L^2(\Omega_\varepsilon)}\,dt &= \int_s^T\bigl((\mathbf{u}^\varepsilon\cdot\nabla)\mathbf{u}^\varepsilon,\mathbf{v}_{\mathrm{E}}\bigr)_{L^2(\Omega_\varepsilon)}\,dt, \label{E:MC_PsTri} \\
    \lim_{k\to\infty}\int_0^{2T}\langle\mathbf{f}^\varepsilon,\bm{\psi}_{s,T}^k\rangle_{\mathcal{V}_\varepsilon}\,dt &= \int_s^T\langle\mathbf{f}^\varepsilon,\mathbf{v}_{\mathrm{E}}\rangle_{\mathcal{V}_\varepsilon}\,dt. \label{E:MC_PsFor}
  \end{align}
\end{proposition}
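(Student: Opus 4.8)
The plan is to reduce all three limits to the elementary fact that, for $g$ in $L^p(\mathbb{R};\mathcal{X})$ with $1\le p<\infty$, one has $\rho_k\ast g\to g$ strongly in $L^p(\mathbb{R};\mathcal{X})$, combined with two structural observations. First, since $\mathbf{D}$ is a spatial operator and the convolution acts only in time, $\mathbf{D}(\bm{\psi}_{s,T}^k)=\rho_k\ast(\chi_{(s,T)}\mathbf{D}(\mathbf{v}_{\mathrm{E}}))$; and since the extension $\mathbf{v}\mapsto\mathbf{v}_{\mathrm{E}}$ is linear and acts only in space, $\bm{\psi}_{s,T}^k=[\mathbf{w}_k]_{\mathrm{E}}$ with $\mathbf{w}_k:=\rho_k\ast(\chi_{(s,T)}\mathbf{v})$. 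Viewing $\chi_{(s,T)}\mathbf{v}_{\mathrm{E}}$ and $\chi_{(s,T)}\mathbf{v}$ as elements of $L^2(\mathbb{R};\mathcal{V}_\varepsilon)$ and $L^2(\mathbb{R};H^1(S^2)^3)$ (extended by zero), mollifier theory then gives $\bm{\psi}_{s,T}^k\to\chi_{(s,T)}\mathbf{v}_{\mathrm{E}}$ strongly in $L^2(\mathbb{R};\mathcal{V}_\varepsilon)$, $\mathbf{D}(\bm{\psi}_{s,T}^k)\to\chi_{(s,T)}\mathbf{D}(\mathbf{v}_{\mathrm{E}})$ strongly in $L^2(\mathbb{R};L^2(\Omega_\varepsilon))$, and $\mathbf{w}_k\to\chi_{(s,T)}\mathbf{v}$ strongly in $L^2(\mathbb{R};H^1(S^2)^3)$; moreover $\|\mathbf{w}_k\|_{L^\infty(\mathbb{R};L^2(S^2))}$ stays bounded uniformly in $k$ because $\|\rho_k\|_{L^1}=1$ and $\chi_{(s,T)}\mathbf{v}\in L^\infty(\mathbb{R};L^2(S^2)^3)$.

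With these preliminaries the bilinear and forcing limits are immediate. For \eqref{E:MC_PsBi} I would pair the strong $L^2(0,2T;L^2(\Omega_\varepsilon))$ convergence of $\mathbf{D}(\bm{\psi}_{s,T}^k)$ against the fixed element $\mathbf{D}(\mathbf{u}^\varepsilon)\in L^2(0,2T;L^2(\Omega_\varepsilon))$ and apply the Cauchy--Schwarz inequality, identifying the limit as $\int_0^{2T}\bigl(\mathbf{D}(\mathbf{u}^\varepsilon),\chi_{(s,T)}\mathbf{D}(\mathbf{v}_{\mathrm{E}})\bigr)_{L^2(\Omega_\varepsilon)}\,dt=\int_s^T\bigl(\mathbf{D}(\mathbf{u}^\varepsilon),\mathbf{D}(\mathbf{v}_{\mathrm{E}})\bigr)_{L^2(\Omega_\varepsilon)}\,dt$. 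For \eqref{E:MC_PsFor} I would likewise pair the strong $L^2(0,2T;\mathcal{V}_\varepsilon)$ convergence of $\bm{\psi}_{s,T}^k$ against $\mathbf{f}^\varepsilon\in L^2(0,2T;\mathcal{V}_\varepsilon^\ast)$.

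The trilinear limit \eqref{E:MC_PsTri} is the main obstacle, since $(\mathbf{u}^\varepsilon\cdot\nabla)\mathbf{u}^\varepsilon$ lies only in $L^{4/3}(0,2T;\mathcal{V}_\varepsilon^\ast)$ and $\mathbf{v}_{\mathrm{E}}$ is not known to be $L^4$ in time with values in $\mathcal{V}_\varepsilon$, so the convergence cannot be read off from strong $L^2(0,2T;\mathcal{V}_\varepsilon)$ convergence alone. The idea is to exploit that $\bm{\psi}_{s,T}^k$ is again a weighted extension. Writing the target integral over $(s,T)$ as $\int_0^{2T}\bigl((\mathbf{u}^\varepsilon\cdot\nabla)\mathbf{u}^\varepsilon,\chi_{(s,T)}\mathbf{v}_{\mathrm{E}}\bigr)_{L^2(\Omega_\varepsilon)}\,dt$ (which is finite by Proposition \ref{P:InTr_Ps2}), the difference to be controlled is $\int_0^{2T}\bigl((\mathbf{u}^\varepsilon\cdot\nabla)\mathbf{u}^\varepsilon,[d_k]_{\mathrm{E}}\bigr)_{L^2(\Omega_\varepsilon)}\,dt$ with $d_k:=\mathbf{w}_k-\chi_{(s,T)}\mathbf{v}$, a surface field in $L^\infty(0,2T;L^2(S^2)^3)\cap L^2(0,2T;H^1(S^2)^3)$.

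To estimate this I would apply the bound of Proposition \ref{P:InTr_Ps2}, which through its proof depends only on the product estimate of Lemma \ref{L:Quad_Thin} and therefore holds verbatim with the extension of any such surface field in place of $\mathbf{v}_{\mathrm{E}}$. Taking $\bm{\psi}=\mathbf{u}^\varepsilon$ and replacing $\mathbf{v}$ by $d_k$ (and $T$ by $2T$) gives, with $\sigma$ and $R_\varepsilon$ as in \eqref{E:Def_sigma}, the bound $c\,\sigma(d_k;2T)\,R_\varepsilon(\mathbf{u}^\varepsilon;2T)$. Here $R_\varepsilon(\mathbf{u}^\varepsilon;2T)$ is finite by $\mathbf{u}^\varepsilon\in L^\infty(0,2T;\mathcal{H}_\varepsilon)\cap L^2(0,2T;\mathcal{V}_\varepsilon)$, while $\sigma(d_k;2T)=\|d_k\|_{L^\infty(0,2T;L^2(S^2))}^{1/2}\|d_k\|_{L^2(0,2T;H^1(S^2))}^{1/2}\to0$ because the first factor is uniformly bounded and the second tends to zero by the strong $L^2(\mathbb{R};H^1(S^2)^3)$ convergence $d_k\to0$. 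This forces the difference to zero and yields \eqref{E:MC_PsTri}. The only delicate point is precisely this reuse of the two-dimensional product estimate to compensate for the missing $L^4$-in-time integrability of $\mathbf{v}_{\mathrm{E}}$; all the remaining steps are standard mollifier convergence together with Cauchy--Schwarz or Hölder estimates.
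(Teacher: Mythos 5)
Your proposal is correct and follows essentially the same route as the paper: the paper also deduces \eqref{E:MC_PsBi} and \eqref{E:MC_PsFor} from the strong convergence $\bm{\psi}_{s,T}^k\to\chi_{(s,T)}\mathbf{v}_{\mathrm{E}}$ in $L^2(\mathbb{R};H^1(\Omega_\varepsilon))$, and for \eqref{E:MC_PsTri} it uses exactly your key observation that mollification and cut-off in time commute with the weighted extension, writing $\bm{\psi}_{s,T}^k-\chi_{(s,T)}\mathbf{v}_{\mathrm{E}}=[\bm{\zeta}_k]_{\mathrm{E}}$ with $\bm{\zeta}_k$ equal to your $d_k$ and bounding the error by $c\,\sigma(\bm{\zeta}_k;2T)R_\varepsilon(\mathbf{u}^\varepsilon;2T)$ as in the proof of \eqref{E:InTr_Ps2}, with $\sigma(\bm{\zeta}_k;2T)\to0$ by the uniform $L^\infty$-in-time bound and the strong $L^2(\mathbb{R};H^1(S^2))$ convergence. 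Your remark that the Proposition \ref{P:InTr_Ps2} bound relies only on Lemma \ref{L:Quad_Thin} and hence applies verbatim to $[d_k]_{\mathrm{E}}$ matches the paper's implicit use of the same fact.
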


\begin{proof}
  Since $\chi_{(s,T)}\mathbf{v}_{\mathrm{E}}\in L^2(\mathbb{R};\mathcal{V}_\varepsilon)$, its mollification $\bm{\psi}_{s,T}^k$ satisfies
  \begin{align*}
    \lim_{k\to\infty}\|\bm{\psi}_{s,T}^k-\chi_{(s,T)}\mathbf{v}_{\mathrm{E}}\|_{L^2(\mathbb{R};H^1(\Omega_\varepsilon))} = 0.
  \end{align*}
  We have \eqref{E:MC_PsBi} and \eqref{E:MC_PsFor} by this result and $\chi_{(s,T)}=0$ on $\mathbb{R}\setminus(s,T)$.

  Next, we prove \eqref{E:MC_PsTri}.
  To this end, we observe that the cut-off and mollification in time commute with the extension \eqref{E:Def_Ext} in the spatial variable, i.e.
  \begin{align*}
    \bm{\psi}_{s,T}^k = \rho_k\ast(\chi_{(s,T)}\mathbf{v}_{\mathrm{E}}) = [\rho_k\ast(\chi_{(s,T)}\mathbf{v})]_{\mathrm{E}}, \quad \chi_{(s,T)}\mathbf{v}_{\mathrm{E}} = [\chi_{(s,T)}\mathbf{v}]_{\mathrm{E}}.
  \end{align*}
  Thus, $\bm{\psi}_{s,T}^k-\chi_{(s,T)}\mathbf{v}_{\mathrm{E}}=[\bm{\zeta}_k]_{\mathrm{E}}$ is the extension of
  \begin{align*}
    \bm{\zeta}_k := \rho_k\ast(\chi_{(s,T)}\mathbf{v})-\chi_{(s,T)}\mathbf{v} \in L^\infty(\mathbb{R};\mathcal{H}_0)\cap L^2(\mathbb{R};\mathcal{V}_0),
  \end{align*}
  and we can show as in the proof of \eqref{E:InTr_Ps2} that
  \begin{align} \label{Pf_MCP:Dif}
    \begin{aligned}
      &\left|\int_0^{2T}\bigl((\mathbf{u}^\varepsilon\cdot\nabla)\mathbf{u}^\varepsilon,\bm{\psi}_{s,T}^k\bigr)_{L^2(\Omega_\varepsilon)}\,dt-\int_0^{2T}\bigl((\mathbf{u}^\varepsilon\cdot\nabla)\mathbf{u}^\varepsilon,\chi_{(s,T)}\mathbf{v}_{\mathrm{E}}\bigr)_{L^2(\Omega_\varepsilon)}\,dt\right| \\
      &\qquad = \left|\int_0^{2T}\bigl((\mathbf{u}^\varepsilon\cdot\nabla)\mathbf{u}^\varepsilon,[\bm{\zeta}_k]_{\mathrm{E}}\bigr)_{L^2(\Omega_\varepsilon)}\,dt\right| \leq c\sigma(\bm{\zeta}_k;2T)R_\varepsilon(\mathbf{u}^\varepsilon;2T),
    \end{aligned}
  \end{align}
  where $\sigma(\bm{\zeta}_k;2T)$ and $R_\varepsilon(\mathbf{u}^\varepsilon;2T)$ are given by \eqref{E:Def_sigma}.
  Moreover,
  \begin{gather*}
    \|\bm{\zeta}_k\|_{L^\infty(\mathbb{R};L^2(S^2))} \leq 2\|\chi_{(s,T)}\mathbf{v}\|_{L^\infty(\mathbb{R};L^2(S^2))}, \quad \lim_{k\to\infty}\|\bm{\zeta}_k\|_{L^2(\mathbb{R};H^1(S^2))} = 0
  \end{gather*}
  by $\chi_{(s,T)}\mathbf{v}\in L^\infty(\mathbb{R};\mathcal{H}_0)\cap L^2(\mathbb{R};\mathcal{V}_0)$, and thus
  \begin{align*}
    0 \leq \sigma(\bm{\zeta}_k;2T) \leq \|\bm{\zeta}_k\|_{L^\infty(\mathbb{R};L^2(S^2))}^{1/2}\|\bm{\zeta}_k\|_{L^2(\mathbb{R};H^1(S^2))}^{1/2} \to 0 \quad\text{as}\quad k\to\infty.
  \end{align*}
  Applying this result and $\chi_{(s,T)}=0$ on $\mathbb{R}\setminus(s,T)$ to \eqref{Pf_MCP:Dif}, we get \eqref{E:MC_PsTri}.
\end{proof}

\begin{proposition} \label{P:MC_ue}
  For all $0<s<T$, we have
  \begin{align}
    \lim_{k\to\infty}\int_s^T\bigl(\mathbf{D}(\mathbf{v}_{\mathrm{E}}),\mathbf{D}(\mathbf{u}_k^\varepsilon)\bigr)_{L^2(\Omega_\varepsilon)}\,dt &= \int_s^T\bigl(\mathbf{D}(\mathbf{v}_{\mathrm{E}}),\mathbf{D}(\mathbf{u}^\varepsilon)\bigr)_{L^2(\Omega_\varepsilon)}\,dt, \label{E:MC_ueBi} \\
    \lim_{k\to\infty}\int_s^T\bigl((\mathbf{v}_{\mathrm{E}}\cdot\nabla)\mathbf{v}_{\mathrm{E}},\mathbf{u}_k^\varepsilon\bigr)_{L^2(\Omega_\varepsilon)}\,dt &= \int_s^T\bigl((\mathbf{v}_{\mathrm{E}}\cdot\nabla)\mathbf{v}_{\mathrm{E}},\mathbf{u}^\varepsilon\bigr)_{L^2(\Omega_\varepsilon)}\,dt, \label{E:MC_ueTri} \\
    \lim_{k\to\infty}\int_s^T\langle\mathbf{f}_{\mathrm{E}},\mathbf{u}_k^\varepsilon\rangle_{\mathcal{V}_\varepsilon}\,dt &= \int_s^T\langle\mathbf{f}_{\mathrm{E}},\mathbf{u}^\varepsilon\rangle_{\mathcal{V}_\varepsilon}\,dt, \label{E:MC_ueFor} \\
    \lim_{k\to\infty}\int_s^T\mathcal{R}_{\mathbf{v}}(\mathbf{u}_k^\varepsilon)\,dt &= \int_s^T\mathcal{R}_{\mathbf{v}}(\mathbf{u}^\varepsilon)\,dt. \label{E:MC_ueRes}
  \end{align}
  Here, $\mathcal{R}_{\mathbf{v}}(\bm{\psi})$ is given in Proposition \ref{P:WF_Ext}.
\end{proposition}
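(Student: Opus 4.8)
The plan is to reduce all four limits to the single strong convergence
\begin{align*}
  \mathbf{u}_k^\varepsilon \to \mathbf{u}^\varepsilon \quad\text{strongly in}\quad L^2(s,T;\mathcal{V}_\varepsilon) \quad\text{as}\quad k\to\infty,
\end{align*}
which is the same as convergence in $L^2(s,T;H^1(\Omega_\varepsilon))$. Since $0<s<T$ and we only consider $k$ with $s-1/k>0$, for such $k$ the value $\mathbf{u}_k^\varepsilon(t)$ with $t\in(s,T)$ depends only on $\mathbf{u}^\varepsilon$ on $(s-1/k,T+1/k)\subset(0,\infty)$, so the zero-extension of $\mathbf{u}^\varepsilon$ to $(-\infty,0)$ plays no role on $(s,T)$. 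Because $\mathbf{u}^\varepsilon\in L_{\mathrm{loc}}^2([0,\infty);\mathcal{V}_\varepsilon)$ by Definition \ref{D:WS_TSS}, the standard approximation property of mollification in Bochner spaces yields the displayed convergence. I would also record the uniform bound $\|\mathbf{u}_k^\varepsilon\|_{L^\infty(s,T;\mathcal{H}_\varepsilon)}\leq\|\mathbf{u}^\varepsilon\|_{L^\infty(s-1/k,T+1/k;\mathcal{H}_\varepsilon)}$, which follows from $\|\rho_k\|_{L^1(\mathbb{R})}=1$ and Young's inequality, though it is not strictly needed below.

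The limits \eqref{E:MC_ueBi}, \eqref{E:MC_ueFor}, and \eqref{E:MC_ueRes} then follow from the observation that each of the three functionals is linear in its argument and bounded on $L^2(s,T;\mathcal{V}_\varepsilon)$. Indeed, for \eqref{E:MC_ueBi} one has $\mathbf{D}(\mathbf{v}_{\mathrm{E}})\in L^2(s,T;L^2(\Omega_\varepsilon))$ since $\mathbf{v}_{\mathrm{E}}\in L^2(0,T;\mathcal{V}_\varepsilon)$, so the pairing with $\mathbf{D}(\mathbf{u}_k^\varepsilon-\mathbf{u}^\varepsilon)$ is controlled by $\|\mathbf{D}(\mathbf{v}_{\mathrm{E}})\|_{L^2(s,T;L^2(\Omega_\varepsilon))}\|\mathbf{u}_k^\varepsilon-\mathbf{u}^\varepsilon\|_{L^2(s,T;H^1(\Omega_\varepsilon))}$; for \eqref{E:MC_ueFor} one uses $\mathbf{f}_{\mathrm{E}}\in L^2(s,T;\mathcal{V}_\varepsilon^\ast)$ (Lemma \ref{L:Func_Ext}) and the duality pairing; and for \eqref{E:MC_ueRes} one uses the pointwise bound \eqref{E:WFEx_Res} together with the fact that $\eta_{\mathbf{v}}\in L^2(s,T)$, which holds because $\mathbf{v}\in L^\infty(0,T;\mathcal{H}_0)\cap L^2(0,T;\mathcal{V}_0)$ makes both terms of \eqref{E:Def_eta} square-integrable in time. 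In each case I would subtract the limit term, apply H\"older's inequality in time, and let the factor $\|\mathbf{u}_k^\varepsilon-\mathbf{u}^\varepsilon\|_{L^2(s,T;H^1(\Omega_\varepsilon))}$ tend to zero.

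The trilinear limit \eqref{E:MC_ueTri} is handled in the same spirit, but here the continuity of the functional $\bm{\psi}\mapsto\int_s^T((\mathbf{v}_{\mathrm{E}}\cdot\nabla)\mathbf{v}_{\mathrm{E}},\bm{\psi})_{L^2(\Omega_\varepsilon)}\,dt$ on $L^2(s,T;\mathcal{V}_\varepsilon)$ is precisely the content of Proposition \ref{P:InTr_v2}. Applying \eqref{E:InTr_v2} on $(s,T)$ with $\bm{\psi}=\mathbf{u}_k^\varepsilon-\mathbf{u}^\varepsilon$ gives
\begin{align*}
  \left|\int_s^T\bigl((\mathbf{v}_{\mathrm{E}}\cdot\nabla)\mathbf{v}_{\mathrm{E}},\mathbf{u}_k^\varepsilon-\mathbf{u}^\varepsilon\bigr)_{L^2(\Omega_\varepsilon)}\,dt\right| \leq c\varepsilon^{1/4}\|\mathbf{v}\|_{L^\infty(s,T;L^2(S^2))}\|\mathbf{v}\|_{L^2(s,T;H^1(S^2))}\|\mathbf{u}_k^\varepsilon-\mathbf{u}^\varepsilon\|_{L^2(s,T;H^1(\Omega_\varepsilon))},
\end{align*}
and the right-hand side vanishes as $k\to\infty$. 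I expect no serious obstacle: everything rests on the mollifier convergence of the first paragraph, and the only place where the structure of the problem genuinely enters is the trilinear term, where the generic $L^4$ estimate is insufficient and one must invoke the two-dimensional bound \eqref{E:InTr_v2} exploiting the essentially 2D nature of $\mathbf{v}_{\mathrm{E}}$.
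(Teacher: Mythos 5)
Your proposal is correct and takes essentially the same route as the paper's proof: strong convergence $\mathbf{u}_k^\varepsilon\to\mathbf{u}^\varepsilon$ in $L^2(s,T;H^1(\Omega_\varepsilon))$ from mollification, bounded-linear-functional arguments for \eqref{E:MC_ueBi}, \eqref{E:MC_ueFor}, and \eqref{E:MC_ueRes} (the latter via the pointwise bound \eqref{E:WFEx_Res}, the linearity of $\mathcal{R}_{\mathbf{v}}$, and $\eta_{\mathbf{v}}\in L^2(s,T)$), and the essentially two-dimensional estimate \eqref{E:InTr_v2} for the trilinear term, exactly where the generic $L^4$ bound would fail. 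The only cosmetic difference is that the paper rederives the trilinear bound ``as in the proof of \eqref{E:InTr_v2}'' rather than citing that estimate directly on $(s,T)$, which is immaterial.
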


\begin{proof}
  Since $\mathbf{u}^\varepsilon\in L_{\mathrm{loc}}^2(\mathbb{R};\mathcal{V}_\varepsilon)$ (by zero extension) and $\mathbf{u}_k^\varepsilon=\rho_k\ast\mathbf{u}^\varepsilon$,
  \begin{align*}
    \lim_{k\to\infty}\|\mathbf{u}_k^\varepsilon-\mathbf{u}^\varepsilon\|_{L^2(s,T;H^1(\Omega_\varepsilon))} = 0.
  \end{align*}
  This result implies \eqref{E:MC_ueBi} and \eqref{E:MC_ueFor}.
  Also, we can show that
  \begin{align*}
    &\left|\int_s^T\bigl((\mathbf{v}_{\mathrm{E}}\cdot\nabla)\mathbf{v}_{\mathrm{E}},\mathbf{u}_k^\varepsilon\bigr)_{L^2(\Omega_\varepsilon)}\,dt-\int_s^T\bigl((\mathbf{v}_{\mathrm{E}}\cdot\nabla)\mathbf{v}_{\mathrm{E}},\mathbf{u}^\varepsilon\bigr)_{L^2(\Omega_\varepsilon)}\,dt\right| \\
    &\qquad \leq c\varepsilon^{1/4}\|\mathbf{v}\|_{L^\infty(s,T;L^2(S^2))}\|\mathbf{v}\|_{L^2(s,T;H^1(S^2))}\|\mathbf{u}_k^\varepsilon-\mathbf{u}^\varepsilon\|_{L^2(s,T;H^1(\Omega_\varepsilon))}
  \end{align*}
  as in the proof of \eqref{E:InTr_v2}, and we see by \eqref{E:WFEx_Res} and H\"{o}lder's inequality that
  \begin{align*}
    &\left|\int_s^T\mathcal{R}_{\mathbf{v}}(\mathbf{u}_k^\varepsilon)\,dt-\int_s^T\mathcal{R}_{\mathbf{v}}(\mathbf{u}^\varepsilon)\,dt\right| = \left|\int_s^T\mathcal{R}_{\mathbf{v}}(\mathbf{u}_k^\varepsilon-\mathbf{u}^\varepsilon)\,dt\right| \\
    &\qquad \leq c\varepsilon^{3/2}\Bigl(\nu+\|\mathbf{v}\|_{L^\infty(s,T;L^2(S^2))}\Bigr)\|\mathbf{v}\|_{L^2(s,T;H^1(S^2))}\|\mathbf{u}_k^\varepsilon-\mathbf{u}^\varepsilon\|_{L^2(s,T;H^1(\Omega_\varepsilon))}.
  \end{align*}
  Recall that $\mathcal{R}_{\mathbf{v}}(\bm{\psi})$ is linear in $\bm{\psi}$.
  By these inequalities and the strong convergence of $\mathbf{u}_k^\varepsilon$ towards $\mathbf{u}^\varepsilon$ as $k\to\infty$, we obtain \eqref{E:MC_ueTri} and \eqref{E:MC_ueRes}.
\end{proof}

\begin{proposition} \label{P:MC_Trace}
  For all $t\in(0,\infty)$, we have
  \begin{align} \label{E:MC_Trace}
    \lim_{k\to\infty}\bigl(\mathbf{u}_k^\varepsilon(t),\mathbf{v}_{\mathrm{E}}(t)\bigr)_{L^2(\Omega_\varepsilon)} = \bigl(\mathbf{u}^\varepsilon(t),\mathbf{v}_{\mathrm{E}}(t)\bigr)_{L^2(\Omega_\varepsilon)}.
  \end{align}
\end{proposition}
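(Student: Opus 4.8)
The plan is to fix $t>0$, treat $\mathbf{v}_{\mathrm{E}}(t)$ as a single fixed element of $L^2(\Omega_\varepsilon)^3$, and reduce \eqref{E:MC_Trace} to the elementary fact that temporal mollification recovers the pointwise value of a scalar function at a point of continuity. The continuity that makes this work is precisely the weak continuity of $\mathbf{u}^\varepsilon$ recorded in Proposition \ref{P:TSS_WeCo}; one can equivalently phrase the argument as the weak convergence $\mathbf{u}_k^\varepsilon(t)\rightharpoonup\mathbf{u}^\varepsilon(t)$ in $L^2(\Omega_\varepsilon)^3$, but since only one test vector field is needed we test directly against $\mathbf{v}_{\mathrm{E}}(t)$.

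First I would fix $t>0$ and set $\mathbf{w}:=\mathbf{v}_{\mathrm{E}}(t)\in\mathcal{H}_\varepsilon\subset L^2(\Omega_\varepsilon)^3$, which is a time-independent vector field because $t$ is held fixed. Since $\mathbf{u}_k^\varepsilon=\rho_k\ast\mathbf{u}^\varepsilon$ is represented by the Bochner integral $\mathbf{u}_k^\varepsilon(t)=\int_{-\infty}^\infty\rho_k(t-z)\mathbf{u}^\varepsilon(z)\,dz$ (with $\mathbf{u}^\varepsilon$ extended by zero to $(-\infty,0)$), and since pairing with the fixed $\mathbf{w}$ is a bounded linear functional on $L^2(\Omega_\varepsilon)^3$ that therefore commutes with the integral, I obtain
\[
  \bigl(\mathbf{u}_k^\varepsilon(t),\mathbf{w}\bigr)_{L^2(\Omega_\varepsilon)} = \int_{-\infty}^\infty\rho_k(t-z)\,g(z)\,dz = (\rho_k\ast g)(t), \qquad g(z):=\bigl(\mathbf{u}^\varepsilon(z),\mathbf{w}\bigr)_{L^2(\Omega_\varepsilon)}.
\]

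Next I would invoke Proposition \ref{P:TSS_WeCo}, which gives $\mathbf{u}^\varepsilon\in C_{\mathrm{weak}}([0,T];\mathcal{H}_\varepsilon)$ for every $T>0$; hence the scalar function $g$ is continuous on $[0,\infty)$, in particular on a neighborhood of the fixed $t>0$. The zero extension to $(-\infty,0)$ affects $g$ only on $(-\infty,0)$ and introduces at worst a jump at $z=0$, which is harmless: for all $k$ large enough that $t-1/k>0$, the mollifier $\rho_k(t-\cdot)$ is supported in $(t-1/k,t+1/k)\subset(0,\infty)$, where $g$ is continuous. The standard mollification estimate then gives
\[
  |(\rho_k\ast g)(t)-g(t)| \le \sup_{|w|\le 1/k}|g(t-w)-g(t)| \longrightarrow 0 \quad (k\to\infty),
\]
and combining this with the previous identity together with $g(t)=\bigl(\mathbf{u}^\varepsilon(t),\mathbf{v}_{\mathrm{E}}(t)\bigr)_{L^2(\Omega_\varepsilon)}$ yields \eqref{E:MC_Trace}.

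I do not expect a genuine obstacle: the only point needing care is to avoid the discontinuity of $g$ at $z=0$ caused by the zero extension, which is automatic once $k$ is taken large for the fixed $t>0$. The conceptual heart of the argument is simply that the weak continuity of $\mathbf{u}^\varepsilon$ in time turns the $L^2$-pairing $z\mapsto(\mathbf{u}^\varepsilon(z),\mathbf{v}_{\mathrm{E}}(t))_{L^2(\Omega_\varepsilon)}$ into an honestly continuous scalar function, so that mollification in time reproduces its value at $t$.
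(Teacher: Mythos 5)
Your proof is correct and follows essentially the same route as the paper: pairing with the fixed vector field $\mathbf{v}_{\mathrm{E}}(t)$ reduces the claim to scalar mollification of $g(z)=\bigl(\mathbf{u}^\varepsilon(z),\mathbf{v}_{\mathrm{E}}(t)\bigr)_{L^2(\Omega_\varepsilon)}$, whose continuity on $(0,\infty)$ comes from $\mathbf{u}^\varepsilon\in C_{\mathrm{weak}}([0,T];\mathcal{H}_\varepsilon)$ (Proposition \ref{P:TSS_WeCo}), exactly as in the paper's argument with $\bm{\psi}=\mathbf{v}_{\mathrm{E}}(t)$. Your explicit remark that for $k$ large the support of $\rho_k(t-\cdot)$ avoids the jump of $g$ at $z=0$ created by the zero extension is a small point the paper leaves implicit, and it is handled correctly.
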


\begin{proof}
  Fix any $\bm{\psi}\in\mathcal{H}_\varepsilon$.
  For each $t\in\mathbb{R}$, we see that
  \begin{align*}
    \bigl(\mathbf{u}_k^\varepsilon(t),\bm{\psi}\bigr)_{L^2(\Omega_\varepsilon)} = (\rho_k\ast\varphi^\varepsilon)(t), \quad \varphi^\varepsilon(t) := \bigl(\mathbf{u}^\varepsilon(t),\bm{\psi}\bigr)_{L^2(\Omega_\varepsilon)}.
  \end{align*}
  Since $\mathbf{u}^\varepsilon\in C_{\mathrm{weak}}(0,\infty;\mathcal{H}_\varepsilon)$ by Proposition \ref{P:TSS_WeCo}, we have $\varphi^\varepsilon\in C(0,\infty)$.
  Thus,
  \begin{align*}
    \lim_{k\to\infty}(\rho_k\ast\varphi^\varepsilon)(t) = \varphi^\varepsilon(t), \quad\text{i.e.}\quad \lim_{k\to\infty}\bigl(\mathbf{u}_k^\varepsilon(t),\bm{\psi}\bigr)_{L^2(\Omega_\varepsilon)} = \bigl(\mathbf{u}^\varepsilon(t),\bm{\psi}\bigr)_{L^2(\Omega_\varepsilon)}
  \end{align*}
  for each $t\in(0,\infty)$.
  In particular, setting $\bm{\psi}=\mathbf{v}_{\mathrm{E}}(t)$, we obtain \eqref{E:MC_Trace}.
\end{proof}

Now, let us combine \eqref{E:WF_Ext} and \eqref{E:WF_Moll} to derive the following crucial formula.

\begin{proposition} \label{P:uv_pair}
  For all $T>0$, we have
  \begin{align} \label{E:uv_pair}
    \begin{aligned}
      &\bigl(\mathbf{u}^\varepsilon(T),\mathbf{v}_{\mathrm{E}}(T)\bigr)_{L^2(\Omega_\varepsilon)}+4\nu\int_0^T\bigl(\mathbf{D}(\mathbf{u}^\varepsilon),\mathbf{D}(\mathbf{v}_{\mathrm{E}})\bigr)_{L^2(\Omega_\varepsilon)}\,dt \\
      &\qquad +\int_0^T\Bigl\{\bigl((\mathbf{u}^\varepsilon\cdot\nabla)\mathbf{u}^\varepsilon,\mathbf{v}_{\mathrm{E}}\bigr)_{L^2(\Omega_\varepsilon)}+\bigl((\mathbf{v}_{\mathrm{E}}\cdot\nabla)\mathbf{v}_{\mathrm{E}},\mathbf{u}^\varepsilon\bigr)_{L^2(\Omega_\varepsilon)}\Bigr\}\,dt \\
      &\qquad\qquad = (\mathbf{u}_0^\varepsilon,[\mathbf{v}_0]_{\mathrm{E}})_{L^2(\Omega_\varepsilon)}+\int_0^T\{\langle\mathbf{f}^\varepsilon,\mathbf{v}_{\mathrm{E}}\rangle_{\mathcal{V}_\varepsilon}+\langle\mathbf{f}_{\mathrm{E}},\mathbf{u}^\varepsilon\rangle_{\mathcal{V}_\varepsilon}\}\,dt+\int_0^T\mathcal{R}_{\mathbf{v}}(\mathbf{u}^\varepsilon)\,dt.
    \end{aligned}
  \end{align}
\end{proposition}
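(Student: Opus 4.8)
The plan is to add the mollified weak form \eqref{E:WF_Moll} for $\mathbf{u}^\varepsilon$ to the extended weak form \eqref{E:WF_Ext} tested against the time-mollified velocity $\mathbf{u}_k^\varepsilon$, to combine the two time-derivative pairings by the integration-by-parts identity \eqref{E:Dt_IbP}, and then to remove the regularization by sending first $k\to\infty$ and afterwards $s\to0^+$.

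First I fix $0<s<T$ and take $k\in\mathbb{N}$ with $1/k<\min\{s,T\}$, so that \eqref{E:WF_Moll} is available. Since the mollification gives $\mathbf{u}_k^\varepsilon\in\mathbb{E}_T(\mathcal{V}_\varepsilon)$, I may use $\mathbf{u}_k^\varepsilon\,\chi_{(s,T)}$ as a test function in \eqref{E:WF_Ext}, which restricts every integral there to $(s,T)$ and produces the $\langle\partial_t\mathbf{v}_{\mathrm{E}},\mathbf{u}_k^\varepsilon\rangle_{\mathcal{V}_\varepsilon}$ term. As both $\mathbf{u}_k^\varepsilon$ and $\mathbf{v}_{\mathrm{E}}$ lie in $\mathbb{E}_T(\mathcal{V}_\varepsilon)$, Lemma \ref{L:BS_Ipl} applies and the two time-derivative integrals collapse to a boundary term,
\[
  \int_s^T\Bigl(\langle\partial_t\mathbf{u}_k^\varepsilon,\mathbf{v}_{\mathrm{E}}\rangle_{\mathcal{V}_\varepsilon}+\langle\partial_t\mathbf{v}_{\mathrm{E}},\mathbf{u}_k^\varepsilon\rangle_{\mathcal{V}_\varepsilon}\Bigr)\,dt = \bigl(\mathbf{u}_k^\varepsilon(T),\mathbf{v}_{\mathrm{E}}(T)\bigr)_{L^2(\Omega_\varepsilon)}-\bigl(\mathbf{u}_k^\varepsilon(s),\mathbf{v}_{\mathrm{E}}(s)\bigr)_{L^2(\Omega_\varepsilon)}.
\]
Adding the two weak forms, the two symmetric diffusion integrals $2\nu(\mathbf{D}(\mathbf{u}^\varepsilon),\mathbf{D}(\mathbf{v}_{\mathrm{E}}))_{L^2(\Omega_\varepsilon)}$ and $2\nu(\mathbf{D}(\mathbf{v}_{\mathrm{E}}),\mathbf{D}(\mathbf{u}^\varepsilon))_{L^2(\Omega_\varepsilon)}$ will, after the limit, merge into the single $4\nu$-term of \eqref{E:uv_pair}.

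Next I let $k\to\infty$ with $s$ fixed. The boundary pairings converge by \eqref{E:MC_Trace}; the two diffusion integrals by \eqref{E:MC_PsBi} and \eqref{E:MC_ueBi}; the two trilinear integrals by \eqref{E:MC_PsTri} and \eqref{E:MC_ueTri}; the two forcing integrals by \eqref{E:MC_PsFor} and \eqref{E:MC_ueFor}; and the residual integral by \eqref{E:MC_ueRes}. This yields \eqref{E:uv_pair} with $(0,T)$ replaced by $(s,T)$ and the initial pairing replaced by $\bigl(\mathbf{u}^\varepsilon(s),\mathbf{v}_{\mathrm{E}}(s)\bigr)_{L^2(\Omega_\varepsilon)}$. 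Finally I send $s\to0^+$. Every integrand over $(0,T)$ is in $L^1(0,T)$: the bilinear terms because $\mathbf{u}^\varepsilon,\mathbf{v}_{\mathrm{E}}\in L^2(0,T;\mathcal{V}_\varepsilon)$, the two trilinear terms by Propositions \ref{P:InTr_v2} and \ref{P:InTr_Ps2} with $\bm{\psi}=\mathbf{u}^\varepsilon$, the forcing terms since $\mathbf{f}^\varepsilon,\mathbf{f}_{\mathrm{E}}\in L^2(0,T;\mathcal{V}_\varepsilon^\ast)$, and the residual term by \eqref{E:WFEx_Res} with $\eta_{\mathbf{v}},\|\mathbf{u}^\varepsilon\|_{H^1(\Omega_\varepsilon)}\in L^2(0,T)$; hence each $(s,T)$-integral tends to the $(0,T)$-integral by absolute continuity of the Lebesgue integral. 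For the boundary term I use that $\mathbf{u}^\varepsilon\in C_{\mathrm{weak}}([0,T];\mathcal{H}_\varepsilon)$ with $\mathbf{u}^\varepsilon(0)=\mathbf{u}_0^\varepsilon$ (Proposition \ref{P:TSS_WeCo} and the initial datum encoded in \eqref{E:WF_TSS}), while $\mathbf{v}_{\mathrm{E}}\in C([0,T];\mathcal{H}_\varepsilon)$ with $\mathbf{v}_{\mathrm{E}}(0)=[\mathbf{v}_0]_{\mathrm{E}}$; pairing weak against strong convergence gives $\bigl(\mathbf{u}^\varepsilon(s),\mathbf{v}_{\mathrm{E}}(s)\bigr)_{L^2(\Omega_\varepsilon)}\to\bigl(\mathbf{u}_0^\varepsilon,[\mathbf{v}_0]_{\mathrm{E}}\bigr)_{L^2(\Omega_\varepsilon)}$, which completes \eqref{E:uv_pair}.

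The essential obstacle is the time-derivative pairing itself: because $\partial_t\mathbf{u}^\varepsilon$ only belongs to $L^{4/3}(0,T;\mathcal{V}_\varepsilon^\ast)$, the identity \eqref{E:Dt_IbP} cannot be applied to $\mathbf{u}^\varepsilon$ directly, and this is exactly why the temporal mollification $\mathbf{u}_k^\varepsilon$ (for which $\partial_t\mathbf{u}_k^\varepsilon\in L^2$) must be inserted. Consequently the real content of the argument is not the algebraic combination but the justification of the $k\to\infty$ passage in the nonlinear and residual terms, which has already been isolated in Propositions \ref{P:MC_Psi}, \ref{P:MC_ue}, and \ref{P:MC_Trace}; granting these, the present proof is essentially bookkeeping together with the weak-strong continuity argument at $t=0$.
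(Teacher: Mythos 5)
Your proposal is correct and follows essentially the same route as the paper's proof: testing the extended weak form \eqref{E:WF_Ext} with $\chi_{(s,T)}\mathbf{u}_k^\varepsilon$, adding it to the mollified form \eqref{E:WF_Moll}, collapsing the time-derivative pairings via \eqref{E:Dt_IbP}, passing $k\to\infty$ through Propositions \ref{P:MC_Psi}--\ref{P:MC_Trace}, and finally sending $s\to0$ using the $L^1(0,T)$ integrability of all terms together with the weak--strong pairing of $\mathbf{u}^\varepsilon\in C_{\mathrm{weak}}([0,T];\mathcal{H}_\varepsilon)$ against $\mathbf{v}_{\mathrm{E}}\in C([0,T];\mathcal{H}_\varepsilon)$ at the initial time. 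Your closing remark correctly identifies the role of the mollification, namely that $\partial_t\mathbf{u}^\varepsilon\in L^{4/3}(0,T;\mathcal{V}_\varepsilon^\ast)$ precludes applying \eqref{E:Dt_IbP} to $\mathbf{u}^\varepsilon$ directly, which is exactly the motivation in the paper.
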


\begin{proof}
  Let $0<s<T$ and $\mathbf{u}_k^\varepsilon=\rho_k\ast\mathbf{u}^\varepsilon$ such that $s-1/k>0$ and $T+1/k<2T$.
  We can take the test function $\bm{\psi}=\chi_{(s,T)}\mathbf{u}_k^\varepsilon$ in \eqref{E:WF_Ext}, since $\mathbf{u}_k^\varepsilon \in C^\infty(\mathbb{R};\mathcal{V}_\varepsilon)$.
  Then,
  \begin{multline} \label{Pf_Pa:Ext}
    \int_s^T\langle\partial_t\mathbf{v}_{\mathrm{E}},\mathbf{u}_k^\varepsilon\rangle_{\mathcal{V}_\varepsilon}\,dt+2\nu\int_s^T\bigl(\mathbf{D}(\mathbf{v}_{\mathrm{E}}),\mathbf{D}(\mathbf{u}_k^\varepsilon)\bigr)_{L^2(\Omega_\varepsilon)}\,dt \\
    +\int_s^T\bigl((\mathbf{v}_{\mathrm{E}}\cdot\nabla)\mathbf{v}_{\mathrm{E}},\mathbf{u}_k^\varepsilon\bigr)_{L^2(\Omega_\varepsilon)}\,dt = \int_s^T\langle\mathbf{f}_{\mathrm{E}},\mathbf{u}_k^\varepsilon\rangle_{\mathcal{V}_\varepsilon}\,dt+\int_s^T\mathcal{R}_{\mathbf{v}}(\mathbf{u}_k^\varepsilon)\,dt
  \end{multline}
  by $\chi_{(s,T)}=0$ on $\mathbb{R}\setminus(s,T)$.
  We take the sum of \eqref{E:WF_Moll} and \eqref{Pf_Pa:Ext} and use
  \begin{align*}
    \int_s^T\Bigl\{\langle\partial_t\mathbf{u}_k^\varepsilon,\mathbf{v}_{\mathrm{E}}\rangle_{\mathcal{V}_\varepsilon}+\langle\partial_t\mathbf{v}_{\mathrm{E}},\mathbf{u}_k^\varepsilon\rangle_{\mathcal{V}_\varepsilon}\Bigr\}\,dt = \bigl(\mathbf{u}_k^\varepsilon(T),\mathbf{v}_{\mathrm{E}}(T)\bigr)_{L^2(\Omega_\varepsilon)}-\bigl(\mathbf{u}_k^\varepsilon(s),\mathbf{v}_{\mathrm{E}}(s)\bigr)_{L^2(\Omega_\varepsilon)}
  \end{align*}
  by $\mathbf{u}_k^\varepsilon,\mathbf{v}_{\mathrm{E}}\in\mathbb{E}_T(\mathcal{V}_\varepsilon)$ and \eqref{E:Dt_IbP} (recall that $\mathbf{u}_k^\varepsilon$ is the mollification of $\mathbf{u}^\varepsilon$ in time).
  After that, we send $k\to\infty$ and use Propositions \ref{P:MC_Psi}--\ref{P:MC_Trace} to get
  \begin{align*}
    &\bigl(\mathbf{u}^\varepsilon(T),\mathbf{v}_{\mathrm{E}}(T)\bigr)_{L^2(\Omega_\varepsilon)}+4\nu\int_s^T\bigl(\mathbf{D}(\mathbf{u}^\varepsilon),\mathbf{D}(\mathbf{v}_{\mathrm{E}})\bigr)_{L^2(\Omega_\varepsilon)}\,dt \\
    &\qquad +\int_s^T\Bigl\{\bigl((\mathbf{u}^\varepsilon\cdot\nabla)\mathbf{u}^\varepsilon,\mathbf{v}_{\mathrm{E}}\bigr)_{L^2(\Omega_\varepsilon)}+\bigl((\mathbf{v}_{\mathrm{E}}\cdot\nabla)\mathbf{v}_{\mathrm{E}},\mathbf{u}^\varepsilon\bigr)_{L^2(\Omega_\varepsilon)}\Bigr\}\,dt \\
    &\qquad\qquad = \bigl(\mathbf{u}^\varepsilon(s),\mathbf{v}_{\mathrm{E}}(s)\bigr)_{L^2(\Omega_\varepsilon)}+\int_s^T\{\langle\mathbf{f}^\varepsilon,\mathbf{v}_{\mathrm{E}}\rangle_{\mathcal{V}_\varepsilon}+\langle\mathbf{f}_{\mathrm{E}},\mathbf{u}^\varepsilon\rangle_{\mathcal{V}_\varepsilon}\}\,dt+\int_s^T\mathcal{R}_{\mathbf{v}}(\mathbf{u}^\varepsilon)\,dt.
  \end{align*}
  Now, let $s\to0$ in this equality.
  We recall that
  \begin{align*}
    \mathbf{u}^\varepsilon\in C_{\mathrm{weak}}([0,T];\mathcal{H}_\varepsilon), \quad \mathbf{v}_{\mathrm{E}} \in \mathbb{E}_T(\mathcal{V}_\varepsilon) \subset C([0,T];\mathcal{H}_\varepsilon)
  \end{align*}
  by Propositions \ref{P:TSS_WeCo} and \ref{P:S2_Cont} and Lemmas \ref{L:BS_Ipl} and \ref{L:Ext_Dt}.
  Thus, by a standard argument of the weak and strong convergence, we can show that
  \begin{align*}
    \lim_{s\to0}\bigl(\mathbf{u}_k^\varepsilon(s),\mathbf{v}_{\mathrm{E}}(s)\bigr)_{L^2(\Omega_\varepsilon)} = (\mathbf{u}_0^\varepsilon,[\mathbf{v}_0]_{\mathrm{E}})_{L^2(\Omega_\varepsilon)}.
  \end{align*}
  The other integral terms converge to the integrals over $(0,T)$ by the dominated convergence theorem, since the integrands are in $L^1(0,T)$ (see \eqref{E:InTr_v2}, \eqref{E:InTr_Ps2}, and \eqref{E:WFEx_Res} for the trilinear and residual terms).
  Hence, we obtain \eqref{E:uv_pair}.
\end{proof}

\subsection{Estimates for the difference of solutions} \label{SS:Df_Prf}
We are ready to estimate the difference of $\mathbf{u}^\varepsilon$ and $\mathbf{v}_{\mathrm{E}}$, and to prove the difference estimate \eqref{E:Diff_Est} for $\mathbf{u}^\varepsilon$ and $\mathbf{v}$.

\begin{theorem} \label{T:Df_vE}
  Under the assumptions of Theorem \ref{T:Diff_Est}, let
  \begin{align} \label{E:Def_weps}
    \mathbf{w}^\varepsilon := \mathbf{u}^\varepsilon-\mathbf{v}_{\mathrm{E}}, \quad \mathbf{w}_0^\varepsilon := \mathbf{u}_0^\varepsilon-[\mathbf{v}_0]_{\mathrm{E}}, \quad \mathbf{h}^\varepsilon:=\mathbf{f}^\varepsilon-\mathbf{f}_{\mathrm{E}}.
  \end{align}
  Then, for all $t\geq0$, we have
  \begin{multline} \label{E:Df_vE}
    \|\mathbf{w}^\varepsilon(t)\|_{L^2(\Omega_\varepsilon)}^2+\frac{\nu}{2}\int_0^t\|\nabla\mathbf{w}^\varepsilon\|_{L^2(\Omega_\varepsilon)}^2\,ds \\
    \leq F_{\mathbf{v}}(t)\left\{\|\mathbf{w}_0^\varepsilon\|_{L^2(\Omega_\varepsilon)}^2+\frac{c}{\nu}\int_0^t\Bigl(\|\mathbf{h}^\varepsilon\|_{\mathcal{V}_\varepsilon^\ast}^2+\varepsilon^3\eta_{\mathbf{v}}^2\Bigr)\,ds\right\},
  \end{multline}
  where $F_{\mathbf{v}}$ and $\eta_{\mathbf{v}}$ are given by \eqref{E:Def_FG} and \eqref{E:Def_eta}, respectively.
\end{theorem}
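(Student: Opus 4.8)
The plan is to run the weak--strong uniqueness argument on the three energy relations already available: the energy inequality \eqref{E:TSS_Ener} for $\mathbf{u}^\varepsilon$, the energy equality with error \eqref{E:Ext_Ener} for $\mathbf{v}_{\mathrm{E}}$, and the cross-term formula \eqref{E:uv_pair}. Writing $b(\mathbf{a},\mathbf{b},\mathbf{c}):=\bigl((\mathbf{a}\cdot\nabla)\mathbf{b},\mathbf{c}\bigr)_{L^2(\Omega_\varepsilon)}$, I would form the combination (energy inequality for $\mathbf{u}^\varepsilon$) $+$ (energy equality for $\mathbf{v}_{\mathrm{E}}$) $-$ (cross-term formula). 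The quadratic boundary terms collapse to $\tfrac12\|\mathbf{w}^\varepsilon(t)\|_{L^2(\Omega_\varepsilon)}^2$ and $\tfrac12\|\mathbf{w}_0^\varepsilon\|_{L^2(\Omega_\varepsilon)}^2$, and the viscous terms collapse to $2\nu\int_0^t\|\mathbf{D}(\mathbf{w}^\varepsilon)\|_{L^2(\Omega_\varepsilon)}^2$ by bilinearity of $\mathbf{D}:\mathbf{D}$. The force terms telescope into $\int_0^t\langle\mathbf{h}^\varepsilon,\mathbf{w}^\varepsilon\rangle_{\mathcal{V}_\varepsilon}$, and the residuals combine, by the linearity of $\mathcal{R}_{\mathbf{v}}$ asserted in Proposition \ref{P:WF_Ext}, into $-\int_0^t\mathcal{R}_{\mathbf{v}}(\mathbf{w}^\varepsilon)$. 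For the nonlinear terms I substitute $\mathbf{u}^\varepsilon=\mathbf{w}^\varepsilon+\mathbf{v}_{\mathrm{E}}$ and expand $b(\mathbf{u}^\varepsilon,\mathbf{u}^\varepsilon,\mathbf{v}_{\mathrm{E}})+b(\mathbf{v}_{\mathrm{E}},\mathbf{v}_{\mathrm{E}},\mathbf{u}^\varepsilon)$; using $\mathbf{w}^\varepsilon,\mathbf{v}_{\mathrm{E}}\in\mathcal{V}_\varepsilon$ and the antisymmetry relations \eqref{E:TrTS_AnS}, all terms cancel except $b(\mathbf{w}^\varepsilon,\mathbf{w}^\varepsilon,\mathbf{v}_{\mathrm{E}})$. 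Because we used the energy \emph{inequality} for $\mathbf{u}^\varepsilon$, the outcome is an inequality
\begin{align*}
  \tfrac12\|\mathbf{w}^\varepsilon(t)\|_{L^2(\Omega_\varepsilon)}^2+2\nu\int_0^t\|\mathbf{D}(\mathbf{w}^\varepsilon)\|_{L^2(\Omega_\varepsilon)}^2\,ds
  \leq \tfrac12\|\mathbf{w}_0^\varepsilon\|_{L^2(\Omega_\varepsilon)}^2+\int_0^t\Bigl\{\langle\mathbf{h}^\varepsilon,\mathbf{w}^\varepsilon\rangle_{\mathcal{V}_\varepsilon}+b(\mathbf{w}^\varepsilon,\mathbf{w}^\varepsilon,\mathbf{v}_{\mathrm{E}})-\mathcal{R}_{\mathbf{v}}(\mathbf{w}^\varepsilon)\Bigr\}\,ds.
\end{align*}

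The crux is the trilinear term, and here I would exploit that $\mathbf{v}_{\mathrm{E}}$ is essentially two-dimensional rather than using a naive $L^4$ interpolation (whose constant is unfavorable in both $\varepsilon$ and $\nu$). Keeping $\mathbf{v}_{\mathrm{E}}$ undifferentiated, I bound $|b(\mathbf{w}^\varepsilon,\mathbf{w}^\varepsilon,\mathbf{v}_{\mathrm{E}})|\leq 2\bigl\|\,|\bar{\mathbf{v}}|\,|\mathbf{w}^\varepsilon|\,\bigr\|_{L^2(\Omega_\varepsilon)}\|\nabla\mathbf{w}^\varepsilon\|_{L^2(\Omega_\varepsilon)}$ using $|\mathbf{v}_{\mathrm{E}}|\leq 2|\bar{\mathbf{v}}|$ and then apply the good product estimate \eqref{E:Quad_Thin} of Lemma \ref{L:Quad_Thin} to the first factor. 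This yields a bound of the form $c\|\mathbf{v}\|_{L^2(S^2)}^{1/2}\|\mathbf{v}\|_{H^1(S^2)}^{1/2}\|\mathbf{w}^\varepsilon\|_{L^2(\Omega_\varepsilon)}^{1/2}\|\mathbf{w}^\varepsilon\|_{H^1(\Omega_\varepsilon)}^{3/2}$. A Young inequality with exponents $4/3$ and $4$, tuned with a parameter of order $\nu^{3/4}$ so that the resulting $\|\mathbf{w}^\varepsilon\|_{H^1(\Omega_\varepsilon)}^2$ is a small multiple of the dissipation, produces exactly the weight $\tfrac{c}{\nu^3}\|\mathbf{v}\|_{L^2(S^2)}^2\|\mathbf{v}\|_{H^1(S^2)}^2\,\|\mathbf{w}^\varepsilon\|_{L^2(\Omega_\varepsilon)}^2$ appearing in $F_{\mathbf{v}}$.

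For the remaining two terms I would use Cauchy--Schwarz and Young: $\langle\mathbf{h}^\varepsilon,\mathbf{w}^\varepsilon\rangle_{\mathcal{V}_\varepsilon}$ absorbs an $O(\nu)\|\mathbf{w}^\varepsilon\|_{H^1(\Omega_\varepsilon)}^2$ and leaves $\tfrac{c}{\nu}\|\mathbf{h}^\varepsilon\|_{\mathcal{V}_\varepsilon^\ast}^2$, while the residual bound \eqref{E:WFEx_Res} together with \eqref{E:Def_eta} gives, after Young, an absorbable $O(\nu)\|\mathbf{w}^\varepsilon\|_{H^1(\Omega_\varepsilon)}^2$ plus $\tfrac{c}{\nu}\varepsilon^3\eta_{\mathbf{v}}^2$. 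To convert the viscous term into $\|\nabla\mathbf{w}^\varepsilon\|_{L^2(\Omega_\varepsilon)}^2$ I invoke the uniform Korn inequality \eqref{E:Korn_Thin} (applicable since $\mathbf{w}^\varepsilon\cdot\mathbf{n}_\varepsilon=0$), which writes $2\nu\|\mathbf{D}(\mathbf{w}^\varepsilon)\|_{L^2(\Omega_\varepsilon)}^2\geq \tfrac{\nu}{2}\|\nabla\mathbf{w}^\varepsilon\|_{L^2(\Omega_\varepsilon)}^2-c\nu\|\mathbf{w}^\varepsilon\|_{L^2(\Omega_\varepsilon)}^2$; the error $c\nu\|\mathbf{w}^\varepsilon\|_{L^2(\Omega_\varepsilon)}^2$ is precisely the source of the $\nu$ in the exponent of $F_{\mathbf{v}}$.

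Collecting everything after absorbing the small multiples of $\|\nabla\mathbf{w}^\varepsilon\|_{L^2(\Omega_\varepsilon)}^2$ into the dissipation, I arrive at an integral inequality for $y(t):=\|\mathbf{w}^\varepsilon(t)\|_{L^2(\Omega_\varepsilon)}^2$ with running weight $g(s)=c\nu+\tfrac{c}{\nu^3}\|\mathbf{v}\|_{L^2(S^2)}^2\|\mathbf{v}\|_{H^1(S^2)}^2$ and forcing $\tfrac{c}{\nu}\int_0^t(\|\mathbf{h}^\varepsilon\|_{\mathcal{V}_\varepsilon^\ast}^2+\varepsilon^3\eta_{\mathbf{v}}^2)$, with the dissipation $\tfrac{\nu}{2}\int_0^t\|\nabla\mathbf{w}^\varepsilon\|_{L^2(\Omega_\varepsilon)}^2$ retained on the left. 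I conclude with the integral form of Gr\"onwall's inequality in the version that bounds \emph{both} $y(t)$ and the retained dissipation integral by $\bigl(y(0)+\text{forcing}\bigr)\exp\bigl(\int_0^t 2g\bigr)$, recognizing the exponential as $F_{\mathbf{v}}(t)$ from \eqref{E:Def_FG} with $C_2=2c$. This gives \eqref{E:Df_vE}. I expect the main obstacle to be the trilinear estimate and the associated $\nu$-bookkeeping: one must route the bound through \eqref{E:Quad_Thin} (not an $L^4$ embedding on the thin shell) and choose the Young parameter so that exactly the powers $\nu^{-3}$ and $\nu^{-1}$ in $F_{\mathbf{v}}$ and in the forcing emerge, while simultaneously leaving enough dissipation to absorb the Korn error and all the $O(\nu)$ remainders.
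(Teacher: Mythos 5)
Your proposal is correct and follows essentially the same route as the paper's own proof: the identical combination of \eqref{E:TSS_Ener}, \eqref{E:Ext_Ener}, and \eqref{E:uv_pair}, the same collapse of the trilinear terms to $\bigl((\mathbf{w}^\varepsilon\cdot\nabla)\mathbf{w}^\varepsilon,\mathbf{v}_{\mathrm{E}}\bigr)_{L^2(\Omega_\varepsilon)}$ via \eqref{E:TrTS_AnS}, the same estimate of that term through $|\mathbf{v}_{\mathrm{E}}|\leq2|\bar{\mathbf{v}}|$ and \eqref{E:Quad_Thin} (this is exactly \eqref{Pf_ITr:Est}), the same absorption of all remainders into the dissipation via \eqref{E:Korn_Thin} with the Young parameter of order $\nu$, and the same Gronwall conclusion with the dissipation retained. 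The only differences are cosmetic bookkeeping of the Young parameter and of the constant inside the exponential, which is immaterial since $C_2$ in \eqref{E:Def_FG} is unspecified.
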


\begin{proof}
  We take the sum of
  \begin{align*}
    \bigl((\mathbf{u}^\varepsilon\cdot\nabla)\mathbf{u}^\varepsilon,\mathbf{v}_{\mathrm{E}}\bigr)_{L^2(\Omega_\varepsilon)} &= \bigl((\mathbf{u}^\varepsilon\cdot\nabla)\mathbf{w}^\varepsilon,\mathbf{v}_{\mathrm{E}}\bigr)_{L^2(\Omega_\varepsilon)}+\bigl((\mathbf{u}^\varepsilon\cdot\nabla)\mathbf{v}_{\mathrm{E}},\mathbf{v}_{\mathrm{E}}\bigr)_{L^2(\Omega_\varepsilon)}, \\
    \bigl((\mathbf{v}_{\mathrm{E}}\cdot\nabla)\mathbf{v}_{\mathrm{E}},\mathbf{u}^\varepsilon\bigr)_{L^2(\Omega_\varepsilon)} &= \bigl((\mathbf{v}_{\mathrm{E}}\cdot\nabla)\mathbf{v}_{\mathrm{E}},\mathbf{w}^\varepsilon\bigr)_{L^2(\Omega_\varepsilon)}+\bigl((\mathbf{v}_{\mathrm{E}}\cdot\nabla)\mathbf{v}_{\mathrm{E}},\mathbf{v}_{\mathrm{E}}\bigr)_{L^2(\Omega_\varepsilon)}
  \end{align*}
  and apply $\bigl((\bm{\psi}\cdot\nabla)\mathbf{v}_{\mathrm{E}},\mathbf{v}_{\mathrm{E}}\bigr)_{L^2(\Omega_\varepsilon)}=0$ for $\bm{\psi}=\mathbf{u}^\varepsilon,\mathbf{v}_{\mathrm{E}}$ and
  \begin{align*}
    \bigl((\mathbf{v}_{\mathrm{E}}\cdot\nabla)\mathbf{v}_{\mathrm{E}},\mathbf{w}^\varepsilon\bigr)_{L^2(\Omega_\varepsilon)} = -\bigl(\mathbf{v}_{\mathrm{E}},(\mathbf{v}_{\mathrm{E}}\cdot\nabla)\mathbf{w}^\varepsilon\bigr)_{L^2(\Omega_\varepsilon)} = -\bigl((\mathbf{v}_{\mathrm{E}}\cdot\nabla)\mathbf{w}^\varepsilon,\mathbf{v}_{\mathrm{E}}\bigr)_{L^2(\Omega_\varepsilon)}
  \end{align*}
  by $\mathbf{u}^\varepsilon,\mathbf{v}_{\mathrm{E}}\in L_{\mathrm{loc}}^2([0,\infty);\mathcal{V}_\varepsilon)$ and \eqref{E:TrTS_AnS} to get
  \begin{align} \label{Pf_DvE:TrSum}
    \bigl((\mathbf{u}^\varepsilon\cdot\nabla)\mathbf{u}^\varepsilon,\mathbf{v}_{\mathrm{E}}\bigr)_{L^2(\Omega_\varepsilon)}+\bigl((\mathbf{v}_{\mathrm{E}}\cdot\nabla)\mathbf{v}_{\mathrm{E}},\mathbf{u}^\varepsilon\bigr)_{L^2(\Omega_\varepsilon)} = \bigl((\mathbf{w}^\varepsilon\cdot\nabla)\mathbf{w}^\varepsilon,\mathbf{v}_{\mathrm{E}}\bigr)_{L^2(\Omega_\varepsilon)}
  \end{align}
  on $(0,\infty)$.
  Since we assume that $\mathbf{u}^\varepsilon$ satisfies the energy inequality \eqref{E:TSS_Ener}, and since $\mathbf{v}_{\mathrm{E}}$ satisfies the energy equality \eqref{E:Ext_Ener} by Proposition \ref{P:Ext_Ener}, we take the sum of them and then subtract \eqref{E:uv_pair} (with $T$ replaced by $t$) from the resulting inequality.
  Then,
  \begin{align} \label{Pf_DvE:weps}
    \begin{aligned}
      &\frac{1}{2}\|\mathbf{w}^\varepsilon(t)\|_{L^2(\Omega_\varepsilon)}^2+2\nu\int_0^t\|\mathbf{D}(\mathbf{w}^\varepsilon)\|_{L^2(\Omega_\varepsilon)}^2\,ds \\
      &\qquad \leq \frac{1}{2}\|\mathbf{w}_0^\varepsilon\|_{L^2(\Omega_\varepsilon)}^2+\int_0^t\langle\mathbf{h}^\varepsilon,\mathbf{w}^\varepsilon\rangle_{\mathcal{V}_\varepsilon}\,ds \\
      &\qquad\qquad +\int_0^t\bigl((\mathbf{w}^\varepsilon\cdot\nabla)\mathbf{w}^\varepsilon,\mathbf{v}_{\mathrm{E}}\bigr)_{L^2(\Omega_\varepsilon)}\,ds-\int_0^t\mathcal{R}_{\mathbf{v}}(\mathbf{w}^\varepsilon)\,ds
    \end{aligned}
  \end{align}
  for all $t\in(0,\infty)$, where we also used \eqref{Pf_DvE:TrSum}.
  Moreover, for any $\delta>0$,
  \begin{align} \label{Pf_DvE:Res}
    \begin{aligned}
      |\langle\mathbf{h}^\varepsilon,\mathbf{w}^\varepsilon\rangle_{\mathcal{V}_\varepsilon}| & \leq \delta\|\mathbf{w}^\varepsilon\|_{H^1(\Omega_\varepsilon)}^2+c\delta^{-1}\|\mathbf{h}^\varepsilon\|_{\mathcal{V}_\varepsilon^\ast}^2, \\
      \Bigl|\bigl((\mathbf{w}^\varepsilon\cdot\nabla)\mathbf{w}^\varepsilon,\mathbf{v}_{\mathrm{E}}\bigr)_{L^2(\Omega_\varepsilon)}\Bigr| &\leq \delta\|\mathbf{w}^\varepsilon\|_{H^1(\Omega_\varepsilon)}^2+c\delta^{-3}\|\mathbf{v}\|_{L^2(S^2)}^2\|\mathbf{v}\|_{H^1(S^2)}^2\|\mathbf{w}^\varepsilon\|_{L^2(\Omega_\varepsilon)}^2, \\
      |\mathcal{R}_{\mathbf{v}}(\mathbf{w}^\varepsilon)| &\leq \delta\|\mathbf{w}^\varepsilon\|_{H^1(\Omega_\varepsilon)}^2+c\delta^{-1}\varepsilon^3\eta_{\mathbf{v}}^2
    \end{aligned}
  \end{align}
  on $(0,\infty)$ by \eqref{Pf_ITr:Est}, \eqref{E:WFEx_Res}, and Young's inequality, where $c>0$ is a constant independent of $\varepsilon$, $t$, and $\delta$.
  We apply \eqref{Pf_DvE:Res} to \eqref{Pf_DvE:weps} and use \eqref{E:Korn_Thin} to the left-hand side to get
  \begin{align*}
    &\frac{1}{2}\|\mathbf{w}^\varepsilon(t)\|_{L^2(\Omega_\varepsilon)}^2+\frac{\nu}{2}\int_0^t\Bigl(\|\nabla\mathbf{w}^\varepsilon\|_{L^2(\Omega_\varepsilon)}^2-c\|\mathbf{w}^\varepsilon\|_{L^2(\Omega_\varepsilon)}^2\Bigr)\,ds \\
    &\qquad \leq \frac{1}{2}\|\mathbf{w}_0^\varepsilon\|_{L^2(\Omega_\varepsilon)}^2+c_1\delta\int_0^t\Bigl(\|\nabla\mathbf{w}^\varepsilon\|_{L^2(\Omega_\varepsilon)}^2+\|\mathbf{w}^\varepsilon\|_{L^2(\Omega_\varepsilon)}^2\Bigr)\,ds \\
    &\qquad\qquad +c\int_0^t\Bigl\{\delta^{-3}\|\mathbf{v}\|_{L^2(S^2)}^2\|\mathbf{v}\|_{H^1(S^2)}^2\|\mathbf{w}^\varepsilon\|_{L^2(\Omega_\varepsilon)}^2+\delta^{-1}\Bigl(\|\mathbf{h}^\varepsilon\|_{\mathcal{V}_\varepsilon^\ast}^2+\varepsilon^3\eta_{\mathbf{v}}^2\Bigr)\Bigr\}\,ds
  \end{align*}
  with constants $c,c_1>0$ independent of $\varepsilon$, $t$, $\nu$, and $\delta$.
  Then, we set $\delta:=\nu/4c_1$ in the above inequality and multiply both sides by two to deduce that
  \begin{multline*}
    \|\mathbf{w}^\varepsilon(t)\|_{L^2(\Omega_\varepsilon)}^2+\frac{\nu}{2}\int_0^t\|\nabla\mathbf{w}^\varepsilon\|_{L^2(\Omega_\varepsilon)}^2\,ds \\
    \leq \|\mathbf{w}_0^\varepsilon\|_{L^2(\Omega_\varepsilon)}^2+c\int_0^t\left\{\psi_{\mathbf{v}}\|\mathbf{w}^\varepsilon\|_{L^2(\Omega_\varepsilon)}^2+\frac{1}{\nu}\Bigl(\|\mathbf{h}^\varepsilon\|_{\mathcal{V}_\varepsilon^\ast}^2+\varepsilon^3\eta_{\mathbf{v}}^2\Bigr)\right\}\,ds
  \end{multline*}
  for all $t\in(0,\infty)$, where
  \begin{align*}
    \psi_{\mathbf{v}}(t) := \nu+\frac{1}{\nu^3}\|\mathbf{v}(t)\|_{L^2(S^2)}^2\|\mathbf{v}(t)\|_{H^1(S^2)}^2, \quad \psi_{\mathbf{v}} \in L_{\mathrm{loc}}^1([0,\infty)).
  \end{align*}
  Hence, we can apply Gronwall's inequality to obtain \eqref{E:Df_vE}.
\end{proof}

\begin{proof}[Proof of Theorem \ref{T:Diff_Est}]
  Let $\mathbf{w}^\varepsilon$, $\mathbf{w}_0^\varepsilon$, and $\mathbf{h}^\varepsilon$ be given by \eqref{E:Def_weps}.
  Then,
  \begin{align} \label{Pf_Dif:uv}
    \begin{aligned}
      \|\mathbf{u}^\varepsilon-\bar{\mathbf{v}}\|_{L^2(\Omega_\varepsilon)} &\leq \|\mathbf{w}^\varepsilon\|_{L^2(\Omega_\varepsilon)}+\|\mathbf{v}_{\mathrm{E}}-\bar{\mathbf{v}}\|_{L^2(\Omega_\varepsilon)} \\
      &\leq \|\mathbf{w}^\varepsilon\|_{L^2(\Omega_\varepsilon)}+c\varepsilon^{3/2}\|\mathbf{v}\|_{L^2(S^2)}
    \end{aligned}
  \end{align}
  by \eqref{E:DfEx_L2}.
  Similarly, since
  \begin{align*}
    \mathbf{w}_0^\varepsilon = (\mathbf{u}_0^\varepsilon-\bar{\mathbf{v}}_0)+(\bar{\mathbf{v}}_0-[\mathbf{v}_0]_{\mathrm{E}}), \quad \mathbf{h}^\varepsilon = (\mathbf{f}^\varepsilon-\bar{\mathbf{f}})+(\bar{\mathbf{f}}-\mathbf{f}_{\mathrm{E}}),
  \end{align*}
  it follows from \eqref{E:DfEx_L2} and \eqref{E:Func_Ext} that
  \begin{align} \label{Pf_Dif:wh}
    \begin{aligned}
      \|\mathbf{w}_0^\varepsilon\|_{L^2(S^2)} &\leq \|\mathbf{u}_0^\varepsilon-\bar{\mathbf{v}}_0\|_{L^2(\Omega_\varepsilon)}+c\varepsilon^{3/2}\|\mathbf{v}_0\|_{L^2(S^2)}, \\
      \|\mathbf{h}^\varepsilon\|_{\mathcal{V}_\varepsilon^\ast} &\leq \|\mathbf{f}^\varepsilon-\bar{\mathbf{f}}\|_{\mathcal{V}_\varepsilon^\ast}+c\varepsilon^{3/2}\|\mathbf{f}\|_{\mathcal{V}_0^\ast}.
    \end{aligned}
  \end{align}
  To calculate $\nabla\mathbf{w}^\varepsilon$, we see by $\mathbf{P}=\mathbf{I}_3-\mathbf{n}\otimes\mathbf{n}$ on $S^2$ that
  \begin{align*}
    \nabla\mathbf{u}^\varepsilon = \overline{\mathbf{P}}\nabla\mathbf{u}^\varepsilon+(\bar{\mathbf{n}}\otimes\bar{\mathbf{n}})\nabla\mathbf{u}^\varepsilon = \overline{\mathbf{P}}\nabla\mathbf{u}^\varepsilon+\bar{\mathbf{n}}\otimes[\partial_{\mathbf{n}}\mathbf{u}^\varepsilon], \quad \partial_{\mathbf{n}}\mathbf{u}^\varepsilon = (\bar{\mathbf{n}}\cdot\nabla)\mathbf{u}^\varepsilon
  \end{align*}
  in $\Omega_\varepsilon$.
  By this equality and \eqref{E:Ext_Grad}, we have
  \begin{align*}
    \nabla\mathbf{w}^\varepsilon = \mathbf{A}^\varepsilon+\bar{\mathbf{n}}\otimes\mathbf{a}^\varepsilon, \quad \mathbf{A}^\varepsilon := \overline{\mathbf{P}}\nabla\mathbf{u}^\varepsilon-\overline{\nabla_{S^2}\mathbf{v}}, \quad \mathbf{a}^\varepsilon := \partial_{\mathbf{n}}\mathbf{u}^\varepsilon-\bar{\mathbf{v}}.
  \end{align*}
  Moreover, since $\mathbf{P}\nabla_{S^2}\mathbf{v}=\nabla_{S^2}\mathbf{v}$, $\mathbf{P}^2=\mathbf{P}^{\mathrm{T}}=\mathbf{P}$, $\mathbf{P}\mathbf{n}=\mathbf{0}$, and $|\mathbf{n}|^2=1$ on $S^2$,
  \begin{align*}
    \mathbf{A}^\varepsilon = \overline{\mathbf{P}}\mathbf{A}^\varepsilon, \quad [\mathbf{A}^\varepsilon]^{\mathrm{T}}\bar{\mathbf{n}} = [\mathbf{A}^\varepsilon]^{\mathrm{T}}\,\overline{\mathbf{P}\mathbf{n}} = \mathbf{0}, \quad (\mathbf{a}^\varepsilon\otimes\bar{\mathbf{n}})(\bar{\mathbf{n}}\otimes\mathbf{a}^\varepsilon) = \mathbf{a}^\varepsilon\otimes\mathbf{a}^\varepsilon
  \end{align*}
  in $\Omega_\varepsilon$.
  Thus, $(\nabla\mathbf{w}^\varepsilon)^{\mathrm{T}}\nabla\mathbf{w}^\varepsilon=[\mathbf{A}^\varepsilon]^{\mathrm{T}}\mathbf{A}^\varepsilon+\mathbf{a}^\varepsilon\otimes\mathbf{a}^\varepsilon$ and we obtain
  \begin{align} \label{Pf_Dif:grad}
    |\nabla\mathbf{w}^\varepsilon|^2 = |\mathbf{A}^\varepsilon|^2+|\mathbf{a}^\varepsilon|^2 = \Bigl|\overline{\mathbf{P}}\nabla\mathbf{u}^\varepsilon-\overline{\nabla_{S^2}\mathbf{v}}\Bigr|^2+|\partial_{\mathbf{n}}\mathbf{u}^\varepsilon-\bar{\mathbf{v}}|^2 \quad\text{in}\quad \Omega_\varepsilon
  \end{align}
  by $|\mathbf{B}|^2=\mathrm{tr}[\mathbf{B}^{\mathrm{T}}\mathbf{B}]$ for $\mathbf{B}\in\mathbb{R}^{3\times3}$.
  We apply \eqref{Pf_Dif:uv}--\eqref{Pf_Dif:grad} to \eqref{E:Df_vE} and use
  \begin{align*}
    [\eta_{\mathbf{v}}(t)]^2 \leq c\Bigl(\nu^2+\|\mathbf{v}(t)\|_{L^2(S^2)}^2\Bigr)\|\mathbf{v}(t)\|_{H^1(S^2)}^2
  \end{align*}
  and $F_{\mathbf{v}}(t)\geq1$.
  Then, dividing the resulting inequality by $\varepsilon$, we get \eqref{E:Diff_Est}.
\end{proof}

Let us also show that the average of $\mathbf{u}^\varepsilon$ converges to $\mathbf{v}$ as $\varepsilon\to0$.

\begin{proof}[Proof of Theorem \ref{T:Ave_Conv}]
  For a function $\eta$ on $S^2$ and its constant extension $\bar{\eta}$,
  \begin{align*}
    \mathcal{M}_\varepsilon^0\bar{\eta}(y) = \eta(y), \quad \mathcal{M}_\varepsilon^1\bar{\eta}(y) = \frac{1}{2\varepsilon}\{(1+\varepsilon)^2-1\}\eta(y) = \left(1+\frac{\varepsilon}{2}\right)\eta(y), \quad y\in S^2.
  \end{align*}
  By these relations and \eqref{E:Ave_TGr} with $k=0$, we have
  \begin{align*}
    \mathcal{M}_\varepsilon^0\mathbf{u}^\varepsilon-\mathbf{v} &= \mathcal{M}_\varepsilon^0(\mathbf{u}^\varepsilon-\bar{\mathbf{v}}), \\
    \nabla_{S^2}\mathcal{M}_\varepsilon^0\mathbf{u}^\varepsilon-\nabla_{S^2}\mathbf{v} &= \mathcal{M}_\varepsilon^1\Bigl(\overline{\mathbf{P}}\nabla\mathbf{u}^\varepsilon-\overline{\nabla_{S^2}\mathbf{v}}\Bigr)+\frac{\varepsilon}{2}\nabla_{S^2}\mathbf{v}
  \end{align*}
  on $S^2$.
  Thus, it follows from \eqref{E:Ave_L2} that
  \begin{align} \label{Pf_AvC:Adif}
    \begin{aligned}
      \|\mathcal{M}_\varepsilon^0\mathbf{u}^\varepsilon-\mathbf{v}\|_{L^2(S^2)} &\leq c\varepsilon^{-1/2}\|\mathbf{u}^\varepsilon-\bar{\mathbf{v}}\|_{L^2(\Omega_\varepsilon)}, \\
      \|\nabla_{S^2}\mathcal{M}_\varepsilon^0\mathbf{u}^\varepsilon-\nabla_{S^2}\mathbf{v}\|_{L^2(S^2)} &\leq c\varepsilon^{-1/2}\Bigl\|\overline{\mathbf{P}}\nabla\mathbf{u}^\varepsilon-\overline{\nabla_{S^2}\mathbf{v}}\Bigr\|_{L^2(\Omega_\varepsilon)}+\frac{\varepsilon}{2}\|\nabla_{S^2}\mathbf{v}\|_{L^2(S^2)}.
    \end{aligned}
  \end{align}
  By these inequalities, the square root of \eqref{E:Diff_Est}, and \eqref{E:Def_FG}, we find that
  \begin{multline*}
    \|\mathcal{M}_\varepsilon^0\mathbf{u}^\varepsilon-\mathbf{v}\|_{L^\infty(0,T;L^2(S^2))}+\|\nabla_{S^2}\mathcal{M}_\varepsilon^0\mathbf{u}^\varepsilon-\nabla_{S^2}\mathbf{v}\|_{L^2(0,T;L^2(S^2))} \\
    \leq c_{\mathbf{v},\nu,T}\Bigl(\varepsilon^{-1/2}\|\mathbf{u}_0^\varepsilon-\bar{\mathbf{v}}_0\|_{L^2(\Omega_\varepsilon)}+\varepsilon^{-1/2}\|\mathbf{f}^\varepsilon-\bar{\mathbf{f}}\|_{L^2(0,T;\mathcal{V}_\varepsilon^\ast)}+\varepsilon\Bigr),
  \end{multline*}
  where $c_{\mathbf{v},\nu,T}>0$ is a constant depending on $\nu$, $T$, and
  \begin{align*}
    \|\mathbf{v}_0\|_{L^2(S^2)}, \quad \|\mathbf{v}\|_{L^\infty(0,T;L^2(S^2))}, \quad \|\mathbf{v}\|_{L^2(0,T;H^1(S^2))}, \quad \|\mathbf{f}\|_{L^2(0,T;\mathcal{V}_0^\ast)},
  \end{align*}
  but independent of $\varepsilon$.
  Thus, if \eqref{E:ACL_data} holds, then we have \eqref{E:ACL_sol}.
\end{proof}

%%% Section 6 %%%
\section{Global-in-time difference estimate} \label{S:Global}
This section is devoted to the proof of Theorem \ref{T:DEst_Gl}.

\subsection{Korn inequalities} \label{SS:Gl_Korn}
In the proof of the global difference estimate \eqref{E:DEst_Gl}, the following (uniform) Korn inequalities on $S^2$ and $\Omega_\varepsilon$ play a crucial role.
They were shown in \cite{JaOlRe18,Miu24_SNS} for closed surfaces and in \cite{LewMul11,Miu22_01} for curved thin domains, in which the proofs are written in terms of Killing vector fields on closed surfaces.
For the readers' convenience, we provide the proofs specialized for the unit sphere and the thin spherical shell below.

Let $\mathbf{a}\in\mathbb{R}^3$.
We define $\mathbf{r}_{\mathbf{a}}(x):=\mathbf{a}\times x$ for $x\in\mathbb{R}^3$, where $\times$ is the vector product in $\mathbb{R}^3$.
We use the same notation $\mathbf{r}_{\mathrm{a}}$ for the restriction of $\mathbf{r}_{\mathbf{a}}$ to $S^2$ and $\Omega_\varepsilon$.

\begin{lemma} \label{L:KoH1_S2}
  There exists a constant $c>0$ such that
  \begin{align} \label{E:KoH1_S2}
    \|\mathbf{v}\|_{H^1(S^2)} \leq c\|\mathbf{D}_{S^2}(\mathbf{v})\|_{L^2(S^2)}
  \end{align}
  for all $\mathbf{v}\in H_\tau^1(S^2)$ satisfying $(\mathbf{v},\mathbf{r}_{\mathbf{a}})_{L^2(S^2)}=0$ for all $\mathbf{a}\in\mathbb{R}^3$.
\end{lemma}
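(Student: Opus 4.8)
The plan is to argue by contradiction via a compactness argument, thereby reducing \eqref{E:KoH1_S2} to the classification of tangential fields with vanishing surface strain. Suppose \eqref{E:KoH1_S2} fails for every constant. Then there is a sequence $\mathbf{v}_k\in H_\tau^1(S^2)$ with $(\mathbf{v}_k,\mathbf{r}_{\mathbf{a}})_{L^2(S^2)}=0$ for all $\mathbf{a}\in\mathbb{R}^3$ and
\[
  \|\mathbf{v}_k\|_{H^1(S^2)} = 1, \qquad \|\mathbf{D}_{S^2}(\mathbf{v}_k)\|_{L^2(S^2)} \to 0 \quad (k\to\infty).
\]
Since the embedding $H^1(S^2)\hookrightarrow L^2(S^2)$ is compact, a subsequence (still denoted $\mathbf{v}_k$) converges strongly in $L^2(S^2)$ and weakly in $H^1(S^2)$ to some $\mathbf{v}\in H_\tau^1(S^2)$, using that $H_\tau^1(S^2)$ is closed. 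Applying Korn's inequality \eqref{E:Korn_S2} to the differences $\mathbf{v}_k-\mathbf{v}_\ell$ and noting that $\mathbf{D}_{S^2}(\mathbf{v}_k)\to\mathbf{0}$ in $L^2(S^2)$ while $\mathbf{v}_k$ is Cauchy in $L^2(S^2)$, one finds that $\nabla_{S^2}\mathbf{v}_k$ is Cauchy in $L^2(S^2)$; hence $\mathbf{v}_k\to\mathbf{v}$ strongly in $H^1(S^2)$, so that $\|\mathbf{v}\|_{H^1(S^2)}=1$ and, by continuity of $\mathbf{D}_{S^2}$ on $H^1(S^2)$, $\mathbf{D}_{S^2}(\mathbf{v})=\mathbf{O}_3$ on $S^2$.

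The crucial step is to show that any $\mathbf{v}\in H_\tau^1(S^2)$ with $\mathbf{D}_{S^2}(\mathbf{v})=\mathbf{O}_3$ is a rotation field, i.e. $\mathbf{v}=\mathbf{r}_{\mathbf{a}}$ on $S^2$ for some $\mathbf{a}\in\mathbb{R}^3$. For this I would pass to the weighted extension $\mathbf{v}_{\mathrm{E}}(x)=|x|\,\bar{\mathbf{v}}(x)$. By \eqref{E:Ext_str} we have $\mathbf{D}(\mathbf{v}_{\mathrm{E}})=\overline{\mathbf{D}_{S^2}(\mathbf{v})}=\mathbf{O}_3$ on $\mathbb{R}^3\setminus\{0\}$, and by Lemma \ref{L:Ext_Bdd} the field $\mathbf{v}_{\mathrm{E}}$ lies in $H^1$ on every bounded annulus. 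The classical infinitesimal rigidity of $H^1$ vector fields with vanishing symmetric gradient on the connected open set $\mathbb{R}^3\setminus\{0\}$ then forces $\mathbf{v}_{\mathrm{E}}(x)=\mathbf{b}+\mathbf{a}\times x$ a.e. for some constant vectors $\mathbf{a},\mathbf{b}\in\mathbb{R}^3$. Since $\bar{\mathbf{v}}$ is homogeneous of degree zero, $\mathbf{v}_{\mathrm{E}}$ is homogeneous of degree one, which is compatible with this affine form only if $\mathbf{b}=\mathbf{0}$; hence $\mathbf{v}_{\mathrm{E}}(x)=\mathbf{a}\times x=\mathbf{r}_{\mathbf{a}}(x)$, and restricting to $S^2$, where $\mathbf{v}_{\mathrm{E}}=\bar{\mathbf{v}}=\mathbf{v}$, gives $\mathbf{v}=\mathbf{r}_{\mathbf{a}}$.

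Finally I would exploit the orthogonality. Passing $(\mathbf{v}_k,\mathbf{r}_{\mathbf{a}})_{L^2(S^2)}=0$ to the $L^2(S^2)$ limit yields $(\mathbf{v},\mathbf{r}_{\mathbf{a}})_{L^2(S^2)}=0$ for all $\mathbf{a}$; choosing $\mathbf{a}$ to be the vector with $\mathbf{v}=\mathbf{r}_{\mathbf{a}}$ gives $\|\mathbf{v}\|_{L^2(S^2)}^2=0$, so $\mathbf{v}=\mathbf{0}$, contradicting $\|\mathbf{v}\|_{H^1(S^2)}=1$. This proves \eqref{E:KoH1_S2}. The main obstacle is the kernel characterization in the middle paragraph: one must know that the only tangential fields annihilated by $\mathbf{D}_{S^2}$ are the rotations $\mathbf{r}_{\mathbf{a}}$. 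I expect the homogeneous extension combined with the Euclidean rigidity lemma to be the cleanest route, as it reuses the machinery of Section \ref{S:AvEx} and avoids an intrinsic computation of Killing fields on $S^2$.
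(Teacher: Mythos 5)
Your proposal is correct and takes essentially the same route as the paper's proof: a compactness contradiction whose kernel step characterizes $\mathbf{v}\in H_\tau^1(S^2)$ with $\mathbf{D}_{S^2}(\mathbf{v})=\mathbf{O}_3$ as rotation fields $\mathbf{r}_{\mathbf{a}}$ by passing to the weighted extension $\mathbf{v}_{\mathrm{E}}$, using \eqref{E:Ext_str} together with the Euclidean rigidity $\mathbf{D}(\mathbf{v}_{\mathrm{E}})=\mathbf{O}_3\Rightarrow\mathbf{v}_{\mathrm{E}}(x)=\mathbf{a}\times x+\mathbf{b}$ on the connected set $\mathbb{R}^3\setminus\{0\}$, and then killing the limit via orthogonality to $\mathbf{r}_{\mathbf{a}}$. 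The only (harmless) deviations are that you normalize in $H^1(S^2)$ and upgrade to strong $H^1$ convergence by a Cauchy argument with \eqref{E:Korn_S2}, whereas the paper normalizes in $L^2(S^2)$ after first reducing \eqref{E:KoH1_S2} to the bound $\|\mathbf{v}\|_{L^2(S^2)}\leq c\|\mathbf{D}_{S^2}(\mathbf{v})\|_{L^2(S^2)}$ (so only weak convergence is needed), and that you infer $\mathbf{b}=\mathbf{0}$ from the degree-one homogeneity of $\mathbf{v}_{\mathrm{E}}$ while the paper uses the tangentiality $\mathbf{v}\cdot\mathbf{n}=0$ on $S^2$.
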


\begin{proof}
  By \eqref{E:Korn_S2}, it suffices to prove
  \begin{align} \label{Pf_KHS:goal}
    \|\mathbf{v}\|_{L^2(S^2)} \leq c\|\mathbf{D}_{S^2}(\mathbf{v})\|_{L^2(S^2)}.
  \end{align}
  Assume to the contrary that there exist some $\mathbf{v}_k\in H_\tau^1(S^2)$, $k\in\mathbb{N}$ such that
  \begin{align*}
    \|\mathbf{v}_k\|_{L^2(S^2)} > k\|\mathbf{D}_{S^2}(\mathbf{v}_k)\|_{L^2(S^2)}, \quad (\mathbf{v}_k,\mathbf{r}_{\mathbf{a}})_{L^2(S^2)} = 0 \quad\text{for all}\quad \mathbf{a}\in\mathbb{R}^3.
  \end{align*}
  Since $\|\mathbf{v}_k\|_{L^2(S^2)}>0$, we may replace $\mathbf{v}_k$ by $\mathbf{v}_k/\|\mathbf{v}_k\|_{L^2(S^2)}$ to assume that
  \begin{align*}
    \|\mathbf{v}_k\|_{L^2(S^2)} = 1, \quad \|\mathbf{D}_{S^2}(\mathbf{v}_k)\|_{L^2(S^2)} < \frac{1}{k} \quad\text{and thus}\quad \|\mathbf{v}_k\|_{H^1(S^2)} \leq c
  \end{align*}
  by \eqref{E:Korn_S2}.
  Then, since the embedding $H_\tau^1(S^2)\hookrightarrow L_\tau^2(S^2)$ is compact, we have
  \begin{align*}
    \lim_{k\to\infty}\mathbf{v}_k = \mathbf{v} \quad\text{weakly in $H_\tau^1(S^2)$ and strongly in $L_\tau^2(S^2)$}
  \end{align*}
  with some $\mathbf{v}\in H_\tau^1(S^2)$ up to a subsequence.
  Hence,
  \begin{align*}
    \|\mathbf{v}\|_{L^2(S^2)} = 1, \quad \|\mathbf{D}_{S^2}(\mathbf{v})\|_{L^2(S^2)} = 0, \quad (\mathbf{v},\mathbf{r}_{\mathbf{a}})_{L^2(S^2)} = 0 \quad\text{for all}\quad \mathbf{a}\in\mathbb{R}^3.
  \end{align*}
  Now, let $\mathbf{v}_{\mathrm{E}}$ be the extension of $\mathbf{v}$ given by \eqref{E:Def_Ext}.
  Then, we see by \eqref{E:Ext_str} that
  \begin{align*}
    \mathbf{D}(\mathbf{v}_{\mathrm{E}}) = \overline{\mathbf{D}_{S^2}(\mathbf{v})} = 0 \quad\text{a.e. in}\quad \mathbb{R}^3\setminus\{0\},
  \end{align*}
  and thus we can deduce that (see e.g. \cite[Lemma IV.7.5]{BoyFab13})
  \begin{align*}
    \mathbf{v}_{\mathrm{E}}(x) = \mathbf{a}\times x+\mathbf{b}, \quad x\in\mathbb{R}^3 \quad\text{with some}\quad \mathbf{a},\mathbf{b}\in\mathbb{R}^3.
  \end{align*}
  Moreover, since $\mathbf{v}\in H_\tau^1(S^2)$, we have
  \begin{align*}
    0 = \mathbf{v}(y)\cdot\mathbf{n}(y) = \mathbf{v}_{\mathrm{E}}(y)\cdot\mathbf{n}(y) = (\mathbf{a}\times y+\mathbf{b})\cdot y = \mathbf{b}\cdot y \quad\text{for a.a.}\quad y\in S^2.
  \end{align*}
  Hence, $\mathbf{b}=\mathbf{0}$ and $\mathbf{v}(y)=\mathbf{v}_{\mathrm{E}}(y)=\mathbf{a}\times y=\mathbf{r}_{\mathbf{a}}(y)$ for $y\in S^2$.
  This gives
  \begin{align*}
    \|\mathbf{v}\|_{L^2(S^2)}^2 = (\mathbf{v},\mathbf{v})_{L^2(S^2)} = (\mathbf{v},\mathbf{r}_{\mathbf{a}})_{L^2(S^2)} = 0,
  \end{align*}
  which contradicts $\|\mathbf{v}\|_{L^2(S^2)}=1$.
  Thus, \eqref{Pf_KHS:goal} is valid and we get \eqref{E:KoH1_S2}.
\end{proof}

\begin{lemma} \label{L:KoH1_TSS}
  There exist constants $\varepsilon_0\in(0,1)$ and $c>0$ such that
  \begin{align} \label{E:KoH1_TSS}
    \|\mathbf{u}\|_{H^1(\Omega_\varepsilon)} \leq c\|\mathbf{D}(\mathbf{u})\|_{L^2(\Omega_\varepsilon)}
  \end{align}
  for all $\varepsilon\in(0,\varepsilon_0]$ and $\mathbf{u}\in H^1(\Omega_\varepsilon)^3$ satisfying
  \begin{align*}
    \mathbf{u}\cdot\mathbf{n}_\varepsilon = 0 \quad\text{on}\quad \partial\Omega_\varepsilon, \quad (\mathbf{u},\mathbf{r}_{\mathbf{a}})_{L^2(\Omega_\varepsilon)} = 0 \quad\text{for all}\quad \mathbf{a}\in\mathbb{R}^3.
  \end{align*}
\end{lemma}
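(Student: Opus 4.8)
The plan is to combine the basic Korn inequality \eqref{E:Korn_Thin}, whose constant is already uniform in $\varepsilon$, with a uniform Poincar\'e-type bound $\|\mathbf{u}\|_{L^2(\Omega_\varepsilon)}\le c\|\mathbf{D}(\mathbf{u})\|_{L^2(\Omega_\varepsilon)}$ under the stated constraints; once the latter is established, \eqref{E:Korn_Thin} immediately upgrades it to the full $H^1$ bound \eqref{E:KoH1_TSS}. I would prove the $L^2$ bound by contradiction: if no uniform pair $\varepsilon_0,c$ existed, then taking $\varepsilon_0=c=n$ would produce $\varepsilon_n\le1/n$ and $\mathbf{u}_n\in H^1(\Omega_{\varepsilon_n})^3$ satisfying the two constraints with $\|\mathbf{u}_n\|_{H^1(\Omega_{\varepsilon_n})}=1$ and $\|\mathbf{D}(\mathbf{u}_n)\|_{L^2(\Omega_{\varepsilon_n})}<1/n$. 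Feeding this into \eqref{E:Korn_Thin} gives $1=\|\mathbf{u}_n\|_{L^2(\Omega_{\varepsilon_n})}^2+\|\nabla\mathbf{u}_n\|_{L^2(\Omega_{\varepsilon_n})}^2\le(1+c)\|\mathbf{u}_n\|_{L^2(\Omega_{\varepsilon_n})}^2+4/n^2$, so $\|\mathbf{u}_n\|_{L^2(\Omega_{\varepsilon_n})}\ge\delta_0>0$ for large $n$; the mass does not escape as the shell collapses.

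Since the domains $\Omega_{\varepsilon_n}$ degenerate, I would transplant the problem to the fixed sphere $S^2$ via the tangential average $\mathbf{v}_n:=\mathcal{M}_{\varepsilon_n,\tau}^3\mathbf{u}_n\in H_\tau^1(S^2)$. The key identity is $\mathbf{D}_{S^2}(\mathcal{M}_\varepsilon^k\mathbf{u})=\mathbf{P}\,\mathcal{M}_\varepsilon^{k+1}(\mathbf{D}(\mathbf{u}))\,\mathbf{P}$, which follows by applying \eqref{E:Ave_TGr} componentwise, symmetrizing, and using that $\mathbf{P}$ is independent of the radial variable and hence commutes with the average. Combined with \eqref{E:VT_str} (to pass from $\mathcal{M}_\varepsilon^3$ to its tangential part) and with \eqref{E:Ave_L2} and \eqref{E:Ave_NC}, this yields
\begin{align*}
  \|\mathbf{D}_{S^2}(\mathbf{v}_n)\|_{L^2(S^2)}\le c\varepsilon_n^{-1/2}\|\mathbf{D}(\mathbf{u}_n)\|_{L^2(\Omega_{\varepsilon_n})}+c\varepsilon_n^{1/2}\|\mathbf{u}_n\|_{H^1(\Omega_{\varepsilon_n})}.
\end{align*}
For the constraint, the rotation field satisfies $\mathbf{r}_{\mathbf{a}}(x)=|x|\,\overline{\mathbf{r}_{\mathbf{a}}}(x)$ on $\Omega_{\varepsilon_n}$, so the change of variables \eqref{E:CoV_Thin} gives $0=(\mathbf{u}_n,\mathbf{r}_{\mathbf{a}})_{L^2(\Omega_{\varepsilon_n})}=\varepsilon_n(\mathbf{v}_n,\mathbf{r}_{\mathbf{a}})_{L^2(S^2)}$ (using that $\mathbf{r}_{\mathbf{a}}$ is tangential), i.e. $(\mathbf{v}_n,\mathbf{r}_{\mathbf{a}})_{L^2(S^2)}=0$ for all $\mathbf{a}$.

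The delicate point is the $\varepsilon$-dependent scaling: both $\|\mathbf{v}_n\|_{H^1(S^2)}$ and $\|\mathbf{v}_n\|_{L^2(S^2)}$ carry a factor $\varepsilon_n^{-1/2}$, so I must renormalize carefully to obtain a nondegenerate limit. Using \eqref{E:Atan_Con} I get $\|\mathbf{u}_n-\overline{\mathbf{v}_n}\|_{L^2(\Omega_{\varepsilon_n})}\le c\varepsilon_n$, and combining with the lower bound $\|\mathbf{u}_n\|_{L^2(\Omega_{\varepsilon_n})}\ge\delta_0$ together with \eqref{E:CoEx_L2} yields $\|\mathbf{v}_n\|_{L^2(S^2)}\ge(\delta_0/4)\varepsilon_n^{-1/2}$ for large $n$. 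Setting $\tilde{\mathbf{v}}_n:=\mathbf{v}_n/\|\mathbf{v}_n\|_{L^2(S^2)}$, the bound \eqref{E:Atan_H1} gives $\|\tilde{\mathbf{v}}_n\|_{H^1(S^2)}\le c$, while dividing the displayed estimate above by $\|\mathbf{v}_n\|_{L^2(S^2)}$ gives $\|\mathbf{D}_{S^2}(\tilde{\mathbf{v}}_n)\|_{L^2(S^2)}\le c(n^{-1}+\varepsilon_n)\to0$; the two $\varepsilon_n^{-1/2}$ factors cancel exactly, which is precisely why the argument closes. Moreover $\|\tilde{\mathbf{v}}_n\|_{L^2(S^2)}=1$ and $(\tilde{\mathbf{v}}_n,\mathbf{r}_{\mathbf{a}})_{L^2(S^2)}=0$.

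Finally, I would use the compact embedding $H_\tau^1(S^2)\hookrightarrow L_\tau^2(S^2)$ to extract $\tilde{\mathbf{v}}_n\rightharpoonup\mathbf{v}$ weakly in $H_\tau^1(S^2)$ and strongly in $L_\tau^2(S^2)$, whence $\|\mathbf{v}\|_{L^2(S^2)}=1$, $\mathbf{D}_{S^2}(\mathbf{v})=0$, and $(\mathbf{v},\mathbf{r}_{\mathbf{a}})_{L^2(S^2)}=0$ for all $\mathbf{a}$. Exactly as in the proof of Lemma \ref{L:KoH1_S2} (extend by \eqref{E:Ext_str}, use that a field with vanishing symmetric gradient is affine, and invoke tangency to kill the translation part), $\mathbf{D}_{S^2}(\mathbf{v})=0$ forces $\mathbf{v}=\mathbf{r}_{\mathbf{a}_0}$ for some $\mathbf{a}_0\in\mathbb{R}^3$; then $0=(\mathbf{v},\mathbf{r}_{\mathbf{a}_0})_{L^2(S^2)}=\|\mathbf{v}\|_{L^2(S^2)}^2=1$, a contradiction. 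This establishes the uniform bound $\|\mathbf{u}\|_{L^2(\Omega_\varepsilon)}\le c\|\mathbf{D}(\mathbf{u})\|_{L^2(\Omega_\varepsilon)}$ on some $(0,\varepsilon_0]$, and hence \eqref{E:KoH1_TSS}.
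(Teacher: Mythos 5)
Your proposal is correct, but it takes a genuinely different route from the paper's. The paper proves \eqref{E:KoH1_TSS} \emph{directly}, without contradiction (and says so explicitly, in contrast to the earlier literature): it transfers the orthogonality constraint to the weighted average via $(\mathbf{u},\mathbf{r}_{\mathbf{a}})_{L^2(\Omega_\varepsilon)}=\varepsilon(\mathcal{M}_{\varepsilon,\tau}^3\mathbf{u},\mathbf{r}_{\mathbf{a}})_{L^2(S^2)}$, applies the two-dimensional Korn inequality \eqref{E:KoH1_S2} to $\mathcal{M}_{\varepsilon,\tau}^3\mathbf{u}$ for each fixed $\varepsilon$ (using the same identity $\mathbf{D}_{S^2}(\mathcal{M}_\varepsilon^3\mathbf{u})=\mathbf{P}\mathcal{M}_\varepsilon^4[\mathbf{D}(\mathbf{u})]\mathbf{P}$ that you derive), and then closes by the triangle inequality through $\overline{\mathcal{M}_{\varepsilon,\tau}^3\mathbf{u}}$ together with \eqref{E:Atan_Con}, \eqref{E:CoEx_L2}, and \eqref{E:Korn_Thin}, absorbing a term $c_2\varepsilon\|\mathbf{u}\|_{L^2(\Omega_\varepsilon)}$ to get the explicit threshold $\varepsilon_0=\min\{1/2,1/(2c_2)\}$. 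Your argument instead negates the statement and runs a compactness argument at the thin-shell level, transplanting the degenerating sequence to $S^2$ through the same average; the delicate points are handled correctly, namely the lower bound $\|\mathbf{u}_n\|_{L^2(\Omega_{\varepsilon_n})}\ge\delta_0$ from \eqref{E:Korn_Thin}, the lower bound $\|\mathbf{v}_n\|_{L^2(S^2)}\gtrsim\varepsilon_n^{-1/2}$ from \eqref{E:Atan_Con} and \eqref{E:CoEx_L2}, and the exact cancellation of the $\varepsilon_n^{-1/2}$ factors after renormalization, after which the limiting rigidity step coincides with the tail of the paper's proof of Lemma \ref{L:KoH1_S2}. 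What each approach buys: the paper's route is quantitative (effective $\varepsilon_0$ and traceable constants, with the single compactness argument confined to Lemma \ref{L:KoH1_S2}, which is invoked as a black box) and is shorter; your route is self-contained modulo the rigidity classification, but both constants come out non-constructive and the compactness already spent in Lemma \ref{L:KoH1_S2} is duplicated. One small slip in the quantifiers: to force $\varepsilon_n\le 1/n$ you should take $\varepsilon_0=1/n$ and $c=n$ in the negation, not $\varepsilon_0=c=n$ (which is incompatible with $\varepsilon_0\in(0,1)$); this does not affect the rest of the argument.
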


Note that \eqref{E:KoH1_TSS} is the uniform Korn inequality in the sense that we can take the constant $c>0$ independently of $\varepsilon$.
It was shown in \cite{LewMul11,Miu22_01} by a contradiction argument, but here we give the proof without contradiction.

\begin{proof}
  For any $\mathbf{a}\in\mathbb{R}^3$, we see by \eqref{E:CoV_Thin} and $\mathbf{r}_{\mathrm{a}}(x)=\mathbf{a}\times x$ that
  \begin{align*}
    (\mathbf{u},\mathbf{r}_{\mathbf{a}})_{L^2(\Omega_\varepsilon)} &= \int_{S^2}\int_1^{1+\varepsilon}\mathbf{u}(ry)\cdot\{\mathbf{a}\times(ry)\}r^2\,dr\,d\mathcal{H}^2(y) \\
    &= \int_{S^2}\left(\int_1^{1+\varepsilon}\mathbf{u}(ry)r^3\,dr\right)\cdot\mathbf{r}_{\mathbf{a}}(y)\,d\mathcal{H}^2(y) = \varepsilon(\mathcal{M}_\varepsilon^3\mathbf{u},\mathbf{r}_{\mathbf{a}})_{L^2(S^2)}.
  \end{align*}
  Moreover, since $\mathbf{r}_{\mathbf{a}}(y)\cdot\mathbf{n}(y)=(\mathbf{a}\times y)\cdot y=0$ for all $y\in S^2$, we have
  \begin{align*}
    (\mathbf{u},\mathbf{r}_{\mathbf{a}})_{L^2(\Omega_\varepsilon)} = \varepsilon(\mathcal{M}_\varepsilon^3\mathbf{u},\mathbf{r}_{\mathbf{a}})_{L^2(S^2)} = \varepsilon(\mathcal{M}_{\varepsilon,\tau}^3\mathbf{u},\mathbf{r}_{\mathbf{a}})_{L^2(S^2)}.
  \end{align*}
  Thus, if $\mathbf{u}\in H^1(\Omega_\varepsilon)^3$ satisfies the assumption of the lemma, then
  \begin{align*}
    \mathcal{M}_{\varepsilon,\tau}^3\mathbf{u} \in H_\tau^1(S^2), \quad (\mathcal{M}_{\varepsilon,\tau}^3\mathbf{u},\mathbf{r}_{\mathbf{a}})_{L^2(S^2)} = 0 \quad\text{for all}\quad \mathbf{a}\in\mathbb{R}^3,
  \end{align*}
  and we can apply \eqref{E:KoH1_S2} to $\mathcal{M}_{\varepsilon,\tau}^3\mathbf{u}$.
  By this fact, \eqref{E:VT_str}, and $|\mathbf{P}|=\sqrt{2}$ on $S^2$,
  \begin{align*}
    \|\mathcal{M}_{\varepsilon,\tau}^3\mathbf{u}\|_{H^1(S^2)} &\leq c\|\mathbf{D}_{S^2}(\mathcal{M}_{\varepsilon,\tau}^3\mathbf{u})\|_{L^2(S^2)} \\
    &\leq c\Bigl(\|\mathbf{D}_{S^2}(\mathcal{M}_\varepsilon^3\mathbf{u})\|_{L^2(S^2)}+\|\mathcal{M}_\varepsilon^3\mathbf{u}\cdot\mathbf{n}\|_{L^2(S^2)}\Bigr).
  \end{align*}
  Moreover, we see by \eqref{E:Ave_TGr} and $\mathbf{P}^2=\mathbf{P}^{\mathrm{T}}=\mathbf{P}$ on $S^2$ that
  \begin{align*}
    \mathbf{D}_{S^2}(\mathcal{M}_\varepsilon^3\mathbf{u}) = \mathbf{P}(\nabla_{S^2}\mathcal{M}_\varepsilon^3\mathbf{u})_{\mathrm{S}}\mathbf{P} = \mathbf{P}(\mathcal{M}_\varepsilon^4[\nabla\mathbf{u}])_{\mathrm{S}}\mathbf{P} = \mathbf{P}\mathcal{M}_\varepsilon^4[\mathbf{D}(\mathbf{u})]\mathbf{P}
  \end{align*}
  on $S^2$ (recall that $\mathbf{A}_{\mathrm{S}}=(\mathbf{A}+\mathbf{A}^{\mathrm{T}})/2$ for $\mathbf{A}\in\mathbb{R}^{3\times3}$).
  Hence,
  \begin{align} \label{Pf_KHT:MuL2}
    \begin{aligned}
      \|\mathcal{M}_{\varepsilon,\tau}^3\mathbf{u}\|_{H^1(S^2)} &\leq c\Bigl(\|\mathbf{P}\mathcal{M}_\varepsilon^4[\mathbf{D}(\mathbf{u})]\mathbf{P}\|_{L^2(S^2)}+\|\mathcal{M}_\varepsilon^3\mathbf{u}\cdot\mathbf{n}\|_{L^2(S^2)}\Bigr) \\
      &\leq c\Bigl(\varepsilon^{-1/2}\|\mathbf{D}(\mathbf{u})\|_{L^2(\Omega_\varepsilon)}+\varepsilon^{1/2}\|\mathbf{u}\|_{H^1(\Omega_\varepsilon)}\Bigr)
    \end{aligned}
  \end{align}
  by $|\mathbf{P}|=\sqrt{2}$ on $S^2$, \eqref{E:Ave_L2}, and \eqref{E:Ave_NC} (note that $\mathbf{u}\cdot\mathbf{n}_\varepsilon=0$ on $\partial\Omega_\varepsilon$).
  Now,
  \begin{align*}
    \|\mathbf{u}\|_{L^2(\Omega_\varepsilon)} &\leq \Bigl\|\mathbf{u}-\overline{\mathcal{M}_{\varepsilon,\tau}^3\mathbf{u}}\Bigr\|_{L^2(\Omega_\varepsilon)}+\Bigl\|\overline{\mathcal{M}_{\varepsilon,\tau}^3\mathbf{u}}\Bigr\|_{L^2(\Omega_\varepsilon)} \\
    &\leq c\Bigl(\varepsilon\|\mathbf{u}\|_{H^1(\Omega_\varepsilon)}+\varepsilon^{1/2}\|\mathcal{M}_{\varepsilon,\tau}^3\mathbf{u}\|_{L^2(S^2)}\Bigr) \\
    &\leq c\Bigl(\|\mathbf{D}(\mathbf{u})\|_{L^2(\Omega_\varepsilon)}+\varepsilon\|\mathbf{u}\|_{H^1(\Omega_\varepsilon)}\Bigr)
  \end{align*}
  by \eqref{E:CoEx_L2}, \eqref{E:Atan_Con}, and \eqref{Pf_KHT:MuL2}.
  Then, we further use \eqref{E:Korn_Thin} to get
  \begin{align*}
    \|\mathbf{u}\|_{L^2(\Omega_\varepsilon)} \leq c_1(1+\varepsilon)\|\mathbf{D}(\mathbf{u})\|_{L^2(\Omega_\varepsilon)}+c_2\varepsilon\|\mathbf{u}\|_{L^2(\Omega_\varepsilon)}
  \end{align*}
  with some constants $c_1,c_2>0$ independent of $\varepsilon$.
  Thus, setting
  \begin{align*}
    \varepsilon_0 := \min\left\{\frac{1}{2},\frac{1}{2c_2}\right\} \in (0,1),
  \end{align*}
  we have $c_2\varepsilon<1/2$ for $\varepsilon\in(0,\varepsilon_0]$ and deduce from the above inequality that
  \begin{align*}
    \|\mathbf{u}\|_{L^2(\Omega_\varepsilon)} \leq 2c_1(1+\varepsilon)\|\mathbf{D}(\mathbf{u})\|_{L^2(\Omega_\varepsilon)} \leq 4c_1\|\mathbf{D}(\mathbf{u})\|_{L^2(\Omega_\varepsilon)}.
  \end{align*}
  By this estimate and \eqref{E:Korn_Thin}, we obtain \eqref{E:KoH1_TSS}.
\end{proof}

\subsection{Preservation of orthogonality} \label{SS:Gl_Orth}
Next, we see that weak solutions to \eqref{E:NS_TSS} and \eqref{E:NS_S2} are orthogonal to the vector fields $\mathbf{r}_{\mathbf{a}}$ if given data satisfy the same condition.

Let us first consider a weak solution to \eqref{E:NS_TSS} on $\Omega_\varepsilon$.

\begin{lemma} \label{L:Rota_TSS}
  For each $\mathbf{a}\in\mathbb{R}^3$, the restriction of $\mathbf{r}_{\mathbf{a}}$ to $\Omega_\varepsilon$ satisfies
  \begin{align*}
    \mathbf{r}_{\mathbf{a}} \in \mathcal{V}_\varepsilon, \quad \mathbf{D}(\mathbf{r}_{\mathbf{a}}) = \mathbf{O}_3 \quad\text{in}\quad \Omega_\varepsilon, \quad \bigl((\mathbf{u}\cdot\nabla)\mathbf{u},\mathbf{r}_{\mathbf{a}}\bigr)_{L^2(\Omega_\varepsilon)} = 0 \quad\text{for all}\quad \mathbf{u}\in\mathcal{V}_\varepsilon.
  \end{align*}
\end{lemma}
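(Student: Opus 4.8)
The plan is to verify the three assertions in turn, all by direct computation with the explicit field $\mathbf{r}_{\mathbf{a}}(x)=\mathbf{a}\times x$. First I would establish $\mathbf{r}_{\mathbf{a}}\in\mathcal{V}_\varepsilon$. Since $x\mapsto\mathbf{a}\times x$ is linear, $\mathbf{r}_{\mathbf{a}}$ is smooth and hence lies in $H^1(\Omega_\varepsilon)^3$ on the bounded domain $\Omega_\varepsilon$. Writing out the components $\mathbf{a}\times x=(a_2x_3-a_3x_2,\,a_3x_1-a_1x_3,\,a_1x_2-a_2x_1)^{\mathrm{T}}$ gives $\mathrm{div}\,\mathbf{r}_{\mathbf{a}}=0$ at once. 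For the boundary condition I would use that on $\partial\Omega_\varepsilon$ the outward normal is radial, namely $\mathbf{n}_\varepsilon(y)=-y$ for $y\in S^2$ and $\mathbf{n}_\varepsilon((1+\varepsilon)y)=y$, as already recorded in the proof of Lemma \ref{L:Ave_div}. Since $\mathbf{n}_\varepsilon(x)$ is thus a scalar multiple of $x$ and $(\mathbf{a}\times x)\cdot x=0$, we obtain $\mathbf{r}_{\mathbf{a}}\cdot\mathbf{n}_\varepsilon=0$ on $\partial\Omega_\varepsilon$, which combined with the previous two facts yields $\mathbf{r}_{\mathbf{a}}\in\mathcal{V}_\varepsilon$.

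Next I would compute the gradient matrix. Differentiating the components above with the convention $\nabla\mathbf{u}=(\partial_iu_j)_{i,j}$ produces the constant matrix with rows $(0,a_3,-a_2)$, $(-a_3,0,a_1)$, $(a_2,-a_1,0)$, which is skew-symmetric. Its symmetric part $\mathbf{D}(\mathbf{r}_{\mathbf{a}})=(\nabla\mathbf{r}_{\mathbf{a}})_{\mathrm{S}}$ therefore vanishes, so $\mathbf{D}(\mathbf{r}_{\mathbf{a}})=\mathbf{O}_3$ in $\Omega_\varepsilon$.

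For the trilinear identity, let $\mathbf{u}\in\mathcal{V}_\varepsilon$. Since $\mathbf{u}\in\mathcal{V}_\varepsilon$ and $\mathbf{r}_{\mathbf{a}}\in H^1(\Omega_\varepsilon)^3$, the integration-by-parts relation \eqref{E:TrTS_AnS}, applied with $\mathbf{z}=\mathbf{u}$ and $\bm{\psi}=\mathbf{r}_{\mathbf{a}}$, gives $\bigl((\mathbf{u}\cdot\nabla)\mathbf{u},\mathbf{r}_{\mathbf{a}}\bigr)_{L^2(\Omega_\varepsilon)}=-\bigl(\mathbf{u},(\mathbf{u}\cdot\nabla)\mathbf{r}_{\mathbf{a}}\bigr)_{L^2(\Omega_\varepsilon)}$. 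Because $\mathbf{r}_{\mathbf{a}}$ is linear, the directional derivative is $(\mathbf{u}\cdot\nabla)\mathbf{r}_{\mathbf{a}}=\mathbf{a}\times\mathbf{u}$, which is orthogonal to $\mathbf{u}$ at every point, so the integrand $\mathbf{u}\cdot(\mathbf{a}\times\mathbf{u})$ vanishes identically and the trilinear term is zero. Equivalently, one can bypass integration by parts and note directly that $(\mathbf{u}\cdot\nabla)\mathbf{r}_{\mathbf{a}}=(\nabla\mathbf{r}_{\mathbf{a}})^{\mathrm{T}}\mathbf{u}$ is the image of $\mathbf{u}$ under a skew-symmetric matrix, which is the same cancellation seen from the gradient computation of the previous step.

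None of these steps presents a genuine obstacle; the computations are routine. The only point requiring a little care is the boundary tangency, where one must invoke the radial structure of $\mathbf{n}_\varepsilon$ on $\partial\Omega_\varepsilon$ rather than treat the boundary abstractly. The trilinear cancellation is the single place where an earlier result enters, and I would present it through \eqref{E:TrTS_AnS} to keep the argument consistent with the rest of the paper.
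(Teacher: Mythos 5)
Your proof is correct and follows essentially the same route as the paper's: direct computation of $\mathrm{div}\,\mathbf{r}_{\mathbf{a}}=0$ and the skew-symmetric gradient (hence $\mathbf{D}(\mathbf{r}_{\mathbf{a}})=\mathbf{O}_3$), boundary tangency from the radial form of $\mathbf{n}_\varepsilon$, and the trilinear cancellation via \eqref{E:TrTS_AnS} together with $(\mathbf{u}\cdot\nabla)\mathbf{r}_{\mathbf{a}}=\mathbf{a}\times\mathbf{u}\perp\mathbf{u}$. Your write-up is merely more explicit in the component computations; no differences of substance.
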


\begin{proof}
  Since $\mathbf{r}_{\mathbf{a}}(x)=\mathbf{a}\times x$ for $x\in\mathbb{R}^3$, we see by direct calculations that
  \begin{align*}
    \mathbf{D}(\mathbf{r}_{\mathbf{a}}) = \mathbf{O}_3, \quad \mathrm{div}\,\mathbf{r}_{\mathbf{a}} = 0, \quad (\mathbf{b}\cdot\nabla)\mathbf{r}_{\mathbf{a}} = \mathbf{a}\times\mathbf{b} \quad (\mathbf{b}\in\mathbb{R}^3) \quad\text{in}\quad \mathbb{R}^3
  \end{align*}
  and $\mathbf{r}_{\mathbf{a}}(x)\cdot\mathbf{n}_\varepsilon(x)=(\mathbf{a}\times x)\cdot(\pm x/|x|)=0$ for $x\in\partial\Omega_\varepsilon$.
  Thus, $\mathbf{r}_{\mathbf{a}}\in\mathcal{V}_\varepsilon$.
  Also,
  \begin{align*}
    \bigl((\mathbf{u}\cdot\nabla)\mathbf{u},\mathbf{r}_{\mathbf{a}}\bigr)_{L^2(\Omega_\varepsilon)} = -\bigl(\mathbf{u},(\mathbf{u}\cdot\nabla)\mathbf{r}_{\mathbf{a}}\bigr)_{L^2(\Omega_\varepsilon)} = -(\mathbf{u},\mathbf{a}\times\mathbf{u})_{L^2(\Omega_\varepsilon)} = 0
  \end{align*}
  for all $\mathbf{u}\in\mathcal{V}_\varepsilon$ by \eqref{E:TrTS_AnS}.
\end{proof}

\begin{proposition} \label{P:TSS_Orth}
  For given $\mathbf{u}_0^\varepsilon\in\mathcal{H}_\varepsilon$ and $\mathbf{f}^\varepsilon\in L_{\mathrm{loc}}^2([0,\infty);\mathcal{V}_\varepsilon^\ast)$, let $\mathbf{u}^\varepsilon$ be a weak solution to \eqref{E:NS_TSS}.
  Also, let $\mathbf{a}\in\mathbb{R}^3$.
  Suppose that
  \begin{align*}
    (\mathbf{u}_0^\varepsilon,\mathbf{r}_{\mathbf{a}})_{L^2(\Omega_\varepsilon)} = 0, \quad \langle\mathbf{f}^\varepsilon(t),\mathbf{r}_{\mathbf{a}}\rangle_{\mathcal{V}_\varepsilon} = 0 \quad\text{for a.a.}\quad t\in(0,\infty).
  \end{align*}
  Then, $(\mathbf{u}^\varepsilon(t),\mathbf{r}_{\mathbf{a}})_{L^2(\Omega_\varepsilon)}=0$ for all $t\geq0$.
\end{proposition}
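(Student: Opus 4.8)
The plan is to use the time-independent field $\mathbf{r}_{\mathbf{a}}$ itself, scaled by a scalar cutoff in time, as a test function in the weak formulation \eqref{E:WF_TSS}, exploiting the three properties of $\mathbf{r}_{\mathbf{a}}$ collected in Lemma \ref{L:Rota_TSS}. Concretely, I would fix $\phi \in C_c^1([0,\infty))$ and set $\bm{\psi}(t) := \phi(t)\mathbf{r}_{\mathbf{a}}$. Since $\mathbf{r}_{\mathbf{a}} \in \mathcal{V}_\varepsilon$, this is an admissible test function in $C_c^1([0,\infty);\mathcal{V}_\varepsilon)$, with $\bm{\psi}(0) = \phi(0)\mathbf{r}_{\mathbf{a}}$ and $\partial_t\bm{\psi} = \phi'\mathbf{r}_{\mathbf{a}}$.

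First I would substitute this $\bm{\psi}$ into \eqref{E:WF_TSS} and simplify term by term. The viscous term drops out because $\mathbf{D}(\bm{\psi}) = \phi\,\mathbf{D}(\mathbf{r}_{\mathbf{a}}) = \mathbf{O}_3$. The trilinear term vanishes pointwise in time because $\bigl((\mathbf{u}^\varepsilon \cdot \nabla)\mathbf{u}^\varepsilon, \mathbf{r}_{\mathbf{a}}\bigr)_{L^2(\Omega_\varepsilon)} = 0$ for $\mathbf{u}^\varepsilon(t) \in \mathcal{V}_\varepsilon$, again by Lemma \ref{L:Rota_TSS}. The forcing term vanishes by the hypothesis $\langle\mathbf{f}^\varepsilon(t), \mathbf{r}_{\mathbf{a}}\rangle_{\mathcal{V}_\varepsilon} = 0$ for a.a.\ $t$, and the initial term $\phi(0)(\mathbf{u}_0^\varepsilon, \mathbf{r}_{\mathbf{a}})_{L^2(\Omega_\varepsilon)}$ vanishes by the hypothesis on $\mathbf{u}_0^\varepsilon$. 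Writing $g(t) := (\mathbf{u}^\varepsilon(t), \mathbf{r}_{\mathbf{a}})_{L^2(\Omega_\varepsilon)}$, what survives is
\begin{align*}
  \int_0^\infty \phi'(t)\,g(t)\,dt = 0 \quad\text{for all}\quad \phi \in C_c^1([0,\infty)).
\end{align*}

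Finally I would deduce $g \equiv 0$ from this relation. Restricting first to $\phi \in C_c^1((0,\infty))$ shows that the distributional derivative of $g$ vanishes on $(0,\infty)$, so $g$ is constant there; since $\mathbf{u}^\varepsilon \in C_{\mathrm{weak}}([0,T];\mathcal{H}_\varepsilon)$ for every $T>0$ by Proposition \ref{P:TSS_WeCo}, the function $g$ is continuous on $[0,\infty)$, hence equal to a single constant $C$ on all of $[0,\infty)$. Plugging $g \equiv C$ back into the displayed identity and choosing $\phi$ with $\phi(0) \neq 0$ gives $C\phi(0) = 0$, whence $C = 0$ and thus $(\mathbf{u}^\varepsilon(t), \mathbf{r}_{\mathbf{a}})_{L^2(\Omega_\varepsilon)} = 0$ for all $t \geq 0$.

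The argument is essentially a direct computation, so there is no serious obstacle; the only point requiring care is the low time regularity of $\mathbf{u}^\varepsilon$. Because $\partial_t\mathbf{u}^\varepsilon$ is merely in $L_{\mathrm{loc}}^{4/3}([0,\infty);\mathcal{V}_\varepsilon^\ast)$ and $\mathbf{u}^\varepsilon$ is only weakly continuous in time, one cannot differentiate $g$ classically; the conclusion must instead be drawn distributionally as above, with the weak continuity from Proposition \ref{P:TSS_WeCo} supplying the pointwise-in-$t$ information needed to fix the constant and to interpret the statement for \emph{every} $t \geq 0$ rather than almost every $t$.
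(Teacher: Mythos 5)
Your proof is correct and follows essentially the same route as the paper: test \eqref{E:WF_TSS} with a scalar cutoff times $\mathbf{r}_{\mathbf{a}}$, annihilate the viscous, trilinear, and forcing terms via Lemma \ref{L:Rota_TSS} and the hypotheses, and use the weak continuity from Proposition \ref{P:TSS_WeCo} to upgrade the distributional constancy of $g(t):=(\mathbf{u}^\varepsilon(t),\mathbf{r}_{\mathbf{a}})_{L^2(\Omega_\varepsilon)}$ to all $t\geq0$. The one small refinement is that by admitting $\phi\in C_c^1([0,\infty))$ with $\phi(0)\neq0$ you pin the constant to zero directly through the initial-data term of the weak form, whereas the paper tests with $\xi\in C_c^\infty(0,\infty)$ and tacitly uses the attainment $g(0)=(\mathbf{u}_0^\varepsilon,\mathbf{r}_{\mathbf{a}})_{L^2(\Omega_\varepsilon)}$ to fix the constant.
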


\begin{proof}
  Note that $(\mathbf{u}^\varepsilon(\cdot),\mathbf{r}_{\mathbf{a}})_{L^2(\Omega_\varepsilon)}\in C([0,\infty))$ by Proposition \ref{P:TSS_WeCo}.

  For any $\xi=\xi(t)\in C_c^\infty(0,\infty)$, let $\bm{\psi}:=\xi\mathbf{r}_{\mathbf{a}}\in C_c^\infty(0,\infty;\mathcal{V}_\varepsilon)$ in \eqref{E:WF_TSS}.
  Then,
  \begin{align*}
    \int_0^\infty \frac{d\xi}{dt}(t)(\mathbf{u}^\varepsilon(t),\mathbf{r}_{\mathbf{a}})_{L^2(\Omega_\varepsilon)}\,dt = 0
  \end{align*}
  by Lemma \ref{L:Rota_TSS} and the assumption on $\mathbf{f}^\varepsilon$.
  Hence,
  \begin{align*}
    \frac{d}{dt}(\mathbf{u}^\varepsilon(t),\mathbf{r}_{\mathbf{a}})_{L^2(\Omega_\varepsilon)} = 0 \quad\text{for a.a}\quad t\in(0,\infty),
  \end{align*}
  and we have $(\mathbf{u}^\varepsilon(t),\mathbf{r}_{\mathbf{a}})_{L^2(\Omega_\varepsilon)}=(\mathbf{u}_0^\varepsilon,\mathbf{r}_{\mathbf{a}})_{L^2(\Omega_\varepsilon)}=0$ for all $t\geq0$.
\end{proof}

Next, we deal with a weak solution to \eqref{E:NS_S2} on $S^2$.

\begin{lemma} \label{L:Rota_S2}
  For each $\mathbf{a}\in\mathbb{R}^3$, the restriction of $\mathbf{r}_{\mathbf{a}}$ to $S^2$ satisfies
  \begin{align*}
    \mathbf{r}_{\mathbf{a}} \in \mathcal{V}_0, \quad \mathbf{D}_{S^2}(\mathbf{r}_{\mathbf{a}}) = \mathbf{O}_3 \quad\text{on}\quad S^2, \quad (\nabla_{\mathbf{v}}\mathbf{v},\mathbf{r}_{\mathbf{a}})_{L^2(S^2)} = 0 \quad\text{for all}\quad \mathbf{v}\in\mathcal{V}_0.
  \end{align*}
\end{lemma}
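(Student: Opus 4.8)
The plan is to follow the same scheme as in the proof of Lemma~\ref{L:Rota_TSS}, transferring the three assertions from $\Omega_\varepsilon$ to $S^2$ by means of the weighted extension \eqref{E:Def_Ext} and the antisymmetry identities \eqref{E:TrS2_AnS}. The starting observation is that, for $y\in S^2$, the weighted extension of the restriction $\mathbf{r}_{\mathbf{a}}|_{S^2}$ coincides with $\mathbf{r}_{\mathbf{a}}$ itself: indeed $(\mathbf{r}_{\mathbf{a}})_{\mathrm{E}}(x)=|x|\,\mathbf{r}_{\mathbf{a}}(x/|x|)=|x|\,\mathbf{a}\times(x/|x|)=\mathbf{a}\times x=\mathbf{r}_{\mathbf{a}}(x)$. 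This lets me import the pointwise computations already recorded for $\mathbf{r}_{\mathbf{a}}$ on $\mathbb{R}^3$ in Lemma~\ref{L:Rota_TSS}.

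First I would check $\mathbf{r}_{\mathbf{a}}\in\mathcal{V}_0$ and $\mathbf{D}_{S^2}(\mathbf{r}_{\mathbf{a}})=\mathbf{O}_3$. Tangency is immediate from $\mathbf{r}_{\mathbf{a}}(y)\cdot\mathbf{n}(y)=(\mathbf{a}\times y)\cdot y=0$, and smoothness gives $\mathbf{r}_{\mathbf{a}}\in H_\tau^1(S^2)$. Since $\mathbf{r}_{\mathbf{a}}|_{S^2}$ is tangential and $(\mathbf{r}_{\mathbf{a}})_{\mathrm{E}}=\mathbf{r}_{\mathbf{a}}$, the strain formula \eqref{E:Ext_str} yields $\overline{\mathbf{D}_{S^2}(\mathbf{r}_{\mathbf{a}})}=\mathbf{D}((\mathbf{r}_{\mathbf{a}})_{\mathrm{E}})=\mathbf{D}(\mathbf{r}_{\mathbf{a}})=\mathbf{O}_3$ by Lemma~\ref{L:Rota_TSS}, whence $\mathbf{D}_{S^2}(\mathbf{r}_{\mathbf{a}})=\mathbf{O}_3$ on $S^2$. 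For the solenoidal condition I would use that, for a tangential field, taking the trace of \eqref{E:TGr_Dec} and using $\mathbf{P}^2=\mathbf{P}^{\mathrm{T}}=\mathbf{P}$ gives $\mathrm{div}_{S^2}\mathbf{r}_{\mathbf{a}}=\mathrm{tr}[\mathbf{D}_{S^2}(\mathbf{r}_{\mathbf{a}})]=0$; alternatively this follows directly from \eqref{E:Ext_div} together with $\mathrm{div}\,\mathbf{r}_{\mathbf{a}}=0$ and $\bar{\mathbf{n}}\cdot\bar{\mathbf{v}}=\overline{\mathbf{n}\cdot\mathbf{r}_{\mathbf{a}}}=0$. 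Thus $\mathbf{r}_{\mathbf{a}}\in\mathcal{V}_0$.

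For the last identity I would invoke the antisymmetry relation \eqref{E:TrS2_AnS} with $\mathbf{w}=\mathbf{v}$ and $\bm{\zeta}=\mathbf{r}_{\mathbf{a}}$, valid since $\mathbf{v}\in\mathcal{V}_0$ and $\mathbf{r}_{\mathbf{a}}\in H_\tau^1(S^2)$, to obtain $(\nabla_{\mathbf{v}}\mathbf{v},\mathbf{r}_{\mathbf{a}})_{L^2(S^2)}=-(\mathbf{v},\nabla_{\mathbf{v}}\mathbf{r}_{\mathbf{a}})_{L^2(S^2)}$. It then remains to compute $\nabla_{\mathbf{v}}\mathbf{r}_{\mathbf{a}}$. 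Because $\mathbf{v}$ is tangential, $\mathbf{v}\cdot\nabla_{S^2}(\mathbf{r}_{\mathbf{a}})_j=(\mathbf{P}\mathbf{v})\cdot\nabla(\mathbf{r}_{\mathbf{a}})_j=\mathbf{v}\cdot\nabla(\mathbf{r}_{\mathbf{a}})_j$, so $(\mathbf{v}\cdot\nabla_{S^2})\mathbf{r}_{\mathbf{a}}=(\mathbf{v}\cdot\nabla)\mathbf{r}_{\mathbf{a}}=\mathbf{a}\times\mathbf{v}$ by the pointwise identity in Lemma~\ref{L:Rota_TSS}, giving $\nabla_{\mathbf{v}}\mathbf{r}_{\mathbf{a}}=\mathbf{P}(\mathbf{a}\times\mathbf{v})$. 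Finally $(\mathbf{v},\mathbf{P}(\mathbf{a}\times\mathbf{v}))_{L^2(S^2)}=(\mathbf{v},\mathbf{a}\times\mathbf{v})_{L^2(S^2)}=0$ since $\mathbf{v}\cdot(\mathbf{a}\times\mathbf{v})=0$ pointwise, which closes the argument.

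The argument is largely routine once the extension identity $(\mathbf{r}_{\mathbf{a}})_{\mathrm{E}}=\mathbf{r}_{\mathbf{a}}$ is noticed; the only point requiring care is the passage from the ambient strain $\mathbf{D}(\mathbf{r}_{\mathbf{a}})=\mathbf{O}_3$ to the surface strain $\mathbf{D}_{S^2}(\mathbf{r}_{\mathbf{a}})=\mathbf{O}_3$ and, relatedly, the reduction of the covariant derivative $\nabla_{\mathbf{v}}\mathbf{r}_{\mathbf{a}}$ to the ambient directional derivative via the tangency of $\mathbf{v}$. I would double-check that $\overline{\mathbf{D}_{S^2}(\mathbf{r}_{\mathbf{a}})}=\mathbf{O}_3$ genuinely forces $\mathbf{D}_{S^2}(\mathbf{r}_{\mathbf{a}})=\mathbf{O}_3$ (it does, since $\bar{\eta}=0$ on $\mathbb{R}^3\setminus\{0\}$ iff $\eta=0$ on $S^2$), and alternatively note that $\mathbf{D}_{S^2}(\mathbf{r}_{\mathbf{a}})=\mathbf{O}_3$ can be verified directly from $\mathbf{D}_{S^2}(\mathbf{r}_{\mathbf{a}})=\mathbf{P}(\nabla_{S^2}\mathbf{r}_{\mathbf{a}})_{\mathrm{S}}\mathbf{P}$, $\nabla_{S^2}\mathbf{r}_{\mathbf{a}}=\mathbf{P}\nabla\mathbf{r}_{\mathbf{a}}$, and the antisymmetry of the constant matrix $\nabla\mathbf{r}_{\mathbf{a}}$.
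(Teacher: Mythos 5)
Your proposal is correct, and for the hardest assertion it coincides with the paper's proof: you invoke the antisymmetry relation \eqref{E:TrS2_AnS} with $\mathbf{w}=\mathbf{v}$, $\bm{\zeta}=\mathbf{r}_{\mathbf{a}}$, reduce $(\mathbf{v}\cdot\nabla_{S^2})\mathbf{r}_{\mathbf{a}}=(\mathbf{v}\cdot\nabla)\mathbf{r}_{\mathbf{a}}=\mathbf{a}\times\mathbf{v}$ using the tangency of $\mathbf{v}$, and conclude from the pointwise identity $\mathbf{v}\cdot\mathbf{P}(\mathbf{a}\times\mathbf{v})=\mathbf{v}\cdot(\mathbf{a}\times\mathbf{v})=0$, exactly as in the paper. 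Where you genuinely differ is in establishing $\mathbf{D}_{S^2}(\mathbf{r}_{\mathbf{a}})=\mathbf{O}_3$ and $\mathrm{div}_{S^2}\mathbf{r}_{\mathbf{a}}=0$: you observe $(\mathbf{r}_{\mathbf{a}})_{\mathrm{E}}=\mathbf{r}_{\mathbf{a}}$ and pull both facts back through \eqref{E:Ext_str} and \eqref{E:Ext_div} from the ambient computations of Lemma \ref{L:Rota_TSS}, whereas the paper computes directly on the sphere, writing $\mathbf{D}_{S^2}(\mathbf{r}_{\mathbf{a}})=\mathbf{P}(\nabla_{S^2}\mathbf{r}_{\mathbf{a}})_{\mathrm{S}}\mathbf{P}=\mathbf{P}[\mathbf{D}(\mathbf{r}_{\mathbf{a}})]\mathbf{P}=\mathbf{O}_3$ and extracting $\mathrm{div}_{S^2}\mathbf{r}_{\mathbf{a}}=0$ from the trace of \eqref{E:TGr_Dec} together with $\mathbf{r}_{\mathbf{a}}\cdot\mathbf{n}=0$. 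Both routes are sound: yours buys reuse of the extension machinery of Lemma \ref{L:Ext_Grad} (and \eqref{E:Ext_str} indeed applies, since $\mathbf{r}_{\mathbf{a}}|_{S^2}$ is tangential), and the observation $(\mathbf{r}_{\mathbf{a}})_{\mathrm{E}}=\mathbf{r}_{\mathbf{a}}$ is a pleasant structural remark worth having in mind elsewhere; the paper's computation is self-contained on $S^2$ and avoids invoking thin-domain apparatus for a purely surface statement --- and your closing alternative, via the antisymmetry of the constant matrix $\nabla\mathbf{r}_{\mathbf{a}}$, is precisely the paper's argument. One cosmetic slip: when invoking \eqref{E:Ext_div} you wrote $\bar{\mathbf{n}}\cdot\bar{\mathbf{v}}=\overline{\mathbf{n}\cdot\mathbf{r}_{\mathbf{a}}}=0$, mixing the generic field $\mathbf{v}$ of that formula with $\mathbf{r}_{\mathbf{a}}$; the intended specialization $\mathbf{v}=\mathbf{r}_{\mathbf{a}}$ is clear from context and the step is otherwise fine.
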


\begin{proof}
  We have $\mathbf{r}_{\mathbf{a}}(y)\cdot\mathbf{n}(y)=(\mathbf{a}\times y)\cdot y=0$ for $y\in S^2$.
  Moreover,
  \begin{align*}
    \mathbf{D}_{S^2}(\mathbf{r}_{\mathbf{a}}) = \mathbf{P}(\nabla_{S^2}\mathbf{r}_{\mathbf{a}})_{\mathrm{S}}\mathbf{P} = \mathbf{P}(\nabla\mathbf{r}_{\mathbf{a}})_{\mathrm{S}}\mathbf{P} = \mathbf{P}[\mathbf{D}(\mathbf{r}_{\mathbf{a}})]\mathbf{P} = \mathbf{O}_3 \quad\text{on}\quad S^2
  \end{align*}
  by $\nabla_{S^2}\mathbf{r}_{\mathbf{a}}=\mathbf{P}\nabla\mathbf{r}_{\mathbf{a}}$ and $\mathbf{P}^2=\mathbf{P}^{\mathrm{T}}=\mathbf{P}$.
  We also have $\mathrm{div}_{S^2}\mathbf{r}_{\mathbf{a}}=0$ on $S^2$, since
  \begin{align*}
    \mathrm{tr}[\mathbf{D}_{S^2}(\mathbf{r}_{\mathbf{a}})] = \mathrm{tr}[\mathbf{P}(\nabla_{S^2}\mathbf{r}_{\mathbf{a}})\mathbf{P}] = \mathrm{tr}[\nabla_{S^2}\mathbf{r}_{\mathbf{a}}+\mathbf{r}_{\mathbf{a}}\otimes\mathbf{n}] = \mathrm{div}_{S^2}\mathbf{r}_{\mathbf{a}}+\mathbf{r}_{\mathbf{a}}\cdot\mathbf{n}
  \end{align*}
  by $\mathrm{tr}[\mathbf{A}_{\mathrm{S}}]=\mathrm{tr}[\mathbf{A}]$ for $\mathbf{A}\in\mathbb{R}^{3\times3}$ and \eqref{E:TGr_Dec}.
  Thus, $\mathbf{r}_{\mathbf{a}}\in\mathcal{V}_0$.

  Let $\mathbf{v}\in\mathcal{V}_0$.
  Then, $(\nabla_{\mathbf{v}}\mathbf{v},\mathbf{r}_{\mathbf{a}})_{L^2(S^2)}=-(\mathbf{v},\nabla_{\mathbf{v}}\mathbf{r}_{\mathbf{a}})_{L^2(S^2)}$ by \eqref{E:TrS2_AnS}.
  Moreover,
  \begin{align*}
    (\mathbf{v}\cdot\nabla_{S^2})\mathbf{r}_{\mathbf{a}} &= (\mathbf{v}\cdot\nabla)\mathbf{r}_{\mathbf{a}} = \mathbf{a}\times\mathbf{v}, \\
    \mathbf{v}\cdot\nabla_{\mathbf{v}}\mathbf{r}_{\mathbf{a}} &= \mathbf{v}\cdot(\mathbf{P}[(\mathbf{v}\cdot\nabla_{S^2})\mathbf{r}_{\mathbf{a}}]) = \mathbf{v}\cdot(\mathbf{a}\times\mathbf{v}) = 0
  \end{align*}
  on $S^2$, since $\mathbf{v}$ is tangential on $S^2$.
  Hence, $(\nabla_{\mathbf{v}}\mathbf{v},\mathbf{r}_{\mathbf{a}})_{L^2(S^2)}=0$.
\end{proof}

\begin{proposition} \label{P:S2_Orth}
  For given $\mathbf{v}_0\in\mathcal{H}_0$ and $\mathbf{f}\in L_{\mathrm{loc}}^2([0,\infty);\mathcal{V}_0^\ast)$, let $\mathbf{v}$ be a weak solution to \eqref{E:NS_S2}.
  Also, let $\mathbf{a}\in\mathbb{R}^3$.
  Suppose that
  \begin{align*}
    (\mathbf{v}_0,\mathbf{r}_{\mathbf{a}})_{L^2(S^2)} = 0, \quad \langle\mathbf{f}(t),\mathbf{r}_{\mathbf{a}}\rangle_{\mathcal{V}_0} = 0 \quad\text{for a.a.}\quad t\in(0,\infty).
  \end{align*}
  Then, $(\mathbf{v}(t),\mathbf{r}_{\mathbf{a}})_{L^2(S^2)}=0$ and also $(\mathbf{v}_{\mathrm{E}}(t),\mathbf{r}_{\mathbf{a}})_{L^2(\Omega_\varepsilon)}=0$ for all $t\geq0$.
\end{proposition}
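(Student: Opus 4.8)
The plan is to transcribe the argument of Proposition \ref{P:TSS_Orth} to the sphere, substituting the properties of $\mathbf{r}_{\mathbf{a}}$ on $S^2$ recorded in Lemma \ref{L:Rota_S2} for the thin-shell versions used there. First I observe that, by Proposition \ref{P:S2_Cont}, the weak solution satisfies $\mathbf{v}\in C([0,\infty);\mathcal{H}_0)$ with $\mathbf{v}(0)=\mathbf{v}_0$; since $\mathbf{r}_{\mathbf{a}}\in\mathcal{V}_0\subset\mathcal{H}_0$, the scalar map $t\mapsto(\mathbf{v}(t),\mathbf{r}_{\mathbf{a}})_{L^2(S^2)}$ is continuous. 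It therefore suffices to show that its distributional time derivative vanishes.

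To this end I take an arbitrary $\xi\in C_c^\infty(0,\infty)$ and set $\bm{\zeta}:=\xi\mathbf{r}_{\mathbf{a}}$, which lies in $C_c^\infty(0,\infty;\mathcal{V}_0)$ because $\mathbf{r}_{\mathbf{a}}\in\mathcal{V}_0$ by Lemma \ref{L:Rota_S2}, and substitute it into the weak form \eqref{E:WF_S2}. The viscous term drops out since $\mathbf{D}_{S^2}(\bm{\zeta})=\xi\,\mathbf{D}_{S^2}(\mathbf{r}_{\mathbf{a}})=\mathbf{O}_3$; the convective term drops out because $(\nabla_{\mathbf{v}}\mathbf{v},\mathbf{r}_{\mathbf{a}})_{L^2(S^2)}=0$ for $\mathbf{v}\in\mathcal{V}_0$; the forcing term drops out by the hypothesis $\langle\mathbf{f}(t),\mathbf{r}_{\mathbf{a}}\rangle_{\mathcal{V}_0}=0$; and the initial term drops out since $\xi(0)=0$. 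What remains is
\begin{align*}
  \int_0^\infty\frac{d\xi}{dt}(t)\,(\mathbf{v}(t),\mathbf{r}_{\mathbf{a}})_{L^2(S^2)}\,dt = 0.
\end{align*}
As $\xi$ is arbitrary, the weak derivative of $(\mathbf{v}(\cdot),\mathbf{r}_{\mathbf{a}})_{L^2(S^2)}$ vanishes, so this continuous function is constant; evaluating at $t=0$ gives $(\mathbf{v}(t),\mathbf{r}_{\mathbf{a}})_{L^2(S^2)}=(\mathbf{v}_0,\mathbf{r}_{\mathbf{a}})_{L^2(S^2)}=0$ for all $t\geq0$.

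For the extension claim I would simply compute the inner product via the change of variables \eqref{E:CoV_Thin}. Since $\mathbf{v}_{\mathrm{E}}(ry)=r\,\mathbf{v}(y)$ and $\mathbf{r}_{\mathbf{a}}(ry)=r\,\mathbf{r}_{\mathbf{a}}(y)$ for $y\in S^2$ and $r\in[1,1+\varepsilon]$ (directly from \eqref{E:Def_Ext} and $\mathbf{r}_{\mathbf{a}}(x)=\mathbf{a}\times x$), the factor $r^2$ from $dx=r^2\,dr\,d\mathcal{H}^2(y)$ yields
\begin{align*}
  (\mathbf{v}_{\mathrm{E}}(t),\mathbf{r}_{\mathbf{a}})_{L^2(\Omega_\varepsilon)} = \left(\int_1^{1+\varepsilon}r^4\,dr\right)(\mathbf{v}(t),\mathbf{r}_{\mathbf{a}})_{L^2(S^2)} = 0,
\end{align*}
so the second assertion follows at once from the first.

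I do not anticipate any serious obstacle: the computation is a routine adaptation of Proposition \ref{P:TSS_Orth}. The only points demanding a little care are the admissibility of $\xi\mathbf{r}_{\mathbf{a}}$ as a test function, which is secured by $\mathbf{r}_{\mathbf{a}}\in\mathcal{V}_0$ from Lemma \ref{L:Rota_S2}, and the passage from a vanishing weak derivative to a pointwise constant, which relies on the strong continuity $\mathbf{v}\in C([0,\infty);\mathcal{H}_0)$ together with $\mathbf{v}(0)=\mathbf{v}_0$ provided by Proposition \ref{P:S2_Cont}.
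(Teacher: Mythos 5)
Your proposal is correct and follows essentially the same route as the paper's own proof: testing the weak form \eqref{E:WF_S2} with $\xi\mathbf{r}_{\mathbf{a}}$, killing the viscous, convective, and forcing terms via Lemma \ref{L:Rota_S2} and the hypothesis on $\mathbf{f}$, and then evaluating $(\mathbf{v}_{\mathrm{E}}(t),\mathbf{r}_{\mathbf{a}})_{L^2(\Omega_\varepsilon)}$ by the change of variables \eqref{E:CoV_Thin}, where your factor $\int_1^{1+\varepsilon}r^4\,dr$ matches the paper's $\frac{1}{5}\{(1+\varepsilon)^5-1\}$. Your explicit appeal to the strong continuity $\mathbf{v}\in C([0,\infty);\mathcal{H}_0)$ to pass from a vanishing weak derivative to a pointwise constant is exactly the regularity the paper invokes at the start of its proof.
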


\begin{proof}
  Recall that $\mathbf{v}_{\mathrm{E}}$ is the extension of $\mathbf{v}$ given by \eqref{E:Def_Ext} and
  \begin{align*}
    \mathbf{v} \in \mathbb{E}_T(\mathcal{V}_0) \subset C([0,T];\mathcal{H}_0), \quad \mathbf{v}_{\mathrm{E}} \in \mathbb{E}_T(\mathcal{V}_\varepsilon) \subset C([0,T];\mathcal{H}_\varepsilon)
  \end{align*}
  for all $T>0$ by Proposition \ref{P:S2_Cont} and Lemmas \ref{L:BS_Ipl} and \ref{L:Ext_Dt}.

  For any $\xi\in C_c^\infty(0,\infty)$, we set $\bm{\zeta}:=\xi\mathbf{r}_{\mathbf{a}}\in C_c^\infty(0,\infty;\mathcal{V}_0)$ in \eqref{E:WF_S2} to get
  \begin{align*}
    \int_0^\infty\frac{d\xi}{dt}(t)(\mathbf{v}(t),\mathbf{r}_{\mathbf{a}})_{L^2(S^2)}\,ds = 0
  \end{align*}
  by Lemma \ref{L:Rota_S2} and the assumption on $\mathbf{f}$.
  Thus,
  \begin{align*}
    \frac{d}{dt}(\mathbf{v}(t),\mathbf{r}_{\mathbf{a}})_{L^2(S^2)} = 0 \quad\text{for a.a.}\quad t\in(0,\infty)
  \end{align*}
  and $(\mathbf{v}(t),\mathbf{r}_{\mathbf{a}})_{L^2(S^2)}=(\mathbf{v}_0,\mathbf{r}_{\mathbf{a}})_{L^2(S^2)}=0$ for all $t\geq0$.
  Also,
  \begin{align*}
    (\mathbf{v}_{\mathrm{E}}(t),\mathbf{r}_{\mathbf{a}})_{L^2(\Omega_\varepsilon)} &= \int_{S^2}\int_1^{1+\varepsilon}[r\mathbf{v}(y,t)\cdot\{\mathbf{a}\times(ry)\}]r^2\,dr\,d\mathcal{H}^2(y) \\
    &= \frac{1}{5}\{(1+\varepsilon)^5-1\}(\mathbf{v}(t),\mathbf{r}_{\mathbf{a}})_{L^2(S^2)} = 0
  \end{align*}
  for all $t\geq0$ by \eqref{E:CoV_Thin}, \eqref{E:Def_Ext}, and $\mathbf{r}_{\mathbf{a}}(x)=\mathbf{a}\times x$ for $x\in\mathbb{R}^3$.
\end{proof}

\subsection{Global difference estimate} \label{SS:Gl_GlDF}
Now, we impose the assumptions of Theorem \ref{T:DEst_Gl}.
Let $\mathbf{u}^\varepsilon$ be a weak solution to \eqref{E:NS_TSS} satisfying the energy inequality \eqref{E:TSS_Ener}.
Also, let $\mathbf{v}$ be a weak solution to \eqref{E:NS_S2} and $\mathbf{v}_{\mathrm{E}}$ be its extension given by \eqref{E:Def_Ext}.
We derive the global difference estimate \eqref{E:DEst_Gl} after showing two auxiliary results.

\begin{proposition} \label{P:S2_GlEn}
  Under the assumptions of Theorem \ref{T:DEst_Gl}, we have
  \begin{align} \label{E:S2_GlEn}
    \|\mathbf{v}(t)\|_{L^2(S^2)}^2+\nu\int_0^t\|\mathbf{v}(t)\|_{H^1(S^2)}^2\,ds \leq cE_0 \quad\text{for all}\quad t\geq0,
  \end{align}
  where $E_0=\|\mathbf{v}_0\|_{L^2(S^2)}^2+\nu^{-1}\int_0^\infty\|\mathbf{f}\|_{\mathcal{V}_0^\ast}^2\,dt$ as in \eqref{E:Def_GlCon}.
\end{proposition}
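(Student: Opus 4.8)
The plan is to run the standard energy estimate for $\mathbf{v}$ but with the \emph{sharp} Korn inequality \eqref{E:KoH1_S2}, which is available precisely because the hypotheses of Theorem \ref{T:DEst_Gl} force $\mathbf{v}(t)$ to be orthogonal to every rotation field $\mathbf{r}_{\mathbf{a}}$. First I would invoke Proposition \ref{P:S2_Orth}: since $(\mathbf{v}_0,\mathbf{r}_{\mathbf{a}})_{L^2(S^2)}=0$ and $\langle\mathbf{f}(t),\mathbf{r}_{\mathbf{a}}\rangle_{\mathcal{V}_0}=0$ for all $\mathbf{a}\in\mathbb{R}^3$ and a.a.\ $t$, we obtain $(\mathbf{v}(t),\mathbf{r}_{\mathbf{a}})_{L^2(S^2)}=0$ for all $t\geq0$ and all $\mathbf{a}\in\mathbb{R}^3$. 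Consequently Lemma \ref{L:KoH1_S2} applies to $\mathbf{v}(t)$ for a.a.\ $t$, giving a constant $c>0$, independent of $\varepsilon$, $t$, and $\nu$, with $\|\mathbf{v}\|_{H^1(S^2)}\leq c\|\mathbf{D}_{S^2}(\mathbf{v})\|_{L^2(S^2)}$; that is, the dissipation controls the full $H^1$-norm with no lower-order remainder.

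Next I would start from the energy equality \eqref{E:S2_Ener}, which holds by Proposition \ref{P:S2_Cont}. Using the Korn bound I would replace the dissipation $2\nu\|\mathbf{D}_{S^2}(\mathbf{v})\|_{L^2(S^2)}^2$ by $\tfrac{2\nu}{c^2}\|\mathbf{v}\|_{H^1(S^2)}^2$ on the left-hand side. The forcing term is estimated by $|\langle\mathbf{f},\mathbf{v}\rangle_{\mathcal{V}_0}|\leq\|\mathbf{f}\|_{\mathcal{V}_0^\ast}\|\mathbf{v}\|_{H^1(S^2)}$, and then by Young's inequality with weight tuned so that the resulting $\|\mathbf{v}\|_{H^1(S^2)}^2$ term takes up exactly half of $\tfrac{2\nu}{c^2}\|\mathbf{v}\|_{H^1(S^2)}^2$; this leaves $\tfrac{\nu}{c^2}\int_0^t\|\mathbf{v}\|_{H^1(S^2)}^2\,ds$ on the left and contributes $\tfrac{c^2}{4\nu}\int_0^t\|\mathbf{f}\|_{\mathcal{V}_0^\ast}^2\,ds$ on the right. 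Absorbing and multiplying through by a fixed constant gives $\|\mathbf{v}(t)\|_{L^2(S^2)}^2+\nu\int_0^t\|\mathbf{v}\|_{H^1(S^2)}^2\,ds\leq c\bigl(\|\mathbf{v}_0\|_{L^2(S^2)}^2+\tfrac{1}{\nu}\int_0^t\|\mathbf{f}\|_{\mathcal{V}_0^\ast}^2\,ds\bigr)$.

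Finally, since $\mathbf{f}\in L^2(0,\infty;\mathcal{V}_0^\ast)$ by hypothesis, I would bound $\int_0^t\|\mathbf{f}\|_{\mathcal{V}_0^\ast}^2\,ds$ by $\int_0^\infty\|\mathbf{f}\|_{\mathcal{V}_0^\ast}^2\,ds$ and read off \eqref{E:S2_GlEn} with $E_0$ as in \eqref{E:Def_GlCon}. There is no genuine obstacle here: the entire point is that the orthogonality hypotheses remove the $L^2$-term from Korn's inequality (Lemma \ref{L:Korn_S2} by itself would only yield a local-in-time bound through Gronwall's inequality), so the dissipation becomes coercive on $H^1(S^2)$ and the forcing can be absorbed directly rather than via an exponentially growing factor. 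The only care required is to verify that the Korn constant and the Young weight are both independent of $\nu$, so that the final constant $c$ is independent of $\varepsilon$, $t$, and $\nu$ as asserted.
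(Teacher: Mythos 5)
Your proposal is correct and follows essentially the same route as the paper's proof: Proposition \ref{P:S2_Orth} to secure $(\mathbf{v}(t),\mathbf{r}_{\mathbf{a}})_{L^2(S^2)}=0$ for all $t\geq0$, the sharp Korn inequality \eqref{E:KoH1_S2} made available by this orthogonality, the energy equality \eqref{E:S2_Ener}, and Young's inequality with a $\nu$-tuned weight to absorb the forcing into the dissipation, with the constant independent of $\varepsilon$, $t$, and $\nu$. The only cosmetic difference is ordering: you convert the dissipation to $\|\mathbf{v}\|_{H^1(S^2)}^2$ at the outset, whereas the paper estimates $\langle\mathbf{f},\mathbf{v}\rangle_{\mathcal{V}_0}$ via $\|\mathbf{D}_{S^2}(\mathbf{v})\|_{L^2(S^2)}$, absorbs half of the $2\nu$ dissipation, and applies \eqref{E:KoH1_S2} a second time at the end.
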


\begin{proof}
  By the assumptions of Theorem \ref{T:DEst_Gl}, we can use Proposition \ref{P:S2_Orth} to get
  \begin{align*}
    (\mathbf{v}(t),\mathbf{r}_{\mathbf{a}})_{L^2(S^2)} = 0 \quad\text{for all}\quad t\geq0 \quad\text{and}\quad \mathbf{a}\in\mathbb{R}^3.
  \end{align*}
  Thus, we can apply \eqref{E:KoH1_S2} to $\mathbf{v}(t)$ for all $t\geq0$, and we have
  \begin{align*}
    \int_0^t\langle\mathbf{f},\mathbf{v}\rangle_{\mathcal{V}_0}\,ds &\leq \int_0^t\|\mathbf{f}\|_{\mathcal{V}_0^\ast}\|\mathbf{v}\|_{H^1(S^2)}\,ds \leq c\int_0^t\|\mathbf{f}\|_{\mathcal{V}_0^\ast}\|\mathbf{D}_{S^2}(\mathbf{v})\|_{L^2(S^2)}\,ds \\
    &\leq \nu\int_0^t\|\mathbf{D}_{S^2}(\mathbf{v})\|_{L^2(S^2)}^2\,ds+\frac{c}{\nu}\int_0^t\|\mathbf{f}\|_{\mathcal{V}_0^\ast}^2\,ds
  \end{align*}
  by Young's inequality.
  Using this estimate to \eqref{E:S2_Ener}, we find that
  \begin{align*}
    \frac{1}{2}\|\mathbf{v}(t)\|_{L^2(S^2)}^2+\nu\int_0^t\|\mathbf{D}_{S^2}(\mathbf{v})\|_{L^2(S^2)}^2\,ds \leq \frac{1}{2}\|\mathbf{v}_0\|_{L^2(S^2)}^2+\frac{c}{\nu}\int_0^t\|\mathbf{f}\|_{\mathcal{V}_0^\ast}^2\,ds \leq cE_0
  \end{align*}
  for all $t\geq0$.
  Hence, we again apply \eqref{E:KoH1_S2} to get \eqref{E:S2_GlEn}.
\end{proof}

\begin{theorem} \label{T:GlDf_vE}
  Under the assumptions of Theorem \ref{T:DEst_Gl}, let $\mathbf{w}^\varepsilon$, $\mathbf{w}_0^\varepsilon$, and $\mathbf{h}^\varepsilon$ be the vector fields given by \eqref{E:Def_weps}.
  Then, we have
  \begin{align} \label{E:GlDf_vE}
    \|\mathbf{w}^\varepsilon(t)\|_{L^2(\Omega_\varepsilon)}^2+\nu\int_0^t\|\mathbf{w}^\varepsilon\|_{H^1(S^2)}^2\,ds \leq cF_0\left(\|\mathbf{w}_0^\varepsilon\|_{L^2(\Omega_\varepsilon)}^2+\frac{1}{\nu}\int_0^t\|\mathbf{h}^\varepsilon\|_{\mathcal{V}_\varepsilon^\ast}^2\,ds+\varepsilon^3G_0\right)
  \end{align}
  for all $t\geq0$, where $F_0$ and $G_0$ are the constants given by \eqref{E:Def_GlCon}.
\end{theorem}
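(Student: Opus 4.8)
The plan is to re-run the weak--strong uniqueness computation behind Theorem~\ref{T:Df_vE}, changing only the Korn inequality used at the very end. First I would reproduce the combined energy inequality \eqref{Pf_DvE:weps} verbatim: it is obtained from the energy inequality \eqref{E:TSS_Ener} for $\mathbf{u}^\varepsilon$, the energy equality \eqref{E:Ext_Ener} for $\mathbf{v}_{\mathrm{E}}$, the pairing identity \eqref{E:uv_pair}, and the cancellation \eqref{Pf_DvE:TrSum}, none of which uses the orthogonality hypotheses of Theorem~\ref{T:DEst_Gl}. Thus, with $\mathbf{w}^\varepsilon$, $\mathbf{w}_0^\varepsilon$, $\mathbf{h}^\varepsilon$ as in \eqref{E:Def_weps}, the inequality \eqref{Pf_DvE:weps} holds as it stands, and the three error terms on its right-hand side are estimated by Young's inequality exactly as in \eqref{Pf_DvE:Res}.

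The decisive new step is to invoke the uniform Korn inequality \eqref{E:KoH1_TSS} in place of \eqref{E:Korn_Thin}. To license it I would first check that $\mathbf{w}^\varepsilon(t)$ is orthogonal to every rigid field $\mathbf{r}_{\mathbf{a}}$: under the hypotheses of Theorem~\ref{T:DEst_Gl}, Propositions~\ref{P:TSS_Orth} and~\ref{P:S2_Orth} give $(\mathbf{u}^\varepsilon(t),\mathbf{r}_{\mathbf{a}})_{L^2(\Omega_\varepsilon)}=0$ and $(\mathbf{v}_{\mathrm{E}}(t),\mathbf{r}_{\mathbf{a}})_{L^2(\Omega_\varepsilon)}=0$ for all $t\ge0$ and all $\mathbf{a}\in\mathbb{R}^3$, whence $(\mathbf{w}^\varepsilon(t),\mathbf{r}_{\mathbf{a}})_{L^2(\Omega_\varepsilon)}=0$. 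Since $\mathbf{w}^\varepsilon(t)\in\mathcal{V}_\varepsilon$ (so $\mathbf{w}^\varepsilon\cdot\mathbf{n}_\varepsilon=0$ on $\partial\Omega_\varepsilon$) and $\varepsilon\le\varepsilon_0$, I may apply \eqref{E:KoH1_TSS} to obtain $\|\mathbf{w}^\varepsilon(t)\|_{H^1(\Omega_\varepsilon)}^2\le c\|\mathbf{D}(\mathbf{w}^\varepsilon(t))\|_{L^2(\Omega_\varepsilon)}^2$. Consequently the dissipation $2\nu\int_0^t\|\mathbf{D}(\mathbf{w}^\varepsilon)\|_{L^2(\Omega_\varepsilon)}^2\,ds$ controls $c^{-1}\nu\int_0^t\|\mathbf{w}^\varepsilon\|_{H^1(\Omega_\varepsilon)}^2\,ds$, which bounds both $\nu\int_0^t\|\nabla\mathbf{w}^\varepsilon\|_{L^2(\Omega_\varepsilon)}^2\,ds$ and the genuinely coercive $\nu\int_0^t\|\mathbf{w}^\varepsilon\|_{L^2(\Omega_\varepsilon)}^2\,ds$ appearing on the left of \eqref{E:GlDf_vE}. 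Choosing $\delta\sim\nu$ in \eqref{Pf_DvE:Res} then absorbs the three $\delta\|\mathbf{w}^\varepsilon\|_{H^1(\Omega_\varepsilon)}^2$ terms into this dissipation.

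Crucially --- and this is what converts a local estimate into a global one --- the absence of the lower-order correction $-c\|\mathbf{w}^\varepsilon\|_{L^2(\Omega_\varepsilon)}^2$ (which is forced when \eqref{E:Korn_Thin} is used) eliminates the additive $\nu$ from the Gronwall coefficient $\psi_{\mathbf{v}}$ of the local proof, leaving only $c\nu^{-3}\|\mathbf{v}\|_{L^2(S^2)}^2\|\mathbf{v}\|_{H^1(S^2)}^2$. Its time-integral over $(0,\infty)$ is finite by Proposition~\ref{P:S2_GlEn}: bounding $\|\mathbf{v}(t)\|_{L^2(S^2)}^2\le cE_0$ uniformly in $t$ and $\int_0^\infty\|\mathbf{v}\|_{H^1(S^2)}^2\,ds\le cE_0/\nu$ yields a finite, time-independent exponent, which reproduces the factor $F_0$ of \eqref{E:Def_GlCon}. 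The same a priori bounds, applied to $\eta_{\mathbf{v}}^2\le c(\nu^2+\|\mathbf{v}\|_{L^2(S^2)}^2)\|\mathbf{v}\|_{H^1(S^2)}^2$ coming from \eqref{E:Def_eta}, turn the residual term $\nu^{-1}\int_0^\infty\varepsilon^3\eta_{\mathbf{v}}^2\,ds$ into $c\varepsilon^3 G_0$ with $G_0=E_0+E_0^2/\nu^2$. Feeding these into Gronwall's inequality, and using $\mathbf{f}\in L^2(0,\infty;\mathcal{V}_0^\ast)$ to guarantee $E_0<\infty$, then produces \eqref{E:GlDf_vE}.

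The main obstacle is ensuring that the Gronwall factor remains bounded uniformly in $t$, which is the entire point of a global estimate. This rests squarely on the uniform Korn inequality being applicable to $\mathbf{w}^\varepsilon$ at every time --- i.e. on the preservation of the orthogonality $(\mathbf{w}^\varepsilon(t),\mathbf{r}_{\mathbf{a}})_{L^2(\Omega_\varepsilon)}=0$ along the flow --- together with the global-in-time energy bound of Proposition~\ref{P:S2_GlEn}; moreover, the $\varepsilon$-uniformity of the constant in \eqref{E:KoH1_TSS}, valid only for $\varepsilon\le\varepsilon_0$, is precisely what keeps all the resulting constants $C_3,C_4$ independent of $\varepsilon$.
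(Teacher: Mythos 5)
Your proposal is correct and follows essentially the same route as the paper's proof: it re-runs the weak--strong computation from \eqref{Pf_DvE:weps} with the error bounds \eqref{Pf_DvE:Res}, justifies replacing \eqref{E:Korn_Thin} by the uniform Korn inequality \eqref{E:KoH1_TSS} via the orthogonality $(\mathbf{w}^\varepsilon(t),\mathbf{r}_{\mathbf{a}})_{L^2(\Omega_\varepsilon)}=0$ from Propositions \ref{P:TSS_Orth} and \ref{P:S2_Orth}, chooses $\delta\sim\nu$ to absorb the $\delta\|\mathbf{w}^\varepsilon\|_{H^1(\Omega_\varepsilon)}^2$ terms, and closes with Gronwall's inequality using the global energy bound \eqref{E:S2_GlEn} to bound $\int_0^t\nu^{-3}\|\mathbf{v}\|_{L^2(S^2)}^2\|\mathbf{v}\|_{H^1(S^2)}^2\,ds$ and $\int_0^t\eta_{\mathbf{v}}^2\,ds$ by constants depending only on $E_0$ and $\nu$. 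You even identify the same key mechanism the paper exploits, namely that \eqref{E:KoH1_TSS} removes the lower-order $\|\mathbf{w}^\varepsilon\|_{L^2(\Omega_\varepsilon)}^2$ correction and hence the additive $\nu$ in the Gronwall coefficient, which is exactly what makes the resulting factor time-independent.
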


\begin{proof}
  Again, we can use Propositions \ref{P:TSS_Orth} and \ref{P:S2_Orth} under the assumptions of Theorem \ref{T:DEst_Gl}.
  Hence, for all $t\geq0$ and $\mathbf{a}\in\mathbb{R}^3$, we have
  \begin{align*}
    (\mathbf{w}^\varepsilon(t),\mathbf{r}_{\mathbf{a}})_{L^2(\Omega_\varepsilon)} = (\mathbf{u}^\varepsilon(t),\mathbf{r}_{\mathbf{a}})_{L^2(\Omega_\varepsilon)}-(\mathbf{v}_{\mathrm{E}}(t),\mathbf{r}_{\mathbf{a}})_{L^2(\Omega_\varepsilon)} = 0,
  \end{align*}
  and we can apply \eqref{E:KoH1_TSS} to $\mathbf{w}^\varepsilon(t)$ for all $t\geq0$.
  Now, we return to the proof of Theorem \ref{T:Df_vE}, and use \eqref{Pf_DvE:Res} and \eqref{E:KoH1_TSS} (instead of \eqref{E:Korn_Thin}) to \eqref{Pf_DvE:weps}.
  Then,
  \begin{align*}
    &\frac{1}{2}\|\mathbf{w}^\varepsilon(t)\|_{L^2(\Omega_\varepsilon)}^2+c_1\nu\int_0^t\|\mathbf{w}^\varepsilon\|_{H^1(\Omega_\varepsilon)}^2\,ds \\
    &\qquad \leq \frac{1}{2}\|\mathbf{w}_0^\varepsilon\|_{L^2(\Omega_\varepsilon)}^2+c_2\delta\int_0^t\|\mathbf{w}^\varepsilon\|_{H^1(\Omega_\varepsilon)}^2\,ds \\
    &\qquad\qquad +c\int_0^t\Bigl\{\delta^{-3}\|\mathbf{v}\|_{L^2(S^2)}^2\|\mathbf{v}\|_{H^1(S^2)}^2\|\mathbf{w}^\varepsilon\|_{L^2(\Omega_\varepsilon)}^2+\delta^{-1}\Bigl(\|\mathbf{h}^\varepsilon\|_{\mathcal{V}_\varepsilon^\ast}^2+\varepsilon^3\eta_{\mathbf{v}}^2\Bigr)\Bigr\}\,ds
  \end{align*}
  for any $\delta>0$, where $\eta_{\mathbf{v}}$ is given by \eqref{E:Def_eta} and $c_1,c_2,c>0$ are some constants independent of $\varepsilon$, $t$, $\nu$, and $\delta$.
  Thus, setting $\delta:=c_1\nu/2c_2$ (i.e. $c_2\delta=c_1\nu/2$), we get
  \begin{multline*}
    \|\mathbf{w}^\varepsilon(t)\|_{L^2(\Omega_\varepsilon)}^2+c_1\nu\int_0^t\|\mathbf{w}^\varepsilon\|_{H^1(\Omega_\varepsilon)}^2\,ds \\
    \leq \|\mathbf{w}_0^\varepsilon\|_{L^2(\Omega_\varepsilon)}^2+c\int_0^t\left\{\varphi_{\mathbf{v}}\|\mathbf{w}^\varepsilon\|_{L^2(\Omega_\varepsilon)}^2+\frac{1}{\nu}\Bigl(\|\mathbf{h}^\varepsilon\|_{\mathcal{V}_\varepsilon^\ast}^2+\varepsilon^3\eta_{\mathbf{v}}^2\Bigr)\right\}\,ds
  \end{multline*}
  for all $t\geq0$, where $\varphi_{\mathbf{v}}(t):=\nu^{-3}\|\mathbf{v}(t)\|_{L^2(S^2)}^2\|\mathbf{v}(t)\|_{H^1(S^2)}^2$.
  Therefore,
  \begin{multline*}
    \|\mathbf{w}(t)\|_{L^2(\Omega_\varepsilon)}^2+c_1\nu\int_0^t\|\mathbf{w}^\varepsilon\|_{H^1(S^2)}^2\,ds \\
    \leq \exp\left(c\int_0^t\varphi_{\mathbf{v}}\,ds\right)\left\{\|\mathbf{w}_0^\varepsilon\|_{L^2(S^2)}^2+\frac{c}{\nu}\int_0^t\Bigl(\|\mathbf{h}^\varepsilon\|_{\mathcal{V}_\varepsilon^\ast}^2+\varepsilon^3\eta_{\mathbf{v}}^2\Bigr)\,ds\right\}
  \end{multline*}
  for all $t\geq0$ by Gronwall's inequality.
  Now, we see that
  \begin{align*}
    \int_0^t\varphi_{\mathbf{v}}\,ds &= \frac{1}{\nu^3}\int_0^t\|\mathbf{v}\|_{L^2(S^2)}^2\|\mathbf{v}\|_{H^1(S^2)}^2\,ds \leq \frac{c}{\nu^4}E_0^2, \\
    \int_0^t\eta_{\mathbf{v}}^2\,ds &\leq c\int_0^t\Bigl(\nu^2+\|\mathbf{v}\|_{L^2(S^2)}^2\Bigr)\|\mathbf{v}\|_{H^1(S^2)}^2\,ds \leq c(\nu^2+E_0)\frac{E_0}{\nu}
  \end{align*}
  by \eqref{E:Def_eta} and \eqref{E:S2_GlEn}.
  Thus, we obtain \eqref{E:GlDf_vE}.
\end{proof}

\begin{proof}[Proof of Theorem \ref{T:DEst_Gl}]
  Let $\mathbf{w}^\varepsilon$, $\mathbf{w}_0^\varepsilon$, and $\mathbf{h}^\varepsilon$ be given by \eqref{E:Def_weps}.
  By \eqref{Pf_Dif:grad}, we have
  \begin{align*}
    \|\mathbf{w}^\varepsilon\|_{H^1(\Omega_\varepsilon)}^2 = \|\mathbf{w}^\varepsilon\|_{L^2(\Omega_\varepsilon)}^2+\Bigl\|\overline{\mathbf{P}}\nabla\mathbf{u}^\varepsilon-\overline{\nabla_{S^2}\mathbf{v}}\Bigr\|_{L^2(\Omega_\varepsilon)}^2+\|\partial_{\mathbf{n}}\mathbf{u}^\varepsilon-\bar{\mathbf{v}}\|_{L^2(\Omega_\varepsilon)}^2.
  \end{align*}
  We divide \eqref{E:GlDf_vE} by $\varepsilon$ and use the above equality, \eqref{Pf_Dif:uv}, and \eqref{Pf_Dif:wh} to get
  \begin{align} \label{Pf_Gl:Est}
    D_{\mathrm{sol}}^\varepsilon(t)+\frac{\nu}{\varepsilon}\int_0^t\|\mathbf{u}^\varepsilon-\bar{\mathbf{v}}\|_{L^2(\Omega_\varepsilon)}^2\,ds \leq cF_0\left\{D_{\mathrm{data}}^\varepsilon(t)+\varepsilon^2[G_0+A_0(t)]\right\}+c\varepsilon^2B_{\mathbf{v}}(t)
  \end{align}
  for all $t\geq0$, where $D_{\mathrm{sol}}^\varepsilon(t)$ and $D_{\mathrm{data}}^\varepsilon(t)$ are given by \eqref{E:Def_Diff} and
  \begin{align*}
    A_0(t) &:= \|\mathbf{v}_0\|_{L^2(S^2)}^2+\frac{1}{\nu}\int_0^t\|\mathbf{f}\|_{\mathcal{V}_0^\ast}^2\,ds, \\
    B_{\mathbf{v}}(t) &:= \|\mathbf{v}(t)\|_{L^2(S^2)}^2+\nu\int_0^t\|\mathbf{v}\|_{L^2(S^2)}^2\,ds.
  \end{align*}
  Now, we use \eqref{E:Def_GlCon}, $\|\mathbf{v}\|_{L^2(S^2)}\leq\|\mathbf{v}\|_{H^1(S^2)}$, and \eqref{E:S2_GlEn} to deduce that
  \begin{align*}
    A_0(t) \leq E_0 \leq G_0, \quad B_{\mathbf{v}}(t) \leq cE_0 \leq cG_0.
  \end{align*}
  Applying these estimates and $F_0\geq1$ to \eqref{Pf_Gl:Est}, we get \eqref{E:DEst_Gl}.

  Next, suppose that $\mathbf{f}^\varepsilon\in L^2(0,\infty;\mathcal{V}_\varepsilon^\ast)$.
  Then, for all $t\geq0$, we have
  \begin{align*}
    &\|[\mathcal{M}_\varepsilon^0\mathbf{u}^\varepsilon-\mathbf{v}](t)\|_{L^2(S^2)}^2+\nu\int_0^t\|\mathcal{M}_\varepsilon^0\mathbf{u}^\varepsilon-\mathbf{v}\|_{H^1(S^2)}^2\,ds \\
    &\qquad \leq cF_0\{D_{\mathrm{data}}^\varepsilon(t)+\varepsilon^2G_0\}+c\nu\varepsilon^2\int_0^t\|\nabla_{S^2}\mathbf{v}\|_{L^2(S^2)}^2\,ds \\
    &\qquad \leq cF_0\left\{\frac{1}{\varepsilon}\|\mathbf{u}_0^\varepsilon-\bar{\mathbf{v}}\|_{L^2(\Omega_\varepsilon)}^2+\frac{1}{\varepsilon\nu}\int_0^\infty\|\mathbf{f}^\varepsilon-\bar{\mathbf{f}}\|_{\mathcal{V}_\varepsilon^\ast}^2\,ds+\varepsilon^2G_0\right\}+c\varepsilon^2E_0
  \end{align*}
  by combining \eqref{E:DEst_Gl} and \eqref{Pf_AvC:Adif} and then using \eqref{E:S2_GlEn}.
  Hence, it follows that
  \begin{multline*}
    \|\mathcal{M}_\varepsilon^0\mathbf{u}^\varepsilon-\mathbf{v}\|_{L^\infty(0,\infty;L^2(S^2))}+\nu^{1/2}\|\mathcal{M}_\varepsilon^0\mathbf{u}^\varepsilon-\mathbf{v}\|_{L^2(0,\infty;H^1(S^2))} \\
    \leq c_0\Bigl\{\varepsilon^{-1/2}\|\mathbf{u}_0^\varepsilon-\bar{\mathbf{v}}_0\|_{L^2(\Omega_\varepsilon)}+(\varepsilon\nu)^{-1/2}\|\mathbf{f}^\varepsilon-\bar{\mathbf{f}}\|_{L^2(0,\infty;\mathcal{V}_\varepsilon^\ast)}+\varepsilon\Bigr\},
  \end{multline*}
  where $c_0>0$ is a constant depending only on $E_0$, $F_0$, and $G_0$ and thus independent of $\varepsilon$.
  This inequality implies \eqref{E:ACG_sol} provided that \eqref{E:ACG_data} is valid.
\end{proof}

%%% Acknowledgments %%%
\section*{Acknowledgments}
The work of the author was supported by JSPS KAKENHI Grant Number 23K12993.

%%% References %%%
\bibliographystyle{abbrv}
\bibliography{NS_Thin_Sphere_Ref}

\end{document}